\documentclass[10pt]{amsart}

\usepackage[left=0.87 in,right=0.87
in,top= 1.1 in,bottom=1.08 in]{geometry} 
\usepackage{amsmath,amsthm,amssymb,amsfonts}
\usepackage{setspace}
\usepackage{tikz}
\usepackage{mathtools}
\usepackage{nccmath}
\usepackage[mathscr]{euscript}
\usepackage{hyperref}
\usepackage{dsfont}
\usepackage{yhmath}
\usepackage{csquotes}
\usepackage{bbm}
\usepackage{comment}
\usepackage{mathrsfs}
\usepackage{upgreek}
\usepackage{indentfirst}
\usepackage{pgf,tikz}
\usepackage{caption}
\usepackage{enumitem}

\usetikzlibrary{calc}

\newcommand{\indicatrix}{\mathbbm{1}}

\newcommand{\R}{\mathbb{R}}
\newcommand{\C}{\mathbb{C}}

\newcommand{\F}{\mathscr{F}}

\newcommand{\Nl}{\mathscr{N}}
\newcommand{\M}{\mathscr{M}}
\newcommand{\B}{\mathscr{B}}
\newcommand{\A}{\mathscr{A}}

\DeclareMathOperator{\real}{Re}

\newtheorem{theorem}{Theorem}[section]

\newtheorem{lemma}{Lemma}[section]
\newtheorem{proposition}{Proposition}
\theoremstyle{remark}
\newtheorem{remark}{Remark}[section]

\theoremstyle{definition}
\newtheorem{definition}{Definition}[section]

\numberwithin{equation}{section}

\begin{document}
 
\title{Sharp local well-posedness for the Hirota-Satsuma system}
\author{Rafael Deiga}
\address{\textsc{Rafael Deiga}, Center for Mathematical Analysis, Geometry and Dynamical Systems,
Department of Mathematics,
Instituto Superior T\'ecnico, Universidade de Lisboa, 
Av. Rovisco Pais, 1049-001 Lisboa, Portugal}
\email{rafael.deiga.ferreira@tecnico.ulisboa.pt}
\email{rafaeldeiga@hotmail.com}
\thanks{Acknowledgments: 
The author was supported by FCT/Portugal via projects CAMGSD and IST-ID, the FCT PhD grant UI/BD/153714/2022, and the FAPERN public funding call No. 13/2022 for international doctoral studies. Special thanks are due to Simão Correia and Jorge Drumond Silva for their guidance and insightful suggestions.
}
\date{\today}

\begin{abstract}
    We establish sharp local existence results for the Hirota--Satsuma system in $H^k(\R)\times H^s(\R)$, depending on the ratio between the dispersion of the components. These theorems significantly generalize previous works, which were restricted to the diagonal case of equal regularity $s=k$. Furthermore, we extend the known global well-posedness theory to the off-diagonal regime. The argument relies on the Fourier restriction norm method coupled with the concept of integrated-by-parts strong solution -- a framework that generalizes the classical notion of strong solution.

\end{abstract}

\maketitle

\section{Introduction}

    \subsection{The problem and literature review}    
        In this work, we investigate the initial value problem (IVP) for the Hirota--Satsuma system,
        \begin{equation}\label{equation hirota system}\tag{H-S}
            \left\{ 
            \begin{aligned} 
                u_t+au_{xxx} &= \beta \partial_x(u^2) + \gamma \partial_x(v^2),\\
                v_t +v_{xxx}  &= \theta uv_x,
            \end{aligned} \right. \quad (t,x) \in \R \times \R,
        \end{equation}
        subject to initial data $(u_0,v_0) \in H^k(\R) \times H^s(\R)$, where $a \in \R \setminus \{0,1\}$ and $\beta, \gamma, \theta \in \C \setminus \{0\}$. Our objective is to characterize the precise region in the $(k,s)$-plane for which local well-posedness (LWP) holds and to establish a global well-posedness (GWP) result.

        The Hirota--Satsuma system is a coupled model of KdV-type equations. Physically, it describes nonlinear interactions of long waves with distinct dispersion relations, appearing in fluid mechanics (e.g., shallow water and stratified fluids), solid-state physics, plasma physics, and thermodynamics (see \cite{hirota-satsuma, martinez2024feedbackboundarystabilizationhirotasatsuma, sadeeq, Zedan2009NewAF}). It was originally proposed by Hirota and Satsuma in \cite{hirota-satsuma} in the following form:
        \begin{equation}\label{system proposed by hirota-satsuma}
            \left\{ 
            \begin{aligned} 
                u_t+au_{xxx} &= -6a uu_x + \gamma vv_x\\
                v_t +v_{xxx}  &= -3 uv_x
            \end{aligned} \right.; (t,x) \in \R \times \R.
        \end{equation}
        
        The IVP for the Hirota--Satsuma system has been extensively studied for data $(u_0,v_0) \in H^s(\R) \times H^s(\R)$. This scenario, where both components possess identical regularity, is hereafter referred to as the \emph{diagonal} case. The first available LWP result was established by Feng \cite{feng}, who proved LWP for system \eqref{system proposed by hirota-satsuma} for $s\geq 1$ in the diagonal case, assuming $a\neq 1$ and $\gamma >0$. This was subsequently improved by Alvarez and Carvajal \cite{alvarez} for $s>3/4$. Related results in weighted spaces were also obtained by Muñoz and Pastor \cite{Munoz}.
        
        The current state-of-the-art is due to Yang and Zhang \cite{Yang}, who employed the Fourier restriction norm method introduced by Bourgain \cite{Bourgain1993, Bourgain1993-part2} to obtain the results summarized in Table \ref{table LWP Yang}. Notably, the regularity threshold depends crucially on the dispersion ratio $a$. The authors also proved that the associated bilinear estimates are sharp with respect to $s$.
        
        \begin{table}[ht]
            \begin{minipage}{0.47\textwidth}
                \centering
                \renewcommand{\arraystretch}{1.5}
                \begin{tabular}{|c|c|c|}
                \hline
                Coefficient $a$ & $s$ \\
                \hline
                $a \in (-\infty,1/4) \setminus \{0\}$ & $s > -3/4$ \\
                \hline
                $a \in (1/4,\infty) \setminus \{1\}$ & $s \geq 0$ \\
                \hline
                $a = 1$ & $s > 0$ \\
                \hline
                $a = 1/4$ & $s \geq 3/4$ \\
                \hline
                \end{tabular}
                \captionsetup{hypcap=false}
                \captionof{table}{LWP results from \cite{Yang}.}\label{table LWP Yang}
        \end{minipage}
        \begin{minipage}{0.47\textwidth}
            \centering
            \renewcommand{\arraystretch}{1.7}
            \begin{tabular}{|c|c|c|}
            \hline
            Coefficient $a$ and $\gamma$ & $s$ \\
            \hline
            $a \not \in \{1/4,1\}$, $\gamma >0$ & $s \geq 0$ \\
            \hline
            $a =1/4$, $ \gamma>0$ & $s \geq 1$ \\
            \hline
            \end{tabular}
            \captionsetup{hypcap=false}
            \captionof{table}{GWP results from \cite{Yang}.}\label{table GWP Yang}
            \end{minipage}
        \end{table}

        Regarding ill-posedness, Alvarez and Carvajal \cite{alvarez} established that the IVP for \eqref{system proposed by hirota-satsuma} is ill-posed in $H^k(\R) \times H^s(\R)$ for $k \in [-1,-3/4)$ and $s \in \R$, in the sense that the flow map fails to be uniformly continuous for $a\neq 0$. This result follows from the observation that, for initial data of the form $(u_0,0)$, the system reduces to the KdV equation, which exhibits the same type of ill-posedness in this regime (cf. \cite{Christ2002AsymptoticsFM}). Similarly, the diagonal case is known to be $C^3$-ill-posed for $s<-3/4$ according to \cite[Section 6]{Bourgain1997PeriodicKD}. Consequently, in the diagonal case, the results in Table \ref{table LWP Yang} are sharp for $a \in (-\infty,1/4) \setminus \{0\}$ given the analyticity of the flow for solutions obtained through fixed point methods. However, a gap remains for the case $a \geq 1/4$.

        As for the global theory, we observe that \eqref{system proposed by hirota-satsuma} conserves the mass 
        \begin{equation}\label{equation mass}
            M = \int (\theta u^2- 2\gamma v^2)\,dx,
        \end{equation}
        while \eqref{equation hirota system} additionally preserves the energy
        \begin{equation}\label{equation energy}
            E = \int (1-a)u^2_x + \gamma v^2_x-2(1-a)u^3- \gamma uv^2\, dx.
        \end{equation}
        
        By employing \eqref{equation mass} alongside energy-type estimates, Feng \cite{feng} concluded that the $H^s$-norm of the solution remains bounded and, consequently, established GWP for \eqref{system proposed by hirota-satsuma} in the diagonal case for $s\geq 1$, assuming $\gamma >0$ and $0<a<1$. Subsequently, Yang and Zhang \cite{Yang} improved this result by leveraging refined LWP estimates in conjunction with the conservation laws \eqref{equation mass} and \eqref{equation energy} (with $\theta = -3$). Their conclusions are summarized in Table \ref{table GWP Yang}.

        To date, LWP results for the off-diagonal case remain unavailable. In recent years, new techniques— specifically frequency-restricted estimates (FREs) and normal form reductions in the context of well-posedness—have been developed (cf. \cite{ correia2025sharplocalwellposednessschrodingerkortewegde, correia2023sharp}). The primary objective of the present work is to leverage this framework to address the off-diagonal regime.

    \subsection{Main results} 
    
        We now state our main results of this paper. Let $a  \in \R \setminus \{0,1\}. $ We define the following admissible regularity regions in the $(k,s)$-plane (cf. Figures \ref{figure LWP a>=1/4} and \ref{figure LWP a<1/4}):
        \begin{equation}\label{equation definition A_a}
            \A_{a} \coloneqq 
            \begin{cases}
                \displaystyle \left \{(k,s)\in \R^2; \text{ }  k>-\frac{3}{4}, \text{ } \max \left\{-\frac{3}{4},\frac{k}{2}-\frac{3}{4}, k-2\right\}< s< k+3  \right \},  &\text{if } a < 1/4, \\
                \displaystyle \left \{(k,s)\in \R^2; \text{ }  k\geq\frac{3}{4}, \text{ } s \geq \frac{k}{2}+\frac{3}{8}, \text{ }  k-2< s< k+3  \right \},  &\text{if } a=1/4, \\
                \displaystyle \left \{(k,s)\in \R^2; \text{ }  k\geq0, \text{ } s \geq \frac{k}{2}, \text{ }  k-2< s< k+3  \right \},   &\text{if } a >1/4.
            \end{cases}
        \end{equation}

        The following theorem presents a summarized version of our main LWP result. A precise statement involving the notion of integrated-by-parts strong solutions is provided in Theorem \ref{theorem improved LWP} (cf. Definition \ref{definition IBPS}).
        
        \begin{theorem}\label{theorem general LWP}
            Let $a \in \R \setminus \{0,1\}$ and $(k,s) \in \A_a$. Then the system \eqref{equation hirota system} is analytically locally well-posed in $H^k(\R) \times H^s(\R)$. 
        \end{theorem}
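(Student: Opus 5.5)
The plan is to run a contraction argument in Bourgain spaces adapted to the two dispersion relations. Let $X^{k,b}_a$ be the space with norm $\|\langle\xi\rangle^k\langle\tau-a\xi^3\rangle^b \widehat{u}\|_{L^2}$, and let $X^{s,b}_1$ be the analogous space for the $v$-equation. The Duhamel formulation of \eqref{equation hirota system} is then placed in $X^{k,b}_a\times X^{s,b}_1$ with $b$ close to $1/2$, or in the endpoint cases in spaces with $b=1/2$ intersected with an $\ell^2_\xi L^1_\tau$-type component. The standard linear estimates give a small power of $T$. Local well-posedness with an analytic flow then reduces to three bilinear estimates:
\begin{align*}
\|\partial_x(u_1u_2)\|_{X^{k,b-1}_a}&\lesssim \|u_1\|_{X^{k,b}_a}\|u_2\|_{X^{k,b}_a},\\
\|\partial_x(v_1v_2)\|_{X^{k,b-1}_a}&\lesssim \|v_1\|_{X^{s,b}_1}\|v_2\|_{X^{s,b}_1},\\
\|u\,\partial_x v\|_{X^{s,b-1}_1}&\lesssim \|u\|_{X^{k,b}_a}\|v\|_{X^{s,b}_1}.
\end{align*}
The first is the Kenig--Ponce--Vega KdV estimate, valid for $k>-3/4$; when $a\ge 1/4$ we only need it for $k\ge 0$. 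The other two are the genuinely coupled estimates. Through duality and Cauchy--Schwarz in the standard way, I would reduce each to a frequency-restricted estimate (FRE). Each FRE is the supremum over one variable of a weighted measure of the set where the resonance function lies within $M$ of a fixed value.

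The resonance functions are the key objects. For the $vv\to u$ interaction with $\xi=\xi_1+\xi_2$, the resonance is
\[
\Phi_1=a\xi^3-\xi_1^3-\xi_2^3 = (a-\tfrac14)\xi^3+\tfrac34\xi(\xi_1-\xi_2)^2 .
\]
For the $uv\to v$ interaction it is $\Phi_2=\xi^3-a\xi_1^3-\xi_2^3$. When $a<1/4$, the function $\Phi_1$ vanishes only on the cone $(\xi_1-\xi_2)^2=\frac{4(1/4-a)}{3}\xi^2$, where all frequencies are comparable. When $a>1/4$, the function $\Phi_1$ vanishes only when $\xi=0$, which forces $\xi_1=-\xi_2$ and gives a high--high$\to$low interaction. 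For $a=1/4$ the two effects combine. I would split the analysis into dyadic regions according to these cases:
\begin{itemize}
\item high--high$\to$high, near the resonant cone;
\item high--low and low--high, where $|\Phi|\gtrsim N^3$ or $N^2\cdot(\text{low})$;
\item high--high$\to$low.
\end{itemize}
In each region I would compare the derivative gain $|\xi|$ and the weights $\langle\xi\rangle^{k},\langle\xi_j\rangle^{-s}$ against $\max|\sigma_j|\gtrsim|\Phi|$, or against the measure of the set where $|\Phi-\alpha|<M$. That measure is controlled by $|\partial_{\xi_1}\Phi|^{-1}$ away from critical points, or by $\sqrt{M/|\partial^2\Phi|}$ near them.

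Collecting the constraints region by region should produce exactly the boundary of $\A_a$. The lines $s<k+3$ and $s>k-2$ come from high--low interactions, where one component is much smoother and the gain $|\Phi|\sim N^2 |\text{low}|$ must absorb a weight ratio $\langle N\rangle^{|k-s|}$ together with the derivative. The lines $s>k/2-3/4$, $s\ge k/2$ and $s\ge k/2+3/8$ come from the $vv\to u$ term at the resonant set. That set is all-comparable when $a<1/4$, giving $2s-k> -3/2$. It is high--high$\to$low when $a>1/4$, where the $u$-output at low frequency gains little. At $a=1/4$ the resonance is $\frac34\xi(\xi_1-\xi_2)^2$ and degenerates doubly, costing the extra $3/8$.

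The main obstacle is the endpoints. These include $s=k/2$ and $s=k/2+3/8$, together with the low-frequency part of $\Phi_2$ when $u$ sits at frequency near zero. There the plain $X^{s,b}$ estimate fails logarithmically or worse, and the paper's hint points to integrated-by-parts strong solutions. My approach there would be a normal form (differentiation by parts) step. I would integrate by parts in time on the nonresonant portion $|\Phi|\gg$ (threshold) of the Duhamel integral, which produces boundary bilinear terms and trilinear terms with $1/\Phi$ gains. I would estimate these with FREs of one order higher and close the fixed point for the modified equation, which is valid in the sense of Definition \ref{definition IBPS}. Analytic dependence follows because every map involved is a bounded multilinear operator.
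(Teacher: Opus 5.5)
\textbf{Gap 1: the bilinear estimates do not reach $\A_a$.} Your plan rests on the claim that the three bilinear estimates hold on all of $\A_a$ up to endpoints, so that normal forms are needed only on boundary lines. This is false. The two coupled estimates hold exactly on $\A_a^0$, and $\A_a\setminus\A_a^0$ contains open strips.

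Take $\partial_x(v_1v_2)$ with $\xi\simeq\xi_1\sim N$ and $|\xi_2|\lesssim 1$. Since $a\neq 1$, one has $|\Phi|\sim N^3$, not $N^2\cdot(\text{low})$, which is the equal-dispersion picture. But the weight $\langle\sigma\rangle^{b-1}$ converts this into only $N^{-3/2}$. Testing with unit boxes in $(\xi_j,\tau_j)$ (input modulation $N^2$, output modulation $N^3$) gives $N^{k-s-3/2}\lesssim 1$ up to $\epsilon$-losses, so the estimate fails for $s\le k-3/2$. In the same way:
\begin{itemize}
\item $u\,\partial_x v$ fails for $s\ge k+5/2$ (the $u$-factor high, the $v$-factor low);
\item for $a<1/4$, $\partial_x(v_1v_2)$ fails for $s\le k/2-3/8$ (from $\xi_1\simeq\xi_2$, where $|\Phi|\sim|\xi|^3$).
\end{itemize}
These are power losses, so neither tuning $b$ nor $\ell^2_\xi L^1_\tau$ endpoint spaces will help. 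For example, $(k,s)=(1,\tfrac{15}{4})$ for every $a$, or $(0,-\tfrac12)$ for $a<1/4$, lies in $\A_a$ but beyond any contraction in $X^{k,b}_a\times X^{s,b}_1$.

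Conversely, the endpoints you single out ($s=k/2$ for $a>1/4$, $s=k/2+3/8$ for $a=1/4$) need nothing special, since the estimate holds there directly. A normal form could not help at a resonant endpoint anyway, because it divides by $\Phi$.

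Your resonance geometry is also reversed by a sign error. Since $\xi_1^3+\xi_2^3=\frac14\xi^3+\frac34\xi(\xi_1-\xi_2)^2$, one has $a\xi^3-\xi_1^3-\xi_2^3=(a-\frac14)\xi^3-\frac34\xi(\xi_1-\xi_2)^2$. Thus for $a<1/4$ one has $|\Phi|\gtrsim|\xi|^3$. For $a>1/4$ the phase vanishes on the all-comparable cone $\xi_1\simeq\mu_a\xi$, and it is this cone that produces $s\ge k/2$.

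\textbf{Gap 2: the normal-form step, as described, does not close.} The integration by parts in time has to be done exactly on the strongly nonresonant sets where $|\Phi|\sim|\xi|^3$, so that $1/\Phi$ yields the full $N^{-3}$. These sets are: $\xi\simeq\xi_1$ or $\xi\simeq\xi_2$ in $\partial_x(v^2)$, plus $\xi_1\simeq\xi_2$ when $a<1/4$; and $\xi\simeq\xi_1$ in $uv_x$.

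You cannot then run the contraction for the modified equation at the target regularity in $X^{k,b}_a\times X^{s,b}_1$ with $b>1/2$. Where $|\sigma|\simeq|\Phi|$, one has $\langle\sigma\rangle^{b}/|\Phi|\approx\langle\sigma\rangle^{b-1}$. Hence the boundary term $\B^u$ (resp.\ $\B^v$) lies in $X^{k,b}_a$ (resp.\ $X^{s,b}_1$) only under the $\A_a^0$ conditions. The sharpness counterexamples apply verbatim, since they live where $|\sigma_1|,|\sigma_2|\ll|\sigma|$. So this term is exactly as hard as the bilinear estimate you removed.

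Consequently the solution itself is not in the target Bourgain space. The pieces with no spatial smoothing then cannot be handled by your scheme: the KdV term $\partial_x(u^2)$ when $k$ is raised, and the non-integrated part $\Nl_0^v$ when $s$ is raised.

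The missing idea is the paper's two-level argument. First solve the modified system at a lower level $(k',s)$ or $(k,s')$ in $\A_a^0$. When raising $k$, add the norm $X^{k,b-\rho/3}$ with $\rho=k-k'$; this trades modulation for spatial regularity, and it is where $\B^u$ does live. Then recover the target regularity a posteriori, using:
\begin{itemize}
\item $\epsilon$-derivative smoothing for the cubic terms;
\item fixed-time $H^k$/$H^s$ bounds for the boundary terms;
\item a time-smoothing estimate placing the KdV Duhamel term in $X^{k,b^-}$ with $b^->1/2$;
\item for $\Nl_0^v$, a bound on $\Nl_0^v[u,\B^v]$, combined with the invertibility for small $T$ of $f\mapsto f-\eta\int_0^t\eta_T(t')e^{-(t-t')\partial_x^3}\Nl_0^v[u,f]\,dt'$ on $X^{s,b}_1$, applied to $v-\eta\B^v$.
\end{itemize}
Uniqueness and independence of the cut-off parameters then follow by density, a point your sketch also leaves open.
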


        The proof of Theorem \ref{theorem general LWP} proceeds as follows. First, we establish LWP for $(k,s)$ within the region
        \begin{equation}\label{equation definition A^0}
            \A^0_{a} \coloneqq 
            \begin{cases}
                \displaystyle  \left \{(k,s)\in \R^2; \text{ } k>-\frac{3}{4}, \text{ } \max \left\{\frac{k}{2}-\frac{3}{8}, k-\frac{3}{2}\right\}< s< k+\frac{5}{2}  \right \},  &\text{if } a < 1/4, \\
                \displaystyle \left\{(k,s)\in \R^2; \text{ } k\geq \frac{3}{4}, \quad s\geq \frac{k}{2}+\frac{3}{8},\quad k-\frac{3}{2}< s< k+\frac{5}{2}  \right \},  &\text{if } a=1/4, \\
                \displaystyle \left \{(k,s)\in \R^2; \text{ } k\geq 0, \quad s\geq \frac{k}{2},\quad k-\frac{3}{2}< s< k+\frac{5}{2}  \right \},   &\text{if } a>1/4.
            \end{cases}
        \end{equation}
        which corresponds to the blue regions in Figures \ref{figure LWP a>=1/4} and \ref{figure LWP a<1/4}. Once the bilinear estimates associated with \eqref{equation hirota system} are obtained via frequency-restricted estimates, the proof follows from a standard application of the contraction mapping principle. To extend LWP to the gray regions, we adopt the approach in \cite{correia2025sharplocalwellposednessschrodingerkortewegde}: we first construct a solution at a lower regularity within $\A^0_a$ using a fixed-point argument. Subsequently, by employing the nonlinear smoothing effect provided by integration by parts and FREs, we show that this solution attains the higher regularity of the initial data.

        \begin{figure}[ht]
        \centering
        \begin{minipage}{0.47\textwidth}
            \centering
            \begin{tikzpicture}[scale= 0.6]       
            
                \draw[->] (-1,0) -- (10,0) node[right] {$k$};
                \draw[->] (0,-0.5) -- (0,7) node[above] {$s$};
            
                \begin{scope}
                    \fill[gray, opacity=0.5] (0,0) -- (0,3) --(4,4+3) -- (9,9-2) --(4,4 -2)  --cycle;
                \end{scope}
            
                \begin{scope}
                    \fill[blue!60, opacity=0.6] (0,0) -- (0,5/2) --(9/2,9/2+5/2) --(17/2, 17/2-3/2) -- (5,5-3/2) --(3,3 -3/2)  --cycle;
                \end{scope}
            
                \begin{scope}
                    \fill[red, opacity=0.4] (0,0) -- (0,3) --(4,4+3) -- (-1,7) -- (-1,-1)-- (10,-1) -- (10,7)  -- (9,7) -- (4,4-2)-- cycle;
                \end{scope}
            
                \draw[ domain=0:7, yellow] plot (\x, {\x}) 
                    node[above right] {};
            
                \path let \p1 = (0,0), \p2 = (7, 7) in node[above, rotate={atan2((\y2-\y1),\x2-\x1)}] at (7/2,7/2) { \color{yellow}{$s = k$}};
            
                \draw[dashed,domain=0.1:4, black,line width=1pt] plot (\x, {\x + 3});
            
                \path let \p1 = (0,3), \p2 = (4, 7) in node[above=4pt, rotate={atan2((\y2-\y1),\x2-\x1)}] at (2,5) {$s = k + 3$};
            
                \path let \p1 = (0,5/2), \p2 = (9/2, 7) in node[below=-3pt, rotate={atan2((\y2-\y1),\x2-\x1)}] at (9/4,19/4) { $s = k + 5/2$};
            
                \draw[dashed, domain=4.03:9, black,line width=1pt] plot (\x, {\x - 2});
            
                \path let \p1 = (3,3/2), \p2 = (17/2, 7) in
                node[above=-4pt, rotate={atan2((\y2-\y1),\x2-\x1)}] at (23/4,17/4) { $s = k -3/2 $};
            
                \path let \p1 = (4,2), \p2 = (9, 7) in
                node[below, rotate={atan2((\y2-\y1),\x2-\x1)}] at (13/2,9/2) { $s = k -2 $};
            
                \path let \p1 = (0,0), \p2 = (4,2) in
                node[below, rotate={atan2((\y2-\y1),\x2-\x1)}] at (2,1) {$s = k/2 $};
    
                \draw[line width=1.5 pt] (0,0) -- (0,2.98);
    
                \draw[line width=1pt] (0,3) circle (2.5pt);
    
                \draw (0,0)[line width=1.5 pt] -- (3.98,3.98/2);
    
                \draw[line width=1pt] (4,2) circle (2.5pt);
                
            \end{tikzpicture}
            \end{minipage}
            \begin{minipage}{0.47\textwidth}
            \centering
                \begin{tikzpicture}[scale= 0.6] 
                    \draw[->] (-1,0) -- (10,0) node[right] {$k$};
                    \draw[->] (0,-0.5) -- (0,7) node[above] {$s$};
            
                    
                    \draw[dashed,domain=3/4:4, black, line width=1pt] plot (\x, {\x + 3}) 
                        node[above] {};
                    
                    \draw[dashed, domain=19/4:9, black, line width=1pt] plot (\x, {\x - 2}) 
                        node[above right] {};
            
                    
            
                    \begin{scope}
                        \fill[gray, opacity=0.5] (3/4,3/4) -- (3/4,3/4+3) --(4,7)-- (9,7) --(19/4,19/4 -2)  --cycle;
                    \end{scope} 
            
                    \begin{scope}
                        \fill[blue!60, opacity=0.6] (3/4,3/4) -- (3/4,3/4+5/2) --(9/2,9/2+5/2) --(17/2, 17/2-3/2)  --(15/4,15/4-3/2)  --cycle;
                    \end{scope}
            
                    \begin{scope}
                        \fill[red, opacity=0.4] (3/4,3/4) -- (3/4,3/4+3)  --(4,7) --(-1,7) -- (-1,-1) --(10,-1) --(10,7) --(9,7) --(19/4,19/4 -2)  --cycle;
                    \end{scope} 
            
                    \draw[domain=3/4:7, yellow] plot (\x, {\x});
            
                    \draw[dashed,black] (3/4,3/4) -- (3/4,0)          node[below, yshift= -1pt] {$\frac{3}{4}$};
            
                    \path let \p1 = (3/4,3/4+3), \p2 = (4,7) in
                    node[above=2pt, rotate={atan2((\y2-\y1),\x2-\x1)}] at (\x1/2+\x2/2,\y1/2+\y2/2) {$s = k +3$};
                    
                    \path let \p1 = (3/4,3/4+5/2), \p2 = (9/2,7) in
                    node[below=-4pt, rotate={atan2((\y2-\y1),\x2-\x1)}] at (\x1/2+\x2/2,\y1/2+\y2/2) {$s = k+5/2 $};
            
                    \path let \p1 = (3/4,3/4), \p2 = (7,7) in
                    node[above, rotate={atan2((\y2-\y1),\x2-\x1)}] at (\x1/2+\x2/2,\y1/2+\y2/2) {\color{yellow}{$s = k$}};
            
                    \path let \p1 = (9/4,9/4-3/2), \p2 = (17/2,17/2-3/2) in
                    node[above=-6pt, rotate={atan2((\y2-\y1),\x2-\x1)}] at (\x1/2+\x2/2,\y1/2+\y2/2) { $s = k-3/2 $};
            
                    \path let \p1 = (3/4,3/4), \p2 = (19/4,19/4-2) in
                    node[below, rotate={atan2((\y2-\y1),\x2-\x1)}] at (\x1/2+\x2/2,\y1/2+\y2/2) { $s = k/2+3/8 $};
            
                    \path let \p1 = (19/4,19/4-2), \p2 = (9, 7) in
                    node[below, rotate={atan2((\y2-\y1),\x2-\x1)}] at (\x1/2+\x2/2,\y1/2+\y2/2) { $s = k -2 $};
        
                    \draw[line width=1.5 pt] (3/4,3/4) -- (19/4-0.02,19/4-0.02-2);
        
                    \draw[line width=1pt] (19/4,19/4-2) circle (2.5pt);
        
                    \draw (3/4,3/4)[line width=1.5 pt] -- (3/4,3/4+3-0.02);
        
                    \draw[line width=1pt] (3/4,3/4+3) circle (2.5pt);
                    
                \end{tikzpicture}
            
            \end{minipage}
    
            \caption{Regularity regions for LWP with $a \in (1/4,\infty)\setminus\{1\}$ (left) and $a=1/4$ (right) for initial data $(u_0,v_0) \in H^k\times H^s$. The yellow line represents the results by Yang and Zhang \cite{Yang}. The set $\A_a^0$ corresponds to the blue area, including a portion of the continuous black line, while $\A_a$ comprises both the blue and gray regions, inclusive of the entire continuous black line. In the red zone, the problem is $C^2$-ill-posed (Theorem \ref{theorem ill-posedness}).} 
            \label{figure LWP a>=1/4}
        \end{figure}
    
        \begin{figure}[ht]
            \centering
            \begin{tikzpicture}[scale=1.1]       
                \draw[->] (-1,0) -- (6,0) node[right] {$k$};
                \draw[->] (0,-1) -- (0,3.5) node[above, yshift= 2pt] {$s$};
                
                \draw[dashed,black, line width=1pt] (-3/4,-3/4) -- (-3/4,-3/4+3)          node[left] {}; 
    
                \draw[dashed,black, line width=1pt] (-3/4,-3/4) -- (0,-3/4)          node[left] {}; 
                \draw[dashed,domain=-3/4:0.5, black, line width=1pt] plot (\x, {\x + 3}) 
                    node[above] {};
                
                \draw[dashed, domain=5/2:5.5, black, line width=1pt] plot (\x, {\x - 2}) 
                    node[above right] {};
                \draw[dashed,domain=0:5/2, black, line width=1pt ] plot (\x, {\x/2 - 3/4}) 
                    node[below right] {};
                
                \begin{scope}
                    \fill[gray, opacity=0.5] (-3/4,-3/4) -- (-3/4,-3/4+3) --(0.5,0.5+3)  -- (5.5,5.5-2) --(5/2,5/2 -2) -- (0,-3/4) --cycle;
                    
    
                    \fill[blue!60, opacity=0.6] (-3/4,-3/4) -- (-3/4,-3/4+5/2) --(1,1+5/2)  -- (5,3.5) --(9/4,9/4 -3/2) --cycle;
    
                    \fill[orange, opacity=0.65](-1/2,-1/4 -3/4) --(0,-3/4) -- (-3/4,-3/4) -- (-3/4,-3/4+3) --(-1.5,-1.5+3)  --  (-1.5,-3/4)  -- (-1.5,-1)--cycle;
    
                    \fill[red, opacity=0.4]  (-1.5,-1.5+3) --(0.5,0.5+3)  -- (-1.5, 3.5) -- cycle;
    
                    \fill[red, opacity=0.4] (6,-1/4-3/4) -- (6,3.5) --(5.5,5.5-2) --(5/2,5/2 -2) -- (-1/2,-1/4 -3/4) --cycle;
                     
                \end{scope}
    
                \fill (-3/4,0) circle (1.4 pt);
                \node[below left, yshift=-3pt, xshift= -1pt] at (-3/4,0) {$-\frac{3}{4}$};
    
                \draw[domain=-3/4:3.5, yellow] plot (\x, {\x});
    
                \path let \p1 = (-3/4,-3/4+3), \p2 = (0.5,0.5+3) in
                node[above=2pt, rotate={atan2((\y2-\y1),\x2-\x1)}] at (-3/4+0.45,-3/4 +0.45 +3) {$s = k+3 $};
    
                \path let \p1 = (-3/4,-3/4+5/2), \p2 = (1,1+5/2) in
                node[below, rotate={atan2((\y2-\y1),\x2-\x1)}] at ((\x1/2+\x2/2,\y1/2+\y2/2) {$s = k+5/2 $};
    
                \path let \p1 = (-3/4,-3/4), \p2 = (1+5/2,1+5/2) in
                node[above=2pt, rotate={atan2((\y2-\y1),\x2-\x1)}] at ((\x1/2+\x2/2,\y1/2+\y2/2) {\color{yellow}{$s = k$}};
    
                \path let \p1 = (-3/4,-3/4), \p2 = (9/4,9/4-3/2) in
                node[above= -4pt, rotate={atan2((\y2-\y1),\x2-\x1)}] at (10+\x1/2+\x2/2 ,5+\y1/2+\y2/2) {$s = k/2-3/8 $};
    
                \path let \p1 = (9/4,9/4-3/2), \p2 = (5,5-3/2) in
                node[above=-4pt, rotate={atan2((\y2-\y1),\x2-\x1)}] at ((\x1/2+\x2/2,\y1/2+\y2/2) { $s = k-3/2 $};
    
                \path let \p1 = (0,-3/4), \p2 = (5/2,5/2-2) in
                node[below= -4pt, rotate={atan2((\y2-\y1),\x2-\x1)}] at ((\x1/2+\x2/2,\y1/2+\y2/2) {$s = k/2-3/4 $};
    
                \path let \p1 = (5/2,5/2-2), \p2 = (5.5,5.5-2) in
                node[below , rotate={atan2((\y2-\y1),\x2-\x1)}] at ((\x1/2+\x2/2,\y1/2+\y2/2) {$s = k-2 $};
            \end{tikzpicture}
            
            \caption{For $a \in (-\infty, 1/4)\setminus\{0\}$, the yellow line represents the LWP results established by Yang and Zhang \cite{Yang}. The set $\A_a^0$ corresponds to the blue region, while $\A_a$ comprises both the blue and gray areas. In the red and orange regions, the problem is $C^2$-ill-posed and $C^3$-ill-posed (for $a \neq -1/8$), respectively; see Theorem \ref{theorem ill-posedness} and Remark \ref{remark ill-posedness for a=-1/8}.} 
            \label{figure LWP a<1/4}
            
        \end{figure}

        By analyzing the multilinear estimates required to establish Theorem \ref{theorem general LWP}, we can identify the specific frequency interactions responsible for the constraints defining $\A_a$. These interactions enable the construction of counterexamples that yield the following:

        \begin{theorem}[Ill-posedness]\label{theorem ill-posedness}
            Let $(k,s) \in \R^2 \setminus \overline{\A_a}$. The system \eqref{equation hirota system} is $C^2$-ill-posed in $H^k(\R) \times H^s(\R)$ for $a \in [1/4, \infty) \setminus \{1\}$ and $C^3$-ill-posed in $H^k(\R) \times H^s(\R)$ for $a \in (-\infty, 1/4) \setminus \{-\frac{1}{8},0 \}$.
        \end{theorem}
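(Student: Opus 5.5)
The approach is the standard Bourgain/Tzvetkov argument: test the Picard iterates of the Duhamel formulation on explicit frequency-localized data. If the data-to-solution map $(u_0,v_0)\mapsto (u(t),v(t))$ were $C^2$ (resp. $C^3$) at the origin from $H^k\times H^s$ into $C([0,T];H^k\times H^s)$, then the second (resp. third) order Fréchet derivative would be bounded. Writing $(u_0,v_0)=\delta(\phi,\psi)$ and expanding, this derivative is a bilinear (resp. trilinear) Duhamel iterate. Concretely, we would have to have
\begin{equation*}
\Bigl\|\int_0^t S_a(t-t')\partial_x\bigl[(S_1(t')\psi)^2\bigr]dt'\Bigr\|_{H^k}\lesssim \|\psi\|_{H^s}^2,\qquad \Bigl\|\int_0^t S_1(t-t')\bigl[(S_a(t')\phi)\,\partial_x S_1(t')\psi\bigr]dt'\Bigr\|_{H^s}\lesssim \|\phi\|_{H^k}\|\psi\|_{H^s},
\end{equation*}
where $S_a(t)=e^{-ta\partial_x^3}$. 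The analogous quadratic term $\partial_x(u^2)$ would only reproduce the KdV constraint $k\ge -3/4$, which is already covered by the KdV/Bourgain results quoted in the introduction. For $a<1/4$ we will also need the cubic iterate obtained by substituting the first quadratic iterate of $u$ (the one generated by $\psi^2$) into $uv_x$, which is what yields $C^3$ ill-posedness.

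The plan is to handle each boundary of $\A_a$ separately: $s=k+3$, $s=k-2$, $s=k/2$ (or $k/2+3/8$ when $a=1/4$, and the constraint $k\ge 0$ or $k\ge 3/4$), and for $a<1/4$ the lines $s=k/2-3/4$ and $s=-3/4$. For each boundary, take $\phi,\psi$ with Fourier transforms equal to indicators of intervals of width $\sim N^{-\alpha}$ or $\sim 1$ centred at frequencies $\pm c_jN$, chosen so that the interaction lies on the resonant set. The resonance functions are
\begin{equation*}
\Omega_1=a\xi^3-\xi_1^3-\xi_2^3\ (\xi=\xi_1+\xi_2),\qquad \Omega_2=\xi^3-a\xi_1^3-\xi_2^3.
\end{equation*}
On the resonant set $|\Omega|\lesssim 1$, we have $\bigl|\int_0^t e^{it'\Omega}dt'\bigr|\gtrsim t$, so the iterate's $L^2$ norm can be computed from below by measure counting. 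The candidate configurations are as follows.

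\begin{enumerate}[label=(\roman*)]
\item \emph{High$\times$high$\to$low, producing $u$ from $v^2$.} Take $\xi_1\approx N$ and $\xi_2\approx -N$, so the output $\xi$ lies near $0$ or at $\sim 1$. This gives a ratio $\sim N^{1-2s+k}$-type growth and should produce the line $s=k/2$. For $a\ge1/4$ we place the output at the nonzero resonant root: $\Omega_1=0$ with $\xi_1=\lambda\xi$ solves $a=\lambda^3+(1-\lambda)^3$, which has real roots exactly when $a\ge 1/4$, with a double root at $a=1/4$. This explains the different exponent $3/8$ at $a=1/4$, where the resonant set is degenerate and wider, so the interval widths must be taken as $N^{-1/2}$-type.
\item \emph{High $u$, high $v$, producing high $v$.} Choose frequencies on the root of $\Omega_2=0$. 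This should produce $s<k+3$ (low $v$ forced by high $u$, i.e. the derivative landing on the low component) and $s>k-2$.
\item \emph{Cubic iterate for $a<1/4$.} Here nonzero resonances in (i) are absent. Use a high-high-to-low iterate followed by reinsertion into $uv_x$, with a resonance of the total cubic phase. The exceptional value $a=-1/8$ presumably corresponds to a cancellation of this combined phase or of the coefficient, which is why it is excluded.
\end{enumerate}

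In each case, compute $\|\text{iterate}(t)\|_{H^k}\gtrsim t\,N^{\text{gain}}\cdot(\text{measure})$ against $\|\phi\|_{H^k}$ and $\|\psi\|_{H^s}$. Letting $N\to\infty$ for $(k,s)$ on the wrong side of the line contradicts boundedness.

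The main obstacle is choosing the localization widths so that the output set remains resonant: the width must satisfy $|\Omega|\ll 1/t$ uniformly over the supports. This is critical in the degenerate case $a=1/4$ and in the cubic iterate for $a<1/4$, where the intermediate frequency must itself be tuned to make both the quadratic and the cubic phases small simultaneously. I would first derive the explicit roots $\lambda(a)$, then Taylor-expand $\Omega$ around the resonant point to read off the admissible widths (linear versus quadratic vanishing), and finally match exponents with the boundary lines of $\A_a$ in \eqref{equation definition A_a}.
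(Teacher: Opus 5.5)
\textbf{There is a genuine gap.} Your only source of a lower bound is resonance ($|\Omega|\lesssim 1$ at fixed $t$), but most of $\partial\A_a$ comes from \emph{non-resonant} interactions.

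\emph{Why resonance cannot suffice.} For $a<1/4$ the quadratic factors of $\Phi_1^u$ and $\Phi^v$ are definite (discriminant $12a-3<0$). So $\Phi_1^u$ vanishes only at $\xi=0$ and $\Phi^v$ only at $\xi_1=0$. These trivial resonances give no useful constraint, so none of the $C^2$ lines for $a<1/4$ can come from your mechanism.

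\emph{What the paper does instead.} It uses interactions with $|\Phi|\sim N^3$ and takes $t=cN^{-3}$. This is legitimate, since the second derivative must be bounded uniformly in $t\in[0,T]$, and it gives $\mathrm{Re}\,\frac{e^{it\Phi}-1}{i\Phi}\gtrsim N^{-3}$. The three lines then arise as follows.
\begin{itemize}
\item $s\le k+3$: high $u\times$ low $v\to$ high $v$ in $uv_x$, with $\hat u_0=\indicatrix_{[N,N+1]}$ and $\hat v_0=\indicatrix_{[1,3]}$, gives $N^{s-3}\lesssim N^{k}$.
\item $s\ge k-2$: high $v\times$ low $v\to$ high $u$ in $\partial_x(v^2)$.
\item $s\ge k/2-3/4$: $v_0$ at $\pm N$ with width $\delta N$, output at frequency $\sim\delta N$.
\end{itemize}

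\emph{Your assignments do not work.}
\begin{itemize}
\item In (ii), high $u\times$ low $v\to$ high $v$ has $\Omega_2\approx(1-a)\xi^3\neq0$, so there is no root to sit on.
\item When the input and output $v$-frequencies are comparable, the factors $N^{s}$ cancel. So (ii) on the root of $\Omega_2$ cannot detect $s$ at all. For $a>1/4$ with widths $N^{-2}$ it gives exactly $k\ge 0$; for $a=1/4$ with widths $N^{-1/2}$ it gives $k\ge 3/4$.
\item $s\ge k-2$ comes from the $u$-equation, not from $uv_x$.
\item In (i), an output at frequency $O(1)$ carries no power $N^{k}$ and yields only $s\ge -1$.
\end{itemize}
The lines $s\ge k/2$ and $s\ge k/2+3/8$ do come from the root $\lambda$ with $|\xi|\sim|\xi_1|\sim|\xi_2|$, as your second sentence of (i) says.

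\textbf{The cubic step also fails as stated.} For $a<1/4$ one has $|\Phi_1^{u_1}|\gtrsim|\xi_1|(\xi_{11}^2+\xi_1^2)$. Making the intermediate quadratic phase $O(1)$ with $|\xi_{11}|\sim N$ therefore forces $|\xi_1|\lesssim N^{-2}$, which kills the derivative factor $\xi_1$. The count then gives only $s\ge -3/2$, whatever the widths. Shrinking $t$ so that all phases are $\ll 1/t$ gives the same.

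Bourgain's mechanism, used in the paper, is the opposite:
\begin{itemize}
\item Take $\hat v_0=\indicatrix_{[N,N+N^{-1/2}]}+\indicatrix_{[-N+\frac54 N^{-1/2},\,-N+\frac32 N^{-1/2}]}$.
\item Keep $\Phi_1^{u_1}$ and $\Phi^v$ \emph{large}, and require only the total phase $\Theta=-3(\xi_2+\xi_{11})(\xi_2+\xi_{12})(\xi_{11}+\xi_{12})$ to be $O(1)$.
\item The double time integral is then $\frac{1}{i\Phi_1^{u_1}}\int_0^t e^{it'\Theta}\,dt'$ plus a term of size $|\Phi_1^{u_1}\Phi^v|^{-1}$. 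So at \emph{fixed} $t$ the effective cubic coefficient is $\xi_1\xi_2/\Phi_1^{u_1}\sim N^{-1}$.
\item The three placements of $(N,N,-N)$ among $(\xi_{11},\xi_{12},\xi_2)$ are produced by the same data and cannot be separated. They contribute $-\frac{1}{(1-4a)N}$ and twice $\frac{1}{3N}$, and the sum vanishes exactly at $a=-1/8$.
\item Otherwise the count gives $N^{s-9/4}\lesssim N^{3s-3/4}$, i.e.\ $s\ge -3/4$.
\end{itemize}
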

        
        \begin{remark}\label{remark ill-posedness for a=-1/8}
            More precisely, for $a \in (-\infty, 1/4) \setminus \{0\}$, system \eqref{equation hirota system} is $C^2$-ill-posed in $H^k(\R) \times H^s(\R)$ whenever $s > k+3$ or $s < \min \{k/2 - 3/4, k-2, -1\}$. Consequently, for $a = -1/8$, the behavior of the system remains an open question in the interior of the region 
            \begin{equation}\label{missing region for a=-1/8}
                \left\{(k,s) \in \R^2 : k > -\frac{3}{4}, \, \min \left\{\frac{k}{2}-\frac{3}{4}, -1\right\} < s < -\frac{3}{4} \right\}.
            \end{equation}
        \end{remark}
        
        \begin{remark}
            In the case $a=-1/8$, we conjecture that $C^3$-ill-posedness also holds for $(k,s) \notin \overline{\A_{a}}$. The restriction $a \neq -1/8$ arises from the specific choice of counterexample and is likely a technical artifact. For further details, see the proof of Lemma \ref{lemma ill-posedness s<-3/4}.
        \end{remark}
        
        By exploiting the GWP framework established in \cite{correia2025sharplocalwellposednessschrodingerkortewegde}, we obtain:
        
        \begin{theorem}[GWP]\label{theorem GWP}
            Let $(k,s) \in \A_a$. If $a \in \R \setminus \{0,1,1/4\}$, $\gamma\theta < 0$, and $k,s \geq 0$, then system \eqref{equation hirota system} is globally well-posed in $H^k(\R) \times H^s(\R)$. Furthermore, system \eqref{system proposed by hirota-satsuma} with $a=1/4$ is globally well-posed in $H^k(\R) \times H^s(\R)$ provided that $k,s \geq 1$, $(k,s) \in \A_{1/4}$, $\gamma > 0$, and $\theta < 0$.
        \end{theorem}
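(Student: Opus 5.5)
The plan is to combine the local theory of Theorem \ref{theorem general LWP}, in its integrated-by-parts strong solution form (Theorem \ref{theorem improved LWP}), with a priori bounds from the conserved quantities \eqref{equation mass} and \eqref{equation energy}, and then propagate higher regularity through persistence. The key point is that, by the construction described after Theorem \ref{theorem general LWP}, the local existence time at $(k,s)\in\A_a$ depends only on the norm of the data at some lower regularity pair $(k_0,s_0)\in\A^0_a$. Global existence at $(k,s)$ therefore follows once the solution is globally controlled at a suitable low regularity.

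\emph{Step 1 ($L^2$ level).} Since $\gamma\theta<0$, the mass $M=\int \theta u^2-2\gamma v^2\,dx$ is, after multiplying by a sign, a positive combination $|\theta|\|u\|_{L^2}^2+2|\gamma|\|v\|_{L^2}^2$. Here I read the coefficients as real, as the sign hypothesis implicitly requires. Hence $\|(u,v)(t)\|_{L^2\times L^2}$ is bounded uniformly in time. For $a\neq 1/4$, the point $(0,0)$ lies in $\A_a$: for $a>1/4$ it is on the included boundary $k\ge0$, $s\ge k/2$, and for $a<1/4$ it is in the interior. Moreover, the LWP time at $(0,0)$ depends only on the $L^2$ norm. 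Iterating therefore gives a global solution in $L^2\times L^2$.

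\emph{Step 2 (persistence).} For general $(k,s)\in\A_a$ with $k,s\ge0$, I take the local solution in $H^k\times H^s$ and show that its norm cannot blow up in finite time. I would use the framework of \cite{correia2025sharplocalwellposednessschrodingerkortewegde}. The multilinear (FRE) estimates behind Theorem \ref{theorem improved LWP} are linear in the high-regularity norm, with constants depending only on a low norm. So on each interval of length $\delta=\delta(\|(u,v)\|_{L^2\times L^2})$, one gets
\begin{equation*}
\|(u,v)(t+\delta)\|_{H^k\times H^s}\le C\|(u,v)(t)\|_{H^k\times H^s}.
\end{equation*}
This yields at most exponential growth, hence a global solution. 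The main obstacle is exactly this: verifying that the bilinear/FRE estimates (including the integration-by-parts boundary terms used in the gray region) are tame. Concretely, each must be bounded by the high norm of one factor times the $L^2$ (or $\A^0_a$-level) norm of the others. This requires a path from $(0,0)$ to $(k,s)$ staying inside $\A_a$, which holds by convexity of $\A_a\cap\{k,s\ge0\}$. I would check tameness by redistributing derivatives to the highest frequency in each case of the FRE proofs.

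\emph{Step 3 ($a=1/4$).} Here $(0,0)\notin\A_{1/4}$, so $L^2$ control does not suffice. I would use the energy $E$ with $a=1/4$ for system \eqref{system proposed by hirota-satsuma}, where $\theta=-3<0$, $\gamma>0$ and $1-a=3/4>0$, so the quadratic part $\tfrac34\|u_x\|^2+\gamma\|v_x\|^2$ is coercive. The cubic terms are controlled by Gagliardo--Nirenberg, $\|u\|_{L^3}^3\lesssim\|u\|_{L^2}^{5/2}\|u_x\|_{L^2}^{1/2}$, together with the mass bound. This gives a uniform $H^1\times H^1$ bound, first for smooth data by the conservation laws and then by continuous dependence. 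Since $(1,1)\in\A_{1/4}$ (indeed $1\ge 1/2+3/8$), global existence at $(1,1)$ follows as in Step 1. The general case $k,s\ge1$, $(k,s)\in\A_{1/4}$ then follows by the persistence argument of Step 2, with the low norm now $H^1\times H^1$.
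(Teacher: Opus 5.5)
Your proposal is correct and follows essentially the paper's route: mass conservation (definite since $\gamma\theta<0$) gives global well-posedness at $(0,0)\in\A_a^0$, or mass and energy at $(1,1)$ when $a=1/4$, and persistence of regularity transfers this to $(k,s)$; the only difference in framing is that the paper argues by contradiction with the blow-up alternative, where you use an exponential-growth bound. One point to sharpen is where the bounds that are linear in the higher norm come from: they are the bilinear estimates at mixed index pairs (low $u$-index with high $v$-index and vice versa, both in $\A_a^0$, taken along a chain of nearby levels, plus the gain-of-regularity step in $\A_a\setminus\A_a^0$), not from placing one factor at $L^2$, which fails for $\partial_x(v^2)$ in the resonant region when $a>1/4$ and $k>s$.
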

        
        The proof is based on the multilinear estimates established in Subsections \ref{section bilinear estimates} and \ref{section multilinear estimates} to deduce persistence of regularity in $\A_a$. Moreover, the conservation law \eqref{equation mass} ensures GWP in $L^2\times L^2$ (for $a\neq 1/4$), allowing us to conclude GWP for $k,s \geq 0$ and $(k,s) \in \A_a$ via the persistence property. For $a=1/4$, global well-posedness in $H^1 \times H^1$ follows from the conservation laws \eqref{equation mass}--\eqref{equation energy}, and the result is obtained analogously.
        
        \subsection{Overview of the method}

            As previously stated, we first establish LWP for $(k,s) \in \A^0_a$. We apply a Picard iteration scheme to the Duhamel formulation of system \eqref{equation hirota system} via a contraction mapping argument in suitable Fourier restriction spaces. To derive the required multilinear estimates, we use the method of frequency-restricted estimates (FREs) introduced in \cite{correia2023sharp}. For initial data $(u_0,v_0) \in H^k(\R) \times H^s(\R)$, the linear solutions to \eqref{equation hirota system} are denoted by
            \[e^{-at\partial_x^3}u_0 = \F_x^{-1}(e^{iat\xi^3}\F_xu_0) \hspace{2mm}\text{ and } \hspace{2mm} e^{-t\partial_x^3}v_0 = \F_x^{-1}(e^{it\xi^3}\F_xv_0), \]
            where $\F_*$ denotes the Fourier transform with respect to the variable $*$. For $k,s, b \in \R$, the Fourier restriction spaces (or Bourgain spaces, cf. \cite{Bourgain1993, Bourgain1993-part2}) are defined as the closure of the Schwartz space $\mathscr{S} (\R\times\R )$ equipped with the norms
            $$\|u\|_{X^{k,b}} \coloneqq \left( \int_{\R^2} \langle \tau - a\xi^3 \rangle^{2b} \langle \xi \rangle^{2k}|\F_{(x,t)} u(\tau, \xi)|^2 \, d\tau \, d\xi \right)^{1/2},$$
            $$\|v\|_{Y^{s,b}} \coloneqq \left( \int_{\R^2} \langle \tau - \xi^3 \rangle^{2b} \langle \xi\rangle^{2s}|\F_{(x,t)} v(\tau, \xi)|^2 \, d\tau \, d\xi \right)^{1/2}. $$ 
            
            Within this framework, we develop the LWP theory in $\A_0$ adopting the classical notion of strong solutions, proving that the fixed point \begin{equation}\label{equation classical u}
                u(t) = \eta(t)\left(e^{-at\partial_x^3}u_0 +  \int_0^t \eta_T(t') e^{-a(t-t')\partial_x^3} \partial_x(u^2+ v^2) \,dt'\right),
            \end{equation}
            \begin{equation}\label{equation classical v}
                v(t) = \eta(t)\left(e^{-t\partial_x^3}v_0+  \int_0^t \eta_T(t') e^{-(t-t')\partial_x^3} (u v_x) \,dt'\right),
            \end{equation}
            holds for $(u,v) \in X^{k,b} \times Y^{s,b}$. Here, $\eta \in C_c^\infty(\R)$ is a cutoff function satisfying $\eta \equiv 1$ on $[-1,1]$, and we set $\eta_T(t) =\eta(t/T)$.
                    
            As detailed in Subsection \ref{preliminaries}, proving the estimates requires analyzing functions of the form:
            \begin{equation}\label{equation phi^u}
                \Phi^u_1 (\xi,\xi_1,\xi_2)\coloneqq -a\xi^3 + \xi_1^3 + \xi_2^3,
            \end{equation}
            \begin{equation}\label{equation phi^v}
                 \Phi^v(\xi,\xi_1,\xi_2) \coloneqq -\xi^3 + a\xi_1^3 + \xi_2^3,
            \end{equation} 
            where $\xi = \xi_1 + \xi_2$. The functions \eqref{equation phi^u} and \eqref{equation phi^v} are referred to as \emph{total phases} and are associated with the coupling terms $\partial_x(v^2)$ and $uv_x$ in \eqref{equation hirota system}, respectively. We now examine the properties of $\Phi_1^u$; analogous results hold for $\Phi^v$ due to the relation
            \begin{equation}\label{equation relation phi^v and phi^u}
                \Phi^v (\xi,\xi_1,\xi_2) = \Phi_1^u (-\xi_1,-\xi,\xi_2).
            \end{equation}
            The phase $\Phi_1^u$ admits the factorization
            \begin{equation}\label{equation factorization phi^u}
                \Phi_1^u = \xi(\xi_1^2 - \xi_1\xi_2 + \xi_2^2 - a\xi^2) = \xi(3\xi_1^2 - 3\xi\xi_1 + (1-a)\xi^2).
            \end{equation}
            The quadratic factor $3\xi_1^2 - 3\xi\xi_1 + (1-a)\xi^2$ does not vanish for $a < 1/4$. Specifically, in this regime, we have the lower bound\footnote{Given $f,g \ge 0$, the notation $f \lesssim g$ denotes the existence of a universal constant $C>0$ such that $f \le C g$, whereas $f \sim g$ signifies that $f \lesssim g \lesssim f$. We write $f \ll g$ if $f \le \eta g$ for a sufficiently small $\eta > 0$. Finally, the relation $f \simeq g$ means that $|f - g| \ll \max\{|f|, |g|\}$.} 
            $$|3\xi_1^2 - 3\xi\xi_1 + (1-a)\xi^2| \gtrsim |\xi|^2,$$ 
            which implies
            \begin{equation}\label{inequality phase a<1/4}
                |\Phi^u_1| \gtrsim |\xi|^3.
            \end{equation}
            If $a \geq 1/4$, the total phase factorizes as
            \begin{equation}\label{equation factorization phi^u a >=1/4}
                \Phi^u_1 = 3\xi(\xi_1 - \mu_a\xi)(\xi_1 - (1-\mu_a)\xi),
            \end{equation}
            where $\mu_a = \frac{1}{2} + \frac{\sqrt{3(4a-1)}}{6}$. Note that $\Phi_1^u$ vanishes within the region $|\xi| \sim |\xi_1|\sim|\xi_2|$, since $\xi_1 \simeq \mu_a\xi$ implies $|\xi_2| \simeq |(1-\mu_a)\xi| \gtrsim |\xi|$ for $a \neq 1$. Moreover, the threshold case $a=1/4$ yields a double root:
            \[ \Phi_1^u = \xi\left(\xi_1 - \frac{\xi}{2}\right)^2. \]
            Since the existence of the analytic flow is sensitive to the structure of the resonance sets (i.e., where $\Phi_1^u$ or $\Phi^v$ vanish), the regularity regions for local well-posedness and ill-posedness depend significantly on the value of $a$. This heuristic underlies the three distinct parameter regimes observed in Theorems \ref{theorem general LWP} and \ref{theorem ill-posedness}.

        \vspace{2mm}

        In Section \ref{section LWP in A_a^0}, we establish sharp bilinear estimates for the coupling terms. Combined with the known estimate for the KdV equation, LWP for $(k,s)\in \A^0_a$ follows via a standard fixed-point argument.   

        While these bilinear estimates are sharp, they do not necessarily determine the optimal LWP region; such estimates may fail for certain $k$ and $s$ even if the continuous flow exists at that regularity level. Indeed, such a failure may merely reflect a limitation of the chosen functional space for the solution. In fact, in Sections \ref{section normal form} and \ref{section well posedness in A_a}, we extend LWP to the larger region $\A_a$. Although we obtain a solution $(u,v) \in C_tH^k_x \times C_tH^s_x$, we do not construct it directly in $X^{k,b} \times Y^{s,b}$. The underlying strategy is as follows: if an obstruction to the bilinear estimate for $\partial_x(v^2)$ arises where $|\Phi_1^u|$ is large, we integrate the coupling term in \eqref{equation classical u} by parts in time to mitigate this issue. This approach yields sufficient decay to conclude LWP for $(k,s) \in \A_a$, with an analogous procedure applied to $uv_x$. 
        
        This technique of employing integration by parts in time to enhance the properties of nonlinear dispersive equations traces back to the work of Shatah \cite{shatah}, who derived energy estimates to prove global existence for a class of nonlinear Klein-Gordon equations with sufficiently small initial data. Since then, this approach---often referred to as a normal form reduction---has been applied in various contexts, including local well-posedness \cite{correia2025sharplocalwellposednessschrodingerkortewegde,oh}, global well-posedness \cite{babin}, global existence \cite{germain}, unconditional uniqueness \cite{CHUNG20171273,Guo2013,kishimoto2021unconditionaluniquenesssolutionsnonlinear, kwon}, and blow-up stability \cite{correia2024sharpblowupstabilityselfsimilar}.

        \vspace{2mm}
        
        The paper is organized as follows: Section \ref{preliminaries} lists the necessary frequency-restricted estimates and auxiliary results. Section \ref{section LWP in A_a^0} proves the sharp bilinear estimates for the coupling terms in \eqref{equation hirota system} and establishes LWP in $\A_a^0$. In Section \ref{section normal form}, we derive and define the notion of an integrated-by-parts strong solution. Section \ref{section well posedness in A_a} is devoted to showing the required multilinear estimates and extending the LWP region to $\A_a$.  Finally, Section \ref{section ill-posedness} provides the ill-posedness results, proving the sharpness of the LWP thresholds.

        \vspace{2mm}
        
        \noindent \textit{Further notation}. Let $c \in \mathbb{R}$ be a constant. The notation $c^+$ (resp. $c^-$) in the hypotheses of a result denotes the existence of some $\epsilon_0 > 0$ such that the statement holds for every $c' = c + \epsilon$ (resp. $c' = c- \epsilon$) with $\epsilon \in (0, \epsilon_0)$. When considering frequencies $\xi, \xi_1, \xi_2, \dots$, we consistently assume the convolution relation $\xi = \xi_1 + \dots + \xi_n$.
        \section{Preliminaries}\label{preliminaries}
        
        In this section, we reduce the proof of multilinear estimates in Fourier restriction norm spaces to establishing frequency-restricted estimates (FREs). We conclude by providing several auxiliary tools for proving these FREs.
        
        In system \eqref{equation hirota system}, the nonlinearities possess a multilinear structure well-suited for frequency space analysis. Given a multiplier $m: \R^{n+1} \rightarrow \C$, we consider an $n$-linear form $\Nl[g_1, \dots, g_n]$ satisfying

        \begin{equation}\label{equation form of the nonlinearity}
            \F_x \Nl(\xi) = \int_{\xi = \xi_1 + \dots + \xi_n} m(\xi, \xi_1, \dots, \xi_n) \F_x g_1(\xi_1) \dots \F_x g_n(\xi_n) \, d\xi_1 \dots d\xi_{n-1}.
        \end{equation}

       Let $J_u \subset\{ \emptyset,1, \cdots, n \}$ be the set of indices associated with the $u$-factors and $J_v =\{ \emptyset,1, \cdots, n \} \setminus J_u$ the set of indices associated with the $v$-factors. Accordingly, we define the phase $\phi_j$ corresponding to each equation in \eqref{equation hirota system} as 
        $$\phi_j(\xi_j) \coloneqq   \begin{cases}
            a\xi_j^3, & \text{if } j \in J_u, \\
            \xi_j^3, & \text{if } j \in J_v.
        \end{cases}$$
        
        Define the profiles $\tilde{g}_j = e^{-it\phi_j(\xi_j)} \F_x g_j$ and the total phase
        \begin{equation}\label{equation total phase}
            \Phi = -\phi(\xi) + \sum_{j=1}^{n} \phi_j(\xi_j).
        \end{equation}
        
        Furthermore, let 
        $$Z_j \coloneqq \begin{cases}
            X, & \text{if } j \in J_u, \\
            Y, & \text{if } j \in J_v.
        \end{cases} $$
        
        To establish LWP for \eqref{equation hirota system} via a fixed-point argument, the primary challenge lies in proving the multilinear estimates
        \begin{equation}\label{equation multilinear estimate}
            \|\Nl[g_1, \dots, g_n]\|_{Z^{s,b'}} \lesssim \prod_{j=1}^{n} \|g_j\|_{Z_j^{s_j,b}},
        \end{equation}
        where $s, s_j \in \R$, $b = (1/2)^+$, and $b' = (b-1)^+$. The following lemmas provide sufficient conditions for \eqref{equation multilinear estimate} to hold.
        
        \begin{lemma}[{\cite[Lemma 3.1]{correia2025sharplocalwellposednessschrodingerkortewegde}}]\label{lemma CHS}
            Let $n=2$. Suppose that for every $M > 1$ and $\alpha \in \R$,
            \begin{equation}
                 \sup_{\xi} \int_{\xi = \xi_1+ \xi_2} \frac{|m|^2 \langle \xi \rangle^{2s}}{ \langle \xi_1 \rangle^{2s_1}\langle \xi_2 \rangle^{2s_2}} \mathbbm{1}_{|\Phi - \alpha| < M} \, d \xi_1 \lesssim \langle \alpha \rangle^{1^-} M.
            \end{equation}
            Then the estimate \eqref{equation multilinear estimate} holds.
        \end{lemma}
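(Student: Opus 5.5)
The plan is a duality argument followed by a single Cauchy--Schwarz step in which the time--frequency variables of the two inputs are integrated out. The only nontrivial ingredient is the hypothesis of Lemma \ref{lemma CHS}, applied with $\alpha$ equal to the output modulation.

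\textbf{Duality and reduction to a weighted $L^2$ bound.} Write $\sigma=\tau-\phi(\xi)$ and $\sigma_j=\tau_j-\phi_j(\xi_j)$ for $j=1,2$, and set
\[
f_j(\tau_j,\xi_j)=\langle\xi_j\rangle^{s_j}\langle\sigma_j\rangle^{b}|\F_{(x,t)}g_j|, \qquad j=1,2 .
\]
By duality, \eqref{equation multilinear estimate} is equivalent to bounding
\[
I=\int_{\substack{\xi=\xi_1+\xi_2\\ \tau=\tau_1+\tau_2}} \frac{|m|\langle\xi\rangle^{s}\langle\sigma\rangle^{b'}\, f(\tau,\xi)\,f_1(\tau_1,\xi_1)\,f_2(\tau_2,\xi_2)}{\langle\xi_1\rangle^{s_1}\langle\xi_2\rangle^{s_2}\langle\sigma_1\rangle^{b}\langle\sigma_2\rangle^{b}}
\]
by $\|f\|_{L^2}\|f_1\|_{L^2}\|f_2\|_{L^2}$. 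The key algebraic identity comes from the convolution constraints $\tau=\tau_1+\tau_2$ and $\xi=\xi_1+\xi_2$ together with the definition \eqref{equation total phase}:
\[
\sigma_1+\sigma_2=\sigma+\Phi(\xi,\xi_1,\xi_2).
\]

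\textbf{Cauchy--Schwarz with the output variables fixed.} Fix $(\tau,\xi)$ and apply Cauchy--Schwarz in $(\tau_1,\xi_1)$. Since $\int\!\!\int f_1(\tau_1,\xi_1)^2f_2(\tau-\tau_1,\xi-\xi_1)^2\,d\tau_1 d\xi_1$ integrates to $\|f_1\|_{L^2}^2\|f_2\|_{L^2}^2$ over $(\tau,\xi)$, a further Cauchy--Schwarz against $f$ shows it suffices to prove
\[
\sup_{\tau,\xi}\ \langle\sigma\rangle^{2b'}\int \frac{|m|^2\langle\xi\rangle^{2s}}{\langle\xi_1\rangle^{2s_1}\langle\xi_2\rangle^{2s_2}}\left(\int\frac{d\tau_1}{\langle\sigma_1\rangle^{2b}\langle\sigma_2\rangle^{2b}}\right)d\xi_1\lesssim 1 .
\]
Because $b>1/2$, the standard convolution estimate gives
\[
\int\langle\sigma_1\rangle^{-2b}\langle\sigma_2\rangle^{-2b}\,d\tau_1\lesssim\langle\sigma_1+\sigma_2\rangle^{-2b}=\langle\Phi-(-\sigma)\rangle^{-2b}.
\]
We therefore set $\alpha=-\sigma$; only $\langle\alpha\rangle=\langle\sigma\rangle$ matters below.

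\textbf{Dyadic decomposition in $|\Phi-\alpha|$.} Split the $\xi_1$-integral into the regions $|\Phi-\alpha|<1$ and $|\Phi-\alpha|\sim M$ for dyadic $M>1$. On each piece the weight $\langle\Phi-\alpha\rangle^{-2b}$ is at most $\sim M^{-2b}$, and the remaining integral is controlled by the hypothesis of Lemma \ref{lemma CHS} (with $M$ replaced by $1$, resp. by $2M$). Summing over $M$ yields the bound
\[
\langle\sigma\rangle^{1^-}\sum_{M}M^{1-2b}\lesssim\langle\sigma\rangle^{1^-},
\]
where the geometric series converges precisely because $b>1/2$.

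\textbf{The exponent bookkeeping, which is the main point of care.} Multiplying by $\langle\sigma\rangle^{2b'}$, we need $2b'+(1-\epsilon)\le0$. Here $1-\epsilon$ denotes the exponent $1^-$ from the hypothesis, for some fixed $\epsilon>0$. Writing $b=1/2+\delta$ and $b'=b-1+\delta'=-1/2+\delta+\delta'$, the requirement becomes $2(\delta+\delta')\le\epsilon$. This is arranged by choosing $\delta$ and $\delta'$ small relative to $\epsilon$, which is exactly the freedom built into the conventions $b=(1/2)^+$ and $b'=(b-1)^+$. With this choice the supremum is bounded, and the estimate \eqref{equation multilinear estimate} follows.
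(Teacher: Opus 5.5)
Your argument is correct and is essentially the standard proof behind the cited Lemma 3.1 (the paper itself gives no proof beyond the citation). It proceeds by duality, Cauchy--Schwarz with $(\tau,\xi)$ frozen, the convolution bound in $\tau_1$ using $b>1/2$, a dyadic decomposition in $|\Phi-\alpha|$, and the choice of $b,b'$ close to $1/2$ and $b-1$ relative to the loss in $\langle\alpha\rangle^{1^-}$.

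There are two harmless slips. With the paper's convention $\Phi=-\phi(\xi)+\phi_1(\xi_1)+\phi_2(\xi_2)$ one has $\sigma_1+\sigma_2=\sigma-\Phi$, so $\alpha=\sigma$ rather than $-\sigma$. Also, the region $|\Phi-\alpha|<1$ should be handled by invoking the hypothesis with, say, $M=2$, since the hypothesis is only assumed for $M>1$.
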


        \begin{remark}
            This lemma remains valid if the supremum is taken over any $\xi_j$ and the integration is performed with respect to another frequency.
        \end{remark}
        
        Following an argument similar to the proof of \cite[Lemma 3.1]{correia2025sharplocalwellposednessschrodingerkortewegde}, we obtain the following result:\footnote{ Note that, in \cite{correia2025sharplocalwellposednessschrodingerkortewegde}, the estimate corresponding to \eqref{inequality CHS alpha xi_j} features $ \langle \alpha \rangle^{1^-} $ instead of $ \langle \alpha \rangle $. However, a closer inspection of the proof reveals that \eqref{inequality CHS alpha xi_j} may be adopted as a hypothesis, provided that $b>1/2$.} 
        
        \begin{lemma}\label{lemma CHS phase comparable to alpha}
            Suppose that, for all $\alpha, M \in \R$ with $|\alpha| \gtrsim M > 1$, we have
            \begin{equation}\label{inequality CHS alpha xi}
                \sup_{\xi} \int \frac{|m|^2 \langle \xi \rangle^{2s}}{ \prod_{j=1}^n \langle \xi_j \rangle^{2s_j}} \mathbbm{1}_{|\Phi - \alpha| < M} \, d \xi_1 \dots d \xi_{n-1} \lesssim \langle \alpha \rangle^{1^-} M,
            \end{equation}          
            and
            \begin{equation}\label{inequality CHS alpha xi_j}
                \sup_{l=1, \dots, n} \left( \sup_{\xi_l} \int \frac{|m|^2 \langle \xi \rangle^{2s}}{ \prod_{j=1}^n \langle \xi_j \rangle^{2s_j}} \mathbbm{1}_{|\Phi - \alpha| < M} \, d \eta_1 \dots d \eta_{n-1} \right) \lesssim \langle \alpha \rangle M,
            \end{equation}
            where $\{\eta_1, \dots, \eta_{n-1}\} = \{\xi_1, \dots, \xi_n\} \setminus \{\xi_l\}$. Then the estimate \eqref{equation multilinear estimate} holds.
        \end{lemma}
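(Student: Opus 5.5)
The plan is to repeat the duality argument behind \cite[Lemma 3.1]{correia2025sharplocalwellposednessschrodingerkortewegde} and track where the weaker hypothesis \eqref{inequality CHS alpha xi_j} enters. Set $f_j(\tau_j,\xi_j)=\langle\xi_j\rangle^{s_j}\langle\tau_j-\phi_j(\xi_j)\rangle^{b}|\F g_j|$, so that $\|f_j\|_{L^2}=\|g_j\|_{Z_j^{s_j,b}}$, and let $h\in L^2$ with $\|h\|_{L^2}=1$ be a dual function. The estimate \eqref{equation multilinear estimate} then reduces to bounding
\[
I=\int \frac{|m|\langle\xi\rangle^{s}}{\prod_j\langle\xi_j\rangle^{s_j}}\,\frac{h(\tau,\xi)}{\langle\sigma\rangle^{-b'}}\prod_{j=1}^n\frac{f_j(\tau_j,\xi_j)}{\langle\sigma_j\rangle^{b}},\qquad \sigma=\tau-\phi(\xi),\ \sigma_j=\tau_j-\phi_j(\xi_j).
\]
Since $\tau=\sum_j\tau_j$, the key algebraic identity is $\Phi=\sigma-\sum_j\sigma_j$. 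I would decompose dyadically $|\sigma|\sim L$ and $|\sigma_j|\sim L_j$, and write $L_{\max}$ for the largest of these scales.

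On each dyadic piece, the modulation variables lie in boxes of size $L$ and $L_j$. Hence, for the centre $\alpha$ of the relevant $\sigma$-box, $\Phi$ is confined to $|\Phi-\alpha|\lesssim M$ with $M\sim L_{\max}$. If $|\alpha|\gtrsim M$, both hypotheses apply directly. If $|\alpha|\ll M$, I would enlarge the window: the set $\{|\Phi-\alpha|<M\}$ is contained in $\{|\Phi-\alpha'|<M'\}$ with $\alpha'\sim M'\sim M$, so the hypotheses still apply, at the cost of replacing $\langle\alpha\rangle$ by $\langle M\rangle\sim L_{\max}$. I then split into cases according to which modulation attains $L_{\max}$.

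\emph{Case 1: $L=L_{\max}$.} Apply Cauchy--Schwarz in $(\xi_1,\dots,\xi_{n-1},\sigma_1,\dots,\sigma_n)$ for fixed $(\tau,\xi)$, and integrate out the $\sigma_j$ using $b>1/2$. This leaves $\sup_\xi$ of the integral in \eqref{inequality CHS alpha xi}, which gives a factor $(\langle\alpha\rangle^{1^-}M)^{1/2}\lesssim L^{1^-}$. This is absorbed by $\langle\sigma\rangle^{b'}\sim L^{b'}$, since $b'=(b-1)^+$ is close to $-1/2$, with a small power left over to sum the dyadic scales; this is exactly where the exponent $1^-$ is needed.

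\emph{Case 2: $L_l=L_{\max}$ for some $l$.} Here I would instead fix $(\tau_l,\xi_l)$ and apply Cauchy--Schwarz in the remaining frequencies $\eta_1,\dots,\eta_{n-1}$ and modulations, which brings in \eqref{inequality CHS alpha xi_j}. This yields a factor $(\langle\alpha\rangle M)^{1/2}\lesssim L_l$, now without the $1^-$ gain. It is compensated by the weight $\langle\sigma_l\rangle^{-b}$ together with the additional decay $\langle\sigma\rangle^{b'}$ coming from the output, or from a second modulation when $L$ is small. The surplus $L_l^{1/2-b}$, with $b>1/2$, gives the $\epsilon$-room to sum over the dyadic scales.

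I expect Case 2 to be the main obstacle, because the hypothesis only provides $\langle\alpha\rangle$ rather than $\langle\alpha\rangle^{1^-}$. The bookkeeping must show that the exponent pair $b=(1/2)^+$, $b'=(b-1)^+$ leaves a strictly positive power of $L_{\max}$ in reserve, and I have not checked that this works out; this is the ``closer inspection'' mentioned in the footnote. The enlarged-window reduction for $|\alpha|\ll M$ also needs a check that no extra logarithmic loss appears when summing over the dyadic $L$.
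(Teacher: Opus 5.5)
Your framework (duality, dyadic modulation blocks, Cauchy--Schwarz with the variable of largest modulation frozen) is the argument the paper defers to. However, Case 2 fails at exactly the point you flagged, and the failure is not a bookkeeping issue.

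Write $b=\tfrac12+\delta$ and $b'=b-1+\delta'$ with $\delta,\delta'>0$. Take $n=2$, $l=1$, and blocks $|\sigma|\sim L$, $|\sigma_1|\sim L_1=L_{\max}$, $|\sigma_2|\sim L_2$. Freezing $(\tau_1,\xi_1)$:
- the $\tau$-integration has measure $\lesssim\min(L,L_2)$;
- the window is $|\Phi+\sigma_1|\lesssim\max(L,L_2)$ around $\alpha=-\sigma_1$;
- \eqref{inequality CHS alpha xi_j} contributes $L_1\max(L,L_2)$.

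So the squared block constant is
\[
L^{2b'}L_1^{-2b}L_2^{-2b}\cdot L\,L_2\cdot L_1=L^{2\delta+2\delta'}L_1^{-2\delta}L_2^{-2\delta}.
\]
The output weight is not additional decay. Integrating $\langle\sigma\rangle^{2b'}$ over a block of length $L$ produces $L^{2b'+1}$, and $2b'+1>0$. For $L\sim L_1$ and $L_2\sim1$ the constant is $L_1^{2\delta'}$. The surplus $L_1^{1/2-b}$ is entirely consumed, and what remains is a loss that is unbounded, let alone summable.

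This loss is intrinsic to the input-frozen Cauchy--Schwarz. Consider a block where \eqref{inequality CHS alpha xi_j} is saturated uniformly in $\xi_1$, with $|\sigma|\sim|\sigma_1|\sim L_1$ and $|\sigma_2|\lesssim1$. Box test functions there lose exactly $L_1^{b'-b+1}$, and only \eqref{inequality CHS alpha xi} forbids that configuration. With $\langle\alpha\rangle$ in place of $\langle\alpha\rangle^{1^-}$, your Case 2 closes only at the endpoint $b'=b-1$. There, $b>1/2$ makes $\sum_{L\le L_1}(L/L_1)^{b-1/2}$ summable.

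Your Case 1 count is also off as written: $L^{1^-}$ is not absorbed by $L^{b'}\approx L^{-1/2}$. You must keep the factor $M^{-2b}$ from integrating out the $\sigma_j$. With $1^-=1-\epsilon$ this gives $\langle\sigma\rangle^{2b'}\sum_{M\lesssim L}M^{-2b}\langle\sigma\rangle^{1-\epsilon}M\lesssim L^{2b'+1-\epsilon}$, which is fine provided $2\delta+2\delta'<\epsilon$.

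The missing step is a further split of Case 2 according to the output modulation. Fix $\kappa\in(0,1)$.
- If $L\le L_1^{1-\kappa}$, the bound above is $\lesssim L_1^{2\delta'-2\kappa(\delta+\delta')}L_2^{-2\delta}$. This is summable once $\kappa(\delta+\delta')>\delta'$.
- If $L_1^{1-\kappa}<L\le L_1$, freeze $(\tau,\xi)$ as in Case 1 and use \eqref{inequality CHS alpha xi}. Now the window $|\Phi-\sigma|\lesssim L_1$ may be much larger than $|\sigma|\sim L$. This is where your window enlargement becomes essential: it yields $\lesssim L_1^{2-\epsilon}$, hence a squared block constant $\lesssim L^{2b'}L_1^{2-2b-\epsilon}L_2^{1-2b}\le L_1^{\kappa(1-2\delta-2\delta')+2\delta'-\epsilon}$.

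With, for example, $\kappa=2\delta'/\delta$, both exponents are negative as soon as $\delta'$ is small compared with $\delta\epsilon$. This is allowed, since $b'=(b-1)^+$ may be taken as close to $b-1$ as desired. So the lemma holds, but its proof must use the $1^-$ gain of \eqref{inequality CHS alpha xi} also in part of the region where an input modulation is the largest. A split purely by largest modulation is too coarse.
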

        
        In the subsequent lemma, we denote $\Xi \coloneqq \max_{1 \leq j \leq n} \langle \xi_j \rangle$.
        
        \begin{lemma}[{\cite[Lemma 3.5]{correia2025sharplocalwellposednessschrodingerkortewegde}}]\label{lemma Schur's test with weight}
            Let $A$ be a non-empty proper subset of $\{\emptyset, 1, \dots, n\}$ and let $\M_1, \M_2: \R^{n+1} \rightarrow \R^+$. Suppose that
            $$ \M_1 \M_2 = \frac{|m|^2 \langle \xi \rangle^{2s}}{\prod_{j=1}^n \langle \xi_j \rangle^{2s_j}} \quad \text{and} \quad |m| \lesssim \Xi^N,$$
            for some $N \in \mathbb{N}$. Then, if
            $$\sup_{\xi_{j \in A}, \alpha} \int \M_1 \Xi^{0^+} \mathbbm{1}_{|\Phi - \alpha| < M} \, d\xi_{j \notin A} + \sup_{\xi_{j \notin A}, \alpha} \int \M_2 \Xi^{0^+} \mathbbm{1}_{|\Phi - \alpha| < M} \, d\xi_{j \in A} \lesssim M,$$
            the estimate \eqref{equation multilinear estimate} holds.
        \end{lemma}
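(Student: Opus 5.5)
By duality and Plancherel, the estimate \eqref{equation multilinear estimate} reduces to bounding the multilinear form
\[
I=\int_{\substack{\xi=\sum\xi_j\\ \tau=\sum\tau_j}} \frac{|m|\langle\xi\rangle^{s}\langle\sigma\rangle^{b'}}{\prod_{j}\langle\xi_j\rangle^{s_j}\langle\sigma_j\rangle^{b}}\, f(\tau,\xi)\prod_{j=1}^n f_j(\tau_j,\xi_j)
\]
by $\|f\|_{L^2}\prod_j\|f_j\|_{L^2}$. Here $\sigma=\tau-\phi(\xi)$, $\sigma_j=\tau_j-\phi_j(\xi_j)$, and the $f$'s are nonnegative $L^2$ functions. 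The key algebraic identity is $\sum_j\sigma_j-\sigma=-\Phi$ on the hyperplane. Hence once all modulations are localized, the resonance function is pinned to within the size of the largest modulation. Writing index $\emptyset$ for the output $(\tau,\xi)$, we have $n+1$ slots in total.

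The first step is to decompose dyadically $\langle\sigma\rangle\sim L$, $\langle\sigma_j\rangle\sim L_j$, and likewise $\langle\xi_j\rangle\sim N_j$, so that $\Xi\sim\max N_j$. Set $M\sim\max\{L,L_j\}$. We may treat $b'$ as a given number slightly larger than $b-1<0$ (the paper's convention), so $L^{b'}\le1$. In each block, we split the $n+1$ slots into $A$ and its complement. We regard $I_{\text{block}}$ as a bilinear pairing between $G=\prod_{j\in A}f_j$ (as a function of $(\tau_j,\xi_j)_{j\in A}$) and $H=\prod_{j\notin A}f_j$. We write the kernel as $\sqrt{\M_1}\sqrt{\M_2}$ and apply the weighted Schur test (Cauchy--Schwarz):
\[
\int\sqrt{\M_1\M_2}\,GH\le\Big(\sup_{A\text{-vars}}\int\M_1\Big)^{1/2}\Big(\sup_{A^c\text{-vars}}\int\M_2\Big)^{1/2}\|G\|\|H\|.
\]
When the $A$-variables are fixed, the remaining $\tau$-integrations run over intervals of length $\lesssim L_j$. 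The residual $\tau$-constraint forces $\Phi$ into an interval of length $\lesssim M$ around some $\alpha$ determined by the fixed variables. The $\xi$-integral is then exactly $\int\M_1\mathbbm 1_{|\Phi-\alpha|<M}$, and symmetrically for $\M_2$.

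The hypothesis therefore bounds each factor by $M\,\Xi^{-0^+}$ times the $\tau$-measures. Combining the powers of $L$, $L_j$ gives roughly
\[
M^{1/2}\Big(\prod_{j}L_j\Big)^{1/2}\cdot L^{b'}\prod_j L_j^{-b}\cdot(\text{measure factors}).
\]
This yields a negative power of all the modulations except for a possible $\max L^{0^+}$ loss. The excess $b-\tfrac12>0$ and the gap $b'-(b-1)>0$ are used to make the dyadic sums in $L,L_j$ converge.

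The main obstacle is the bookkeeping of the dyadic sums. The sums in $N_j$ are absorbed by the $\Xi^{0^+}$ built into the hypothesis, since the hypothesis grants the bound with the extra $\Xi^{0^+}$. The sums in modulation are harmless when $M\lesssim\Xi^{C}$, because then $\log M\lesssim\Xi^{0^+}$. In the regime $M\gg\Xi^{3}$ we have $|\Phi|\lesssim\Xi^3\ll M$, so the two largest modulations are comparable and both $\gg\Xi^3$. There I would discard the resonance information entirely. I would bound the kernel crudely using $|m|\lesssim\Xi^N$ and the polynomial bound on the weights, and use a small power of the two large modulations, coming from $b>1/2$ and $b'>b-1$, to beat $\Xi^{N+C}$. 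I expect this high-modulation tail, and choosing the $\epsilon$'s in $b,b'$ consistently with the $0^+$ losses, to be the delicate part.
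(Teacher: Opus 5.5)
Your architecture (duality, modulation localization, a Cauchy--Schwarz/Schur split along $A$ and $A^c$ fed by the hypothesis, $\Xi^{0^+}$ absorbing low-modulation losses, and a crude polynomial argument above a threshold) is the standard one. The paper gives no proof of its own here; it only quotes the lemma from the Schr\"odinger--KdV paper. However, the high-modulation step as you describe it does not close.

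Carry out your block Schur step precisely. On the $A^c$ side, use the hypothesis with window $\max_{j\in A^c}L_j$ times the $\tau$-measure $\prod_{j\in A^c}L_j/\max_{j\in A^c}L_j$, and do the same on the $A$ side. Then every block is bounded by $L^{b'+\frac12}\prod_{j\geq 1}L_j^{\frac12-b}\,\Xi^{-\theta}$, where $L\sim\langle\sigma\rangle$ and $\theta>0$ is the exponent hidden in $\Xi^{0^+}$. This holds for every choice of $A$. Now take the configuration where $\sigma$ and one input modulation, say $\sigma_1$, are the two comparable large ones, $L\sim L_1\gg\Xi^{C}$. The block bound becomes $L^{b'-(b-1)}\,\Xi^{-\theta}$, which grows in $L$. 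The output loss $L^{b'+\frac12}=L^{(b-\frac12)+(b'-(b-1))}$ cancels the input gain $L_1^{\frac12-b}$ and leaves $L^{b'-(b-1)}$. So the excess $b'-(b-1)>0$ is a loss, not the source of decay you claim. In this configuration there is no small power of the two large modulations to trade against $\Xi^{N+C}$, and the dyadic sum in $L$ diverges. A genuine small gain exists only when two input modulations dominate.

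The missing idea is to put the output on the fixed side in the high region, as in the proof of Lemma~\ref{lemma CHS}. Fix $(\tau,\xi)$, apply Cauchy--Schwarz in the input variables, and integrate out the free $\tau_j$'s. This produces $\langle\sigma\rangle^{2b'}\langle\sigma-\Phi\rangle^{-2b}$ times the squared spatial weight, which is polynomially bounded in $\Xi$ thanks to $|m|\lesssim\Xi^N$.

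\begin{itemize}
\item If $|\sigma|\geq\Xi^{C}$ with $C>3$, then $|\Phi|\lesssim\Xi^3\ll|\sigma|$, so $\langle\sigma-\Phi\rangle\sim\langle\sigma\rangle$. You gain the full power $\langle\sigma\rangle^{2b'-2b}\approx\langle\sigma\rangle^{-2}$.
\item If instead some input modulation $|\sigma_j|\geq\Xi^{C}$ is the largest, take $\tau_j$ as the dependent variable. Then $\langle\sigma_j\rangle^{-2b}\leq\Xi^{-2bC}$ pointwise, and the remaining $\tau$-integrals are $O(1)$.
\end{itemize}

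Because these are full-power gains, $C$ can be fixed in terms of $N$, $n$, $s$, $s_j$ alone. Only afterwards do you choose $b-\frac12$ and $b'-(b-1)$ small enough that $b'+\frac12<\theta/C$. That choice is what lets the $\Xi^{\theta}$ in the hypothesis absorb the low-region loss. Note that this loss is a genuine power $L^{b'+\frac12}\leq\Xi^{C(b'+\frac12)}$ of the output modulation, not merely a logarithm as your sketch suggests.
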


        For the following lemma, we define 
        $$ \M \coloneqq \frac{|m| \langle \xi \rangle^{s}}{\prod_{j=1}^n \langle \xi_j \rangle^{s_j}}. $$
        Furthermore, let $\eta_1, \eta_2, \eta_3, \eta_4 \in \{\xi, \xi_1, \xi_2, \xi_3\}$ denote the spatial frequency variables. 
       
        \begin{lemma}\label{leminha}
            Assume $n=3$, $|\eta_1| \gtrsim |\eta_2| \gtrsim |\eta_3|$, and $\M \lesssim \langle \eta_1 \rangle^{a_1} \langle \eta_2 \rangle^{a_2} \langle \eta_3 \rangle^{a_3}$. Furthermore, let $\{j_1, j_2, j_3, j_4\} = \{1, 2, 3, 4\}$ be such that
            $$\left| \frac{\partial \Phi}{\partial \eta_{j_1}} \right| \gtrsim \langle \eta_1 \rangle^2 \quad \text{for } \eta_{j_3} \text{ and } \eta_{j_4} \text{ fixed},$$
            $$\left| \frac{\partial \Phi}{\partial \eta_{j_3}} \right| \gtrsim \langle \eta_1 \rangle^2 \quad \text{for } \eta_{j_1} \text{ and } \eta_{j_2} \text{ fixed}.$$ 

            Then inequality \eqref{equation multilinear estimate} holds provided that any of the following conditions is satisfied:
    
            \begin{enumerate}[label=(\roman*)]
                \item\label{item 1} $a_1 < 2$, $a_1 + a_2 < 2$, and $a_1 + a_2 + a_3 < 2$.
                \item\label{item 2} $|\eta_1| \sim |\eta_2|$, $a_1 + a_2 < 2$, and $a_1 + a_2 + a_3 < 2$.
                \item\label{item 3} $|\eta_1| \sim |\eta_2|\sim |\eta_3|$ and $a_1 + a_2 + a_3 < 2$.
            \end{enumerate}
            
        \end{lemma}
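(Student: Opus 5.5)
The plan is to reduce the trilinear estimate to the weighted Schur test of Lemma \ref{lemma Schur's test with weight}, taking $n=3$. The four frequencies $\eta_1,\dots,\eta_4$ satisfy one linear relation, so for fixed $\alpha$ the set $\{|\Phi-\alpha|<M\}$ is a neighbourhood of a surface in a two-dimensional integration domain. We pair the variables as $A=\{j_1,j_2\}$ and its complement $\{j_3,j_4\}$. When $\eta_{j_3},\eta_{j_4}$ are fixed, we integrate over the remaining free variable, which after the convolution constraint can be taken to be $\eta_{j_1}$; symmetrically when $\eta_{j_1},\eta_{j_2}$ are fixed we integrate over $\eta_{j_3}$. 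The hypothesis $|\partial\Phi/\partial\eta_{j_1}|\gtrsim\langle\eta_1\rangle^2$ and the change of variables $\eta_{j_1}\mapsto\Phi$ then give
\[
\int \mathbbm{1}_{|\Phi-\alpha|<M}\,d\eta_{j_1}\lesssim \frac{M}{\langle\eta_1\rangle^{2}}.
\]
The same bound holds for the other integral. Each Schur integral therefore gains a factor $\langle\eta_1\rangle^{-2}$ from the constraint region.

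Next we choose the splitting $\M_1\M_2=\M^2$ so that every factor is bounded by a power of $\langle\eta_1\rangle$ that this gain absorbs. The natural first choice is $\M_1=\M_2=\M$, which gives $\M\,\Xi^{0^+}\lesssim\langle\eta_1\rangle^{a_1}\langle\eta_2\rangle^{a_2}\langle\eta_3\rangle^{a_3}\Xi^{0^+}$. Since $\Xi\lesssim\langle\eta_1\rangle$, the whole integral is bounded by $M\langle\eta_1\rangle^{a_1+a_2+a_3-2+0^+}\cdot\sup(\ldots)$, provided every exponent is nonnegative. Signs matter, so we use $|\eta_1|\gtrsim|\eta_2|\gtrsim|\eta_3|$ and reduce the worst case as follows.

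1. If $a_3\ge0$, replace $\langle\eta_3\rangle^{a_3}$ by $\langle\eta_2\rangle^{a_3}$.
2. Then, if $a_2+a_3\ge0$ (or $a_2\ge0$), replace $\langle\eta_2\rangle$ powers by $\langle\eta_1\rangle$ powers.
3. Otherwise keep the negative powers, which are at most $1$.

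The bound is then $\langle\eta_1\rangle^{\max\{a_1,\,a_1+a_2,\,a_1+a_2+a_3\}+0^+-2}\lesssim1$. This is exactly condition \ref{item 1}, because the strict inequalities leave room for the $\Xi^{0^+}$ loss. Under \ref{item 2} we have $|\eta_1|\sim|\eta_2|$, so $\langle\eta_2\rangle^{a_2}\sim\langle\eta_1\rangle^{a_2}$ whatever its sign, and the case $a_1$ alone disappears. Under \ref{item 3} all three are comparable, leaving only $a_1+a_2+a_3<2$. The polynomial bound $|m|\lesssim\Xi^N$ is immediate for the polynomial multipliers under consideration.

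The main obstacle is bookkeeping rather than depth, and it has two parts. First, we must check that fixing the pair $\{\eta_{j_3},\eta_{j_4}\}$ indeed leaves $\eta_{j_1}$ as a legitimate free coordinate, with $\eta_{j_2}$ determined by the convolution relation, in the sense required by Lemma \ref{lemma Schur's test with weight}. Second, we must verify that the derivative lower bound holds uniformly on the support so that the one-dimensional change of variables applies. If the derivative bound holds only piecewise, we would split the domain into finitely many monotone pieces, since $\Phi$ is cubic, and apply the estimate on each piece.
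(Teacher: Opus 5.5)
Your proposal is correct and is essentially the paper's proof. Both use Lemma \ref{lemma Schur's test with weight} with $\M_1=\M_2=\M$ and the pairing $\{j_1,j_2\}$, $\{j_3,j_4\}$, then change variables to $\Phi$ to gain the Jacobian factor (read pointwise as $d\eta_{j_1}\lesssim\langle\eta_1\rangle^{-2}\,d\Phi$, since $\eta_1$ may vary along the integration), and finally bound $\langle\eta_1\rangle^{a_1-2+0^+}\langle\eta_2\rangle^{a_2}\langle\eta_3\rangle^{a_3}$ pointwise via $|\eta_1|\gtrsim|\eta_2|\gtrsim|\eta_3|$. The only difference is cosmetic: the paper pushes the negative exponent $a_1-2+0^+$ down onto $\langle\eta_2\rangle$ and then $\langle\eta_3\rangle$, whereas you push the nonnegative exponents up onto $\langle\eta_1\rangle$, and both routes yield exactly conditions (i)--(iii).
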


        \begin{remark}
            For the applications  of this result, we adopt the notation $A=\{j_1, j_2\}$ and $A^c =\{j_3, j_4\}$.
        \end{remark}
        
        \begin{proof}
            We prove the statement assuming \ref{item 1}. By a change of variables,
            $$ I \coloneqq \sup_{\eta_{j_3}, \eta_{j_4}} \int_{|\Phi-\alpha| < M} \M \Xi^{0^+} \, d\eta_{j_1} \lesssim \sup_{\eta_{j_3}, \eta_{j_4}} \int_{|\Phi-\alpha| < M} \langle \eta_1 \rangle^{a_1-2+0^+} \langle \eta_2 \rangle^{a_2} \langle \eta_3 \rangle^{a_3} \, d\Phi. $$
            Since $|\eta_1| \gtrsim |\eta_2|$ and $a_1-2 < 0$, we have
            $$ I \lesssim \sup_{\eta_{j_3}, \eta_{j_4}} \int_{|\Phi-\alpha| < M} \langle \eta_2 \rangle^{a_1+a_2-2 + 0^+} \langle \eta_3 \rangle^{a_3} \, d\Phi \lesssim M, $$
            owing to the conditions $|\eta_2| \gtrsim |\eta_3|$, $a_1+a_2 < 2$, and $a_1+a_2+a_3 < 2$. An analogous argument applies when $\eta_{j_1}$ and $\eta_{j_2}$ are fixed. By Lemma \ref{lemma Schur's test with weight}, the estimate \eqref{equation multilinear estimate} holds.
        \end{proof}
        
        The following results are instrumental in proving the FREs:
        
        \begin{lemma}[{\cite[Lemma 5]{correia2023sharp}}]\label{lemma integral quadratic}
        Let $M > 0$ and $\alpha \in \R$. Then,
        $$ \int \indicatrix_{|q^2 - \alpha| < M} \, dq \lesssim M^{1/2}. $$
        \end{lemma}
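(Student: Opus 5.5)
The bound $\int \indicatrix_{|q^2-\alpha|<M}\,dq \lesssim M^{1/2}$ follows from computing the measure of a sublevel set of $q^2$ directly, with the constant independent of $\alpha$. By symmetry $q\mapsto -q$ it suffices to bound the measure of $E=\{q\ge 0: \alpha-M<q^2<\alpha+M\}$ and multiply by $2$. On $q\ge 0$ the map $q\mapsto q^2$ is increasing, so $E$ is an interval
\[
E=\bigl(\sqrt{\max\{\alpha-M,0\}},\ \sqrt{\max\{\alpha+M,0\}}\bigr),
\]
which is empty if $\alpha+M\le 0$.

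Set $A=\max\{\alpha+M,0\}$ and $B=\max\{\alpha-M,0\}$. Then $A-B\le 2M$ and $0\le B\le A$, so the length of $E$ is $\sqrt{A}-\sqrt{B}$. We use the elementary inequality $\sqrt{A}-\sqrt{B}\le \sqrt{A-B}$ for $0\le B\le A$. It follows by squaring: $(\sqrt A-\sqrt B)^2 = A+B-2\sqrt{AB}\le A+B-2B = A-B$, since $\sqrt{AB}\ge B$.

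Hence $|E|\le \sqrt{2M}$, and therefore
\[
\int \indicatrix_{|q^2-\alpha|<M}\,dq = 2|E| \le 2\sqrt{2}\,M^{1/2},
\]
uniformly in $\alpha\in\R$.

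The argument has no real obstacle. The only point to watch is that the bound must not depend on $\alpha$. A change of variables $\rho=q^2$, $dq=d\rho/(2\sqrt\rho)$, would give a bound of order $M/\sqrt{|\alpha|}$ when $|\alpha|\gg M$, which degenerates near $\alpha\approx 0$. The concavity inequality above handles both regimes simultaneously.
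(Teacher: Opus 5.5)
Your proof is correct and complete. The reduction to $q\ge 0$, the description of $E$ as an interval of length $\sqrt{A}-\sqrt{B}$ with $A-B\le 2M$, and the inequality $\sqrt{A}-\sqrt{B}\le\sqrt{A-B}$ together give the bound $2\sqrt{2}\,M^{1/2}$ uniformly in $\alpha$. When $\alpha<M$ the set is really $[0,\sqrt{A})$ rather than an open interval, but that changes it only by a null set.

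The paper gives no proof of this statement; it quotes Lemma 5 of \cite{correia2023sharp}, so there is no argument to match step by step. The natural alternative, which your closing remark points to, is a two-regime split:
\begin{itemize}
\item If $|\alpha|\le 2M$, the set lies in $\{|q|<\sqrt{3M}\}$.
\item If $\alpha>2M$, the substitution $p=q^2$ bounds the measure by $\int_{\alpha-M}^{\alpha+M}p^{-1/2}\,dp\le 2M(\alpha-M)^{-1/2}\le 2M^{1/2}$.
\item If $\alpha<-M$, the set is empty.
\end{itemize}
Your concavity inequality handles all regimes in one line and gives an explicit constant. The case split instead exposes the extra decay $M|\alpha|^{-1/2}$ available when $|\alpha|\gg M$, but the paper never uses that gain. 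Each application, after Morse's lemma, only invokes the uniform $M^{1/2}$ bound, which is exactly what you proved.
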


        When the total phase $\Phi$ in \eqref{equation total phase} is stationary—or, more specifically, when its derivative vanishes in a given direction—one may invoke Morse's Lemma:
        
        \begin{lemma}[Morse's Lemma, {\cite[Lemma 2.2, part I]{milnor}}]\label{lemma morse}
        Let $p$ be a nondegenerate critical point of a $C^\infty$ function $f: \R^n \rightarrow \R$. Then, there exists a local coordinate system $(y_1, \dots, y_n)$ in a neighborhood $U$ of $p$ such that $y_j(p) = 0$ for all $j$ and
        $$ f = f(p) + y_1^2 + \dots + y_\lambda^2 - y_{\lambda+1}^2 - \dots - y_n^2 $$
        in $U$, for some $1 \leq \lambda \leq n$.
        \end{lemma}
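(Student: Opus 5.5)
The proof is the classical one. It reduces the function to a quadratic form with smoothly varying coefficients, then diagonalizes that form one variable at a time, as in Gram--Schmidt.

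\emph{Normalization and Hadamard's lemma.} First, translate so that $p=0$ and subtract $f(p)$, so that $f(0)=0$ and $\nabla f(0)=0$. Apply the fundamental theorem of calculus twice along segments:
$$f(x)=\int_0^1 \frac{d}{dt}f(tx)\,dt=\sum_j x_j g_j(x),\qquad g_j(x)=\int_0^1 \partial_j f(tx)\,dt .$$
Since $g_j(0)=\partial_j f(0)=0$, the same step gives $g_j(x)=\sum_i x_i h_{ij}(x)$ with $h_{ij}$ smooth. Replace $h_{ij}$ by $H_{ij}=\tfrac12(h_{ij}+h_{ji})$. This yields
$$f(x)=\sum_{i,j}x_ix_jH_{ij}(x),\qquad H_{ij}=H_{ji},\qquad H_{ij}(0)=\tfrac12\partial_i\partial_j f(0).$$
Nondegeneracy of the critical point says exactly that the matrix $(H_{ij}(0))$ is invertible.

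\emph{Inductive diagonalization.} Suppose that, in some coordinates $u$ on a neighborhood of $0$,
$$f=\pm u_1^2\pm\cdots\pm u_{r-1}^2+\sum_{i,j\ge r}u_iu_jH_{ij}(u),$$
with $(H_{ij})_{i,j\ge r}$ symmetric, smooth, and invertible at $0$ (invertibility is inherited from the Hessian of $f$ at $0$). After a linear change in the variables $u_r,\dots,u_n$ we may assume $H_{rr}(0)\neq0$. This is possible because a nonzero symmetric matrix has some vector with $v^{T}Hv\ne0$. On a smaller neighborhood where $H_{rr}$ does not vanish, define new coordinates by $v_i=u_i$ for $i\neq r$ and
$$v_r=\sqrt{|H_{rr}(u)|}\Big(u_r+\sum_{i>r}u_i\frac{H_{ir}(u)}{H_{rr}(u)}\Big).$$
Completing the square gives
$$f=\pm v_1^2\pm\cdots\pm v_r^2+\sum_{i,j>r}v_iv_jH'_{ij}(v),$$
with new smooth symmetric coefficients $H'_{ij}$, and the sign in front of $v_r^2$ is the sign of $H_{rr}(0)$. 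The Jacobian of $u\mapsto v$ at $0$ is triangular with diagonal entry $\sqrt{|H_{rr}(0)|}\ne0$ in position $r$. Hence, by the inverse function theorem, $v$ is a valid coordinate system near $0$.

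\emph{Conclusion and main obstacle.} After $n$ steps we obtain $f=\pm y_1^2\pm\cdots\pm y_n^2$. Reordering the coordinates puts the positive signs first. The number $\lambda$ of positive signs equals the number of positive eigenvalues of the Hessian, by Sylvester's law of inertia, and $y_j(p)=0$ by construction. The step that needs care is the inductive one. There one must check that the new coordinates really form a local diffeomorphism, that the neighborhood is shrunk only finitely many times, and that the residual quadratic form in $v_{r+1},\dots,v_n$ stays nondegenerate at $0$ so the induction can continue. The last point follows because the Hessian of $f$ at $0$ in the $v$-coordinates is block diagonal, with blocks $\operatorname{diag}(\pm 2,\dots,\pm 2)$ and $2(H'_{ij}(0))$, and it is invertible since invertibility of the Hessian at a critical point is preserved under a change of coordinates.
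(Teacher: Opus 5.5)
Your proposal is correct and is the classical argument from Milnor's \emph{Morse Theory}: Hadamard's lemma gives $f=\sum_{i,j}x_ix_jH_{ij}(x)$, and then you complete squares inductively with $v_r=\sqrt{|H_{rr}|}\,(u_r+\sum_{i>r}u_iH_{ir}/H_{rr})$; the paper cites this argument rather than reproving it. One remark: your argument makes $\lambda$ the number of positive eigenvalues of the Hessian, so it gives $0\le\lambda\le n$, and $\lambda=0$ does occur (e.g.\ $f=-|x|^2$), so the bound $1\le\lambda$ in the printed statement is a slip in the statement rather than a gap in your proof.
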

        
        \begin{lemma}[Morse's Lemma with Parameters, {\cite[Lemma C.6.1]{Hrmander2007TheAO}} ]\label{lemma morse parameters}
        Let $f(x,y)$ ($x \in \R^n, y \in \R^N$) be a real-valued $C^\infty$ function in a neighborhood of $(0,0)$. Assume that $D_x f(0,0) = 0$ and that the Hessian $A \coloneqq D_x^2 f(0,0)$ is non-singular. Then, the equation $D_x f(x,y) = 0$ determines, in a neighborhood of $0$, a $C^\infty$ function $x(y)$ with $x(0) = 0$, such that in a neighborhood of $(0,0)$:
        $$ f(x,y) = f(x(y),y) + \langle Az, z \rangle / 2, $$
        where $z = x - x(y) + O(|x - x(y)|(|x| + |y|))$ is a $C^\infty$ function of $(x,y)$ at $(0,0)$.
        \end{lemma}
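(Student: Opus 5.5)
The plan is to follow the classical proof of the Morse lemma, carrying the parameter $y$ along, and to split the argument into three steps: locating the critical manifold, writing a Taylor formula with integral remainder, and performing a smooth change of basis.

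\emph{Step 1 (critical point depending on $y$).} Consider the map $G(x,y)\coloneqq D_xf(x,y)\in\R^n$. By hypothesis $G(0,0)=0$ and $D_xG(0,0)=A$ is non-singular. The implicit function theorem then yields a $C^\infty$ function $x(y)$ near $0$ with $x(0)=0$. Moreover, near $(0,0)$ the equation $D_xf(x,y)=0$ holds exactly on the graph $x=x(y)$.

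\emph{Step 2 (Taylor expansion around $x(y)$).} Set $w\coloneqq x-x(y)$. Since $D_xf(x(y),y)=0$, Taylor's formula with integral remainder gives
\[
f(x,y)-f(x(y),y)=\tfrac12\langle B(x,y)w,w\rangle,\qquad B(x,y)\coloneqq 2\int_0^1(1-t)\,D_x^2f\big(x(y)+tw,y\big)\,dt .
\]
Here $B$ is a $C^\infty$ function with values in the symmetric matrices, and $B(0,0)=D_x^2f(0,0)=A$.

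\emph{Step 3 (reducing $B$ to $A$).} We look for a $C^\infty$ matrix $R(x,y)$ with $R(0,0)=I$ and
\[
R^{T}AR=B .
\]
Once we have it, $z\coloneqq Rw$ satisfies $\langle Az,z\rangle=\langle Bw,w\rangle$, which is the desired identity. To construct $R$, restrict to matrices of the form $R=I+A^{-1}S$ with $S$ symmetric, and consider
\[
F(S)\coloneqq (I+A^{-1}S)^{T}A(I+A^{-1}S),
\]
a map from symmetric matrices to symmetric matrices. Using $A^{T}=A$, its derivative at $S=0$ is
\[
H\mapsto HA^{-1}A+AA^{-1}H=2H,
\]
which is invertible. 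By the inverse function theorem, $F$ is a local diffeomorphism near $S=0$, with $F(0)=A$. Therefore $S(B)\coloneqq F^{-1}(B)$ is $C^\infty$ for $B$ near $A$. Composing with $B(x,y)$ gives a $C^\infty$ function $R(x,y)$ with $R(0,0)=I$.

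\emph{Conclusion.} We have $z=w+(R-I)w$, and smoothness together with $R(0,0)=I$ gives $R-I=O(|x|+|y|)$. Hence
\[
z=x-x(y)+O\big(|x-x(y)|(|x|+|y|)\big),
\]
which is exactly the claimed form.

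The only delicate point is Step 3. One must choose the parametrization of $R$ so that the linearized map is an isomorphism between spaces of the same dimension. Working with $I+A^{-1}S$ for symmetric $S$, rather than with arbitrary matrices, is what removes the non-uniqueness (conjugation by $A$-orthogonal matrices) and makes the inverse function theorem applicable.
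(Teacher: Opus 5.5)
Your proof is correct. It follows the standard route behind the cited Lemma C.6.1 in H\"ormander: the implicit function theorem gives $x(y)$, Taylor's formula gives a symmetric remainder matrix $B(x,y)$ with $B(0,0)=A$, and you then solve $R^{T}AR=B$ with $R(0,0)=I$. Your parametrization $R=I+A^{-1}S$ with $S$ symmetric yields $F(S)=A+2S+SA^{-1}S$, whose differential at $0$ is $2\,\mathrm{id}$, so this step is justified; the paper itself only quotes the result without proof.
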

       
\section{Well-posedness in \texorpdfstring{$\A_a^0$}{A\_a}}\label{section LWP in A_a^0}

    In this section, we establish local well-posedness within the framework of classical strong solutions \eqref{equation classical u}--\eqref{equation classical v} in $X^{k,b} \times Y^{s,b}$. Once the necessary bilinear estimates for the coupling terms are derived, the local theory follows from a standard fixed-point argument. We conclude by proving the sharpness of these estimates.

    \subsection{Bilinear estimates}\label{section bilinear estimates}
        In this subsection, we prove the bilinear estimates for the nonlinear coupling terms in \eqref{equation classical u} and \eqref{equation classical v} by invoking Lemmas \ref{lemma CHS} and \ref{lemma CHS phase comparable to alpha}. The proof relies on partitioning the frequency domain into suitable subregions where a frequency-restricted estimate holds. We proceed as follows: when the total phase $\Phi$ in \eqref{equation total phase} has a large derivative, we perform a change of variables to integrate with respect to $\Phi$ itself. This procedure yields decay in the spatial frequencies via the Jacobian of the change of variables. We then bound the integrand either by a constant or by $|\Phi|$. In the former case, the integral is controlled by $M$ and Lemma \ref{lemma CHS} applies; in the latter, we invoke Lemma \ref{lemma CHS phase comparable to alpha}, exploiting the hypothesis $|\Phi| \lesssim |\alpha|$. Conversely, when $\Phi$ is stationary---or, more specifically, when its derivative vanishes in a given direction---one may invoke Morse's Lemma, provided the second derivative remains large.
    
        Recall that $b=(1/2)^+$ and $b' = (b-1)^+$. We begin by stating the classical bilinear estimate for the KdV equation:

        \begin{lemma}[\cite{Kenig1996ABE}]\label{lemma kdv estimate}
            For $k > -3/4$, the following estimate holds:
            $$\|\partial_x(u_1u_2)\|_{X^{k,b'}} \lesssim \|u_1\|_{X^{k,b}} \|u_2\|_{X^{k,b}} .$$
        \end{lemma}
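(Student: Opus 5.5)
This is the classical Kenig--Ponce--Vega bilinear estimate. Within the present framework I would reprove it through Lemmas \ref{lemma CHS} and \ref{lemma CHS phase comparable to alpha}. Take $n=2$, $J_u=\{\emptyset,1,2\}$ and $m=i\xi$, so that $\Phi=a(-\xi^3+\xi_1^3+\xi_2^3)=-3a\,\xi\xi_1\xi_2$. Since $a\neq 0$ is fixed, we may rescale and assume $a=1$. The weight to control is
$$\frac{|\xi|^2\langle\xi\rangle^{2k}}{\langle\xi_1\rangle^{2k}\langle\xi_2\rangle^{2k}}\indicatrix_{|\Phi-\alpha|<M},$$
integrated in $\xi_1$ with $\xi$ fixed, or in $\xi$ with $\xi_1$ fixed. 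The plan is to split the frequency domain into low/high and high/high regions and, in each region, change variables to $\Phi$.

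\textbf{Low and low--high regions.} If $|\xi|\lesssim 1$, the weight is bounded by $\langle\xi_1\rangle^{-4k}$ when $k<0$. Here I use $\partial_{\xi_1}\Phi=-3\xi(\xi-2\xi_1)$, which gives $|d\xi_1|\lesssim M/(|\xi|\,|\xi_1|)$ away from $\xi_1\approx\xi/2$. This region is harmless, and the same holds when $|\xi|\lesssim1$ with $|\xi_1|$ large. In the high--low regime $|\xi|\sim|\xi_1|\gg|\xi_2|$, the Jacobian is $|\partial_{\xi_1}\Phi|\sim|\xi|^2$. The integrand is then about $|\xi|^{2}\langle\xi_2\rangle^{-2k}$. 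When $k\ge0$ this gives a bound $\lesssim M$ directly. When $k<0$, I instead bound $\langle\xi_2\rangle^{-2k}\lesssim(|\Phi|/|\xi|^2)^{-2k}$ and use $|\Phi|\lesssim|\alpha|+M\lesssim\langle\alpha\rangle$. This produces $\langle\alpha\rangle^{-2k}|\xi|^{4k}$ with $-2k<3/2$, and I expect it to close via Lemma \ref{lemma CHS phase comparable to alpha} after balancing against $\langle\alpha\rangle^{1^-}$. This requires $-2k<1$ for the pure $\langle\alpha\rangle$ power; the remaining factor $|\xi|^{4k}$ together with $|\Phi|\sim|\xi|^2|\xi_2|$ supplies the rest.

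\textbf{High--high to low region (main obstacle).} The delicate region is $|\xi_1|\sim|\xi_2|\gg|\xi|$, where $\partial_{\xi_1}\Phi\sim|\xi||\xi_1|$ is small. The integrand is about $|\xi|^{2+2k}|\xi_1|^{-4k}$. Integrating in $\xi_1$ with $\xi$ fixed gives $|\xi|^{1+2k}|\xi_1|^{-4k-1}M$. I would bound $|\xi_1|$ through $|\Phi|\sim|\xi||\xi_1|^2\lesssim\langle\alpha\rangle$. This yields the $\langle\alpha\rangle^{(-4k-1)/2}$-type growth, which is admissible exactly when $-4k-1<2$, i.e. $k>-3/4$. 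The problem is that the sup-in-$\xi$ version of the hypothesis fails when $|\xi|$ is tiny, so I would use the alternative sup over $\xi_1$ allowed by the Remark after Lemma \ref{lemma CHS}, or the weighted Schur test of Lemma \ref{lemma Schur's test with weight}. There the weight is distributed as $\M_1\M_2$ with $\M_1$ carrying $|\xi|$-powers and $\M_2$ carrying $|\xi_1|$-powers. Near $\xi_1=\xi/2$ the phase is stationary in $\xi_1$, and I would apply Lemma \ref{lemma integral quadratic} to $q=\xi_1-\xi/2$, obtaining $M^{1/2}|\xi|^{-1/2}$.

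I expect the endpoint bookkeeping near $k=-3/4$ to be the main difficulty: one must verify that the powers of $\langle\alpha\rangle$ stay strictly below $1$. This is precisely where the restriction $k>-3/4$ is used. Alternatively, the statement can simply be quoted from \cite{Kenig1996ABE}.
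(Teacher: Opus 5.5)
The paper gives no proof of this lemma. It is simply quoted from \cite{Kenig1996ABE}, so your closing sentence is exactly the paper's route and suffices.

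Your frequency-restricted reproof is a genuinely different, self-contained alternative. After rescaling, the KdV phase is $\Phi_1^u$ at the value $a=1$ that is excluded in Lemma \ref{lemma multilinear estimate delx(v1v2)}. Such a proof would put the KdV term on the same footing as the coupling terms, at the price of redoing a classical computation. It can be carried out for $k>-3/4$, but your sketch places the difficulties in the wrong regions.

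\textbf{Misplaced stationary point.} The point $\xi_1=\xi/2$ cannot occur when $|\xi_1|\sim|\xi_2|\gg|\xi|$, since there $|\partial_{\xi_1}\Phi|\sim|\xi\xi_1|$ never vanishes. It lies in the region $|\xi|\sim|\xi_1|\sim|\xi_2|$, which your decomposition omits. In that region $|\Phi|\sim|\xi|^3$, and Lemma \ref{lemma integral quadratic} gives $|\xi|^{3/2-2k}M^{1/2}\lesssim\langle\alpha\rangle^{1/2-2k/3}M$. This is a second place where $k>-3/4$ is used. The analogous stationary points $\xi_2=-\xi$ and $\xi_1=-\xi$ appear when $\xi_1$ or $\xi_2$ is fixed, and there only $\langle\alpha\rangle M$ is needed.

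\textbf{High$\times$high$\to$low is easier than you think.} In this region nothing breaks for small $|\xi|$. Under $M\lesssim|\alpha|$ one has $|\xi\xi_1^2|\lesssim|\alpha|$. Hence $|\xi|^{1+2k}|\xi_1|^{-4k-1}M$ (with $|\xi|$ in place of $|\xi|^{1+2k}$ when $|\xi|\lesssim1$) is $\lesssim\langle\alpha\rangle^{1^-}M$ exactly when $k>-3/4$. With $\xi_1$ fixed the Jacobian is $|\xi_1|^2$, which gives $\lesssim\langle\alpha\rangle M$. So Lemma \ref{lemma CHS phase comparable to alpha} closes directly, as in Case B of Lemma \ref{lemma multilinear estimate delx(v1v2)}. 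The detour through the Remark after Lemma \ref{lemma CHS} or through Lemma \ref{lemma Schur's test with weight} is unnecessary.

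\textbf{The real hole: the fixed-$\xi_l$ hypotheses.} You never check the hypotheses \eqref{inequality CHS alpha xi_j}, and in the high--low region $|\xi|\sim|\xi_1|\gg|\xi_2|$ they fail. With $\xi_2$ fixed, $|\partial_\xi\Phi|\sim|\xi_2\xi|$. For $M\ll|\alpha|$ and $|\xi_2|\ll|\alpha|^{-1/3}$, the integral is of order $M|\alpha|^{1/2}|\xi_2|^{-3/2}$, which is not $\lesssim\langle\alpha\rangle M$. So for $k<0$ you must split:
\begin{enumerate}
\item For $|\xi_2|\lesssim1$, the weight is $\sim|\xi|^2$. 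Lemma \ref{lemma CHS} (fixed $\xi$, Jacobian $\sim|\xi|^2$) then gives $\lesssim M$ for all $\alpha,M$.
\item For $|\xi_2|\gtrsim1$, use Lemma \ref{lemma CHS phase comparable to alpha}. With $\xi$ or $\xi_1$ fixed, $|\xi_2|^3\lesssim|\Phi|$ gives $\langle\xi_2\rangle^{-2k}\lesssim\langle\alpha\rangle^{-2k/3}$. This is cleaner than your $\langle\alpha\rangle^{-2k}|\xi|^{4k}$ bookkeeping. With $\xi_2$ fixed, the bound $M|\alpha|^{1/2}|\xi_2|^{-3/2-2k}\lesssim\langle\alpha\rangle M$ again uses $k\ge-3/4$.
\end{enumerate}
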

        
        \begin{remark}
            This result is sharp with respect to the regularity $k$ (see \cite{Kenig1996ABE,Nakanishi2001CounterexamplesTB}).
        \end{remark}
        
        \begin{lemma}\label{lemma multilinear estimate delx(v1v2)} 
            Let $k \in \R$ and $s > k - 3/2$. Then the estimate
            \begin{equation}\label{equation multilinear estimate del_x(v1v2)}
                \|\partial_x(v_1v_2) \|_{X^{k,b'}} \lesssim \|v_1\|_{Y^{s,b}}\|v_2\|_{Y^{s,b}}
            \end{equation}
            holds in the following cases:
            \begin{enumerate}
                \item $a \in (-\infty, 1/4) \setminus \{0\}$ and $s > k/2 - 3/8$,
                \item $a = 1/4$ and $s \geq k/2 + 3/8$,
                \item $a \in (1/4, \infty) \setminus \{1\}$ and $s \geq k/2 $.
            \end{enumerate}
        \end{lemma}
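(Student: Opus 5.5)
We reduce \eqref{equation multilinear estimate del_x(v1v2)} to frequency-restricted estimates via Lemmas \ref{lemma CHS} and \ref{lemma CHS phase comparable to alpha}. Here $n=2$, $m=i\xi$, $J_v=\{1,2\}$, and the total phase is $\Phi=\Phi^u_1=\xi(3\xi_1^2-3\xi\xi_1+(1-a)\xi^2)$. It suffices to show
\[
\sup_{\xi}\int \frac{|\xi|^2\langle\xi\rangle^{2k}}{\langle\xi_1\rangle^{2s}\langle\xi_2\rangle^{2s}}\indicatrix_{|\Phi-\alpha|<M}\,d\xi_1\lesssim \langle\alpha\rangle^{1^-}M,
\]
or its variant with $\langle\alpha\rangle$ when $|\Phi|\lesssim|\alpha|$, with the supremum over $\xi_1$ and integration in $\xi$. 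By symmetry we may assume $|\xi_1|\ge|\xi_2|$. We split into the regions (A) $|\xi|\lesssim 1$, (B) $|\xi_1|\sim|\xi_2|\gg|\xi|$ (high-high to low), (C) $|\xi_1|\sim|\xi|\gg|\xi_2|$, and (D) $|\xi|\sim|\xi_1|\sim|\xi_2|$. Region (A) is trivial since $\Phi$ is bounded there.

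In (B), fix $\xi$. Then $\partial_{\xi_1}\Phi=3\xi(2\xi_1-\xi)$ has size $|\xi||\xi_1|$ and $|\Phi|\sim|\xi||\xi_1|^2$. Changing variables to $\Phi$ bounds the integral by
\[
\frac{|\xi|^{2+2k}}{|\xi||\xi_1|^{1+4s}}\,M .
\]
Using $|\Phi|\lesssim\langle\alpha\rangle$, i.e.\ $|\xi_1|^2\lesssim\langle\alpha\rangle/|\xi|$, we can trade powers of $|\xi_1|$ for $\langle\alpha\rangle^{1^-}$. Optimizing over $|\xi|\le|\xi_1|$ yields exactly the constraint $s>k/2-3/8$, with equality allowed only for $a\ge1/4$ after the corresponding adjustments.

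In (C), fix $\xi_2$ and integrate in $\xi$ (or $\xi_1$). The derivative of $\Phi$ is then of size $|\xi|^2$ and $|\Phi|\sim|\xi|^3$ for $a<1/4$, away from the roots when $a\ge1/4$. This gives the bound
\[
\frac{|\xi|^{2k+2}}{|\xi|^{2}\,|\xi|^{2s}\langle\xi_2\rangle^{2s}}\,M ,
\]
and $\langle\xi_2\rangle^{-2s}$ integrates or is dominated. Absorbing $|\xi|^{3}\lesssim\langle\alpha\rangle$ via Lemma \ref{lemma CHS phase comparable to alpha} gives the condition $s>k-3/2$.

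In (D) with $a<1/4$, we have $|\Phi|\sim|\xi|^3$ and the derivative in $\xi_1$ is comparable to $|\xi|^2$ away from $\xi_1=\xi/2$. Near $\xi_1=\xi/2$ the second derivative is $6\xi$, so Lemma \ref{lemma integral quadratic} gives the measure $(M/|\xi|)^{1/2}$. The resulting powers combine with $|\xi|^3\lesssim\langle\alpha\rangle$ to give at most $\langle\alpha\rangle^{1^-}M$ once $s>k/2-3/8$. For $a\ge1/4$ the phase vanishes on the resonant lines $\xi_1=\mu_a\xi$ and $\xi_1=(1-\mu_a)\xi$, so $\alpha$ can be small and no $\alpha$-gain is available. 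Fixing $\xi$, on these lines $|\partial_{\xi_1}\Phi|\sim|\xi|^2$ if $a>1/4$, so the integral is $\lesssim|\xi|^{2+2k-4s-2}M$; this is bounded exactly when $s\ge k/2$. For $a=1/4$ the double root gives $\Phi=3\xi(\xi_1-\xi/2)^2$, and Lemma \ref{lemma integral quadratic} yields measure $(M/|\xi|)^{1/2}$, which combined with $M\ge1$ requires $2+2k-4s-1\le0$, i.e.\ $s\ge k/2+1/4$. The stated threshold $k/2+3/8$ should instead come from the mixed regime where $|\xi_1-\xi/2|$ is intermediate and one must interpolate between the Jacobian bound and the $\alpha$-gain.

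I expect this near-resonant analysis for $a=1/4$ and the matching of the endpoint cases $s=k/2$ and $s=k/2+3/8$ to be the main obstacle. There I would dyadically decompose in $|\xi_1-\mu_a\xi|$ and in $|\Phi|$, and sum carefully, checking that the endpoints give a uniform rather than a logarithmically divergent bound.
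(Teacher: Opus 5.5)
Your architecture is the paper's: reduction to frequency-restricted estimates via Lemmas \ref{lemma CHS} and \ref{lemma CHS phase comparable to alpha}, the split into high--low, high--high--low and all-comparable interactions, Jacobian bounds where $\partial\Phi$ is large, and the quadratic measure bound at stationary points. However, case 2 ($a=1/4$) is left unproved because of an arithmetic slip, and the diagnosis you build on it is wrong. For $a=1/4$ and fixed $\xi$ one has exactly $\Phi=3\xi(\xi_1-\xi/2)^2$. So Lemma \ref{lemma integral quadratic} bounds the measure of $\{\xi_1:|\Phi-\alpha|<M\}$ by $(M/|\xi|)^{1/2}$ for every $\alpha$, globally in $\xi_1$. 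Against the integrand $\sim|\xi|^{2+2k-4s}$ of the all-comparable region this gives $|\xi|^{3/2+2k-4s}M^{1/2}$. The measure costs $|\xi|^{-1/2}$, not $|\xi|^{-1}$; you used the correct power for $a<1/4$, which is how you got $k/2-3/8$ there. The bound is $\lesssim M$ precisely when $s\ge k/2+3/8$, and this is exactly the paper's argument: Lemma \ref{lemma CHS} with the supremum over $\xi$, no $\alpha$-gain, the endpoint included, nothing to sum. So there is no ``mixed regime'' producing the $3/8$, and the dyadic decomposition you plan is unnecessary. Your intermediate threshold $k/2+1/4$ is in fact false: it would contradict case 3 of Proposition \ref{proposition estimate del_x(v_1v_2) is sharp}.

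Three further steps fail as written.

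\textbf{Region (A) is not trivial.} If $|\xi|\lesssim1\ll|\xi_1|\sim|\xi_2|$, then $|\Phi|\sim|\xi||\xi_1|^2$ is unbounded. The trivial region is $|\xi_1|\lesssim1$ (with $|\xi_1|\ge|\xi_2|$), as in the paper, so your (B) must include $|\xi|\lesssim1$, with $\langle\xi\rangle^{2k}$ in place of $|\xi|^{2k}$.

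\textbf{The threshold you attribute to (B) is wrong.} There the bound $|\xi|\langle\xi\rangle^{2k}|\xi_1|^{-4s-1}\lesssim(|\xi||\xi_1|^2)^{1^-}$ requires $s>\max\{k/2-3/4,-3/4\}$. Region (B) does not depend on $a$ and does not produce $k/2-3/8$, which comes only from the stationary point in (D). The condition $s>-3/4$ is genuinely needed. Localize $v_1,v_2$ to unit boxes on their free curves at $\xi_1\approx N$, $\xi_2\approx-N$. The output then lives at $|\xi|\sim1$, $|\tau|\sim N^2$ on a set of measure $\sim N$, and the duality pairing forces $N^{-2s-1}\lesssim N^{1/2}$. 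This condition is not implied by the stated hypotheses for every $k\in\R$ (take $a<1/4$, $k<-3/4$). It does hold throughout $\A^0_a$, where the lemma is used, and the paper's Case B relies on it tacitly.

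\textbf{The case $a>1/4$ in (D) misses a stationary point.} You only treat the resonant lines, but for fixed $\xi$ the derivative $\partial_{\xi_1}\Phi=3\xi(2\xi_1-\xi)$ also vanishes at $\xi_1=\xi/2$. There the quadratic bound gives only $|\xi|^{3/2}M^{1/2}$ at $s=k/2$. The paper fixes $\xi_1$ on that piece instead, since $|\xi_1^2-\xi_2^2|$ and $|a\xi^2-\xi_2^2|$ cannot both be $\ll|\xi|^2$ when $a\neq1/4$. Alternatively, $|\Phi|\sim|a-\tfrac14||\xi|^3$ near that point, so $|\xi|^3\lesssim|\alpha|+M$ yields a bound $\lesssim\langle\alpha\rangle^{1/2}M$.
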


        \begin{remark}
             The region of validity for this bilinear estimate is sharp (see Proposition \ref{proposition estimate del_x(v_1v_2) is sharp}).
        \end{remark}
        
        \begin{proof}
            According to \eqref{equation total phase}, the total phase for $\partial_x(v_1v_2)$ is $\Phi_1^u$ in \eqref{equation phi^u}. We define
            $$\mathscr{M} = \frac{|\xi|\langle \xi \rangle^k}{\langle \xi_1 \rangle^s \langle \xi_2 \rangle^s}.$$    
           
            By symmetry, we may assume without loss of generality that $|\xi_1| \geq |\xi_2|$, which implies $|\xi_1| \gtrsim |\xi|$. Moreover, if $|\xi_1| \lesssim 1$, all frequencies are bounded and \eqref{equation multilinear estimate del_x(v1v2)} follows directly from Lemma \ref{lemma CHS}. Therefore, throughout the remainder of the proof, we assume $|\xi_1| \gtrsim 1$.
    
            \vspace{2mm}
            \noindent \textbf{Case A:} $|\xi| \gtrsim |\xi_1|$. This implies $|\xi| \sim |\xi_1|$.
    
            \vspace{2mm}
            \noindent \textbf{Subcase A1:} $|\xi| \gg |\xi_2|$. Then $\xi \simeq \xi_1$, and since $a \neq 1$, it follows from \eqref{equation factorization phi^u} that $|\Phi_1^u| \sim |\xi|^3$. In this regime, Lemma \ref{lemma CHS phase comparable to alpha} is applicable. Assuming $M \lesssim |\alpha|$, the condition $|\Phi_1^u - \alpha| < M$ implies $|\Phi_1^u| \lesssim |\alpha|$. We bound
            $$I_1 \coloneqq \sup_{\xi_1} \int_{|\Phi_1^u-\alpha| <M} \mathscr{M}^2 d\xi \sim \sup_{\xi_1} \int_{|\Phi_1^u-\alpha| <M} \frac{|\xi|^{2 + 2k - 2s}}{\langle \xi_2 \rangle^{2s}} d\xi \lesssim \sup_{\xi_1} \int_{|\Phi_1^u-\alpha| <M} \frac{|\xi|^{2k - 2s}}{\langle \xi_2 \rangle^{2s}} d\Phi_1^u,$$
            by a change of variables, since $|\partial_\xi \Phi_1^u| \sim |a\xi^2 - \xi_2^2| \sim |\xi|^2$. If $s \geq 0$, the relation $|\xi|^3 \sim |\Phi_1^u| \lesssim |\alpha|$ yields $|\xi|^{2k - 2s} \langle \xi_2 \rangle^{-2s} \lesssim |\xi|^{2k - 2s} \lesssim |\xi|^{3^-} \lesssim|\alpha|^{1^-}$. Hence, $I_1 \lesssim \langle \alpha \rangle^{1^-} M$. If $s < 0$, a similar argument shows that $I_1 \lesssim \langle \alpha \rangle^{1^-} M$ provided $(2k-4s)/3 < 1$. The same reasoning applies when $\xi_2$ or $\xi$ is fixed, given that $a \neq 1$.
    
            \vspace{2mm}
            \noindent \textbf{Subcase A2:} $|\xi_2| \gtrsim |\xi|$.  In this setting, $|\xi|\sim |\xi_1|\sim |\xi_2|$. We distinguish the following scenarios regarding the parameter $a$:

            \vspace{2mm}
            \noindent $\bullet$ $a \in (-\infty, 1/4) \setminus \{0\}$: in this region, $|\Phi_1^u| \sim |\xi|^3$ whenever $a < 1/4$ by virtue of \eqref{inequality phase a<1/4}, and we invoke Lemma \ref{lemma CHS phase comparable to alpha}. Under the assumption $M \lesssim |\alpha|$, it follows that $|\xi_1|^3 \sim |\Phi_1^u| \lesssim |\alpha|$. Let
            \[
            I_1 \coloneqq \sup_{\xi_1} \int_{|\Phi_1^u-\alpha| <M} |\xi_1|^{2 + 2k - 4s} \, d \xi.
            \]
            We consider two possibilities:
            
            \vspace{2mm}
            \noindent \textbf{(i)} $|\partial_{\xi_1}\Phi_1^u| \gtrsim |\xi|^2$: a change of variables yields
            \begin{equation}\label{inequality calculations derivative large}
            I_1 \lesssim \sup_{\xi_1} \int_{|\Phi_1^u-\alpha| <M} |\xi_1|^{2k - 4s} \, d \Phi_1^u 
            \lesssim \int_{|\Phi_1^u-\alpha| <M} |\Phi_1^u|^{\frac{2k - 4s}{3}} \, d \Phi_1^u 
            \lesssim \langle \alpha \rangle^{1^-} M. 
            \end{equation}
            
            \vspace{2mm}
            \noindent \textbf{(ii)} $|\partial_{\xi_1} \Phi_1^u| \ll |\xi|^2$: setting $p_j = \xi_j/\xi_1$, we observe that
            \begin{equation}\label{equation P morse}
                P(p) \coloneqq \frac{\Phi_1^u}{|\xi_1|^3} = -ap^3 + 1 + p_2^3   
            \end{equation}
            possesses two nondegenerate stationary points at $p = \frac{1}{1 \pm \sqrt{a}}$. By Morse's Lemma (\ref{lemma morse}), there exists a local coordinate transformation $p \mapsto q$ in a neighborhood of each critical point such that $P = q^2 + c$ for some $c \in \mathbb{R}$. Choosing $\alpha' \in \R$ appropriately, it follows from Lemma \ref{lemma integral quadratic} that
            \begin{equation}\label{inequality calculations derivative small}
                I_1 \lesssim \sup_{\xi_1} \int_{|\Phi_1^u-\alpha| <M} |\xi_1|^{3 + 2k - 4s} \, dp 
                \sim \sup_{\xi_1} \int_{|q^2 - \alpha'| < \frac{M}{|\xi_1|^3}} |\xi_1|^{3 + 2k - 4s} \, dq
                \lesssim \sup_{\xi_1} |\xi_1|^{\frac{3}{2} + 2k - 4s} M^{\frac{1}{2}}. 
            \end{equation}
            Since $|\xi_1|^3 \lesssim |\alpha|$, it follows that $I_1 \lesssim \langle \alpha \rangle^{1^-} M^{1/2}$. An analogous estimate holds when $\xi$ or $\xi_2$ is fixed.
            
            \vspace{2mm}
            \noindent $\bullet$ $a = 1/4$: we apply Lemma \ref{lemma CHS}. Taking the supremum in $\xi$ and performing estimates analogous to those in the previous case, we obtain $I_1 \lesssim M$. Specifically, it suffices to observe that $|\xi|^{2k - 4s} \lesssim 1$ in \eqref{inequality calculations derivative large} and $|\xi|^{3/2 + 2k - 4s} \lesssim 1$ in \eqref{inequality calculations derivative small}.
            
            \vspace{2mm}
            \noindent $\bullet$ $a \in (1/4, \infty) \setminus \{1\}$: since $\xi_2 = \xi -\xi_1$, it follows that  $|\partial_\xi \Phi_1^u| \sim |-a\xi^2 + \xi_2^2|$ with $\xi_1$ fixed. Moreover, $|\partial_{\xi_1} \Phi_1^u| \sim |\xi_1^2 - \xi_2^2|$ for $\xi$ fixed. Since the conditions $|-a\xi^2 + \xi_2^2| \ll |\xi|^2$ and $|\xi_1^2 - \xi_2^2| \ll |\xi|^2$ cannot occur simultaneously, we may assume $|\xi_1^2 - \xi_2^2| \gtrsim |\xi|^2$ without loss of generality and use Lemma \ref{lemma CHS} with $\xi$ fixed. Given that $|\xi|^{2k - 4s} \lesssim 1$, the argument in \eqref{inequality calculations derivative large} yields $I_1 \lesssim M$.

            \vspace{2mm}
            \noindent \textbf{Case B:} $|\xi| \ll |\xi_1|$. In this regime, $\xi_2 \simeq -\xi_1$ and $\mathscr{M} \sim |\xi| \langle \xi \rangle^{k} |\xi_1|^{-2s}$. To verify the hypotheses of Lemma \ref{lemma CHS phase comparable to alpha}, we assume $M \lesssim |\alpha|$, so that \eqref{equation factorization phi^u} implies $|\xi\xi_1^2| \sim |\Phi_1^u| \lesssim |\alpha|$. Fixing $\xi$, we observe that $|\partial_{\xi_1}\Phi_1^u|  \sim |\xi\xi_1|$. Consequently, 
            \[
            \sup_{\xi} \int_{|\Phi_1^u-\alpha| < M} |\xi|^2 \langle \xi \rangle^{2k} |\xi_1|^{-4s} \, d\xi_1
            \lesssim
            \sup_{\xi} \int_{|\Phi_1^u-\alpha| < M} |\xi| \langle \xi \rangle^{2k} |\xi_1|^{-4s-1} \, d\Phi_1^u 
            \]
            \[
            \lesssim \sup_{\xi} \int_{|\Phi_1^u-\alpha| < M} (|\xi\xi_1^2|)^{1^-} \, d\Phi_1^u \lesssim \langle \alpha\rangle^{1^-} M.
            \]
            
            When $\xi_1$ (or $\xi_2$) is fixed, the derivative satisfies $|\partial_\xi \Phi_1^u| \sim |\xi_2|^2 \gtrsim |\xi\xi_1|$; hence, a similar reasoning provides the desired bound.
    
        \end{proof}

        \begin{lemma} \label{lemma multilinear estimate uv_x}
            Let $-k-3/2 < s < k+5/2$. Then the bilinear estimate
            \begin{equation}\label{inequality bilinear estimate uv_x}
                \|u v_x \|_{Y^{s,b'}} \lesssim \|u\|_{X^{k,b}}\|v\|_{Y^{s,b}}
            \end{equation}
            holds in the following cases:
            \begin{enumerate}
                \item $a < 0$ and $k \geq -3/4$,
                \item $0 < a < 1/4$ and $k > -3/4$,
                \item $a = 1/4$ and $k \geq 3/4$,
                \item $a \in (1/4, \infty) \setminus \{1\}$ and $k \geq 0$.
            \end{enumerate}
        \end{lemma}
            
        \begin{remark}
            The region of validity for this bilinear estimate is sharp (see Proposition \ref{proposition estimate uvx is sharp}).
        \end{remark}
            
        \begin{proof}
            The total phase $\Phi^v$ is defined in \eqref{equation phi^v}. Let 
            \[
            \mathscr{M} = \frac{|\xi_2|\langle \xi \rangle^{s}}{\langle \xi_1 \rangle^{k} \langle \xi_2 \rangle^{s}}.
            \]
            Since $\xi = \xi_1 + \xi_2$, it follows that
            \begin{equation}\label{equation phase factorization}
            \Phi^v = -\xi^3 + a\xi_1^3 + \xi_2^3 = \xi_1\bigl(a\xi_1^2-\xi^2 - \xi \xi_2 -\xi_2^2 \bigr).
            \end{equation}
            If all frequencies are bounded, the estimate \eqref{inequality bilinear estimate uv_x} follows from Lemma \ref{lemma CHS}. Hence, for the remainder of the proof, we assume that $|\xi_j| \gg 1$ for at least one $j \in \{\emptyset, 1, 2\}$.
            
            \vspace{2mm}
            \noindent \textbf{Case A:} $|\xi_2| \sim |\xi|$. In this regime, $\mathscr{M} \sim |\xi_2| \langle \xi_1 \rangle^{-k}$ and $|\xi_1| \lesssim |\xi|$.

            \vspace{2mm}
            \noindent \textbf{Subcase A1:} $\xi_2 \simeq -\xi$. This condition entails $\xi_1 \simeq 2\xi$ and $\mathscr{M} \sim |\xi|^{1-k}$. Observing that
            \begin{equation}\label{equation phase^v xi2 = -xi}
                \Phi^v = -\xi^3 + a\xi_1^3+\xi_2^3 \simeq -2\xi^3 + 8a\xi^3 = 2(4a-1)\xi^3,
            \end{equation}
            we distinguish between the scenarios $a=1/4$ and $a\neq 1/4$. If $a=1/4$, following the reasoning in \eqref{equation P morse}, there exist $\alpha' \in \R$ and a change of variables $\xi \mapsto q$ such that
            \begin{equation}\label{inequality I1}
                I_1 \coloneqq \sup_{\xi_1} \int_{|\Phi^v-\alpha| < M} |\xi_1|^{2-2k} \, d\xi
                \lesssim
                \sup_{\xi_1} |\xi_1|^{3-2k}
                \int_{|q^2-\alpha'| < \frac{M}{|\xi_1|^3}} dq
                \lesssim
                \sup_{\xi_1} |\xi_1|^{3/2-2k} M^{1/2} \lesssim M^{1/2},
            \end{equation}
            which allow us to apply Lemma \ref{lemma CHS}. If $a \neq 1/4$, we instead employ Lemma \ref{lemma CHS phase comparable to alpha}. Under the assumption $M \lesssim |\alpha|$, we have $|\xi_1|^3 \sim |\Phi^v| \lesssim |\alpha|$. Proceeding as in \eqref{inequality I1}, one finds that
            \[ I_1 \lesssim \sup_{\xi_1} |\xi_1|^{3/2-2k} M^{1/2} \lesssim |\xi_1|^3 \lesssim \langle \alpha \rangle M^{1/2}. \]              
            Furthermore, if $\xi_2$ is fixed, we obtain
            \[
            \sup_{\xi_2} \int_{|\Phi^v-\alpha| < M} |\xi|^{2-2k} \, d\xi
            \lesssim
            \sup_{\xi_2} \int_{|\Phi^v-\alpha| < M} |\xi|^{-2k} \, d\Phi^v
            \sim
            \sup_{\xi_2} \int_{|\Phi^v-\alpha| < M} |\Phi^v|^{-2k/3} \, d\Phi^v
            \lesssim
            \langle \alpha \rangle^{1^-} M.
            \]
            The analysis for fixed $\xi$ is analogous.
        
            \vspace{2mm}
            \noindent \textbf{Subcase A2:} $\xi_2 \simeq \xi$. In this case, $|\xi_1| \ll |\xi|$ and, by \eqref{equation phase factorization}, we have $|\Phi^v| \sim |\xi_1 \xi^2|$. We verify the hypotheses of Lemma \ref{lemma CHS phase comparable to alpha} under the assumption $M \lesssim |\alpha|$. In this regime, $|\xi_1 \xi^2| \sim |\Phi^v| \lesssim |\alpha|$, which implies
            \[
            \sup_{\xi_1} \int_{|\Phi^v-\alpha| < M} |\xi|^{2} \langle \xi_1 \rangle^{-2k} \, d\xi
            \lesssim
            \int_{|\Phi^v-\alpha| < M} |\xi| |\xi_1|^{-2k-1} \, d\Phi^v \lesssim
            \int_{|\Phi^v-\alpha| < M} |\xi| |\xi_1|^{2} \, d\Phi^v \lesssim
            \langle \alpha \rangle M.
            \]
            The cases where $\xi$ or $\xi_2$ are fixed are treated similarly, since $|\partial_{\xi_1} \Phi^v| \sim |\xi|^2 \gtrsim |\xi \xi_1|$.

            \vspace{2mm}
            \noindent \textbf{Subcase A3:} $|\xi_2| \not\simeq |\xi|$. This implies $|\xi_1| = |\xi-\xi_2| \gtrsim |\xi|$ and $|\xi| \sim |\xi_1| \sim |\xi_2|$; thus, $\M \sim |\xi|^{1-k}$. We then estimate:
            $$ I_1 \coloneqq \sup_{\xi_1} \int_{|\Phi^v-\alpha| <M} |\xi_1|^{2-2k} \, d \xi \lesssim \sup_{\xi_1} |\xi_1|^{-2k} \int_{|\Phi^v-\alpha| <M} d \Phi^v = \sup_{\xi_1} |\xi_1|^{-2k} M \lesssim \left\{ \begin{aligned}
                &M ,  &\text{if } a \geq 1/4, \\
                &\sup_{\xi_1} |\xi_1|^{3} M, &\text{if } a < 1/4.
            \end{aligned} \right.$$
          
            When $a \geq 1/4$, \eqref{inequality bilinear estimate uv_x} holds by Lemma \ref{lemma CHS}. For $a < 1/4$, we use Lemma \ref{lemma CHS phase comparable to alpha} as follows: in view of \eqref{equation relation phi^v and phi^u} and \eqref{inequality phase a<1/4}, we have $|\Phi^v|\sim|\xi_1|^3$. Under the assumption $M \lesssim |\alpha|$, the relation $|\xi_1|^{3} \sim |\Phi^v| \lesssim |\alpha|$ ensures $I_1 \lesssim \langle \alpha \rangle M$. Next, we fix $\xi$. If $|\partial_{\xi_1} \Phi^v| \gtrsim |\xi|^2$, the argument is identical to the scenario where $\xi_1$ is held constant. Conversely, if $|-a\xi_1^2 + \xi_2^2| \sim |\partial_\xi \Phi^v| \ll |\xi|^2$ (which necessitates $a > 0$), proceeding as in \eqref{equation P morse}, we obtain
            \[
            \sup_{\xi} \int_{|\Phi^v-\alpha| < M} |\xi|^{2-2k} \, d\xi_1 \lesssim \sup_{\xi} |\xi|^{\frac{3}{2}-2k} M^{1/2} \lesssim \langle \alpha \rangle^{1^-} M^{1/2}.
            \] 
            
            The case for fixed $\xi_2$ is analogous. We observe that for $a > 0$, the phase is stationary in the region $\xi_2 \simeq \pm \sqrt{a}\,\xi_1$, requiring $k > -3/4$. For $a < 0$, the phase is never stationary in this subcase, and the condition $k \geq -3/4$ suffices.
            
            \vspace{2mm}
            \noindent \textbf{Case B:} $|\xi_2| \gg |\xi|$. In this regime, $\xi_1 \simeq -\xi_2$ and $\mathscr{M} \sim |\xi_1|^{1-k-s}\langle \xi \rangle^s$. Note that $|\Phi^v| \sim |\xi_1|^3$. To verify the hypotheses of Lemma \ref{lemma CHS phase comparable to alpha}, we assume $M \lesssim |\alpha|$. Then $|\xi_1|^3 \lesssim |\alpha|$ implies
            $$\sup_{\xi} \int_{|\Phi^v-\alpha| < M} |\xi_1|^{2-2k-2s}\langle \xi \rangle^{2s}  d\xi_1 \lesssim \sup_{\xi} \int_{|\Phi^v-\alpha| < M} |\xi_1|^{-2k-2s}\langle \xi \rangle^{2s}  d\Phi^v \lesssim \sup_{\xi} \int_{|\Phi^v-\alpha| < M} |\xi_1|^{3^-}  d\Phi^v \lesssim\langle \alpha \rangle^{1^-} M.$$
            The same conclusion holds when $\xi_1$ (or $\xi_2$) is held constant.
            
            \vspace{2mm}
            \noindent \textbf{Case C:} $|\xi| \gg |\xi_2|$. Here, $\xi \simeq \xi_1$ and $\mathscr{M} \sim |\xi_2|\,|\xi|^{s-k}\langle \xi_2 \rangle^{-s} \lesssim |\xi|^{s-k}\langle \xi_2 \rangle^{1-s}$. Given that $a \neq 1$, we have $|\Phi^v| \sim |\xi_1|^3$. We apply Lemma \ref{lemma CHS phase comparable to alpha}. Assuming $M \lesssim |\alpha|$, we observe that $|\xi_1|^3 \lesssim |\alpha|$ and
            \[
            \sup_{\xi} \int_{|\Phi^v-\alpha| < M} \langle \xi_2 \rangle^{2-2s} |\xi_1|^{2s-2k}  d\xi_1 \lesssim \sup_{\xi} \int_{|\Phi^v-\alpha| < M} \langle \xi_2 \rangle^{2-2s} |\xi_1|^{-2+2s-2k}  d\Phi^v 
            \]
            \[ \lesssim \sup_{\xi} \int_{|\Phi^v-\alpha| < M} |\xi_1|^{3^-}  d\Phi^v\lesssim \langle \alpha \rangle^{1^-} M.
            \]
            This bound remains valid when fixing $\xi_1$ or $\xi_2$.

        \end{proof}    

       By virtue of these bilinear estimates, a standard fixed-point argument (see, for example, the book \cite[Section 7.4]{Linares2009IntroductionTN}) yields the following result.

        \begin{proposition}[Direct LWP]\label{proposition direct LWP}
            Let $a \in \R \setminus \{0,1\}$, $(k,s) \in \A_a^0$, and $(u_0,v_0) \in H^k(\R) \times H^s(\R)$. Thus, there exists a time $T = T(\|u_0\|_{H^k} + \|v_0\|_{H^s}, a) > 0$ and a unique solution
            \[
            (u,v) \in X^{k,b} \times Y^{s,b}
            \]
            to the system \eqref{equation hirota system} on the interval $[0,T]$ satisfying \eqref{equation classical u} and \eqref{equation classical v}. Moreover, the flow depends analytically on the initial data.
        \end{proposition}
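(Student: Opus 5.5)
The plan is the standard Picard iteration in $X^{k,b}\times Y^{s,b}$ with $b=(1/2)^+$, following \cite[Section 7.4]{Linares2009IntroductionTN}. For $(u,v)$ in a ball of $X^{k,b}\times Y^{s,b}$, define $\Gamma(u,v)=(\Gamma_1,\Gamma_2)$ by the right-hand sides of \eqref{equation classical u}--\eqref{equation classical v}. Here the $u$-nonlinearity is $\beta\partial_x(u^2)+\gamma\partial_x(v^2)$ and the $v$-nonlinearity is $\theta uv_x$; the constants $\beta,\gamma,\theta\in\C$ play no role in the estimates.

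First, recall the linear estimates in Bourgain spaces, which hold identically for the phases $a\xi^3$ and $\xi^3$:
\[
\|\eta(t)e^{-at\partial_x^3}u_0\|_{X^{k,b}}\lesssim\|u_0\|_{H^k},
\qquad
\Bigl\|\eta(t)\int_0^t e^{-a(t-t')\partial_x^3}F(t')\,dt'\Bigr\|_{X^{k,b}}\lesssim\|F\|_{X^{k,b-1}}.
\]
Recall also the time-localization gain
\[
\|\eta_T F\|_{X^{k,b-1}}\lesssim T^{\epsilon}\|F\|_{X^{k,b'}}
\]
for $b'=(b-1)^+$ slightly larger than $b-1$. The same three estimates hold in $Y^{s,b}$.

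Second, bound the nonlinear terms. Check that every $(k,s)\in\A_a^0$ satisfies the hypotheses of Lemma \ref{lemma kdv estimate}, Lemma \ref{lemma multilinear estimate delx(v1v2)} and Lemma \ref{lemma multilinear estimate uv_x}. This is an inspection of \eqref{equation definition A^0}.
\begin{itemize}
\item $k>-3/4$ in every regime, since $k\ge 3/4$ or $k\ge0$ for $a\ge 1/4$. This covers the KdV term.
\item $s>k-3/2$ together with the $a$-dependent lower bound on $s$ gives exactly the cases of Lemma \ref{lemma multilinear estimate delx(v1v2)}.
\item The remaining condition needed for Lemma \ref{lemma multilinear estimate uv_x} is $-k-3/2<s<k+5/2$. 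The upper bound is in the definition of $\A_a^0$. For the lower bound: when $a<1/4$, $s>k/2-3/8>-k-3/2$ since $k>-3/4$; when $a\ge1/4$, $s\ge k/2\ge0>-k-3/2$.
\end{itemize}
Combining these with the linear estimates gives
\[
\|\Gamma_1\|_{X^{k,b}}\lesssim\|u_0\|_{H^k}+T^\epsilon\bigl(\|u\|_{X^{k,b}}^2+\|v\|_{Y^{s,b}}^2\bigr),
\qquad
\|\Gamma_2\|_{Y^{s,b}}\lesssim\|v_0\|_{H^s}+T^\epsilon\|u\|_{X^{k,b}}\|v\|_{Y^{s,b}}.
\]
The difference estimates follow by bilinearity, e.g. $u^2-\tilde u^2=(u-\tilde u)(u+\tilde u)$ and $uv_x-\tilde u\tilde v_x=(u-\tilde u)v_x+\tilde u(v-\tilde v)_x$. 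Choosing $R\sim\|u_0\|_{H^k}+\|v_0\|_{H^s}$ and $T^\epsilon R\ll1$ makes $\Gamma$ a contraction on the ball of radius $2R$. The fixed point solves \eqref{equation hirota system} on $[0,T]$, since $\eta=\eta_T=1$ there for $T\le1$.

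Third, analytic dependence. $\Gamma$ is a sum of the linear map of the data and bounded bilinear maps, so the fixed point is obtained from the analytic implicit function theorem applied to $(u,v,u_0,v_0)\mapsto(u,v)-\Gamma(u,v;u_0,v_0)$. Equivalently, the Picard iteration converges as a power series in the data. Continuity into $C([0,T];H^k)\times C([0,T];H^s)$ follows from $b>1/2$. Uniqueness holds in the ball by contraction, and in all of $X^{k,b}\times Y^{s,b}$ by the usual argument that shrinks $T$.

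The only real obstacle is bookkeeping. One must check that the $b'$ in the three bilinear lemmas can be chosen uniformly with a common $\epsilon$, so that a single $T^\epsilon$ factor appears. One must also confirm that the boundary points of $\A_a^0$, where the lemmas allow equality (e.g. $s=k/2$ or $k=3/4$), are indeed covered.
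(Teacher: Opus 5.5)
Your proposal is correct and follows the same route as the paper: a standard contraction argument, as in \cite[Section 7.4]{Linares2009IntroductionTN}, applied to \eqref{equation classical u}--\eqref{equation classical v} and driven by Lemmas \ref{lemma kdv estimate}, \ref{lemma multilinear estimate delx(v1v2)} and \ref{lemma multilinear estimate uv_x}. The paper leaves implicit the check that every $(k,s)\in\A_a^0$ satisfies the hypotheses of all three lemmas, including the lower bound $s>-k-3/2$ for the $uv_x$ estimate, and you carry out that check correctly.
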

    
    \subsection{Sharpness of the bilinear estimates}
        \begin{proposition}\label{proposition estimate del_x(v_1v_2) is sharp}
            The estimate
            \begin{equation}\label{equation multilinear estimate del_x(v1v2) 2}
                \|\partial_x(v_1v_2) \|_{X^{k,b'}} \lesssim \|v_1\|_{Y^{s,b}}\|v_2\|_{Y^{s,b}}
            \end{equation}
            fails if any of the following conditions is satisfied: 
            \begin{enumerate}
                \item $a \neq 1$ and $s \leq k-3/2$
                \item $a < 1/4$ and $s \leq k/2-3/8$
                \item $a = 1/4$ and $s < k/2+3/8$
                \item $a >1/4$ and $s < k/2$
            \end{enumerate}
        \end{proposition}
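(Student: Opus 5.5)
We argue by duality and explicit box-type counterexamples. The estimate \eqref{equation multilinear estimate del_x(v1v2) 2} is equivalent to
\[
\Bigl|\int_{\tau=\tau_1+\tau_2,\ \xi=\xi_1+\xi_2} \frac{|\xi|\langle\xi\rangle^{k}\,f(\tau,\xi)\,f_1(\tau_1,\xi_1)\,f_2(\tau_2,\xi_2)}{\langle\tau-a\xi^3\rangle^{-b'}\langle\xi_1\rangle^{s}\langle\xi_2\rangle^{s}\langle\tau_1-\xi_1^3\rangle^{b}\langle\tau_2-\xi_2^3\rangle^{b}}\Bigr|\lesssim \|f\|_{L^2}\|f_1\|_{L^2}\|f_2\|_{L^2}.
\]
For each of the four regimes we take $f,f_1,f_2$ to be indicators of sets $R,R_1,R_2$ adapted to the frequency interaction that saturated the corresponding step in the proof of Lemma \ref{lemma multilinear estimate delx(v1v2)}. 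We then compute both sides as powers of a large parameter $N$. The left side is essentially (weight)$\times|\{(\tau_1,\xi_1)\in R_1:(\tau-\tau_1,\xi-\xi_1)\in R_2,\,(\tau,\xi)\in R\}|$ integrated over $R$. The failure condition is obtained by requiring the resulting exponent to be positive. Since $b=(1/2)^+$ and $b'=(b-1)^+$, every modulation factor carries an extra $N^{0^+}$ loss. This is what should convert the open conditions of the lemma into the closed failure sets (``$\le$'') in items (1)--(2).

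\textbf{(1) $s\le k-3/2$.} We use the high--low interaction of Subcase A1. Take $R_1=\{\xi_1\in[N,N+N^{-2}],\ |\tau_1-\xi_1^3|\le1\}$ and $R_2=\{\xi_2\in[0,N^{-2}],\ |\tau_2-\xi_2^3|\le1\}$. The output then sits at $\xi\approx N$ with $|\tau-a\xi^3|\sim|1-a|N^3$ (here $a\ne1$ is used), and we take $R$ to be the image set. The weight is $N^{1+k-s}$ (since $\langle\xi_2\rangle\sim1$), and the output modulation contributes $N^{3b'}\approx N^{-3/2}$. Counting measures, the ratio LHS/RHS should come out as $N^{k-s-3/2+0^+}$, which is unbounded for $s\le k-3/2$. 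If the thin boxes make the measure count unfavourable, we will instead place $R_2$ at modulation $\sim N^3$, so that the output is resonant, and redo the count.

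\textbf{(2) $a<1/4$, $s\le k/2-3/8$.} We use the high--high--high interaction of Subcase A2(ii) near the stationary point $\xi_2/\xi_1=p_*$, where $p_*=1/(1\pm\sqrt a)$ is read off from \eqref{equation P morse} (note $\xi=\xi_1+\xi_2$). Here $|\Phi_1^u|\sim N^3$, but $\partial\Phi_1^u$ vanishes in one direction. We take $\xi_1\in[N,N+N^{-1/2}]$ and $\xi_2$ in an $N^{-1/2}$-interval around $p_*N$. Over this $N^{-1/2}$-window the Morse quadratic form makes $\Phi_1^u$ vary by only $O(1)$. Inputs are at modulation $\lesssim1$, and the output is at modulation $\sim N^3$ with $R$ of width $N^{-1/2}\times O(1)$. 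The gain $N^{-3/2}$ from the $\tfrac12$-power of Lemma \ref{lemma integral quadratic} is exactly what produced the exponent $\tfrac32+2k-4s$ in \eqref{inequality calculations derivative small}. The counterexample should give ratio $N^{(3/2+2k-4s)/2-3/2\cdot(\ldots)}$, i.e.\ blow-up iff $2k-4s\ge 3/2$, equivalently $s\le k/2-3/8$. This is the step I expect to be the main obstacle. The balance between the box widths, the $N^{0^+}$ losses from $b,b'$, and the nondegenerate quadratic approximation must be arranged exactly, since we need the endpoint included. I would first try replacing the $O(1)$-thickness boxes by boxes of thickness $N^{\epsilon}$ to harvest the $b$-loss.

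\textbf{(3)--(4) $a\ge1/4$.} We use the genuinely resonant set $\xi_1\simeq\mu_a\xi$ from \eqref{equation factorization phi^u a >=1/4}. All three functions live at modulation $O(1)$ on thin neighbourhoods of the curve $\{\xi_1=\mu_a\xi\}$ intersected with the three characteristic surfaces.
\begin{itemize}
\item For $a>1/4$ the root is simple, so $\Phi_1^u$ is transversal to the curve. Boxes of size $N^{-2}\times1$ then give ratio $N^{1+k-2s}\cdot N^{-1}=N^{k-2s}$, i.e.\ failure for $s<k/2$.
\item For $a=1/4$ the root is double, $\Phi_1^u=\xi(\xi_1-\xi/2)^2$. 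This allows widths $N^{-1/2}$ in $\xi_1-\xi/2$, and the improved measure count should give failure exactly for $s<k/2+3/8$.
\end{itemize}
In all cases the final verification is a direct computation of the measure of the intersection of the translated sets, which we carry out explicitly.
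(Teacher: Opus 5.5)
Your items (1), (3) and (4) are on the right track and match the paper's constructions. In (1), your thin on-shell boxes give ratio $N^{k-s+3b'}$. The paper instead uses unit boxes with $h_1$ off-shell at modulation $\sim N^2$, which gives $N^{1+k-s+3b'-2b}$. Both exponents exceed $k-s-3/2$, so the endpoint is included either way. In (3)--(4) you use the same windows as the paper, $\xi_1\simeq\xi/2$ and $\xi_1\simeq\mu_a\xi$.

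The genuine gap is item (2), where you chose the wrong frequency configuration.
\begin{itemize}
\item Your point $\xi_2/\xi_1=1/(1\pm\sqrt a)$ does not exist for $a<0$, which (2) covers.
\item Even for $0<a<1/4$ it fails. With both inputs on-shell, the size of the output set $R$ is governed by how $\tau_1+\tau_2=\xi_1^3+\xi_2^3+O(1)$ varies along the fibre $\xi_1+\xi_2=\xi$. Its $\xi_1$-derivative $3(\xi_1^2-\xi_2^2)$ is $\sim N^2$ unless $\xi_1\simeq\xi_2$. At your point the $\tau$-fibres of $R$ have length $\sim N^{3/2}$, so $|R|\sim N$ rather than $N^{-1/2}$. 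The ratio is then only $N^{k-2s+3b'}$, i.e.\ failure only for $s\le k/2-3/4$.
\item The stationarity in \eqref{equation P morse} is in $\xi$ with $\xi_1$ fixed; there $p=\xi/\xi_1$, not $\xi_2/\xi_1$, so the stationary point is $\xi_2\simeq\pm\sqrt a\,\xi$. Exploiting it would force $v_1$ to carry the modulation $|\Phi_1^u|\sim N^3$. That contributes $\langle\sigma_1\rangle^{-b}$ with $b>1/2$, which is a gain, so you would only get failure for $s<k/2-3/8-\tfrac32(b-\tfrac12)$ and miss the endpoint.
\end{itemize}
This also shows your heuristic that ``every modulation factor carries an extra $N^{0^+}$ loss'' is false. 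Only the output weight $\langle\tau-a\xi^3\rangle^{b'}$, with $b'>-1/2$, produces a loss. So the endpoint cases must place the large modulation on the output. The same objection rules out your fallback in (1) of moving $R_2$ to modulation $N^3$: it would give ratio $N^{k-s-3b}$ and lose the endpoint.

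The fix is the configuration you already use in (3), and it works for every $a<1/4$. Take $\xi_1,\xi_2\in[N,N+N^{-1/2}]$ with $|\tau_j-\xi_j^3|\le 1$. Since $\xi_1^3+\xi_2^3-\xi^3/4=\frac34\xi(\xi_1-\xi_2)^2=O(1)$, all sums lie in $R=\{\xi\in[2N,2N+2N^{-1/2}],\ |\tau-\xi^3/4|\le C\}$. This set has measure $\sim N^{-1/2}$, and on it $|\tau-a\xi^3|\ge|\tfrac14-a||\xi|^3-C\sim N^3$. The trilinear form is then $\sim N^{-1}\cdot N^{1+k-2s+3b'}$, against $\|f\|_{L^2}\|f_1\|_{L^2}\|f_2\|_{L^2}\sim N^{-3/4}$. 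The ratio is $N^{k-2s+3/4+3b'}$, whose exponent exceeds $k-2s-3/4\ge 0$ whenever $s\le k/2-3/8$. No thickening of the boxes is needed; the $b'$-loss on the output alone captures the endpoint, which is exactly how the paper argues.
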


        \begin{proof}
            By duality, the inequality \eqref{equation multilinear estimate del_x(v1v2) 2} is equivalent to
            \begin{equation}\label{equation inequality duality delx(v1v2)}
                \left| \int_{\R^4} \frac{|\xi| \langle\xi\rangle^k \langle \tau-a\xi^3 \rangle^{b'}h(\xi,\tau)h_1(\xi_1,\tau_1)h_2(\xi_2,\tau_2) }{\langle\xi_1\rangle^s \langle \tau_1-\xi_1^3 \rangle^{b}\langle\xi_2\rangle^s \langle \tau_2-\xi_2^3 \rangle^{b}} \, d\tau_1 d\xi_1 d\tau_2 d\xi_2 \right| \lesssim \|h\|_{L^2} \|h_1\|_{L^2}\|h_2\|_{L^2}.
            \end{equation}
            For each case listed above, we construct a counterexample to show that \eqref{equation inequality duality delx(v1v2)} fails.
        
            \vspace{3 mm}

            \noindent\textbf{1)} $a \neq 1$ and $s \leq k-3/2$: 
            \vspace{1 mm}
            
            The obstruction $s > k-3/2$ arises in the region $\xi \simeq \xi_1$ (see Subcase A1 in the proof of Lemma \ref{lemma multilinear estimate delx(v1v2)}). Let $N \gg 1$ and define the following functions:
            $$h = \indicatrix_{[N+1,N+3](\xi)} \indicatrix_{[N^3-2,N^3](\tau)}, \quad h_1 = \indicatrix_{[N+1,N+2](\xi_1)} \indicatrix_{[N^3-2,N^3-1](\tau_1)}, \quad h_2 = \indicatrix_{[0,1](\xi_2)} \indicatrix_{[0,1](\tau_2)}. $$
            Then $\|h\|_{L^2} \sim \|h_1\|_{L^2} \sim \|h_2\|_{L^2} \sim 1$. Furthermore, in this region we have $|\tau_1 - \xi_1^3| \sim N^2$, $|\tau_2 - \xi_2^3| \sim 1$, and $|\tau - a\xi^3| \sim N^3$. Therefore, the inequality \eqref{equation inequality duality delx(v1v2)} implies
            $$N^{k+1+3b'-s-2b} \lesssim 1 \implies k-s \leq 2b-3b'-1 < \frac{3}{2} \implies s > k - \frac{3}{2}. $$

            \noindent\textbf{2)} $a<1/4$ and $s \leq k/2-3/8$:
            \vspace{1 mm}
            
            Here, the obstruction $s > k/2-3/8$ occurs in the region $\xi_1 \simeq \xi_2$ (see Subcase A2 in the proof of Lemma \ref{lemma multilinear estimate delx(v1v2)}). For $N \gg 1$, we define
            $$ h_1 = \indicatrix_{ \{ N \leq \xi_1 \leq N + N^{-1/2}, \, |\tau_1 - \xi_1^3| \leq 1 \} }, \quad h_2 = \indicatrix_{ \{ N \leq \xi_2 \leq N + N^{-1/2}, \, |\tau_2 - \xi_2^3| \leq 1 \} }. $$
            It follows that
            $$ \tau_1 + \tau_2 - \frac{\xi^3}{4} = \xi_1^3 + \xi_2^3 - \frac{\xi^3}{4} + O(1) = \frac{3}{4} \xi(\xi_1-\xi_2)^2 + O(1), $$
            where $|\xi(\xi_1-\xi_2)^2| \lesssim 1$. Thus, there exists a constant $C > 0$ such that $\left|\tau_1 + \tau_2 - \frac{\xi^3}{4}\right| \leq C$. Let
            $$h = \indicatrix_{ \left\{ 2N \leq \xi \leq 2N + 2N^{-1/2}, \, \left|\tau - \frac{\xi^3}{4}\right| \leq C \right\} }. $$
            The triangle inequality then yields
            $$|\tau - a\xi^3| \geq \left|\frac{1}{4}-a \right||\xi^3| - \left|\tau - \frac{\xi^3}{4} \right| \gtrsim N^3, $$
            which implies $|\tau - a\xi^3| \sim N^3$. Consequently, by \eqref{equation inequality duality delx(v1v2)}, we have
            $$N^{1+k+3b'-2s-1} \lesssim N^{-3/4} \implies k-2s \leq -3b' - \frac{3}{4} < \frac{3}{4} \implies s > \frac{k}{2} - \frac{3}{8}. $$

            \noindent\textbf{3)} $a=1/4$ and $s < k/2+3/8$: 
            \vspace{1 mm}
            
            The obstruction $s \geq k/2+3/8$ similarly emerges within the regime $\xi_1 \simeq \xi_2$ encountered in Case 2. Accordingly, the same reasoning applies, with the sole distinction that $|\tau - a\xi^3| \sim 1$ in view of the condition $a=1/4$.

            \vspace{3mm}
            \noindent\textbf{4)} $a > 1/4$ and $s < k/2$:
            \vspace{1 mm}
            
            The obstruction $s \geq k/2$ appears in the region $|\xi| \sim |\xi_1| \sim |\xi_2|$ (see Subcase A2 in the proof of Lemma \ref{lemma multilinear estimate delx(v1v2)}). Recalling \eqref{equation factorization phi^u a >=1/4} for $a > 1/4$, we have
            \begin{equation}\label{equation factorization}
                \xi_1^3 + \xi_2^3 - a\xi^3 = 3\xi(\xi_1 - \mu_a\xi)(\xi_1 - (1-\mu_a)\xi).
            \end{equation}
            For $N \gg 1$, let
            $$h_1 = \indicatrix_{ \{ N \leq \xi_1 \leq N + N^{-2}, \, |\tau_1 - \xi_1^3| \leq 1 \} }, \quad h_2 = \indicatrix_{ \left\{ -N + \frac{N}{\mu_a} \leq \xi_2 \leq -N + \frac{N}{\mu_a} + N^{-2}, \, |\tau_2 - \xi_2^3| \leq 1 \right\} }. $$
            Then there exists $C > 0$ such that $|\tau_1 + \tau_2 - a\xi^3| \leq C$, since 
            $$  \tau_1+\tau_2 -a\xi^3 = \xi_1^3+ \xi_2^3-a\xi^3  + O(1) = 3\xi(\xi_1-\mu_a\xi)(\xi_1-(1-\mu_a)\xi)+ O(1), $$
            where $|\xi(\xi_1-\mu_a\xi)(\xi_1-(1-\mu_a)\xi|\lesssim1$. . We define
            $$h = \indicatrix_{ \left\{ \frac{N}{\mu_a} \leq \xi \leq \frac{N}{\mu_a} + 2N^{-2}, \, |\tau - a\xi^3| \leq C \right\} } .$$
            Finally, the inequality \eqref{equation inequality duality delx(v1v2)} implies
            $$N^{1+k-2s-4} \lesssim N^{-3} \implies s \geq \frac{k}{2}. $$
        \end{proof}
        \begin{proposition}\label{proposition estimate uvx is sharp}
            The estimate
            \begin{equation}\label{equation multilinear estimate uv_x}
                \|u v_x \|_{Y^{s,b'}} \lesssim \|u\|_{X^{k,b}}\|v\|_{Y^{s,b}}
            \end{equation}
            fails if any of the following conditions is satisfied: 
            \begin{enumerate}
                \item $a \neq 1$ and $s \geq k+5/2$  
                \item $a<0$ and $k<-3/4$
                \item $0<a<1/4$ and $k\leq -3/4$
                \item $a=1/4$ and $k<3/4 $
                \item $a \in (1/4,\infty) \setminus \{1\} $ and $k<0$
            \end{enumerate}
        \end{proposition}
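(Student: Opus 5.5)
The plan is to argue by duality, exactly as in the proof of Proposition \ref{proposition estimate del_x(v_1v_2) is sharp}. Estimate \eqref{equation multilinear estimate uv_x} is equivalent to
\[
\left| \int_{\R^4} \frac{|\xi_2| \langle\xi\rangle^s \langle \tau-\xi^3 \rangle^{b'}h(\xi,\tau)h_1(\xi_1,\tau_1)h_2(\xi_2,\tau_2) }{\langle\xi_1\rangle^k \langle \tau_1-a\xi_1^3 \rangle^{b}\langle\xi_2\rangle^s \langle \tau_2-\xi_2^3 \rangle^{b}} \, d\tau_1 d\xi_1 d\tau_2 d\xi_2 \right| \lesssim \|h\|_{L^2} \|h_1\|_{L^2}\|h_2\|_{L^2}.
\]
For each item I will build characteristic functions concentrated in the frequency region that produced the corresponding restriction in the proof of Lemma \ref{lemma multilinear estimate uv_x}. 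The widths of the supports are chosen so that the total phase $\Phi^v$ is controlled on them. Comparing powers of $N\gg1$ on the two sides then gives the claimed necessary condition, with strict versus non-strict inequalities coming from $b=(1/2)^+$ and $b'=(b-1)^+$.

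\textbf{Item 1} ($s\ge k+5/2$) comes from Case C, where $\xi\simeq \xi_1\sim N$, $|\xi_2|\lesssim1$ and $|\Phi^v|\sim N^3$. I take
\[
h_1=\indicatrix_{[N,N+1]}(\xi_1)\indicatrix_{\{|\tau_1-a\xi_1^3|\le1\}}, \qquad h_2=\indicatrix_{[0,1]}(\xi_2)\indicatrix_{[0,1]}(\tau_2),
\]
and let $h$ be the indicator of the image set $\{(\xi_1+\xi_2,\tau_1+\tau_2)\}$. On that set $|\tau-\xi^3|\sim N^3$, and for fixed $\xi$ the variable $\tau$ ranges over a set of length $\sim N^2$, because $\partial_{\xi_1}\Phi^v\sim N^2$. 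Hence $\|h\|_{L^2}\sim N$, and the left-hand side is $\gtrsim N^{s-k+3b'}\cdot N^2$, since every point of the support of $h$ receives an $O(1)$-volume contribution. The inequality then forces $s-k+3b'\le 1$, i.e.\ $s-k\le 1-3b'<5/2$. So it fails once $s\ge k+5/2$.

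\textbf{Items 3–5} come from the region $|\xi|\sim|\xi_1|\sim|\xi_2|\sim N$, where $\mathscr M\sim N^{1-k}$. I localize near the stationary or resonant points of $\Phi^v$, using the relation \eqref{equation relation phi^v and phi^u} to transfer the geometry of $\Phi_1^u$.
\begin{itemize}
\item For $a>1/4$, $\Phi^v$ has a nondegenerate zero set, as in \eqref{equation factorization phi^u a >=1/4}. I take $\xi_1,\xi_2$ in intervals of length $N^{-2}$ around a resonant pair, all modulations $\le C$, and $h$ on $\xi$-length $2N^{-2}$. This mirrors item 4 of Proposition \ref{proposition estimate del_x(v_1v_2) is sharp} and gives $N^{1-k}N^{-4}\lesssim N^{-3}$, i.e.\ $k\ge0$.
\item For $a=1/4$, I use the double root near $\xi_2\simeq-\xi$, $\xi_1\simeq 2\xi$ (Subcase A1). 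Here the phase is quadratic, so I take widths $N^{-1/2}$ and $O(1)$ modulations. The norms are each $\sim N^{-1/4}$ and the integral is $\sim N^{1-k}N^{-1}$, so $N^{-k}\lesssim N^{-3/4}$, i.e.\ $k\ge 3/4$.
\item For $0<a<1/4$, I use the nondegenerate stationary point $\xi_2\simeq\pm\sqrt a\,\xi_1$ from Subcase A3. I take widths $N^{-1/2}$ as in item 2 of Proposition \ref{proposition estimate del_x(v_1v_2) is sharp}, where $|\tau-\xi^3|\sim N^3$ by \eqref{inequality phase a<1/4}. This yields $N^{-k+3b'}\lesssim N^{-3/4}$, i.e.\ $k\ge 3/4+3b'>-3/4$, so the estimate fails for $k\le -3/4$.
\end{itemize}

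\textbf{Item 2} ($a<0$, $k<-3/4$) is where I expect the main obstacle. For $a<0$ the phase has no stationary point in the comparable-frequency region (Subcase A3), so the Morse-type concentration used above is unavailable. My first attempt is to adapt the Kenig–Ponce–Vega-type counterexample for the KdV bilinear estimate at $-3/4$. I would place $u$ and $v$ at frequencies $\pm N$ with supports of width $N^{-1/2}$, and put the large modulation $\sim N^3$ on whichever factor makes the power count close. The width is tuned so that $\Phi^v$ varies by at most $O(N^{3/2})$ on the support, and the resulting constraint should read $k\ge -3/4$.

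If this does not close, the fallback is to place the large modulation on $h_1$ with $b>1/2$ instead of on $h$. A strict inequality $k<-3/4$ suffices here, so the losses from $b,b'$ being slightly off $\pm1/2$ are harmless. The remaining cases need only routine bookkeeping of widths and norms.
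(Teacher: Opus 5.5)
\textbf{The gap is item 2.} You are right that Subcase A3 has no stationary point for $a<0$, but that is not where the restriction $k\ge -3/4$ comes from. It comes from Subcase A1 ($\xi_2\simeq-\xi$, $\xi_1\simeq 2\xi$), and there the stationary-point concentration is available for every $a$. With $\xi_1$ fixed, $\partial_\xi\Phi^v=3(\xi_2^2-\xi^2)$ vanishes at $\xi_2=-\xi$ regardless of $a$, because the term $a\xi_1^3$ does not move. Equivalently, $\xi^3-\xi_2^3-\tfrac14\xi_1^3=\tfrac34\xi_1(\xi_1+2\xi_2)^2$ for $\xi=\xi_1+\xi_2$. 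So you can reuse the configuration of your own item 4:
\[
h=\indicatrix_{\{N\le\xi\le N+2N^{-1/2},\,|\tau-\xi^3|\le1\}},\quad h_2=\indicatrix_{\{-N\le\xi_2\le -N+N^{-1/2},\,|\tau_2-\xi_2^3|\le1\}},\quad h_1=\indicatrix_{\{2N\le\xi_1\le 2N+N^{-1/2},\,|\tau_1-\xi_1^3/4|\le C\}}.
\]
The only change from $a=1/4$ is that now $|\tau_1-a\xi_1^3|\sim|a-\tfrac14|N^3$. The count becomes $N^{1-k-3b}\cdot N^{-1}\lesssim N^{-3/4}$, i.e.\ $k\ge-\tfrac34-3(b-\tfrac12)$, which fails for fixed $k<-3/4$ once $b$ is close enough to $1/2$. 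This is the paper's argument.

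Your fallback names the right factor for the large modulation. For $a<0$ it is the only option in a concentrated example: putting it on $h$ or $h_2$ would need stationarity at $\xi_2^2=a\xi_1^2$ or $\xi^2=a\xi_1^2$, which is impossible for $a<0$. But you never fix the frequencies, so it is not yet a counterexample. Your first attempt would not reach the threshold. The right geometry is $u$ at $\approx 2N$ against $v$-waves at $\pm N$, not $u$ and $v$ both at $\pm N$. Moreover, if the relevant quantity spreads by $O(N^{3/2})$ over the supports, the factors must sit in slabs of that thickness, and the power count only gives about $k\ge-3/2$. The value $-3/4$ comes precisely from the $O(1)$ slab thickness that stationarity provides.

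\textbf{A smaller slip in item 1.} Fix $(\xi,\tau)$ in the image set. The constraints $|\tau_1-a\xi_1^3|\le1$ and $\tau-\tau_1\in[0,1]$ confine $\xi_1$ to an interval of length $\sim N^{-2}$. So each point of $\mathrm{supp}\,h$ receives mass $O(N^{-2})$, not $O(1)$. The left-hand side is therefore $\sim N^{s-k+3b'}$, namely the weight times $|\mathrm{supp}\,h_1|\,|\mathrm{supp}\,h_2|$. Taken literally, your bound $N^{s-k+3b'+2}$ together with $\|h\|_{L^2}\sim N$ would give $s-k\le-1-3b'\approx\tfrac12$, contradicting Lemma \ref{lemma multilinear estimate uv_x}. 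With the corrected count you get exactly the inequality $s-k\le1-3b'<5/2$ that you wrote.

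With that fix, your item 1 is a valid variant of the paper's. The paper puts modulation $N^3$ on $u$ and $N^2$ on the output, obtaining $s-k\le 3b-2b'$, and so needs $b-\tfrac12<2(b'-b+1)$. Your version is slightly cleaner, since $1-3b'<5/2$ holds for every admissible $b'$. Items 3--5 coincide with the paper's constructions.
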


        \begin{proof} By duality, the estimate \eqref{equation multilinear estimate uv_x} is equivalent to
            \begin{equation}\label{equation inequality duality uvx}
                \left| \int \frac{|\xi_2| \langle\xi\rangle^s \langle \tau-\xi^3 \rangle^{b'}h(\xi,\tau)h_1(\xi_1,\tau_1)h_2(\xi_2,\tau_2) }{\langle\xi_1\rangle^k \langle \tau_1-a\xi_1^3 \rangle^{b}\langle\xi_2\rangle^s \langle \tau_2-\xi_2^3 \rangle^{b}}d\tau_1d\xi_1 d\tau_2 d\xi_2 \right| \lesssim \|h\|_{L^2} \|h_1\|_{L^2}\|h_2\|_{L^2}.
            \end{equation}
            
            \noindent\textbf{1)} $a\neq 1$ and $s \geq k+5/2$: 
            \vspace{1 mm}
            
            The obstruction $s < k+5/2$ arises in the region $\xi \simeq \xi_1$ (see Case C in the proof of Lemma \ref{lemma multilinear estimate uv_x}). Let $N\gg 1$ and define
            $$h = \indicatrix_{[N+2,N+4](\xi)} \indicatrix_{[N^3-2,N^3](\tau)}, \quad h_1 = \indicatrix_{[N+1,N+2](\xi_1)} \indicatrix_{[N^3-1,N^3](\tau_1)}, \quad h_2 = \indicatrix_{[1,2](\xi_2)} \indicatrix_{[-1,0](\tau_2)}. $$

            Since $a \neq 1$, we have $|\tau_1-a\xi_1^3| \sim N^3$, and thus the inequality \eqref{equation inequality duality uvx} implies
            $$N^{s+2b'-k-3b}\lesssim 1 \implies s-k \leq 2(b-b') + b = -2\epsilon+\delta + \frac{5}{2},$$
            where $b= \frac{1}{2}+ \delta$ and $b' = b-1+\epsilon$. By choosing $\delta < 2\epsilon$, we conclude that $s-k < 5/2$.

            \vspace{3mm}
            \noindent\textbf{2)} $a<0$ and $k<-3/4$: 
            \vspace{1 mm}
            
            The obstruction $k \geq -3/4$ occurs in the region $\xi_1 \simeq 2\xi$ (equivalently, $\xi_2 \simeq -\xi$; see Subcase A1 in the proof of Lemma \ref{lemma multilinear estimate uv_x}). For $N\gg 1$, we define
            $$h = \indicatrix_{ \left\{(\xi,\tau) \in \R^2; \hspace{2mm} N \leq \xi \leq N+ 2N^{-1/2}, \hspace{2mm}  |\tau -\xi^3| \leq 1\right\}}, \quad h_2 =\indicatrix_{ \left\{(\xi_2,\tau_2) \in \R^2; \hspace{2mm} -N \leq \xi_2 \leq -N+ N^{-1/2}, \hspace{2mm}  |\tau_2 -\xi_2^3| \leq 1\right\}}. $$
            
            Then, there exists $C>0$ such that $|\tau-\tau_2 - \xi_1^3/4| \leq C$, since
            $$  \tau-\tau_2 -\frac{\xi_1^3}{4} = \xi^3-\xi_2^3-\frac{\xi_1^3}{4}  + O(1) =\frac{3}{4} \xi_1(\xi_1+2\xi_2)^2 + O(1), $$
            where $|\xi_1(\xi_1+2\xi_2)^2| \lesssim 1$. Consequently, we define
            $$h_1 =\indicatrix_{ \left\{(\xi_1,\tau_1) \in \R^2; \hspace{2mm} 2N \leq \xi_1 \leq 2N+ N^{-1/2}, \hspace{2mm}  \left|\tau_1 -\frac{\xi_1^3}{4}\right| \leq C\right\}}. $$
            
            By the triangle inequality, $|\tau_1-a\xi_1^3| \sim N^3$, given that $a \neq 1/4$. Thus, \eqref{equation inequality duality uvx} implies
            $$N^{1+s-k-3b-s-1} \lesssim N^{-3/4} \implies k \geq -\frac{3}{4}.$$

            \vspace{3mm}
            \noindent\textbf{3)} $0<a<1/4$ and $k\leq -3/4$: 
            \vspace{1 mm}
            
            The obstruction $k > -3/4$ emerges in the region $\xi_2 \simeq \pm \sqrt{a}\xi_1$ (cf. Subcase A3 in the proof of Lemma \ref{lemma multilinear estimate uv_x}). Let $r = \sqrt{a}$ and 
            \[
            h_1 = \indicatrix_{ \left\{(\xi_1,\tau_1) \in \R^2; \hspace{2mm} N \leq \xi_1 \leq N+ N^{-1/2}, \hspace{2mm}  |\tau_1 -a\xi_1^3| \leq 1\right\}}, \quad h_2 =\indicatrix_{ \left\{(\xi_2,\tau_2) \in \R^2; \hspace{2mm} rN \leq \xi_2 \leq rN+ N^{-1/2}, \hspace{2mm}  |\tau_2 -\xi_2^3| \leq 1\right\}}.
            \]
            Setting $A = \frac{r^2}{(1+r)^2}$, we observe that
            \[ \tau -A\xi^3 = \tau_1+\tau_2-A\xi^3 = a \xi_1^3+\xi_2^3- A(\xi_1+\xi_2)^3+ O(1) = \frac{(\xi_2-r\xi_1)^2}{(1+r)^2}(r^2\xi_1+2r\xi_1+(2r+1)\xi_2)+ O(1).\]
            Hence, there exists $C>0$ such that $|\tau -A\xi^3|\leq C$. Accordingly, define
            \[ h = \indicatrix_{ \left\{(\xi,\tau) \in \R^2; \hspace{2mm} (1+r)N \leq \xi \leq (1+r)N+ 2N^{-1/2}, \hspace{2mm}  |\tau -\xi^3| \leq C\right\}}. \]
            Furthermore, $A \neq 1$ provided $a < 1/4$, which implies $|\tau-\xi^3| \sim N^3$. Consequently, inequality \eqref{equation inequality duality uvx} yields
            \[N^{1+3b'-k-1} \lesssim N^{-3/4} \implies k \geq 3b' + \frac{3}{4} > -\frac{3}{4}.\]

            \vspace{3mm}
            \noindent\textbf{4)} $a=1/4$ and $k < 3/4$: 
            \vspace{1 mm}
            
            As in Case 2, the constraint $k \geq 3/4$ appears within the regime $\xi_1 \simeq 2\xi$ (cf. Subcase A1 in the proof of Lemma \ref{lemma multilinear estimate uv_x}). Accordingly, the reasoning is identical, except that here $|\tau_1 - a\xi_1^3| \sim 1$ in view of the condition $a=1/4$.

            \vspace{3mm}
            \noindent\textbf{5)} $a>1/4$ and $k < 0$:
            \vspace{1 mm}
            
            The obstruction $k \geq 0$ occurs in Subcase A3 in the proof of Lemma \ref{lemma multilinear estimate uv_x}. Combining \eqref{equation relation phi^v and phi^u} with \eqref{equation factorization phi^u a >=1/4}, we obtain
            \begin{equation}\label{equation factorization Phi^v for a>1/4}
                a\xi_1^3+ \xi_2^3-\xi^3  = -3\xi_1 (\xi-\mu_a\xi_1)(\xi-(1-\mu_a)\xi_1). 
            \end{equation} 
            
            Therefore, following the reasoning in Case 4 of Proposition \ref{proposition estimate del_x(v_1v_2) is sharp}, the inequality \eqref{equation inequality duality uvx} implies
            $$N^{1-k-4} \lesssim N^{-3} \implies k \geq 0.$$
            
        \end{proof}
    
\section{Normal form reduction}\label{section normal form}

    In this section, we apply integration by parts to equations \eqref{equation classical u} and \eqref{equation classical v} to define the notion of integrated-by-parts strong solutions. Throughout the remainder of this paper, we assume $\beta = \gamma = \theta = 1$ without loss of generality. The profiles are defined as follows:

    \begin{equation}\label{equation profile}
        \tilde{u}(t) = e^{-ita\xi^3}\F_xu(t) \quad \text{ and } \quad \tilde{v}(t) =  e^{-it\xi^3}\F_xv(t).
    \end{equation}
    
    Consequently, we have
    \begin{equation}\label{equation profile u}
        \tilde{u}(t,\xi) = \tilde{u}(0,\xi) +  \int_0^t\int_{\xi= \xi_1+\xi_2} i\xi e^{it'\Phi_1^u }\tilde{v}_1\tilde{v}_2 d\xi_1dt' + \int_0^t\int_{\xi= \xi_1+\xi_2} i\xi e^{it'\Phi_2^u }\tilde{u}_1\tilde{u}_2 d\xi_1dt',    
    \end{equation}
    \begin{equation}\label{equation profile v}
        \tilde{v}(t,\xi) = \tilde{v}(0,\xi) +  \int_0^t\int_{\xi= \xi_1+\xi_2} i\xi_2 e^{it'\Phi^v }\tilde{u}_1\tilde{v}_2 d\xi_1dt',
    \end{equation}
    where we adopt the shorthand notation $\tilde{v}_j=\tilde{v}(t,\xi_j)$ and $\tilde{u}_j=\tilde{u}(t,\xi_j)$. The associated total phases are given by
    \begin{equation}\label{equation Phi_2^u}
        \Phi_1^u = -a\xi^3 + \xi_1^3+\xi_2^3, \quad \Phi_2^u = a(-\xi^3 + \xi_1^3+\xi_2^3), \quad \Phi^v = - \xi^3 + a \xi_1^3 + \xi_2^3.
    \end{equation}
    
    We perform integration by parts in time on the coupled terms over regions where problematic obstructions arise and the phase is large relative to the integrand's numerator. This procedure allows us to improve the local well-posedness results by addressing specific obstructions (see Table \ref{table problematic regions}). Assuming $u, v \in \mathscr{S} (\R\times\R )$, we first consider the equation for $u$ and decompose the coupled term as follows:
    \begin{equation}\label{equation N_0^u}
        \int_0^t\int_{\xi= \xi_1+\xi_2} i\xi e^{it'\Phi_1^u }\tilde{v}_1\tilde{v}_2 d\xi_1dt' =  \int_0^t\int_{U} i\xi e^{it'\Phi_1^u }\tilde{v}_1\tilde{v}_2 d\xi_1dt' + \int_0^t\underbrace{\int_{U^c} i\xi e^{it'\Phi_1^u }\tilde{v}_1\tilde{v}_2 d\xi_1}_{=:N^u_0[v,v]} dt',
    \end{equation}   
    where $U$ is referred to as the \textbf{problematic region}, as it contains the obstruction we aim to adress. The set $U$ depends on a parameter $\delta^u>0$, which will be chosen suitably in the proof of local well-posedness. Integrating the first term by parts in time yields:
    $$ \int_0^t\int_{U} i\xi e^{it'\Phi_1^u }\tilde{v}_1\tilde{v}_2 d\xi_1dt' = \left.\int_{U}  \frac{\xi e^{it'\Phi_1^u }}{\Phi_1^u}\tilde{v}_1\tilde{v}_2 d\xi_1\right|_{t'=0}^{t'=t} \underbrace{-\int_0^t\int_{U}  \frac{\xi e^{it'\Phi_1^u }}{\Phi_1^u}\partial_{t'}(\tilde{v}_1\tilde{v}_2) d\xi_1dt'}_{=:J}. $$
    
    We denote the boundary term by 
    \begin{equation}\label{equation B^u}
        B^u[v,v](t) \coloneqq \left.\int_{U}  \frac{\xi e^{it'\Phi_1^u }}{\Phi_1^u}\tilde{v}_1\tilde{v}_2 d\xi_1\right|_{t'=t}.
    \end{equation}
    Differentiating \eqref{equation profile v} with respect to time, we find:
    $$\partial_{t'}\tilde{v}(t') =  \int_{\xi= \xi_1+\xi_2} i\xi_2 e^{it'\Phi^v }\tilde{u}_1\tilde{v}_2 d\xi_1 \implies \partial_{t'}\tilde{v}_1 (t') =  \int_{\xi_1= \xi_{11}+\xi_{12}} i\xi_{12} e^{it'\Phi^{v_1} }\tilde{u}_{11}\tilde{v}_{12} d\xi_{11}. $$
    Substituting this into $J$ leads to: 
    $$J = -i\int_0^t\int_{U, \xi_1 = \xi_{11}+\xi_{12}}  \frac{\xi\xi_{12} e^{it'\Psi_1^u } }{\Phi_1^u}\tilde{u}_{11}\tilde{v}_{12} \tilde{v}_2d\xi_1d\xi_{11}dt' -i\int_0^t\int_{U, \xi_2=  \xi_{21}+\xi_{22}}  \frac{\xi\xi_{22} e^{it'\Psi_2^u } }{\Phi_1^u}\tilde{v}_{1}\tilde{u}_{21} \tilde{v}_{22}d\xi_1d\xi_{21}dt' $$
    \begin{equation}\label{equation N_1^u and N_2^u}
        =\int_0^t(N_1^u[u, v,v] + N_2^u[v, u,v])dt',
    \end{equation}
    where
    \begin{equation}\label{equation psi_1^u and psi_2^u}
        \Psi_1^u \coloneqq \Phi_1^u + \Phi^{v_1}  = - a\xi^3 + \xi_2^3+ a \xi_{11}^3 + \xi_{12}^3 \hspace{2mm} \text{ and } \hspace{2mm} \Psi_2^u \coloneqq \Phi_1^u + \Phi^{v_2} = - a\xi^3 + \xi_1^3+ a \xi_{21}^3 + \xi_{22}^3.
    \end{equation}

    In addition, we let $N_3^u[u,u](t) =  e^{-iat\xi^3}\F_x (\partial_x(u^2)) .$ We now apply an analogous procedure to $v$. Let $V$ be the problematic region for $v$ depending on a parameter $\delta^v>0$. It follows that:    
    $$\int_0^t\int_{\xi= \xi_1+\xi_2} i\xi_2 e^{it'\Phi^v }\tilde{u}_1\tilde{v}_2 d\xi_1dt' =   \int_0^t\int_{V^c} i\xi_2 e^{it'\Phi^v }\tilde{u}_1\tilde{v}_2 d\xi_1dt' + \left.\int_{V} \frac{\xi_2 e^{it'\Phi^v }}{ \Phi^v}\tilde{u}_1\tilde{v}_2 d\xi_1 \right|_{t'=0}^{t'=t}$$
   $$- i\int_0^t\int_{V,\xi_1 = \xi_{11}+ \xi_{12}} \frac{\xi_1\xi_2 e^{it'\Psi_1^v }}{ \Phi^v}\tilde{v}_{11}\tilde{v}_{12}\tilde{v}_{2} d\xi_{11}d\xi_1dt' -i\int_0^t\int_{V,\xi_1 = \xi_{11}+ \xi_{12}} \frac{\xi_1\xi_2 e^{it'\Psi_2^v }}{ \Phi^v}\tilde{u}_{11}\tilde{u}_{12}\tilde{v}_{2} d\xi_{11}d\xi_1dt' $$
    $$ -i\int_0^t\int_{V,\xi_2 = \xi_{21}+ \xi_{22}} \frac{\xi_2\xi_{22} e^{it'\Psi_3^v }}{ \Phi^v}\tilde{u}_1\tilde{u}_{21}\tilde{v}_{22}d\xi_{21}d\xi_1dt' $$
    \begin{equation}\label{equation N_j^v}
        =\int_0^tN_0^v[u,v]dt'+ \left.B^v[u,v](t')\right|_{t'=0}^{t'=t}+\int_0^t( N_1^v[v, v,v] + N_2^v[u, u,v]+N_3^v[u, u,v])dt',
    \end{equation}
    where
    \begin{equation}\label{equation psi^v}
        \Psi_1^v = -\xi^3+ \xi_2^3 + \xi_{11}^3 + \xi_{12}^3, \quad \Psi_2^v = -\xi^3+ \xi_2^3 + a\xi_{11}^3 + a\xi_{12}^3, \quad \Psi_3^v  = -\xi^3+ a\xi_1^3 +a\xi_{21}^3 + \xi_{22}^3.
    \end{equation}

    Finally, we obtain:
    \begin{equation}\label{equation IBP profile u}
        \tilde{u}(t,\xi) = \tilde{u}(0,\xi) +\left. B^u(t')\right|_{t'=0}^{t'=t}+\sum_{j=0}^{3} \int_0^tN_j^u(t')dt',    
    \end{equation}
    \begin{equation}\label{equation IBP profile v}
        \tilde{v}(t,\xi) = \tilde{v}(0,\xi) +  \left. B^v(t')\right|_{t'=0}^{t'=t}+\sum_{j=0}^{3} \int_0^tN_j^v(t')dt'.
    \end{equation}

    \begin{definition}[Integrated-by-parts strong solution]\label{definition IBPS} 
    Given initial data $(u_0,v_0) \in H^k(\R) \times H^s(\R)$, we say that $(u,v) \in C([0,T];H^k(\R) \times H^s(\R))$ is an \textbf{integrated-by-parts strong solution} (IBPS) to the system \eqref{equation hirota system} if there exist $\delta^u,\delta^v>0$ such that the profiles $(\tilde{u}, \tilde{v})$ satisfy either \eqref{equation profile u}--\eqref{equation IBP profile v} or \eqref{equation IBP profile u}--\eqref{equation profile v} for all $t \in [0,T]$.
    \end{definition}

    \begin{table}[ht]
        \centering
        \begin{tabular}{|c|c|c|c|}
        \hline
        Values of $a$ & Obstruction & Problematic region & Phase \\
        \hline
        $(-\infty, 1/4) \setminus \{0\}$&$s>k/2 -3/8$ & $U_1 = \{\xi_1 \in \R;\xi_1\simeq \xi_2 \text{ and } |\xi| >1/\delta^u  \}$ & $|\Phi^u_1|\sim |\xi|^3$ \\
        \hline
         $\R\setminus\{0,1\}$ & $s>k -3/2$ & $U_2 = \{\xi_1 \in \R;(\xi\simeq \xi_1 \text{ or } \xi\simeq \xi_2 ) \text{ and } |\xi| >1/\delta^u  \}$ & $|\Phi^u_1|\sim |\xi|^3 $ \\
        \hline
        $\R\setminus\{0,1\}$&$s<k+ 5/2$ & $V = \{\xi_1 \in \R;\xi\simeq \xi_1 \text{ and } |\xi|>1/\delta^v  \}$ & $|\Phi^v|\sim |\xi|^3$ \\
        \hline
        \end{tabular}
        \caption{Problematic regions.}
        \label{table problematic regions}
    \end{table}

    The problematic region $V$ is specified in Table \ref{table problematic regions}, while the set $U$ is defined as:
    \begin{equation}\label{equation definition U}
        U = 
        \begin{cases}
            U_1 \cup U_2, &  \text{ if } a \in (-\infty, 1/4) \setminus \{0\}, \\
            U_2, &  \text{ if } a \in [1/4, \infty) \setminus \{1\}, 
        \end{cases}
    \end{equation}
    where $U_1$ and $U_2$ are likewise detailed in the same table.

\section{Well-posedness in \texorpdfstring{$\A_a$}{A\_a}}\label{section well posedness in A_a}

    In this section, we extend the local theory to the region $\A_a$ within the framework of integrated-by-parts strong solutions. After establishing the necessary estimates, we are then able to complete the local well-posedness theory.

    \subsection{Multilinear estimates for the integral terms}\label{section multilinear estimates}
    
        In this subsection, we prove the multilinear estimates for the integral terms in equations \eqref{equation IBP profile u} and \eqref{equation IBP profile v}. For the quadratic integral terms, the procedure mirrors that of Subsection \ref{section bilinear estimates}. For the remaining higher-order integral terms, we invoke Lemma \ref{lemma Schur's test with weight}.
            
        Typically, the set $A$ in Lemma \ref{lemma Schur's test with weight} can be chosen such that the derivative of the total phase does not vanish simultaneously on both sides of the interpolation. Under this condition, we apply Lemma \ref{leminha}. Otherwise, we must utilize Lemma \ref{lemma Schur's test with weight}, either by performing a change of variables to the total phase when its derivative is large or by applying Morse's Lemma. However, note that Lemma \ref{lemma Schur's test with weight} does not exploit the potential magnitude of the total phase $|\Phi|$, whereas Lemma \ref{lemma CHS phase comparable to alpha} does. This distinction is crucial in the proofs of Lemmas \ref{Lemma N_1^u and N_2^u} and \ref{lemma N_3^v}, where a case analysis for $s > -k-1$ and $s \leq -k-1$ is required. Indeed, a direct application of Lemma \ref{lemma Schur's test with weight} imposes the restriction $s \leq -k-1$; however, the fact that the phase is large allows us to relax this condition via Lemma \ref{lemma CHS phase comparable to alpha}.
    
        We denote the nonlinear terms in physical space by 
        $$\Nl^u_j = \F_x^{-1}( e^{ita\xi^3}N_j^u), \quad \Nl^v_j  = \F_x^{-1}( e^{it\xi^3}N_j^v) \quad \text{for } 0\leq j \leq 3.$$

        Throughout this section, we assume $a \in \R \setminus \{0,1\}$. Recall the definition of $\A_a$ in \eqref{equation definition A_a} and the previously fixed values $b = (1/2)^+$ and $b' = (b-1)^+$.
        
        \begin{lemma}\label{lemma N_0^u}
            For $(k,s) \in \A_a$,
            $$\| \Nl_0^u[v_1,v_2]\|_{X^{k,b'}} \lesssim \|v_1\|_{Y^{s,b}}\|v_2\|_{Y^{s,b}}.$$
        \end{lemma}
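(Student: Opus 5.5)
The plan is to repeat the frequency-restricted analysis of Lemma \ref{lemma multilinear estimate delx(v1v2)}, now for the multiplier $m=\xi\,\indicatrix_{U^c}$. The total phase is still $\Phi_1^u$, and we set $\M=|\xi|\langle\xi\rangle^k\langle\xi_1\rangle^{-s}\langle\xi_2\rangle^{-s}\indicatrix_{U^c}$. We then verify the hypotheses of Lemma \ref{lemma CHS} or Lemma \ref{lemma CHS phase comparable to alpha} region by region. The point is that the constraints $s>k-3/2$ and (for $a<1/4$) $s>k/2-3/8$ in that lemma come exactly from the sets $U_2$ and $U_1$ of Table \ref{table problematic regions}. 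Once these are removed, only the weaker constraints defining $\A_a$ should survive. As before, we assume by symmetry that $|\xi_1|\ge|\xi_2|$ and $|\xi_1|\gtrsim1$. We use the same case split (Case A1, A2, B), and the cutoff $\indicatrix_{U^c}$ is a harmless indicator in each integral.

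\textbf{Subcase A1} ($|\xi|\gg|\xi_2|$, so $\xi\simeq\xi_1$). Here we are inside $U_2$ unless $|\xi|\le1/\delta^u$. Since $|\xi|\sim|\xi_1|\gtrsim|\xi_2|$, all frequencies are then bounded by a constant depending on $\delta^u$. Lemma \ref{lemma CHS} applies trivially, with the only cost being an implicit constant depending on $\delta^u$. This removes the condition $s>k-3/2$ entirely.

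\textbf{Case B} ($|\xi|\ll|\xi_1|$, so $\xi_2\simeq-\xi_1$). This case is not excised, so it must be rechecked under the weaker hypotheses. We have $|\Phi_1^u|\sim|\xi\xi_1^2|$ and $|\partial_{\xi_1}\Phi_1^u|\sim|\xi\xi_1|$. After the change of variables it suffices to have
\[
|\xi|\langle\xi\rangle^{2k}|\xi_1|^{-4s-1}\lesssim(|\xi||\xi_1|^2)^{1^-}.
\]
If $k\ge0$ we use $|\xi|\lesssim|\xi_1|$, and the requirement becomes $s>k/2-3/4$. If $k<0$, the requirement becomes $s>-3/4$. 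Both hold in $\A_a$: for $a\ge1/4$ this follows from $s\ge k/2$ and $k\ge0$. The symmetric fixed-variable checks go through as in the original proof, because $|\partial_\xi\Phi_1^u|\sim|\xi_2|^2$.

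\textbf{Subcase A2} ($|\xi|\sim|\xi_1|\sim|\xi_2|$). For $a\ge1/4$ nothing changes: the original argument needs only $s\ge k/2$ (respectively $s\ge k/2+3/8$ when $a=1/4$), and these are part of $\A_a$. For $a<1/4$ we are outside $U_1$ once $|\xi|>1/\delta^u$, so $\xi_1\not\simeq\xi_2$; and $\xi_1\simeq-\xi_2$ is impossible because $|\xi|\sim|\xi_1|$. Hence $|\partial_{\xi_1}\Phi_1^u|=3|\xi_1^2-\xi_2^2|\gtrsim|\xi|^2$ with $\xi$ fixed. The stationary (Morse) part of the earlier argument, which produced $s>k/2-3/8$, is therefore no longer needed. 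Using $|\Phi_1^u|\sim|\xi|^3\lesssim|\alpha|$ as in \eqref{inequality calculations derivative large}, we get $I_1\lesssim\langle\alpha\rangle^{(2k-4s)/3}M$, which is acceptable since $s>k/2-3/4$. The step I expect to need the most care is the same bound with $\xi_1$ or $\xi_2$ fixed, as required by \eqref{inequality CHS alpha xi_j}. There $\partial_\xi\Phi_1^u$ may vanish near $\xi_2\simeq\pm\sqrt a\,\xi$ (when $a>0$), so the cheap change of variables is unavailable. I would apply Morse's Lemma as in \eqref{inequality calculations derivative small} to get $|\xi_1|^{3/2+2k-4s}M^{1/2}$. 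This is at most $\langle\alpha\rangle M^{1/2}$, which is acceptable in \eqref{inequality CHS alpha xi_j}, as long as $3/2+2k-4s\le3$, i.e.\ $s\ge k/2-3/8$. If this fails in the strip $k/2-3/4<s<k/2-3/8$, I would instead rely on Lemma \ref{lemma CHS phase comparable to alpha} only through \eqref{inequality CHS alpha xi} with $\xi$ fixed. I would then recover the other supremum by noting that near $\xi_2\simeq\pm\sqrt a\,\xi$ the derivative $\partial_{\xi_1}\Phi_1^u$ is large, provided $\xi_1\not\simeq\xi_2$ there. Finally, combining all cases with Lemmas \ref{lemma CHS} and \ref{lemma CHS phase comparable to alpha} gives the estimate.
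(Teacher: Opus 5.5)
Your route is the paper's own. You restrict the frequency analysis of Lemma \ref{lemma multilinear estimate delx(v1v2)} to $U^c$, note that Subcase A1 is removed by $U_2$, and reuse Case B and Subcase A2. Your treatment of A1, of Case B (where you correctly find that only $s>\max\{-\tfrac34,\tfrac k2-\tfrac34\}$ is needed), and of A2 for $a<0$ and $a\ge 1/4$ is correct.

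The gap is the step you flagged: $0<a<1/4$ with $\tfrac k2-\tfrac34<s<\tfrac k2-\tfrac38$. Your fallback does not close it, for two reasons.
\begin{itemize}
\item Lemma \ref{lemma CHS phase comparable to alpha} needs both \eqref{inequality CHS alpha xi} and \eqref{inequality CHS alpha xi_j}; you cannot verify only the first.
\item With $\xi_1$ fixed you integrate in $\xi$, so the relevant derivative is $\partial_\xi\Phi_1^u=3(\xi_2^2-a\xi^2)$, which vanishes at $\xi_2=\pm\sqrt a\,\xi$. That $\partial_{\xi_1}\Phi_1^u$ is large at fixed $\xi$ is what you already used for \eqref{inequality CHS alpha xi}; it says nothing about this integral.
\end{itemize}
Your Morse bound $|\xi_1|^{3/2+2k-4s}M^{1/2}$ is sharp, so the condition $s\ge \tfrac k2-\tfrac38$ it produces is genuine.

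In fact the estimate itself fails in this strip, so no rearrangement of the FREs can help. Let $r=\sqrt a$. In \eqref{equation inequality duality delx(v1v2)} take
\[
h=\indicatrix_{\{N\le\xi\le N+N^{-1/2},\,|\tau-a\xi^3|\le1\}},\qquad h_2=\indicatrix_{\{-rN-N^{-1/2}\le\xi_2\le -rN,\,|\tau_2-\xi_2^3|\le1\}}.
\]
Writing $\xi_2=-r\xi+e$ with $|e|\le N^{-1/2}$, one finds
\[
a\xi^3-\xi_2^3=\frac{r^2}{(1+r)^2}\,\xi_1^3+\frac{3r}{(1+r)^2}\,\xi_1e^2+\frac{r-1}{(1+r)^2}\,e^3 .
\]
Hence $\tau_1=\tau-\tau_2$ lies within $O(1)$ of $\frac{r^2}{(1+r)^2}\xi_1^3$, with $\xi_1$ in an interval of length $O(N^{-1/2})$ around $(1+r)N$. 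Let $h_1$ be the indicator of this strip. Then:
\begin{itemize}
\item $|\tau-a\xi^3|,\,|\tau_2-\xi_2^3|\lesssim1$ while $|\tau_1-\xi_1^3|\sim N^3$;
\item all three $L^2$ norms are $\sim N^{-1/4}$;
\item the configuration lies in $U^c$, since $\xi_1,\xi_2$ have opposite signs and $|\xi_2|\approx r|\xi|$ is not $\ll|\xi|$.
\end{itemize}
This is the counterexample of Proposition \ref{proposition estimate uvx is sharp}(3), transported by \eqref{equation relation phi^v and phi^u}. The estimate would force $N^{k-2s-3b}\lesssim N^{-3/4}$, i.e.\ $k-2s\le 3b-\tfrac34$. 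For $b=(1/2)^+$ this fails at, e.g., $(k,s)=(1,0)\in\A_a$.

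The paper's one-line proof passes over the same point. Its Morse step in Subcase A2(ii) sits at $p=1/(1\pm\sqrt a)$ in \eqref{equation P morse}, i.e.\ at $\xi_2=\mp\sqrt a\,\xi$, which $U_1$ does not remove. So your argument, like the paper's, proves the lemma only for $a<0$, for $a\ge1/4$, or for $s\ge \tfrac k2-\tfrac38$.

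Covering all of $\A_a$ for $0<a<1/4$ would require enlarging $U$ by cones around $\xi_2\simeq\pm\sqrt a\,\xi$ and $\xi_1\simeq\pm\sqrt a\,\xi$, where still $|\Phi_1^u|\sim|\xi|^3$. The cubic estimates of Lemma \ref{Lemma N_1^u and N_2^u} and the boundary-term bounds would then have to be redone on those cones.
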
 

        \begin{proof}
            The definition of $N_0^u$ is in \eqref{equation N_0^u}. For $a \in (-\infty, 1/4) \setminus \{0\}$, the complement of the problematic region is
            $$(U_1 \cup U_2)^c = \{|\xi|\lesssim1 \text{ or }( \,  |\xi_1| \sim |\xi_2| ; \, \xi_1 \not \simeq \xi_2 )\}.$$

            For $a \in (1/4, \infty) \setminus \{1\}$, this region is
            $$U_2^c = \{|\xi|\lesssim1 \text{ or }|\xi_1| \sim |\xi_2| \}.$$
            
            The required bounds follow from the arguments presented in the proof of Lemma \ref{lemma multilinear estimate delx(v1v2)}, specifically in Subcase A2 and Case B.                 
        \end{proof}

        \begin{lemma}\label{lemma N_0^v}
            For $(k,s) \in \A_a$,
            \begin{equation}\label{estimate N_0^v}
                \| \Nl_0^v[u,v]\|_{Y^{s,b'}} \lesssim \|u\|_{X^{k,b}}\|v\|_{Y^{s,b}}
            \end{equation}
        \end{lemma}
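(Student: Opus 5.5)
The estimate for $N_0^v$ involves only the frequency interactions lying outside the problematic region $V = \{\xi \simeq \xi_1,\ |\xi| > 1/\delta^v\}$. So I would reduce it to the case analysis already carried out in the proof of Lemma \ref{lemma multilinear estimate uv_x}, and check that each case surviving in $V^c$ holds for all $(k,s)\in\A_a$, not merely for $(k,s)\in\A_a^0$. Concretely, $V^c = \{|\xi|\lesssim 1\} \cup \{\xi\not\simeq\xi_1\}$. Inside $V^c$, the only part of Case C of that proof that survives is the bounded-frequency part, handled directly by Lemma \ref{lemma CHS}. Case C was the sole source of the upper restriction $s<k+5/2$, and $\A_a$ only asks for the weaker $s<k+3$, so excising Case C removes the upper restriction entirely. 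What remains are Case A (Subcases A1--A3) and Case B.

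The key observation is that the remaining cases impose no constraint tying $s$ to $k$ beyond a lower bound on $k$. In Subcases A1, A2 and A3 the multiplier is $\M\sim|\xi_2|\langle\xi_1\rangle^{-k}$, which does not involve $s$ at all. The conditions obtained there are
\begin{itemize}[label=--]
\item $k\ge -3/4$ for $a<0$;
\item $k>-3/4$ for $0<a<1/4$, coming from the Morse-lemma computation near $\xi_2\simeq\pm\sqrt a\,\xi_1$;
\item $k\ge 3/4$ for $a=1/4$;
\item $k\ge 0$ for $a>1/4$.
\end{itemize}
These are exactly the $k$-constraints built into the definition \eqref{equation definition A_a} of $\A_a$, so these subcases transfer verbatim, via Lemmas \ref{lemma CHS} and \ref{lemma CHS phase comparable to alpha}.

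The main check is Case B, $|\xi_2|\gg|\xi|$, where $\M\sim|\xi_1|^{1-k-s}\langle\xi\rangle^{s}$ and $|\Phi^v|\sim|\xi_1|^3$. In Lemma \ref{lemma multilinear estimate uv_x} this used $s>-k-3/2$, which holds on $\A_a^0$. On $\A_a$ it must be re-verified from the constraints $k>-3/4$ and $s>-3/4$ (or $k\ge0$ and $s\ge k/2$ when $a\ge1/4$), which give $k+s>-3/2$ in every regime. If $s\ge0$, then $\langle\xi\rangle^{2s}\le|\xi_1|^{2s}$, and the integrand after the change of variables $d\xi_1\to d\Phi^v$ (with Jacobian $\sim|\xi_1|^2$) is $\lesssim|\xi_1|^{-2k}\lesssim|\xi_1|^{3^-}\lesssim\langle\alpha\rangle^{1^-}$, since $k>-3/2$. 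If $s<0$, one bounds $\langle\xi\rangle^{2s}\le1$ and needs $-2k-2s<3$, which is again $k+s>-3/2$. The fixed-$\xi_1$ and fixed-$\xi_2$ versions use $|\partial_\xi\Phi^v|\sim|\xi_1|^2$ in the same way, so Lemma \ref{lemma CHS phase comparable to alpha} applies.

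The main obstacle I expect is bookkeeping rather than analysis. I must make sure the splitting $V/V^c$ does not leave a sliver with $\xi\simeq\xi_1$ and $|\xi|$ large but with $|\xi_2|$ not small. This cannot happen, since $\xi\simeq\xi_1$ forces $|\xi_2|\ll|\xi|$. I must also make sure that no lower bound on $s$ such as $s>k-2$ is secretly needed; none is, because the $v$-equation multiplier decouples $s$ in all of Case A.
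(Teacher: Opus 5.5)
Your proposal is correct and follows the paper's proof, which likewise identifies $V^c=\{|\xi|\lesssim 1 \text{ or } |\xi_2|\gtrsim|\xi|\}$ with Cases A and B of Lemma~\ref{lemma multilinear estimate uv_x}. Removing Case C removes the obstruction $s<k+5/2$. Your explicit check that the surviving constraints (the $k$-thresholds from Case A, and $k+s>-3/2$ from Case B) hold on all of $\A_a$ supplies a verification that the paper leaves implicit.
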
 

        \begin{proof}
            The definition of $N_0^v$ is provided in \eqref{equation N_j^v}. We have
            $$V^c = \{|\xi|\lesssim1 \text{ or }|\xi_2| \gtrsim |\xi|\},$$
            which corresponds to Cases A and B in the proof of Lemma \ref{lemma multilinear estimate uv_x}.
        \end{proof}

        \begin{lemma}\label{Lemma N_1^u and N_2^u}
            Suppose $(k,s) \in \A_a$, $\epsilon < \min\{4, s+3, 2s+3, s-k+3, 2s-k+3\}$ and 
            $$\epsilon < 
            \begin{cases}
                 \min\{3/2-k, s-k+3/2\}, & \text{if } s \leq -k-1, \\
                 \min\left\{s+\frac{3}{2}, 2s+\frac{3}{2}\right\},  & \text{if } a=4. \\
            \end{cases}$$
            
            Then, the following estimates hold:
            $$\| \Nl_1^u[u_{11}, v_{12}, v_2]\|_{X^{k+\epsilon,b'}} \lesssim \|u_{11}\|_{X^{k,b}}\|v_{12}\|_{Y^{s,b}}\|v_2\|_{Y^{s,b}},$$
            $$\| \Nl_2^u[v_1, u_{21}, v_{22}]\|_{X^{k+\epsilon,b'}} \lesssim \|v_1\|_{Y^{s,b}}\|u_{21}\|_{X^{k,b}}\|v_{22}\|_{Y^{s,b}}.$$
        \end{lemma}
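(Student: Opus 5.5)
We treat only $\Nl_1^u$; by the symmetry $\xi_1\leftrightarrow\xi_2$ of $U$, the term $\Nl_2^u$ has the same structure. Write $\xi=\xi_1+\xi_2$ and $\xi_1=\xi_{11}+\xi_{12}$. The multiplier is $m=\xi\xi_{12}\indicatrix_U/\Phi_1^u$, the total phase is $\Psi_1^u=-a\xi^3+\xi_2^3+a\xi_{11}^3+\xi_{12}^3$, and the weight is
$$\M=\frac{|\xi||\xi_{12}|\langle\xi\rangle^{k+\epsilon}}{|\Phi_1^u|\langle\xi_{11}\rangle^{k}\langle\xi_{12}\rangle^{s}\langle\xi_2\rangle^{s}}.$$
On $U$ we have $|\xi|>1/\delta^u$ and $|\Phi_1^u|\sim|\xi|^3$ (Table \ref{table problematic regions}). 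Hence
$$\M\lesssim |\xi|^{k+\epsilon-2}|\xi_{12}|\langle\xi_{11}\rangle^{-k}\langle\xi_{12}\rangle^{-s}\langle\xi_2\rangle^{-s}.$$
The point of the integration by parts is visible here: it buys the factor $|\xi|^{-2}$, and with the extra $\xi_{12}$ the net gain is roughly one derivative. This must absorb the $\epsilon$ and overcome the obstructions $s>k/2-3/8$ and $s>k-3/2$.

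We split according to which piece of $U$ we are on. On $U_2$ with $\xi\simeq\xi_1$ we have $|\xi_2|\ll|\xi|\sim|\xi_1|$. On $U_2$ with $\xi\simeq\xi_2$ we have $|\xi_1|\ll|\xi|$. On $U_1$ (only when $a<1/4$) we have $\xi_1\simeq\xi_2\simeq\xi/2$. Inside each piece we further split on the relative sizes of $\xi_{11}$ and $\xi_{12}$: either $\xi_{11}\simeq\xi_1$, or $\xi_{12}\simeq\xi_1$, or $|\xi_{11}|\sim|\xi_{12}|\gg|\xi_1|$ (high--high to low). This yields finitely many configurations. In each one we order the four frequencies $\xi,\xi_2,\xi_{11},\xi_{12}$ as $\eta_1,\eta_2,\eta_3$ and read off exponents $a_1,a_2,a_3$ with $\M\lesssim\langle\eta_1\rangle^{a_1}\langle\eta_2\rangle^{a_2}\langle\eta_3\rangle^{a_3}$.

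Next we choose $A=\{j_1,j_2\}$ so that $\partial\Psi_1^u$ is of size $\langle\eta_1\rangle^2$ in one free variable on each side. Typically we fix $(\xi,\xi_2)$ and integrate in $\xi_{11}$, where $\partial_{\xi_{11}}\Psi_1^u=3(a\xi_{11}^2-\xi_{12}^2)$. Alternatively we fix $(\xi_{11},\xi_{12})$ and integrate in $\xi$, where $\partial_\xi\Psi_1^u=3(\xi_2^2-a\xi^2)$, which is large since $a\ne1$ in the relevant regimes. We then apply Lemma \ref{leminha}, using item (ii) or (iii) when two or three frequencies are comparable. Each condition $a_1+a_2+a_3<2$ etc.\ becomes a linear constraint on $\epsilon$. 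The hypothesis list $\epsilon<\min\{4,s+3,2s+3,s-k+3,2s-k+3\}$ is exactly what these constraints produce. For instance, the piece $\xi_{12}\simeq\xi_1\simeq\xi$ with $|\xi_2|\lesssim|\xi_{11}|$ gives $a_1=k+\epsilon-1-s$, leading to $\epsilon<s-k+3$; the $U_1$ piece produces the $2s$ constraints.

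Some configurations have stationary derivatives, notably $a\xi_{11}^2\approx\xi_{12}^2$ when $a>0$. There we either switch the roles of the fixed variables or use Morse's Lemma \ref{lemma morse parameters} together with Lemma \ref{lemma integral quadratic}. This gives $M^{1/2}|\eta_1|^{-3/2}$ in place of $M|\eta_1|^{-2}$, which is where the constraints involving $3/2$ come from.

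The main obstacle is the regime $|\xi_{11}|\sim|\xi_{12}|\gg|\xi|$ when $k+s$ is very negative. There the weight $\langle\xi_{11}\rangle^{-k}\langle\xi_{12}\rangle^{1-s}$ grows, and Lemma \ref{lemma Schur's test with weight} alone only closes for $s\le -k-1$, under the extra conditions $\epsilon<\min\{3/2-k,s-k+3/2\}$. For $s>-k-1$ we instead exploit $|\Psi_1^u|\sim|\xi_{11}|^3$, which holds since $\Phi^{v_1}$ dominates. With this we apply Lemma \ref{lemma CHS phase comparable to alpha}: assuming $M\lesssim|\alpha|$, we convert the surplus powers of $|\xi_{11}|$ into $\langle\alpha\rangle^{1^-}$ after the change of variables to $\Psi_1^u$. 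I expect checking both hypotheses of that lemma (with every $\xi_l$ fixed), near stationary points, to be the most delicate step.
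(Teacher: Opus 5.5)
Your architecture matches the paper's: the pieces $U_1$, $U_2\cap\{\xi\simeq\xi_1\}$ and $U_2\cap\{\xi\simeq\xi_2\}$, sub-split by the size of $\xi_{11}$, with Lemma \ref{leminha} in most configurations and Morse with parameters at degeneracies. However, the step you single out as the crux is set up backwards, and as written it fails.

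In the regime $|\xi_{11}|\sim|\xi_{12}|\gg|\xi|$ one has $\M\sim|\xi_{11}|^{1-k-s}$ times factors in the small frequencies. These factors are $|\xi|^{k+\epsilon-s-2}$ on $U_1$ and on $\{\xi\simeq\xi_2\}$, and $|\xi|^{k+\epsilon-2}\langle\xi_2\rangle^{-s}$ on $\{\xi\simeq\xi_1\}$. Lemma \ref{leminha} needs the exponent of the top frequency to be below $2$, i.e.\ $1-k-s<2$, i.e.\ $s>-k-1$.

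So the Schur-test route closes exactly when $s>-k-1$. The paper uses $A=\{2,12\}$ or $A=\{\emptyset,11\}$ there, and needs nothing beyond the first list of conditions on $\epsilon$. For $s\le-k-1$ it cannot close. Freeze the small frequencies at size $\sim1/\delta^u$; the change of variables to $\Psi_1^u$ then gains only $|\xi_{11}|^{-2}$ and leaves $|\xi_{11}|^{-1-k-s}\gtrsim1$. This is unbounded in $\xi_{11}$ (at $s=-k-1$ because of the $\Xi^{0^+}$ factor). Hence your claim that Lemma \ref{lemma Schur's test with weight} closes for $s\le-k-1$ is false, and that case is left unproved.

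The two tools must be swapped. The case $s\le-k-1$ only occurs within $\A_a$ when $a<1/4$, since otherwise $k,s\ge0$. There one uses $|\Psi_1^u|\sim|\xi_{11}|^3\lesssim|\alpha|$ in Lemma \ref{lemma CHS phase comparable to alpha}, as in \eqref{inequality application of CHS with M< alpha}:
\begin{itemize}
\item after $d\xi_{11}=d\Psi_1^u/|\xi_{11}|^{2}$, bound $|\xi_{11}|^{-2k-2s}\lesssim|\Psi_1^u|^{-(2k+2s)/3}\lesssim\langle\alpha\rangle^{1^-}$;
\item this needs $k+s>-3/2$, which holds since $k,s>-3/4$ in $\A_a$;
\item the leftover power of $|\xi|$ must then be integrable in the last free variable.
\end{itemize}
That last requirement is where $\epsilon<\min\{3/2-k,s-k+3/2\}$ comes from. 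It is the price of the large-phase argument, not of a Schur test.

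Two smaller corrections:
\begin{itemize}
\item The complement of $\{\xi_{11}\simeq\xi_1\}\cup\{\xi_{12}\simeq\xi_1\}$ is $|\xi_{11}|\sim|\xi_{12}|\gtrsim|\xi_1|$, not $\gg$. The comparable case is exactly where the stationary sets live.
\item Morse with parameters is needed only for $a=4$, in the configuration $\xi\simeq\xi_1$, $\xi_{11}\simeq-\xi$, $\xi_{12}\simeq2\xi$. There $\partial_\xi\Psi_1^u=3(\xi_{12}^2-a\xi^2)\approx3(4-a)\xi^2$, and this produces $\epsilon<\min\{s+3/2,2s+3/2\}$. The set $a\xi_{11}^2\approx\xi_{12}^2$ on $U_1$ is handled simply by switching to $A=\{2,12\}$, using $a\neq1/4$.
\end{itemize}
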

        
        \begin{proof}       
            The terms $N_1^u$ and $N_2^u$ are defined in \eqref{equation N_1^u and N_2^u}. By symmetry, it suffices to prove the estimate for $\Nl_1^u$. We first consider the region $U_1$ (see Table \ref{table problematic regions} and \eqref{equation definition U}), recalling that here $a < 1/4$ and $a \neq 0$. In this regime, the frequencies satisfy $\xi_{11} + \xi_{12} = \xi_1 \simeq \xi_2 \simeq \xi/2$. From \eqref{equation Phi_2^u}, we have $ |\Phi_1^u|   \sim |(4a-1)\xi_2^3| \sim |\xi|^3, $ which implies
            $$ \mathscr{M} \coloneqq \frac{|\xi||\xi_{12}|\langle \xi\rangle ^{k+\epsilon}}{|\Phi_1^u|\langle \xi_{11}\rangle ^k\langle \xi_{12}\rangle ^s\langle \xi_{2}\rangle ^s} \sim \frac{|\xi_{12}||\xi |^{k+\epsilon-s-2}}{\langle \xi_{11}\rangle ^k\langle \xi_{12}\rangle ^s} \lesssim \frac{|\xi |^{k+\epsilon-s-2}}{\langle \xi_{11}\rangle ^k\langle \xi_{12}\rangle ^{s-1}}. $$
            Recall that $\Psi^u_1 = - a\xi^3 + \xi_2^3 + a\xi_{11}^3 + \xi_{12}^3$. From the definition of $N_1^u$, the convolution relation $\xi = \xi_{11} + \xi_{12} + \xi_2$ holds.
        
            \vspace{2mm}
            \noindent \textbf{Case A:} $|\xi_{11}| \gg |\xi_2|$. In this case, $\xi_{12} \simeq -\xi_{11}$ and $|\Psi^u_1| \sim |\xi_{11}|^3$. Consequently, $\mathscr{M} \sim |\xi|^{k+\epsilon-s-2}|\xi_{11}|^{1-k-s}$. If $s > -k - 1$, we employ Lemma  \ref{leminha} \ref{item 2} with $A=\{2, 12\}$. If $s \leq -k - 1$, we instead invoke Lemma \ref{lemma CHS phase comparable to alpha}:
            \begin{equation}\label{inequality application of CHS with M< alpha}
                \sup_{\xi} \int_{|\Psi_1^u-\alpha| < M} \frac{|\xi|^{2k-2s+2\epsilon-4}}{|\xi_{11}|^{2k+2s-2}} d\xi_{11} d\xi_2 \lesssim \sup_{\xi} \int_{|\Psi_1^u-\alpha| < M} \frac{|\xi|^{2k-2s+2\epsilon-3 + 0^+}}{|\xi_{11}|^{2k+2s}} d\Psi_1^u \frac{d\xi_2}{|\xi|^{1^+}}
            \end{equation}
            $$ \lesssim \sup_{\xi} \int_{|\Psi_1^u-\alpha| < M} |\xi_{11}|^{-2k-2s} d\Psi_1^u \frac{d\xi_2}{|\xi_2|^{1^+}} \lesssim \sup_{\xi} \int_{|\Psi_1^u-\alpha| < M} |\Psi_{1}^u|^{-(2k+2s)/3} d\Psi_1^u \frac{d\xi_2}{|\xi_2|^{1^+}} \lesssim \langle \alpha \rangle^{1^{-}} M.
            $$
            
            An analogous argument holds when $\xi_2$, $\xi_{11}$, or $\xi_{12}$ remains fixed. Thus, \eqref{inequality CHS alpha xi} and \eqref{inequality CHS alpha xi_j} are satisfied.
        
            \vspace{2mm}
            \noindent \textbf{Case B:} $|\xi_{11}| \ll |\xi_2|$. Here, $|\xi_{12} - \xi_{2}| \leq |\xi_{11}| + |\xi - 2\xi_2| \ll |\xi_2|$, which implies $\xi_{12} \simeq \xi_2$. It follows that
            $$ \mathscr{M} \sim \frac{|\xi|^{k+\epsilon-2s-1}}{\langle \xi_{11}\rangle^k}. $$
            We then apply Lemma \ref{leminha} \ref{item 2} with $A=\{\emptyset, 2\}$.

            \vspace{2mm}
            \noindent \textbf{Case C:} $|\xi_{11}| \sim |\xi_2|$. In this regime, $|\xi_{12}| \lesssim |\xi|$ and 
            $$\mathscr{M} \lesssim \frac{|\xi|^{\epsilon-s-2}}{\langle \xi_{12}\rangle^{s-1}} .$$ 
            We employ Lemma \ref{leminha} \ref{item 2}. For fixed $\xi$ and $\xi_2$, the relation $\xi_{12} = \xi - \xi_2 - \xi_{11}$ implies $|\partial_{\xi_{11}}\Psi_1^u| \sim |a\xi_{11}^2 - \xi_{12}^2|$. If $|\partial_{\xi_{11}}\Psi_1^u| \gtrsim |\xi|^2$, we set $A=\{\emptyset, 2\}$. Otherwise, we must have $\sqrt{a}|\xi_{11}| \simeq |\xi_{12}|$ (observing that this scenario requires $a > 0$). Assuming $\sqrt{a}\xi_{11} \simeq \xi_{12}$, it follows that
            $$\xi_2 \simeq \xi_{11} + \xi_{12} = (1+\sqrt{a})\xi_{11} \simeq \left(\frac{1}{\sqrt{a}}+1\right)\xi_{12}. $$
            Thus, $|\xi|, |\xi_2| > (1^+)|\xi_{11}|, (1^+)|\xi_{12}|$, and we choose $A = \{2, 12\}$. Alternatively, if $\sqrt{a}\xi_{11} \simeq -\xi_{12}$, then $\xi_2 \simeq (1-\sqrt{a})\xi_{11}$. Since $a < 1/4$, we can ensure
            $$ |\xi_2| > (1^+) \frac{|\xi_{11}|}{2} \quad \text{and} \quad |\xi| > (1^+)|\xi_{11}|, $$
            while noting that $\xi_2 \simeq -\frac{(1-\sqrt{a})}{\sqrt{a}}\xi_{12}$. For fixed $\xi$ and $\xi_{11}$, the derivative satisfies $|\partial_{\xi_2}\Psi_1^u | = 3|\xi_2^2 - \xi_{12}^2| \ll |\xi_2|^2$ if and only if
            $$ \left|\frac{1-\sqrt{a}}{\sqrt{a}}\right|^2 = 1 \Longleftrightarrow a = \frac{1}{4}, $$
            which is precluded by the assumption $a < 1/4$. Consequently, the choice $A=\{2, 12\}$ remains valid.
            
            \vspace{1mm}
            
            We now turn to the region $U_2$ (see the definition in Table \ref{table problematic regions}). In this case, $a \in \R\setminus\{0,1\}$. There are two possibilities: $\xi \simeq \xi_1$ or $\xi \simeq \xi_2$. Notice that in both cases $|\Phi_1^u| \sim |\xi|^3$, by \eqref{equation Phi_2^u}.
            
            \vspace{2mm}
            \noindent\underline{If $\xi \simeq \xi_1$:}
            
            \[
            \M = \frac{|\xi||\xi_{12}|\langle \xi\rangle^{k+\epsilon}}
            {|\Phi_1^u|\langle \xi_{11}\rangle^k\langle \xi_{12}\rangle^s\langle \xi_{2}\rangle^s}
            \sim \frac{|\xi_{12}||\xi|^{k+\epsilon-2}}
            {\langle \xi_{11}\rangle^k\langle \xi_{12}\rangle^s\langle \xi_{2}\rangle^s}
            \lesssim
            \frac{\langle \xi \rangle^{k+\epsilon-2}}
            {\langle \xi_{11}\rangle^k\langle \xi_{12}\rangle^{s-1}\langle \xi_{2}\rangle^s}.
            \]
            
            \vspace{2mm}
            \noindent \textbf{Case A:} $|\xi| \sim |\xi_{11}|$. In this case, $|\xi_{12}|\lesssim |\xi|$. Thus,
            \[
            \M  \lesssim \frac{|\xi|^{\epsilon-2}}
            {\langle \xi_{12}\rangle^{s-1}\langle \xi_{2}\rangle^s}.
            \]
            
            \vspace{2mm}
            \noindent \textbf{Subcase A1:} $|\xi| \not\simeq |\xi_{11}|$. Since $a \neq 0$, we have 
            $|a\xi^2-\xi_{12}^2| \gtrsim |\xi|^2$ or 
            $|a\xi_{11}^2-\xi_{12}^2| \gtrsim |\xi|^2$. 
            In both situations, we apply Lemma \ref{leminha} \ref{item 1}. 
            In the first case, we take $A = \{\emptyset,12\}$. 
            The case $|a\xi_{11}^2-\xi_{12}^2| \gtrsim |\xi|^2$ is treated analogously.
            
            \vspace{2mm}
            \noindent \textbf{Subcase A2:} $\xi \simeq \xi_{11}$. Then $|\xi_{12}|\ll|\xi|$. Hence, we set $A=\{\emptyset,2\}$ in Lemma \ref{leminha} \ref{item 1}.
            
            \vspace{2mm}
            \noindent \textbf{Subcase A3:} $\xi_{11} \simeq -\xi$. Thus, $\xi_{12}\simeq 2\xi$ and
            \[
            \M \lesssim \frac{|\xi|^{\epsilon-s-1}}{\langle \xi_2\rangle^{s}}.
            \]
            Notice that, for $\xi_2$ and $\xi_{11}$ fixed, 
            $|\partial_\xi \Psi_1^u| \sim |-a\xi^2+ \xi_{12}^2|
            \sim |(-a+4)\xi^2|$. 
            Thus, for $a \neq 4$, we choose $A=\{\emptyset,12\}$ in Lemma \ref{leminha} \ref{item 2}. 
            For $a=4$, we apply Lemma \ref{lemma Schur's test with weight}. Consider
            \[
            I \coloneqq 
            \sup_{\xi, \xi_2} 
            \int_{|\Psi_1^u-\alpha| <M}
            \frac{|\xi|^{-1-s+\epsilon + 0^+}}
            {\langle \xi_2\rangle^s } d\xi_{11} \quad \text{ and} \quad J \coloneqq \sup_{\xi_{11}, \xi_{12}} 
            \int_{|\Psi_1^u-\alpha| <M}
            \frac{|\xi|^{-1-s+\epsilon + 0^+}}
            {\langle \xi_2\rangle^s } d\xi.
            \]
            
            To estimate $I$, we use Morse's Lemma with parameters (Lemma \ref{lemma morse parameters}). 
            For $\xi$ and $\xi_2$ fixed, define $p_j = \xi_j/\xi$. Since $p_{12} = 1-p_{11}-p_2$, we set
            \begin{equation}\label{equation application morse parameters}
                P(p_{11}, p_2) \coloneqq \frac{\Psi_1^u}{\xi^3}
                = -4 + p_2^3 + 4p_{11}^3 + p_{12}^3.
            \end{equation}
            
            Considering the variable $p_2$ fixed, $P$ has a nondegenerate critical point at 
            $(p_{11},p_{12},p_2) = (-1,2,0)$. Hence, 
            it follows that, for every $p_2$ in a neighborhood of $0$, 
            there exists a unique critical point $z(p_2)$ and a diffeomorphism 
            $p_{11} \mapsto q$ such that
            \[
            P(q, p_2) = P(z(p_2)) + q^2.
            \]
            
            Hence, using a change of variables and 
            Lemma \ref{lemma integral quadratic}, we obtain
            \[
            I \lesssim \sup_{\xi, \xi_2} 
            \int_{|q^2 +P(z(p_2))-\alpha/\xi^3| <M/\xi^3}
            \frac{|\xi|^{-s+\epsilon + 0^+}}
            {\langle \xi_2\rangle^s } dq   \lesssim \sup_{\xi, \xi_2}
            \frac{|\xi|^{-s+\epsilon-3/2+ 0^+}}
            {\langle \xi_2\rangle^s } M^{1/2}
            \lesssim
            \sup_{\xi, \xi_2}
            \langle\xi_2 \rangle^{-2s+\epsilon -\frac{3}{2}+0^+}
            M^{1/2}
            \lesssim M^{1/2},
            \]
            because $|\xi_2| \lesssim|\xi|\sim |\xi_{11}|$.  
            To estimate $J$, we observe that $|\partial_\xi\Psi_1^u| \sim |\xi|^2$ and perform a change of variables:
            \[ J \lesssim \sup_{\xi_{11}, \xi_{12}} 
            \int_{|\Psi_1^u-\alpha| <M}
            \frac{|\xi|^{-3-s+\epsilon + 0^+}}
            {\langle \xi_2\rangle^s } d\Psi_1^u \lesssim \sup_{\xi_{11}, \xi_{12}} 
            \int_{|\Psi_1^u-\alpha| <M}
            \langle \xi\rangle ^{-3-2s+\epsilon + 0^+}
             d\Psi_1^u \lesssim M. \]
            
            \vspace{2mm}
            \noindent \textbf{Case B:} $|\xi| \gg |\xi_{11}|$. This implies $\xi \simeq \xi_{12}$. 
            We rely on Lemma \ref{leminha} \ref{item 1} with $A=\{11,12\}$, because
            \[
            \M  \lesssim \frac{|\xi|^{k+\epsilon-s-1}}
            {\langle \xi_{11}\rangle^k\langle \xi_{2}\rangle^s}.
            \]
            
            \vspace{2mm}
            \noindent \textbf{Case C:} $|\xi| \ll |\xi_{11}|$. In this case, 
            $\xi_{12} \simeq - \xi_{11}$ and $|\xi_{12}|\gg1$. Then,
            \[
            \M \sim 
            \frac{|\xi_{11}|^{1-k-s}|\xi|^{k-2+\epsilon}}
            {\langle \xi_2 \rangle^s }.
            \]
            
            If $s> -k-1$, we employ Lemma \ref{leminha} \ref{item 1} with $A=\{2,12\}$. 
            If $s\leq-k-1$, we instead apply Lemma \ref{lemma CHS phase comparable to alpha} 
            and argue as in \eqref{inequality application of CHS with M< alpha}, 
            since $|\Psi_1^u| \sim |\xi_{11}^3|$.
            
            \vspace{2mm}
            \noindent\underline{If $\xi\simeq \xi_2$:} 
            since $|\Phi_1^u| \sim |\xi|^3$, we have
            \[
            \M = \frac{|\xi||\xi_{12}|\langle \xi\rangle^{k+\epsilon}}
            {|\Phi_1^u|\langle \xi_{11}\rangle^k
            \langle \xi_{12}\rangle^s\langle \xi_{2}\rangle^s}
            \sim \frac{|\xi_{12}||\xi|^{k-s+\epsilon-2}}
            {\langle \xi_{11}\rangle^k\langle \xi_{12}\rangle^s}
            \lesssim
            \frac{|\xi|^{k-s+\epsilon-2}}
            {\langle \xi_{11}\rangle^k 
            \langle \xi_{12}\rangle^{s-1}}.
            \]
            
            \vspace{2mm}
            \noindent \textbf{Case A:} $|\xi_{11}| \ll |\xi|$. Then 
            $|\xi_{12}| \leq |\xi_{11}| + |\xi_1| \ll |\xi|$. 
            We use Lemma \ref{leminha} \ref{item 1} 
            with $A = \{\emptyset,11\}$.
            
            \vspace{2mm}
            \noindent \textbf{Case B:} $|\xi_{11}| \sim |\xi|$. 
            Since $\xi_{12} = \xi_{11}-\xi_1$, it follows that $|\xi_{12}|\sim |\xi|$, which yields $\M\sim |\xi|^{-2s+\epsilon-1}.$
            Furthermore, the condition $a \xi_{11}^2 \simeq \xi_{12}^2$ cannot hold. Indeed, if it did, then 
            $\xi_{12} \simeq \pm \sqrt{a}\xi_{11}$, implying $\xi_1 = \xi_{11}- \xi_{12} \simeq (1 \mp \sqrt{a}) \xi_{11}$. 
            As $a \neq 1$, this would entail $|\xi_1|\sim |\xi|$, contradicting the assumption $|\xi_1| \ll |\xi|$. 
            Thus, $a \xi_{11}^2 \not\simeq \xi_{12}^2$, and we apply Lemma \ref{leminha} \ref{item 3} with $A = \{\emptyset,2\}$.

            \vspace{2mm}
            \noindent \textbf{Case C:} $|\xi_{11}| \gg |\xi|$. 
            In this case, we have $\xi_{12} \simeq -\xi_{11}$, which implies that
            $\M \sim 
            |\xi|^{k-s+\epsilon-2}
            |\xi_{11}|^{-k-s+1}.$
            Consequently, if $s > -k-1$, we choose $A = \{\emptyset,11\}$ in Lemma \ref{leminha} \ref{item 2}. 
            If $s \leq -k-1$, we instead invoke Lemma 
            \ref{lemma CHS phase comparable to alpha} and follow the argument 
            used in \eqref{inequality application of CHS with M< alpha}, 
            noting that $|\Psi_1^u| \sim |\xi_{11}|^3$.
            
        \end{proof}

        \begin{lemma}\label{lemma N_1^v} 
           Let $(k,s)\in \A_a$ and $\epsilon < \min \{4, s+3, 2s+3\}$. Then,
              \[  \| \Nl_1^v[v_{11},v_{12},v_2]\|_{Y^{s+ \epsilon,b'}} \lesssim \|v_{11}\|_{Y^{s,b}}\|v_{12}\|_{Y^{s,b}}\|v_{2}\|_{Y^{s,b}}.\]
            
        \end{lemma}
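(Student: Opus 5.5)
The plan is to follow the same scheme as in Lemma \ref{Lemma N_1^u and N_2^u}: write out the multiplier of $N_1^v$, split frequency space according to the relative sizes of $\xi_{11},\xi_{12},\xi_2$, and in each piece invoke Lemma \ref{leminha} for a suitable choice of the set $A$. Where the phase is stationary in the relevant directions, I will fall back on Lemma \ref{lemma Schur's test with weight} together with Morse's Lemma.

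\textbf{Setup.} On the problematic region $V$ we have $\xi\simeq\xi_1$ and $|\xi|>1/\delta^v$. Hence $|\xi_2|\ll|\xi|$ and $|\Phi^v|\sim|\xi|^3$, since $a\neq1$. With $\xi_1=\xi_{11}+\xi_{12}$ and $\xi=\xi_{11}+\xi_{12}+\xi_2$, the weighted multiplier is
\[
\M=\frac{|\xi_1||\xi_2|\langle\xi\rangle^{s+\epsilon}}{|\Phi^v|\langle\xi_{11}\rangle^s\langle\xi_{12}\rangle^s\langle\xi_2\rangle^s}\sim\frac{|\xi|^{s+\epsilon-2}\langle\xi_2\rangle^{1-s}}{\langle\xi_{11}\rangle^s\langle\xi_{12}\rangle^s}.
\]
The total phase is the KdV phase with all three inputs of $v$-type, so it factors as
\[
\Psi_1^v=-3(\xi_{11}+\xi_{12})(\xi_{11}+\xi_2)(\xi_{12}+\xi_2)=-3\xi_1(\xi_{11}+\xi_2)(\xi_{12}+\xi_2).
\]
Its partial derivatives are explicit. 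For example, with $\xi,\xi_2$ fixed, $|\partial_{\xi_{11}}\Psi_1^v|=3|\xi_{11}^2-\xi_{12}^2|$; with $\xi_{11},\xi_{12}$ fixed, $|\partial_{\xi}\Psi_1^v|=3|\xi^2-\xi_2^2|\sim|\xi|^2$. This second bound is always available, so one side of every interpolation in Lemma \ref{leminha} is non-degenerate. This is why, unlike for $N_1^u$, I expect no genuinely resonant case.

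\textbf{Case analysis}, with $\xi_{11}$ and $\xi_{12}$ symmetric.
\begin{enumerate}
\item[(A)] $|\xi_{11}|\gg|\xi|$. Then $\xi_{12}\simeq-\xi_{11}$ and $\M\sim|\xi|^{s+\epsilon-2}\langle\xi_2\rangle^{1-s}|\xi_{11}|^{-2s}$. I take $A=\{\emptyset,2\}$, which makes $\xi_{11}$ the integration variable on one side, where the derivative $\sim|\xi_{11}|^2$. Lemma \ref{leminha}(ii) applies since the two largest frequencies are comparable. The exponent conditions are $a_1+a_2=-2s<2$, which is automatic, and a total exponent below $2$; in the worst configuration $|\xi_2|\sim|\xi|$ this total is $\epsilon-1-2s+\dots$, which yields the conditions $\epsilon<2s+3$ and $\epsilon<s+3$.
\item[(B)] $|\xi_{11}|\sim|\xi|\gtrsim|\xi_{12}|$. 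If $\xi_{11}^2\not\simeq\xi_{12}^2$, then $\partial_{\xi_{11}}\Psi_1^v$ is large and $A=\{\emptyset,2\}$ works with Lemma \ref{leminha}(i) or (ii). The bound $\epsilon<4$ should come from the subcase where $\xi_{12}$ and $\xi_2$ are both small and $s$ is large, where $\M\lesssim|\xi|^{\epsilon-2}$.
\item[(C)] $\xi_{11}\simeq\pm\xi_{12}$. Since the sum is $\simeq\xi$, this forces $\xi_{11}\simeq\xi_{12}\simeq\xi/2$. Here $\partial_{\xi_{11}}\Psi_1^v$ degenerates, but $\partial_{\xi_2}\Psi_1^v=3(\xi_2^2-\xi_{12}^2)\sim|\xi|^2$ with $\xi,\xi_{11}$ fixed, because $|\xi_2|\ll|\xi|$. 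So I choose $A=\{\emptyset,11\}$ and vary $\xi_2$ and $\xi_{12}$ on the two sides, then apply Lemma \ref{leminha}(ii), since $|\xi|\sim|\xi_{11}|\sim|\xi_{12}|$ are the three largest frequencies.
\end{enumerate}

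\textbf{Main obstacle.} I expect the hardest step to be case (C) when $s$ is very negative. There, $\M$ carries the growing factor $\langle\xi_2\rangle^{1-s}$, and one must verify that the chosen ordering of $\eta_1,\eta_2,\eta_3$ still satisfies the hypotheses of Lemma \ref{leminha}. If it does not, my fallback is Lemma \ref{lemma Schur's test with weight} directly. In that approach I would integrate in $\xi_{11}$ near the critical point $\xi_{11}=\xi_{12}$ using Morse's Lemma with parameters (Lemma \ref{lemma morse parameters}), which, via Lemma \ref{lemma integral quadratic}, gains $|\xi|^{-3/2}M^{1/2}$. The condition $\epsilon<2s+3$ should be exactly what makes the resulting power of $|\xi|$ nonpositive. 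All frequencies being bounded is handled trivially by Lemma \ref{lemma CHS}.
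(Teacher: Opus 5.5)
Your overall scheme is the same as the paper's. Your cases (B) and (C) are the paper's Cases A and B, with the same choices $A=\{\emptyset,2\}$ and $A=\{\emptyset,11\}$. No Morse argument is needed in (C): there $|\xi_2|\ll|\xi|$, so $\langle\xi_2\rangle^{1-s}$ is harmless and Lemma \ref{leminha}(ii) closes under the stated restrictions on $\epsilon$.

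The gap is in your case (A), where $|\xi_{11}|\gg|\xi|$ and $\xi_{12}\simeq-\xi_{11}$. You claim that with $\xi,\xi_2$ fixed the $\xi_{11}$-derivative is $\sim|\xi_{11}|^2$. This is false, because $\xi_{11}+\xi_{12}=\xi_1$ is then fixed:
\[
|\partial_{\xi_{11}}\Psi_1^v|=3|\xi_{11}^2-\xi_{12}^2|=3|\xi_1|\,|\xi_{11}-\xi_{12}|\sim|\xi|\,|\xi_{11}|\ll|\xi_{11}|^2 .
\]
The other side of $A=\{\emptyset,2\}$, with $\xi_{11},\xi_{12}$ fixed, only gives $|\partial_\xi\Psi_1^v|\sim|\xi|^2$.

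For $s<0$ the factor $|\xi_{11}|^{-2s}$ in $\M$ forces $\eta_1\in\{\xi_{11},\xi_{12}\}$, so $\langle\eta_1\rangle\sim|\xi_{11}|$. Lemma \ref{leminha} requires derivatives $\gtrsim|\xi_{11}|^2$ on both sides, and this fails on both. Your remark that ``one side is always non-degenerate'' measures non-degeneracy against $|\xi|^2$ rather than $\langle\eta_1\rangle^2$, and this is exactly the case where the difference matters.

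Falling back on Lemma \ref{lemma Schur's test with weight} with this $A$ does not help either. The two sides together gain only $|\xi|^3|\xi_{11}|$, while $\M^2$ contains $|\xi_{11}|^{-4s}$. So for fixed $\xi,\xi_2$ and $|\xi_{11}|\to\infty$ the argument cannot close once $s<-1/4$. That range lies inside $\A_a$ for $a<1/4$, for example $(k,s)=(0,-1/2)$.

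The repair, which is what the paper does, is to pair one of the two high frequencies with $\xi$ by taking $A=\{\emptyset,12\}$. Fixing $\xi,\xi_{12}$ gives $|\partial_{\xi_{11}}\Psi_1^v|=3|\xi_{11}^2-\xi_2^2|\sim|\xi_{11}|^2$. Fixing $\xi_{11},\xi_2$ gives $|\partial_{\xi}\Psi_1^v|=3|\xi_{12}^2-\xi^2|\sim|\xi_{11}|^2$. Lemma \ref{leminha}(i) then applies to $\M\lesssim|\xi|^{s+\epsilon-2}|\xi_{11}|^{-2s}\langle\xi_2\rangle^{1-s}$ and yields the restriction $\epsilon<2s+3$ that you anticipated.
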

        
        \begin{proof}
            Recall the definitions of $\Nl_1^v$ in \eqref{equation N_j^v} and $V$ in Table \ref{table problematic regions}. Since $\xi \simeq \xi_1$, \eqref{equation Phi_2^u} ensures that $|\Phi^v| \sim |\xi|^3$, which yields
            \[\M \coloneqq \frac{|\xi_1||\xi_{2}|\langle \xi\rangle ^{s+\epsilon}}{|\Phi^v|\langle \xi_{11}\rangle ^s\langle \xi_{12}\rangle ^s\langle \xi_{2}\rangle ^s} \sim \frac{|\xi_{2}||\xi |^{s+\epsilon-2}}{\langle \xi_{11}\rangle ^s\langle \xi_{12}\rangle ^s\langle \xi_{2}\rangle ^s} \lesssim \frac{|\xi |^{s+\epsilon-2}}{\langle \xi_{11}\rangle ^s\langle \xi_{12}\rangle ^s\langle \xi_{2}\rangle ^{s-1}}. \]
            From \eqref{equation psi^v}, the total phase is given by $\Psi^v_1 = -\xi^3 + \xi_2^3 + \xi_{11}^3 + \xi_{12}^3$. Without loss of generality, assume $|\xi_{11}| \geq |\xi_{12}|$, which implies $|\xi| \lesssim |\xi_{11}|$.
        
            \vspace{2mm}
            \noindent \textbf{Case A:} $|\xi_{11}| \not\simeq |\xi_{12}|$. Here, $|\xi| \simeq |\xi_{11}+\xi_{12}| \gtrsim |\xi_{11}|$ and $|\xi| \sim |\xi_{11}|$. Consequently, Lemma \ref{leminha} \ref{item 1} applies with $A=\{\emptyset,2\}$, provided that
            \[\M \lesssim \frac{|\xi|^{\epsilon-2}}{\langle \xi_{12}\rangle^s\langle \xi_2 \rangle ^{s-1}}.\]
        
            \vspace{2mm}
            \noindent \textbf{Case B:} $\xi_{11} \simeq \xi_{12}$. In this setting, $\xi_{11} \simeq \frac{\xi}{2}$. We then invoke Lemma \ref{leminha} \ref{item 2} with $A = \{\emptyset,11\}$, observing that
            \[\M \lesssim \frac{|\xi|^{\epsilon-s-2}}{\langle \xi_2 \rangle ^{s-1}}.\]
            
            \vspace{2mm}
            \noindent \textbf{Case C:} $\xi_{11} \simeq -\xi_{12}$. This implies $|\xi| \ll |\xi_{11}|$. Accordingly, we rely on Lemma \ref{leminha} \ref{item 1} with $A=\{\emptyset,12\}$, where
            \[\M \lesssim \frac{ |\xi|^{s+\epsilon-2}}{|\xi_{11}|^{2s}\langle \xi_2\rangle^{s-1}}. \]
        \end{proof}

        \begin{lemma}\label{lemma N_2^v}
            Let $(k,s)\in \A_a$ and $\epsilon < \min \{k-s+4, k+3, 2k+3, 2k-s+4\}$. For $a=4$, assume further that $\epsilon < \min \{2k-s+13/4, 2k+9/4\}$. Then,
            \[
            \|\Nl_2^v[u_{11}, u_{12}, v_2]\|_{Y^{s+\epsilon, b'}} \lesssim \|u_{11}\|_{X^{k,b}} \|u_{12}\|_{X^{k,b}} \|v_{2}\|_{Y^{s,b}}.
            \]
        \end{lemma}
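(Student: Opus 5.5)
Following the pattern of Lemmas \ref{Lemma N_1^u and N_2^u} and \ref{lemma N_1^v}, I would reduce the estimate to a frequency-restricted estimate. The term $N_2^v$ lives in the problematic region $V$, where $\xi\simeq\xi_1$ and $|\xi|>1/\delta^v$. By \eqref{equation relation phi^v and phi^u} and $a\neq1$ we have $|\Phi^v|\sim|\xi|^3$ there, so the multiplier is
\[
\M=\frac{|\xi_1||\xi_2|\langle\xi\rangle^{s+\epsilon}}{|\Phi^v|\langle\xi_{11}\rangle^k\langle\xi_{12}\rangle^k\langle\xi_2\rangle^s}\lesssim\frac{|\xi|^{s+\epsilon-2}}{\langle\xi_{11}\rangle^k\langle\xi_{12}\rangle^k\langle\xi_2\rangle^{s-1}},
\]
with total phase $\Psi_2^v=-\xi^3+\xi_2^3+a\xi_{11}^3+a\xi_{12}^3$ and $\xi=\xi_{11}+\xi_{12}+\xi_2$, $|\xi_2|\ll|\xi|$. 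By the symmetry in $\xi_{11},\xi_{12}$ I would assume $|\xi_{11}|\ge|\xi_{12}|$, so $|\xi_{11}|\gtrsim|\xi|$. I would then split into the same three cases as in Lemma \ref{lemma N_1^v}, and in each case find a partition $A$ satisfying the derivative hypotheses of Lemma \ref{leminha}.

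\textbf{Case A: $|\xi_{11}|\not\simeq|\xi_{12}|$, so $|\xi|\sim|\xi_{11}|$.} The bound is $\M\lesssim|\xi|^{s-k+\epsilon-2}\langle\xi_{12}\rangle^{-k}\langle\xi_2\rangle^{1-s}$.
\begin{itemize}
\item Fixing $\xi_{11},\xi_{12}$ and varying $\xi$, we get $|\partial_\xi\Psi_2^v|\sim|\xi^2-\xi_2^2|\sim|\xi|^2$.
\item Fixing $\xi,\xi_2$ and varying $\xi_{11}$, we get $\partial\Psi_2^v\sim a|\xi_{11}^2-\xi_{12}^2|$. This is $\gtrsim|\xi|^2$ unless $\xi_{11}\simeq-\xi_{12}$, which is excluded here.
\end{itemize}
So $A=\{\emptyset,2\}$ and Lemma \ref{leminha} \ref{item 1} apply. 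The exponent conditions $a_1+a_2+a_3<2$ in the various orderings give $\epsilon<k-s+4$, $\epsilon<2k-s+4$, and the conditions involving $s$ alone.

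\textbf{Case B: $\xi_{11}\simeq\xi_{12}\simeq\xi/2$.} Here $\M\lesssim|\xi|^{s-2k+\epsilon-2}\langle\xi_2\rangle^{1-s}$. I would use $A=\{\emptyset,11\}$ with Lemma \ref{leminha} \ref{item 2}, which gives $\epsilon<2k-s+4$ and $\epsilon<2k+3$.

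\textbf{Case C: $\xi_{11}\simeq-\xi_{12}$, so $|\xi_{11}|\gg|\xi|$.} Here $\M\lesssim|\xi|^{s+\epsilon-2}|\xi_{11}|^{-2k}\langle\xi_2\rangle^{1-s}$ and $|\Psi_2^v|\sim|\xi|^3$. I would take $A=\{\emptyset,12\}$. Varying $\xi_{11}$ with $\xi,\xi_2$ fixed, $|\partial\Psi_2^v|=3|a||\xi_{11}^2-\xi_{12}^2|\sim|\xi_{11}\xi|$, which is only $\gtrsim|\xi|^2$, not $|\xi_{11}|^2$. So I would do the Schur integrals of Lemma \ref{lemma Schur's test with weight} by hand, keeping the factor $|\xi_{11}|^{-2k}$ (and its gain $|\xi_{11}|^{-1}$) in the integrand. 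This yields $\epsilon<k+3$; if $k$ is negative and this is insufficient, I would fall back on Lemma \ref{lemma CHS phase comparable to alpha}, as in \eqref{inequality application of CHS with M< alpha}.

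\textbf{Main obstacle: $a=4$.} The delicate configuration is the stationary one where $\partial_{\xi}\Psi_2^v$ vanishes with $\xi_{11},\xi_{12}$ fixed. In the region $\xi\simeq\xi_1$, the relevant derivatives are $a\xi_{1j}^2-\xi_2^2$ or $\xi^2-a\xi_{1j}^2$. The second vanishes when $\xi\simeq\pm\sqrt a\,\xi_{11}$. In the subregion $\xi_{12}\simeq\xi_2$, $\xi_{11}\simeq\xi$ this forces $a=1$, which is excluded. In the subregion $\xi_{11}\simeq-\xi$, $\xi_{12}\simeq2\xi$ it gives $|4a-a|$-type coefficients, and the analogue of Subcase A3 in Lemma \ref{Lemma N_1^u and N_2^u} degenerates exactly when $a=4$.

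For that case I would copy the $a=4$ argument of Lemma \ref{Lemma N_1^u and N_2^u}:
\begin{itemize}
\item Rescale by $p_j=\xi_j/\xi$ and apply Morse's lemma with parameters (Lemma \ref{lemma morse parameters}) around the nondegenerate critical point, to bound one Schur integral by $|\xi|^{-3/2}M^{1/2}$ times the weight.
\item Bound the other Schur integral by a direct change of variables, since the complementary derivative is of size $|\xi|^2$.
\end{itemize}
Tracking exponents with $\M\lesssim|\xi|^{s-2k+\epsilon-1}\langle\xi_2\rangle^{-s}$ and $|\xi_2|\lesssim|\xi|$ should produce precisely the extra constraints $\epsilon<2k-s+13/4$ and $\epsilon<2k+9/4$ in the statement. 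I expect this to be the hardest step, and checking nondegeneracy of the Hessian at that point is the first thing I would verify.
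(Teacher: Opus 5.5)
Your case split and your pairings in Cases A and B match the paper's ($A=\{\emptyset,11\}$ is the same pairing as the paper's $A=\{2,12\}$). The $a=4$ step, however, has a genuine gap. The degeneracy sits in Case B, which you treat with Lemma~\ref{leminha} without checking its derivative hypothesis, while the configuration you single out is not degenerate at all.

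In Case B, the pairing $\{\emptyset,11\}\,|\,\{12,2\}$ requires, with $\xi_{12},\xi_2$ fixed,
\[
|\partial_\xi\Psi_2^v|=3|a\xi_{11}^2-\xi^2|\simeq\tfrac34|a-4|\,|\xi|^2 ,
\]
and this vanishes at $\xi=2\xi_{11}$ exactly when $a=4$. The other pairings do not help. The pairing $\{\emptyset,2\}\,|\,\{11,12\}$ gives $\partial_{\xi_{11}}\Psi_2^v=3a\xi_1(\xi_{11}-\xi_{12})$, which is small here, and $\{\emptyset,12\}\,|\,\{11,2\}$ is the mirror image of yours.

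By contrast, your configuration $\xi_{11}\simeq-\xi$, $\xi_{12}\simeq 2\xi$ (or its mirror image) lies in Case A. There your pairing $\{\emptyset,2\}\,|\,\{11,12\}$ has derivatives $3(\xi_2^2-\xi^2)$ and $3a\xi_1(\xi_{11}-\xi_{12})$, both $\gtrsim|\xi|^2$ for every $a$. Even in the other pairings, the only $a$-dependent coefficients there are $4a-1$ and $a-1$, never $4-a$. The factor $4-a$ comes from $\Psi_1^u$ in Lemma~\ref{Lemma N_1^u and N_2^u}, where $\xi$ carries weight $a$ and $\xi_{12}$ weight $1$; in $\Psi_2^v$ these weights are swapped. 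Likewise, with $\xi_{11},\xi_{12}$ fixed, $\partial_\xi\Psi_2^v=3(\xi_2^2-\xi^2)$ never vanishes, so the "stationary configuration" you describe does not exist.

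The repair is to run Morse with parameters inside Case B. With $\xi_{12},\xi_2$ fixed, $\partial_\xi^2\Psi_2^v=6(a\xi_{11}-\xi)\simeq 12\,\xi_{11}\neq 0$, so the sublevel integral costs $|\xi|^{-1/2}M^{1/2}$. With $\xi,\xi_{11}$ fixed, $|\partial_{\xi_2}\Psi_2^v|=3|\xi_2^2-a\xi_{12}^2|\sim|\xi|^2$. In Lemma~\ref{lemma Schur's test with weight} take
\[
\M_1\sim\langle\xi_2\rangle^{1-s}|\xi|^{s-2k-11/4+\epsilon},\qquad \M_2\sim\langle\xi_2\rangle^{1-s}|\xi|^{s-2k-5/4+\epsilon}.
\]
Both sides are then bounded by $\langle\xi_2\rangle^{1-s}|\xi|^{s-2k-13/4+\epsilon+0^+}M$, and $|\xi_2|\lesssim|\xi|$ yields exactly $\epsilon<\min\{2k-s+13/4,\,2k+9/4\}$.

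Keep the bound $|\xi_2|\langle\xi_2\rangle^{-s}\le\langle\xi_2\rangle^{1-s}$ here. Your coarser bound $\M\lesssim|\xi|^{s-2k+\epsilon-1}\langle\xi_2\rangle^{-s}$ only gives $\epsilon<2k-s+9/4$ when $s>0$, which falls short of the stated range.

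Finally, your Case C detour is unnecessary. With $A=\{\emptyset,12\}$ the relevant derivatives are $3(a\xi_{12}^2-\xi^2)$ and $3(a\xi_{11}^2-\xi_2^2)$, both $\sim|\xi_{11}|^2$. The expression $3a(\xi_{11}^2-\xi_{12}^2)$ you computed belongs to the pairing $\{\emptyset,2\}\,|\,\{11,12\}$. So Lemma~\ref{leminha}\,\ref{item 1} applies directly, as it does with the paper's $A=\{2,12\}$. Note also that $|\Psi_2^v|\sim|\xi||\xi_{11}|^2$ there, not $|\xi|^3$.
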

        
        \begin{proof}
            Recall the definitions of $\Nl_2^v$ in \eqref{equation N_j^v} and $V$ in Table \ref{table problematic regions}. By \eqref{equation Phi_2^u}, we have $|\Phi^v| \sim |\xi|^3$. Consequently,
            \[
            \M = \frac{|\xi_1||\xi_{2}|\langle \xi\rangle ^{s+\epsilon}}{|\Phi^v|\langle \xi_{11}\rangle ^k\langle \xi_{12}\rangle ^k\langle \xi_{2}\rangle ^s} \lesssim \frac{|\xi |^{s+\epsilon-2}}{\langle \xi_{11}\rangle ^k\langle \xi_{12}\rangle ^k\langle \xi_{2}\rangle ^{s-1}}.
            \]
            From \eqref{equation psi^v}, note that $\Psi^v_2 = - \xi^3+ \xi_2^3+ a\xi_{11}^3 +a\xi_{12}^3$. By symmetry, assume $|\xi_{11}| \gtrsim |\xi_{12}|$, which implies $|\xi| \lesssim |\xi_{11}|$.
        
            \vspace{2mm}
            \noindent \textbf{Case A: } $|\xi_{11}| \not \simeq |\xi_{12}|$. 
            Since $|\xi| \simeq |\xi_{11}+\xi_{12}| \gtrsim |\xi_{11}|$, we have $|\xi| \sim |\xi_{11}|$. Thus, we apply Lemma \ref{leminha} \ref{item 1} with $A=\{\emptyset,2\}$, noting that
            \[
            \M \lesssim \frac{|\xi |^{s+\epsilon-k-2}}{\langle \xi_{12}\rangle ^k\langle \xi_{2}\rangle^{s-1}}.
            \]
        
            \vspace{2mm}
            \noindent \textbf{Case B: } $\xi_{11} \simeq \xi_{12}.$ 
            In this regime, $\xi_{11} \simeq \xi/2$ and  
            \[ 
            \M \lesssim \frac{|\xi |^{s+\epsilon-2k-2}}{\langle \xi_{2}\rangle^{s-1}}. 
            \]
            Moreover, for fixed $\xi_2$ and $\xi_{12}$, the derivative satisfies $|\partial_{\xi}\Psi_2^v| \sim |\xi^2 - a\xi_{11}^2| \sim |4-a||\xi_{11}|^2 \sim |\xi|^2$. Provided $a \neq 4$, Lemma \ref{leminha} applies with $A = \{2,12\}$. 
            If $a=4$, we instead invoke Morse's Lemma with parameters (cf. Lemma \ref{lemma morse parameters}) and Lemma \ref{lemma integral quadratic}, following the approach in \eqref{equation application morse parameters}. In this context, we use Lemma \ref{lemma Schur's test with weight} with $A=\{2,12\}$, $\M_1 \lesssim \langle \xi_{2} \rangle^{1-s} |\xi |^{s-2k-11/4+\epsilon}$ and $\M_2 \lesssim \langle \xi_{2} \rangle^{1-s} |\xi |^{s-2k-5/4+\epsilon}$. 
            The condition $|\xi_2| \lesssim |\xi_{12}|$ then yields
            \[  \sup_{\xi_2, \xi_{12}} \int_{|\Psi_2^v-\alpha| <M} \M_1 d\xi_{11} \lesssim
            \sup_{\xi_2, \xi_{12}} \int_{|\Psi_2^v-\alpha| <M} \langle \xi_{2}\rangle^{1-s}|\xi_{12} |^{s-2k-11/4+\epsilon+0^+} d\xi_{11} 
            \]
            \[\lesssim \sup_{\xi_2, \xi_{12}} \langle \xi_{2}\rangle^{1-s}|\xi_{12} |^{s-2k-13/4+\epsilon+0^+} M^{1/2} \lesssim M^{1/2}.
            \]
            For fixed $\xi$ and $\xi_{11}$, the relation $|\partial_{\xi_2}\Psi_2^v| \sim |\xi|^2$ holds, and the other bound follows via a change of variables.
        
            \vspace{2mm}
            \noindent \textbf{Case C: } $\xi_{11} \simeq -\xi_{12}$. 
            Here, $|\xi| \ll |\xi_{11}|$. We apply Lemma \ref{leminha} \ref{item 1} with $A=\{2,12\}$, utilizing the bound
            \[ 
            \M \lesssim \frac{|\xi |^{s+\epsilon-2}}{\langle \xi_{11}\rangle ^{2k}\langle \xi_{2} \rangle^{s-1}}.
            \]
        \end{proof}

        \begin{lemma}\label{lemma N_3^v}
            Let $(k,s) \in \A_a$ and $\epsilon < \min \{2k+3, k-s+4, 2k-s+4, k+3\}$. For $s \leq -k-1$, assume further that $\epsilon < k-s+3/2$. Then,
            \[
            \| \Nl_3^v[u_{1},u_{21},v_{22}]\|_{Y^{s+\epsilon,b'}} \lesssim \|u_{1}\|_{X^{k,b}}\|u_{21}\|_{X^{k,b}}\|v_{22}\|_{Y^{s,b}}.
            \]
        \end{lemma}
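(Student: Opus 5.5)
Recall the term $N_3^v$ from \eqref{equation N_j^v}: it lives on $V$, where $\xi \simeq \xi_1$ and $|\xi| > 1/\delta^v$. The nested relation is $\xi_2 = \xi_{21}+\xi_{22}$, so $\xi = \xi_1+\xi_{21}+\xi_{22}$, and the total phase is $\Psi_3^v = -\xi^3 + a\xi_1^3 + a\xi_{21}^3 + \xi_{22}^3$. On $V$ we have $|\Phi^v|\sim|\xi|^3$ and $|\xi_2| \ll |\xi|$. The weight is therefore
\[
\M = \frac{|\xi_2||\xi_{22}|\langle\xi\rangle^{s+\epsilon}}{|\Phi^v|\langle\xi_1\rangle^k\langle\xi_{21}\rangle^k\langle\xi_{22}\rangle^s}\lesssim \frac{|\xi_2|\,|\xi|^{s-k+\epsilon-3}}{\langle\xi_{21}\rangle^k\langle\xi_{22}\rangle^{s-1}}.
\]
In the case analysis I check the exponent hypotheses of Lemma \ref{leminha} against the stated bounds on $\epsilon$, choosing in each case a set $A$ for which the two partial derivatives of $\Psi_3^v$ required there are $\gtrsim\langle\eta_1\rangle^2$. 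I split according to the relative size of $\xi_{21}$ and $\xi_{22}$, both of which may be much larger than $|\xi_2|$.

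\textbf{Case A: $|\xi_{21}|\lesssim|\xi_2|$ or $|\xi_{22}|\lesssim|\xi_2|$.} Here $|\xi_{21}|,|\xi_{22}|\lesssim|\xi|$, and the largest frequencies are $\xi\simeq\xi_1$. The derivative $\partial_\xi\Psi_3^v$, taken with one of the small frequencies fixed and $\xi_1$ compensating, behaves like $(a-1)\xi^2\neq0$. I would therefore use Lemma \ref{leminha} \ref{item 1} with $A=\{\emptyset,21\}$ or $\{\emptyset,22\}$. The exponents to control are $s-k+\epsilon-3$ on $\eta_1\sim\xi$ together with the small-frequency powers $1-k$ and $1-s$ (the latter when $s<1$). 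The conditions $\epsilon<k-s+4$ and $\epsilon<2k-s+4$ are exactly what makes these sums $<2$.

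\textbf{Case B: $|\xi_{21}|\sim|\xi_{22}|\gg|\xi_2|$, with $\xi_{22}\simeq-\xi_{21}$.} This is the high--high$\to$low interaction inside $\xi_2$. Then $\M\lesssim |\xi_2|\,|\xi|^{s-k+\epsilon-3}|\xi_{21}|^{1-k-s}$, and $\Psi_3^v\simeq(a-1)\xi_{21}^3+\dots$, so $|\Psi_3^v|\sim|\xi_{21}|^3$ whenever $|\xi_{21}|\gg|\xi|$; in general $\partial_{\xi_{21}}\Psi_3^v\sim a\xi_{21}^2-\xi_{22}^2\sim(a-1)\xi_{21}^2$. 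If $s>-k-1$, the exponent $1-k-s<2$ is harmless, and I apply Lemma \ref{leminha} \ref{item 2} with $A=\{\emptyset,21\}$ (or $\{1,22\}$). If $s\le -k-1$, the factor $|\xi_{21}|^{1-k-s}$ is too large for Schur's test, and I instead use Lemma \ref{lemma CHS phase comparable to alpha} exactly as in \eqref{inequality application of CHS with M< alpha}. I change variables to $\Psi_3^v$ in $\xi_{21}$ (Jacobian $|\xi_{21}|^{-2}$), bound $|\xi_{21}|^{-2k-2s}\lesssim|\Psi_3^v|^{1^-}\lesssim\langle\alpha\rangle^{1^-}$, and integrate the leftover variable $\xi_2$ (or $\xi$) against $|\cdot|^{-1^+}$. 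Doing so requires the $\xi$-exponent $2(s-k+\epsilon)-4$ to be $<-1$, i.e.\ $\epsilon<k-s+3/2$, which is the extra hypothesis. If instead $|\xi_{21}|\lesssim|\xi|$, all four large frequencies are comparable and Lemma \ref{leminha} \ref{item 3} applies directly.

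\textbf{Case C: $|\xi|\sim|\xi_{21}|$ with stationary directions.} When $|\xi_{21}|\sim|\xi_{22}|\sim|\xi|$, the derivative $\partial_{\xi_{21}}\Psi_3^v\sim a\xi_{21}^2-\xi_{22}^2$ can vanish for $a>0$ along $\xi_{22}\simeq\pm\sqrt a\,\xi_{21}$. This case is excluded when $|\xi_2|\ll|\xi|$ with $\xi_{22}\simeq\sqrt a\xi_{21}$, and permitted for $\xi_{22}\simeq-\sqrt a\,\xi_{21}$ only when $|1-\sqrt a|$ is small. In that situation I swap the roles and use $A=\{1,22\}$, where $\partial_{\xi}\Psi_3^v\sim\xi^2-a\xi_1^2\neq0$ since $a\neq1$. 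The conditions $\epsilon<2k+3$ and $\epsilon<k+3$ come from the Case~\ref{item 3} budget $a_1+a_2+a_3<2$ with $\M\lesssim|\xi|^{\epsilon-2k-1}$ or similar.

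I expect Case B with $s\le-k-1$ to be the main obstacle. There the weight is genuinely unbounded, and one must exploit the size of $\Psi_3^v$ through Lemma \ref{lemma CHS phase comparable to alpha}, verifying both \eqref{inequality CHS alpha xi} and the fixed-$\xi_l$ versions \eqref{inequality CHS alpha xi_j} for each of the four frequencies.
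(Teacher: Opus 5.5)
Your skeleton matches the paper's. You use the same bound $\M\lesssim|\xi_2||\xi|^{s-k+\epsilon-3}\langle\xi_{21}\rangle^{-k}\langle\xi_{22}\rangle^{1-s}$ and Lemma \ref{leminha} almost everywhere. You invoke Lemma \ref{lemma CHS phase comparable to alpha} only for $s\le-k-1$ when $|\xi_{21}|\sim|\xi_{22}|\gg|\xi|$, where $|\Psi_3^v|\sim|\xi_{21}|^3$, and your computation there is exactly how $\epsilon<k-s+3/2$ arises. (Bounding $|\xi_{21}|^{-2k-2s}$ by $\langle\alpha\rangle^{1^-}$ also uses $s+k>-3/2$, which holds on $\A_a$.)

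The gap is in the regime where all four frequencies are comparable, $|\xi|\sim|\xi_1|\sim|\xi_{21}|\sim|\xi_{22}|$ with $\xi\simeq\xi_1$ and $\xi_{22}\simeq-\xi_{21}$. The set $A=\{1,22\}$ you pick in Case C is the complement of $\{\emptyset,21\}$, so it is the same splitting you use in Case B. Lemma \ref{leminha} then requires $|a\xi_1^2-\xi_{22}^2|\gtrsim|\xi|^2$ with $\xi,\xi_{21}$ fixed, and $|a\xi_{21}^2-\xi^2|\gtrsim|\xi|^2$ with $\xi_1,\xi_{22}$ fixed. For $a>0$ these fail near $|\xi_{22}|\simeq\sqrt a\,|\xi|$ and near $|\xi_{21}|\simeq|\xi|/\sqrt a$, and both lie inside this regime. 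The degeneracy you actually worry about, $a\xi_{21}^2\simeq\xi_{22}^2$, cannot occur there, since $\xi_{22}\simeq-\xi_{21}$ and $a\neq1$. Also, the derivative $\xi^2-a\xi_1^2$ you quote is $\partial_\xi\Psi_3^v$ with $\xi_{21},\xi_{22}$ fixed, which belongs to a different splitting from the one you name.

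The correct choice, which the paper makes in its Case A, is $A=\{\emptyset,1\}$. With $\xi_{21},\xi_{22}$ fixed, $\partial_\xi\Psi_3^v=3(a\xi_1^2-\xi^2)\simeq3(a-1)\xi^2$. With $\xi,\xi_1$ fixed, $\partial_{\xi_{21}}\Psi_3^v=3(a\xi_{21}^2-\xi_{22}^2)\simeq3(a-1)\xi_{21}^2$. Both are $\sim|\xi|^2$, and Lemma \ref{leminha} \ref{item 3} with $\M\lesssim|\xi|^{\epsilon-2k-1}$ needs only $\epsilon<2k+3$. When $|\xi_{21}|\not\simeq|\xi|$, the paper's alternative $A=\{1,21\}$ also works.

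There is a smaller slip too. In Case B you assert that $|\xi_{21}|\lesssim|\xi|$ forces all four frequencies to be comparable, which is false when $|\xi_2|\ll|\xi_{21}|\ll|\xi|$. That intermediate regime is harmless: Lemma \ref{leminha} \ref{item 2} with $\eta_1,\eta_2=\xi,\xi_1$ and $A=\{\emptyset,21\}$ requires only $\epsilon<k-s+4$ and $\epsilon<2k+3$, whatever the sign of $s+k+1$. You must still say so, because in the branch $s\le-k-1$ you only treat $|\xi_{21}|\gg|\xi|$.
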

        
        \begin{proof}
            Recall the definitions of $\Nl_3^v$ in \eqref{equation N_j^v} and of $V$ in Table \ref{table problematic regions}. Since $|\Phi^v| \sim |\xi|^3$, we have
            \[
            \M = \frac{|\xi_2||\xi_{22}|\langle \xi\rangle ^{s+\epsilon}}{|\Phi^v|\langle \xi_{1}\rangle ^k\langle \xi_{21}\rangle ^k\langle \xi_{22}\rangle ^s} \sim \frac{|\xi_{2}\xi_{22}||\xi |^{s+\epsilon-k-3}}{\langle \xi_{21}\rangle ^k\langle \xi_{22}\rangle ^s} \lesssim \frac{|\xi_{2}||\xi |^{s+\epsilon-k-3}}{\langle \xi_{21}\rangle ^k\langle \xi_{22} \rangle^{s-1}}.
            \]
            Moreover, $\Psi^v_3 = - \xi^3 + a\xi_1^3 + a\xi_{21}^3 + \xi_{22}^3$.
        
            \vspace{2mm}
            \noindent \textbf{Case A: } $|\xi| \simeq |\xi_{22}|$. 
            Note that $|\xi_{21}+\xi_{22}| = |\xi-\xi_1| \ll |\xi| \simeq |\xi_{22}|$, which implies $\xi_{21} \simeq -\xi_{22}$. Hence, $\M \lesssim |\xi_2||\xi|^{\epsilon-2k-4}$. We then apply Lemma \ref{leminha} \ref{item 2} with $A=\{\emptyset,1\}$.
        
            \vspace{2mm}
            \noindent \textbf{Case B: } $|\xi| \not \simeq |\xi_{22}|$. 
            In this case, it follows that $|\xi| \not \simeq |\xi_{21}|$.
        
            \vspace{2mm}
            \noindent \textbf{Subcase B1: } $|\xi| \lesssim |\xi_{21}|$. 
            Here, $|\xi_{22}| \geq |\xi_{21}| - |\xi-\xi_1| \gtrsim |\xi|$ and $|\xi_{21}| \sim |\xi_{22}|$. Consequently, $\M \lesssim |\xi_{2}||\xi_{21}|^{1-s-k}|\xi|^{\epsilon+s-k-3}$. If $s > -k-1$, the result follows from Lemma \ref{leminha} \ref{item 1} with $A=\{1,21\}$. Otherwise, we use Lemma \ref{lemma CHS phase comparable to alpha}. First, observe that if $|\xi| \gtrsim |\xi_{21}|$, then $|\xi| \sim |\xi_{21}|$ and we set $A=\{1,21\}$ in Lemma \ref{leminha} \ref{item 2}. Conversely, if $|\xi_{21}| \gg |\xi|$, then $\xi_{22} \simeq -\xi_{21}$. Since $a \neq 1$, this implies $|\Psi_3^v| \sim |\xi_{21}|^3$ and $\M \sim |\xi_2||\xi_{21}|^{1-s-k}|\xi|^{s+\epsilon-k-3}$. Thus, we proceed as in \eqref{inequality application of CHS with M< alpha}.
            
            \vspace{2mm}
            \noindent \textbf{Subcase B2: } $|\xi| \gg |\xi_{21}|$. 
            In this regime, $|\xi_{22}| \leq |\xi_{21}| + |\xi-\xi_1| \ll |\xi|$, which implies $|\xi_{22}| \ll |\xi|$. Consequently, we employ Lemma \ref{leminha} \ref{item 1} with $A=\{1,22\}$, as
            \[
            \M \lesssim \frac{|\xi_{2}||\xi |^{s+\epsilon-k-3}}{\langle \xi_{21}\rangle ^k\langle \xi_{22} \rangle^{s-1}} \lesssim \frac{|\xi |^{s+\epsilon-k-2}}{\langle \xi_{21}\rangle ^k\langle \xi_{22} \rangle^{s-1}}.
            \]
        \end{proof}

    \subsection{Boundary terms}

        We now address the boundary terms defined in \eqref{equation B^u} and \eqref{equation N_j^v}. We denote their representations in physical space by 
        \[
        \B^u (t)=\F_x^{-1}( e^{ita\xi^3}B^u(t)), \quad \B^v(t) =\F_x^{-1}( e^{it\xi^3}B^v(t)),
        \]
        \[
        \B^u_0 (t)=\F_x^{-1}( e^{ita\xi^3}B^u(0)), \quad \B_0^v(t) = \F_x^{-1}( e^{it\xi^3}B^v(0)).
        \]

        Throughout the remainder of this section, $\eta \in C_c^\infty(\mathbb{R})$ denotes a smooth cutoff function such that $\eta \equiv 1$ on $[-1,1]$. In the following lemma, we establish that for $(k,s) \in \A_a^0$ (as defined in \eqref{equation definition A^0}), the boundary terms are bounded in Fourier restriction spaces. 

        \begin{lemma}\label{lemma B^u for a<1/4} 
            Let $(k,s) \in \A^0_{a}$ and $\rho < \min\{s-k+2,2s-k+3/2\}.$ Then,
            \begin{equation}\label{equation estimate B^u}
                \| \B^u[v_{1},v_{2}]\|_{X^{k+ \rho,b-\frac{\rho}{3}}} \lesssim (\delta^u)^{0^+} \|v_{1}\|_{Y^{s,b}}\|v_{2}\|_{Y^{s,b}}.
            \end{equation}
        \end{lemma}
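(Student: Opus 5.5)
The plan is to treat $\B^u$ as a bilinear operator with no time integration: on the space–time Fourier side it is the multiplier $\indicatrix_U\, \xi/\Phi_1^u$ applied to $v_1v_2$. Write $\sigma=\tau-a\xi^3$ and $\sigma_j=\tau_j-\xi_j^3$. The convolution relation gives the algebraic identity
\[
\sigma=\sigma_1+\sigma_2+\Phi_1^u ,
\]
hence
\[
\langle\sigma\rangle^{b-\rho/3}\lesssim \langle\Phi_1^u\rangle^{b-\rho/3}+\langle\sigma_1\rangle^{b-\rho/3}+\langle\sigma_2\rangle^{b-\rho/3}.
\]
We may assume $\rho\le 3b$; otherwise the exponent is negative and $\langle\sigma\rangle^{b-\rho/3}\le 1$, which only simplifies matters. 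On $U$ we know $|\Phi_1^u|\sim|\xi|^3$ by Table~\ref{table problematic regions}. By duality, exactly as in \eqref{equation inequality duality delx(v1v2)}, the claim reduces to a trilinear $L^2$ estimate. The weight is
\[
\frac{|\xi|\langle\xi\rangle^{k+\rho}\langle\sigma\rangle^{b-\rho/3}}{|\Phi_1^u|\langle\xi_1\rangle^s\langle\xi_2\rangle^s\langle\sigma_1\rangle^b\langle\sigma_2\rangle^b},
\]
and we split according to which of the three terms above dominates $\langle\sigma\rangle$.

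\textbf{Case 1: $\langle\Phi_1^u\rangle$ dominates.} The weight becomes $\langle\xi\rangle^{k-2+3b}\langle\xi_1\rangle^{-s}\langle\xi_2\rangle^{-s}\langle\sigma_1\rangle^{-b}\langle\sigma_2\rangle^{-b}$; the $\rho$ cancels. By Cauchy–Schwarz in $(\xi_1,\tau_1)$ and the standard bound $\int\langle\sigma_1\rangle^{-2b}\langle\sigma_2\rangle^{-2b}\,d\tau_1\lesssim 1$ (valid since $b>1/2$), it suffices to show
\[
\sup_\xi\int_U \langle\xi\rangle^{2k-4+6b}\langle\xi_1\rangle^{-2s}\langle\xi_2\rangle^{-2s}\,d\xi_1\lesssim (\delta^u)^{0^+}.
\]
Here $|\xi|>1/\delta^u$ is used to extract the small power. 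On the $U_2$ part ($\xi\simeq\xi_1$, $|\xi_2|\ll|\xi|$), integrating $\langle\xi_2\rangle^{-2s}$ over $|\xi_2|\lesssim|\xi|$ gives at most $|\xi|^{\max(1-2s,0)^+}$, leaving a power of $|\xi|$ that is negative thanks to $s>k-3/2$ and $(k,s)\in\A_a^0$. On $U_1$ ($\xi_1\simeq\xi_2$, only for $a<1/4$) the integrand is $|\xi|^{2k-4+6b-4s}$ over an interval of length $\sim|\xi|$, which is where $2s-k$ enters. If the crude length-$|\xi|$ count is too lossy, I would instead use the $L^4$ Strichartz / bilinear smoothing for the $Y$-pieces, or keep the $\tau_1$ integral as a $\langle\cdot\rangle^{1-4b}$ factor and integrate it against the $\xi_1$-variation of $\xi_1^3+\xi_2^3$ (derivative $\sim|\xi||\xi_1-\xi_2|$). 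This should give the $3/2$ appearing in $2s-k+3/2$.

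\textbf{Cases 2–3: $\langle\sigma_1\rangle$ (or symmetrically $\langle\sigma_2\rangle$) dominates.} Now $\langle\sigma\rangle^{b-\rho/3}\langle\sigma_1\rangle^{-b}\le\langle\sigma_1\rangle^{-\rho/3}$, and since $\langle\sigma_1\rangle\gtrsim\langle\sigma\rangle$ we lose the modulation weight on $v_1$. We place $v_1$ in $L^2_{t,x}$ with weight $\langle\xi_1\rangle^{-s}$ and $v_2$ in $L^\infty_tL^2_x$ type bounds, i.e. use the $\tau_2$-integrability of $\langle\sigma_2\rangle^{-2b}$ only. If $\langle\sigma_1\rangle\gtrsim|\xi|^3$ (the typical situation, since $\sigma_1$ dominates $\sigma_1+\Phi$), the factor $\langle\sigma_1\rangle^{-\rho/3}$ cancels $\langle\xi\rangle^{\rho}$. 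If $\langle\sigma_1\rangle\ll|\xi|^3$, we cannot cancel, and the multiplier is $\langle\xi\rangle^{k+\rho-2}\langle\xi_1\rangle^{-s}\langle\xi_2\rangle^{-s}$, with the $\sup_\xi\int d\xi_1$ computed as in Case 1. This is where I expect the thresholds $\rho<s-k+2$ (from $U_2$, where $\langle\xi\rangle^{k+\rho-2-s}$ must decay) and $\rho<2s-k+3/2$ (from $U_1$) to appear. The gain $(\delta^u)^{0^+}$ again comes from strict inequality together with $|\xi|>1/\delta^u$.

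The main obstacle is the $U_1$ region for $a<1/4$ in the cases where the modulation of an input dominates. There the naive sup-in-$\xi$ integral over the interval $\xi_1\simeq\xi_2$ loses a full power $|\xi|$, whereas the claimed threshold only loses $|\xi|^{1/2}$ relative to $2s-k+2$. I would handle this first via the $\tau$-integration trick: the integral $\int\langle\tau-\xi_1^3-\xi_2^3\rangle^{-2b}\,d\xi_1$ behaves like $M^{1/2}$ around the stationary point $\xi_1=\xi_2$, by the Morse/quadratic argument of Lemma~\ref{lemma integral quadratic}, as in \eqref{inequality calculations derivative small}. This yields exactly a $|\xi|^{3/2}$-type factor rather than $|\xi|^3$.
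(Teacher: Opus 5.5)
Your overall architecture (duality, a case split according to which modulation controls $\langle\sigma\rangle$, Cauchy--Schwarz with a supremum over the output) is the paper's, and your Cases 2--3 coincide with the paper's Case A. The gap is in your Case 1, where $\langle\sigma\rangle\lesssim\langle\Phi_1^u\rangle\sim|\xi|^3$.

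There you bound $\int\langle\sigma_1\rangle^{-2b}\langle\sigma_2\rangle^{-2b}\,d\tau_1\lesssim 1$ and reduce to $\sup_\xi\int_U\langle\xi\rangle^{2k-4+6b}\langle\xi_1\rangle^{-2s}\langle\xi_2\rangle^{-2s}\,d\xi_1\lesssim(\delta^u)^{0^+}$. This sufficient condition is false on part of $\A^0_a$.
\begin{itemize}
\item On $U_2$ with $s>1/2$ the exponent is $2k-4+6b-2s=2k-2s-1+0^+$. It is negative only for $s>k-1/2$, not under $s>k-3/2$ as you assert. For example, with $a>1/4$ and $(k,s)=(2,1)\in\A^0_a$ the integral grows like $|\xi|^{1^+}$.
\item On $U_1$ the same crude count needs $s>k/2$ instead of $s>k/2-3/8$, as you partly suspected.
\end{itemize}

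What you discarded is exactly what produces the thresholds of $\A^0_a$. The $\tau_1$-integral is bounded by $\langle\tau-\xi_1^3-\xi_2^3\rangle^{-2b}=\langle\sigma-\Phi_1^u\rangle^{-2b}$. Keeping this factor and integrating it in $\xi_1$ gains:
\begin{itemize}
\item $|\xi|^{-2}$ on $U_2$, where $|\partial_{\xi_1}(\xi_1^3+\xi_2^3)|\sim|\xi|^2$;
\item $|\xi|^{-1/2}$ on $U_1$, from the nondegenerate stationary point at $\xi_1=\xi_2$ and Lemma~\ref{lemma integral quadratic}.
\end{itemize}
This yields precisely $s>k-3/2$ and $s>k/2-3/8$.

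The paper handles this more economically. It splits instead into $|\sigma|\lesssim\max\{|\sigma_1|,|\sigma_2|\}$ and $|\sigma|\gg|\sigma_1|,|\sigma_2|$. In the latter case $\sigma\simeq\Phi_1^u$, so $\langle\sigma\rangle^{b}/|\Phi_1^u|\sim\langle\sigma\rangle^{b-1}\lesssim\langle\sigma\rangle^{b'}|\Phi_1^u|^{-0^+}$. The estimate therefore reduces to the bilinear estimate of Lemma~\ref{lemma multilinear estimate delx(v1v2)} restricted to $U$, with $|\Phi_1^u|^{-0^+}\lesssim|\xi|^{-0^+}\lesssim(\delta^u)^{0^+}$. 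This is where the hypothesis $(k,s)\in\A^0_a$ enters. For $\rho\neq 0$ one uses $|\xi|^{\rho}\langle\sigma\rangle^{-\rho/3}\sim 1$ to reduce to $\rho=0$.

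Your identification of the main obstacle is misplaced accordingly. In Cases 2--3 the naive count on $U_1$ already suffices. Integrating $|\xi|^{2k+2\rho-4-4s}$ over an interval of length $\sim|\xi|$ gives $|\xi|^{2k+2\rho-3-4s}$, which is bounded exactly when $\rho<2s-k+3/2$. A loss of $|\xi|$ in the squared integral is only a loss of $|\xi|^{1/2}$ in the norm, i.e.\ of $1/2$ in $\rho$.

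Moreover, the stationary-phase/$\tau$-integration device is not available in Cases 2--3. There $\langle\sigma_1\rangle^{-b}$ has been spent absorbing $\langle\sigma\rangle^{b-\rho/3}$. Only $\langle\sigma_2\rangle^{-2b}$ remains, and its $\tau_2$-integral carries no $\xi$-dependence. That device belongs in Case 1, where it is what makes the argument close.
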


        \begin{proof}
            The definition of $B^u$ is given in \eqref{equation B^u}. Let $\sigma = \tau-a\xi^3$ and $\sigma_j = \tau_j-\xi_j^3$ for $j \in \{1,2\}$. 
            By duality, the estimate \eqref{equation estimate B^u} reduces to proving that
            \begin{equation}\label{equation boundary estimate using duality}
                \left|\int_{U,\tau = \tau_1+\tau_2} \frac{|\xi|\langle \xi\rangle^{k+\rho} \langle \sigma\rangle^{b-\frac{\rho}{3}} h(\tau,\xi)h_1(\tau_1,\xi_1)h_2(\tau_2,\xi_2)}{|\Phi^u_1| \langle \xi_1\rangle^{s} \langle \xi_2\rangle^{s} \langle \sigma_1\rangle^{b}\langle \sigma_2\rangle^{b}}  d\xi_1 d\xi_2 d\tau_1 d\tau_2 \right| \lesssim (\delta^u)^{0^+}\|h\|_{L^2} \|h_1\|_{L^2} \|h_2\|_{L^2}, 
            \end{equation}
            where the region $U$ is defined in Table \ref{table problematic regions} and in \eqref{equation definition U}. We first consider the case $\rho=0$.

            \vspace{2mm}
            \noindent \textbf{Case A: }$|\sigma| \lesssim |\sigma_1|$ or $|\sigma| \lesssim |\sigma_2|$. Without loss of generality, assume $|\sigma| \lesssim |\sigma_1|$. By the Cauchy-Schwarz inequality, \eqref{equation boundary estimate using duality} reduces to verifying that
            \[ 
            I \coloneqq \sup_{\xi,\tau}\int_U \frac{|\xi|^2\langle \xi\rangle^{2k} \langle \sigma\rangle^{2b} }{|\Phi^u_1|^2 \langle \xi_1\rangle^{2s} \langle \xi_2\rangle^{2s} \langle \sigma_1\rangle^{2b}\langle \sigma_2\rangle^{2b}} d\xi_2 d\tau_2\lesssim (\delta^u)^{0^+}.
            \]
            To establish this, note that since $|\sigma| \lesssim |\sigma_1|$, we have
            \[
            I \lesssim \sup_{\xi,\tau}\int_U \frac{|\xi|^2\langle \xi\rangle^{2k}   }{|\Phi^u_1|^2 \langle \xi_1\rangle^{2s} \langle \xi_2\rangle^{2s}} \left( \int \frac{d\tau_2}{\langle \sigma_2 \rangle^{2b} } \right)  d\xi_2 \lesssim  \sup_{\xi}\int_U \frac{|\xi|^2\langle \xi\rangle^{2k}   }{|\Phi^u_1|^2 \langle \xi_1\rangle^{2s} \langle \xi_2\rangle^{2s}} d\xi_2. 
            \]
            In the region $U_1$ (for $a<1/4$), we have $|\Phi_1^u|\sim |\xi|^3$ and all frequencies are comparable. Thus,
            \[
            I \lesssim  \sup_{\xi}\int \frac{1}{ | \xi_1|^{4s-2k+4}}d\xi_2 \lesssim (\delta^u)^{0^+} \sup_{\xi}\int \frac{1}{ | \xi_1|^{4s-2k+4^-}}d\xi_2\lesssim (\delta^u)^{0^+}.
            \]
            Next, we consider $U_2$ for $a \in \R \setminus\{0,1\}$, where $|\Phi_1^u| \sim |\xi|^3$. By symmetry, we may assume $|\xi_1|\geq |\xi_2|$, which implies $\xi \simeq \xi_1$. Consequently,  
            \[ 
            I \lesssim \sup_{\xi}\int \frac{|\xi|^{2k-4 -2s}}{ \langle \xi_2\rangle^{2s}} d\xi_2 \lesssim (\delta^u)^{0^+}\sup_{\xi}\int \frac{|\xi|^{2k-4 -2s^+}}{ \langle \xi_2\rangle^{2s}}d\xi_2 \lesssim (\delta^u)^{0^+}.
            \]

            \vspace{2mm}
            \noindent \textbf{Case B: } $|\sigma| \gg|\sigma_1|, |\sigma_2|.$ Since $\sigma = \sigma_1+ \sigma_2+ \Phi_1^u$, it follows that $\sigma \simeq \Phi_1^u$. In this case, \eqref{equation boundary estimate using duality} becomes 
            \[ 
            \left|\int_{U,\tau = \tau_1+\tau_2} \frac{|\xi|\langle \xi\rangle^{k} \langle \sigma\rangle^{b'} }{|\Phi^u_1|^{0^+}(\delta^u)^{0+} \langle \xi_1\rangle^{s} \langle \xi_2\rangle^{s} \langle \sigma_1\rangle^{b}\langle \sigma_2\rangle^{b}} h h_1 h_2 d\xi_1 d\xi_2 d\tau_1 d\tau_2 \right| \lesssim \|h\|_{L^2} \|h_1\|_{L^2} \|h_2\|_{L^2}.
            \]

            Note that this inequality is equivalent to \eqref{equation multilinear estimate} via a duality argument and may be established using FREs. Consequently, the reasoning proceeds as in the proof of Lemma \ref{lemma multilinear estimate delx(v1v2)} for the regions $U_1$ (provided $a < 1/4$) and $U_2$ (if $a \in \R \setminus \{0,1\}$), since $|\Phi_1^u|\delta^u \gtrsim |\xi|^2 \gtrsim1$.

            \vspace{2mm}
            
            We now adress the case $\rho\neq 0$:

            \vspace{2mm}
            \noindent \textbf{Case A: } $|\sigma| \lesssim |\sigma_1|$ or $|\sigma| \lesssim |\sigma_2|$. Proceeding as in the case $\rho=0$, it suffices to show that
           \[  \sup_{\xi,\tau}\int_U \frac{|\xi|^2\langle \xi\rangle^{2k+2\rho} \langle \sigma\rangle^{2b} }{|\Phi^u_1|^2 \langle \xi_1\rangle^{2s} \langle \xi_2\rangle^{2s} \langle \sigma_1\rangle^{2b}\langle \sigma_2\rangle^{2b}} d\xi_2 d\tau_2\lesssim (\delta^u)^{0^+},
           \]
           which holds provided that $\rho < \min\{s-k+2, 2s-k+3/2\}. $
            
           \vspace{2mm}
           \noindent \textbf{Case B: } $|\sigma| \gg|\sigma_1|, |\sigma_2|.$ In this regime, $ \Phi_1^u \simeq \sigma$. Observe that in $U$ (see Table \ref{table problematic regions}), we have $|\sigma|\simeq |\Phi_1^u| \sim |\xi|^3 $. It follows that $|\xi|^{\rho}|\sigma |^{-\frac{\rho}{3}}\sim 1$, which reduces the problem to the case $\rho=0$.
        \end{proof}
        \begin{lemma}\label{lemma B_0^u for a<1/4} 
            Let $(k,s) \in \A_a$ and
            $\epsilon < \min\{s-k+2, 2s-k+3/2\}$. Then,
            \[
            \|\eta(t)\B_0^u[v_{1},v_{2}]\|_{X^{k+\epsilon,b}} \lesssim (\delta^u)^{0^+} \|v_{1}(0)\|_{H^s}\|v_{2}(0)\|_{H^s}.
            \]
        \end{lemma}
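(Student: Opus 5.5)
Since $\B_0^u(t) = \F_x^{-1}(e^{ita\xi^3}B^u(0))$ is a free $u$-evolution of a fixed function, we will reduce the estimate to a purely spatial bilinear bound. For any $f\in H^{k+\epsilon}$ the standard linear estimate gives $\|\eta(t)e^{-at\partial_x^3}f\|_{X^{k+\epsilon,b}}\lesssim \|f\|_{H^{k+\epsilon}}$. Applying it with $f=\F_x^{-1}B^u[v_1,v_2](0)$, it suffices to show
\[
\Big\|\langle\xi\rangle^{k+\epsilon}\int_{U}\frac{\xi}{\Phi_1^u}\,\hat v_1(0,\xi_1)\hat v_2(0,\xi_2)\,d\xi_1\Big\|_{L^2_\xi}\lesssim(\delta^u)^{0^+}\|v_1(0)\|_{H^s}\|v_2(0)\|_{H^s}.
\]
Here we used that at $t=0$ the profiles coincide with $\F_x v_j(0)$ and the phase factor is $1$.

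We will prove this by duality and Cauchy--Schwarz, as in Case A of Lemma \ref{lemma B^u for a<1/4} but without the $\tau$-variables. Setting $g_j=\langle\xi_j\rangle^s|\hat v_j(0)|$, the left side is bounded by
\[
\sup_\xi\Big(\int_U \frac{|\xi|^2\langle\xi\rangle^{2k+2\epsilon}}{|\Phi_1^u|^2\langle\xi_1\rangle^{2s}\langle\xi_2\rangle^{2s}}\,d\xi_1\Big)^{1/2}\,\|g_1\|_{L^2}\|g_2\|_{L^2},
\]
after writing $\big|\int K g_1g_2\big|\le(\int K^2)^{1/2}(\int g_1^2g_2^2)^{1/2}$ pointwise in $\xi$ and integrating in $\xi$. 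On $U$ we have $|\xi|>1/\delta^u$ and $|\Phi_1^u|\sim|\xi|^3$ (Table \ref{table problematic regions}), so the kernel is at most $|\xi|^{2k+2\epsilon-4}\langle\xi_1\rangle^{-2s}\langle\xi_2\rangle^{-2s}$.

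We then split according to the pieces of $U$.

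\textbf{Region $U_1$} ($a<1/4$). All frequencies are comparable, and the integration range in $\xi_1$ has length $\lesssim|\xi|$. The integral is therefore $\lesssim|\xi|^{2k+2\epsilon-4s-3}$. This is $\lesssim(\delta^u)^{0^+}$ provided $2\epsilon<4s-2k+3$, i.e.\ $\epsilon<2s-k+3/2$.

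\textbf{Region $U_2$.} By symmetry take $\xi\simeq\xi_1$, so that $|\xi_2|\lesssim|\xi|$. The integral is then
\[
|\xi|^{2k+2\epsilon-2s-4}\int_{|\xi_2|\lesssim|\xi|}\langle\xi_2\rangle^{-2s}\,d\xi_2 .
\]
If $s>1/2$ this is $\lesssim|\xi|^{2k+2\epsilon-2s-4}$, which needs $\epsilon<s-k+2$. If $s<1/2$ it is $\lesssim|\xi|^{2k+2\epsilon-4s-3}$, which needs $\epsilon<2s-k+3/2$. The endpoint $s=1/2$ costs only a logarithm, which is absorbed by the strict inequality. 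In every case the exponent of $|\xi|$ is strictly negative under the hypothesis $\epsilon<\min\{s-k+2,2s-k+3/2\}$. Together with $|\xi|>1/\delta^u$, this lets us extract the factor $(\delta^u)^{0^+}$.

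The main point to check is that no time-modulation issues arise. This holds because $\B_0^u$ is a linear solution in $t$, so $b$ (rather than $b'$) is admissible and there is no need for the $\sigma\simeq\Phi$ analysis of Case B in Lemma \ref{lemma B^u for a<1/4}. The only other care needed is the lower-order region near $|\xi_2|\lesssim1$, which is harmless since $\langle\xi_2\rangle\ge1$. Note also that the condition $(k,s)\in\A_a$ is used only to guarantee that the admissible range of $\epsilon$ is nonempty. Indeed, $s>k-2$ and $s>k/2-3/4$ are exactly what make $s-k+2>0$ and $2s-k+3/2>0$.
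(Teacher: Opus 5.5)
Your proposal is correct and follows the same route as the paper. Both reduce, via the free-evolution bound $\|\eta(t)e^{-at\partial_x^3}f\|_{X^{k+\epsilon,b}}\lesssim\|f\|_{H^{k+\epsilon}}$, to a spatial bound on $\F_x^{-1}B^u(0)$. That bound is then closed by Cauchy--Schwarz and the Case~A computation of Lemma~\ref{lemma B^u for a<1/4} on $U_1$ and $U_2$. Your explicit split in $U_2$ according to whether $s$ exceeds $1/2$ (with the logarithm at $s=1/2$) spells out a detail the paper leaves implicit.
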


        \begin{proof}
            Observe that 
            \[ 
            \B_0^u(t) = \F_x^{-1}( e^{ita\xi^3}B^u(0)) = \F_x^{-1}( e^{ita\xi^3}\F_x\F_x^{-1} B^u(0)) = e^{-iat\partial_x^3}\F_x^{-1} B^u(0),
            \]
            which implies
            \[
            \|\eta(t) \B_0^u(t) \|_{X^{k,b}} \lesssim \| \F_x^{-1} B^u(0) \|_{H^k}.
            \]
            By duality and the Cauchy-Schwarz inequality, it suffices to show that
            \[ 
            \sup_{\xi}\int_U \frac{|\xi|^2\langle \xi\rangle^{2k+2\epsilon}   }{|\Phi^u_1|^2 \langle \xi_1\rangle^{2s} \langle \xi_2\rangle^{2s}} d\xi_2 \lesssim (\delta^u)^{0^+}.
            \]
            The result then follows by an argument similar to that used in Case A of Lemma \ref{lemma B^u for a<1/4}.
        \end{proof}
        \begin{lemma}\label{lemma B^u in H^k}
            Let $(k,s) \in \A_a$ and $\epsilon < \min\{s-k+2, 2s-k+3/2\}$. Then,
            \[
            \| \B^u[v_1,v_2]\|_{H^{k+\epsilon}}+\| \B_0^u[v_1,v_2]\|_{H^{k+\epsilon}} \lesssim (\delta^u)^{0^+} \|v_1\|_{H^s}\|v_2\|_{H^s}. 
            \]
        \end{lemma}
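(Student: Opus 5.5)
This is a fixed-time estimate, so no Fourier restriction structure enters. At a fixed time $t$ the profile factors $e^{it\Phi_1^u}$ and $e^{-ita\xi^3}$ are unimodular. Hence
\[
|\F_x \B^u(t)(\xi)| \le \int_{U,\,\xi=\xi_1+\xi_2} \frac{|\xi|}{|\Phi_1^u|}\,|\F_x v_1(t,\xi_1)|\,|\F_x v_2(t,\xi_2)|\,d\xi_1 ,
\]
and the same bound holds for $\B_0^u$ with the data at time $0$. Both terms therefore reduce to one bilinear Fourier-multiplier estimate $H^s\times H^s\to H^{k+\epsilon}$ with symbol $|\xi|\,|\Phi_1^u|^{-1}\indicatrix_U$. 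I would dualize against $h\in L^2_\xi$ and apply Cauchy--Schwarz in $\xi_1$ for fixed $\xi$, writing $f_j=\langle\xi_j\rangle^s|\F_x v_j|$. This reduces everything to the weighted bound
\[
\sup_{\xi}\int_U \frac{|\xi|^2\langle \xi\rangle^{2k+2\epsilon}}{|\Phi^u_1|^2 \langle \xi_1\rangle^{2s} \langle \xi_2\rangle^{2s}}\, d\xi_2 \lesssim (\delta^u)^{0^+},
\]
which is exactly the quantity already handled in Lemma \ref{lemma B_0^u for a<1/4} and in Case A of Lemma \ref{lemma B^u for a<1/4}. Indeed, the $H^k$ bound for $\B_0^u$ is literally what was proved there, since $\|\eta\B_0^u\|_{X^{k+\epsilon,b}}\lesssim\|\F_x^{-1}B^u(0)\|_{H^{k+\epsilon}}$ is the last step of that proof read backwards.

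I would verify the weighted integral region by region, using $|\Phi_1^u|\sim|\xi|^3$ and $|\xi|>1/\delta^u$ throughout $U$.

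In $U_1$ (only when $a<1/4$) all frequencies are comparable and the integration in $\xi_2$ runs over a set of length $\lesssim|\xi|$. The integrand is $\sim|\xi|^{2k+2\epsilon-4-4s}$, so the integral is at most $|\xi|^{2k+2\epsilon-4s-3}$. This has a negative exponent precisely when $\epsilon<2s-k+3/2$, and then $|\xi|^{-0^+}\lesssim(\delta^u)^{0^+}$ supplies the small factor.

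In $U_2$, by symmetry, take $\xi\simeq\xi_1$. The integrand is then $|\xi|^{2k+2\epsilon-4-2s}\langle\xi_2\rangle^{-2s}$ with $|\xi_2|\ll|\xi|$. If $s>1/2$ the $\xi_2$-integral converges and the condition is $\epsilon<s-k+2$. If $s\le 1/2$ the $\xi_2$-integral is $\lesssim|\xi|^{1-2s}$ (with a logarithm at $s=1/2$, absorbed by the strict inequality), and the condition becomes $\epsilon<2s-k+3/2$. In both sub-cases the strict inequalities leave a margin of $|\xi|^{-0^+}$, which again gives $(\delta^u)^{0^+}$.

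The main point needing care is that the hypothesis is now $(k,s)\in\A_a$ rather than $\A_a^0$. However, the fixed-time bound above uses only the numerical constraint $\epsilon<\min\{s-k+2,2s-k+3/2\}$ and never the structure of $\A_a^0$. One also has to check that the region $\A_a$ keeps this minimum positive, so that some admissible $\epsilon>0$, or at least $\epsilon=0$, exists. This is immediate: $s>k-2$ gives $s-k+2>0$, and both branches of $\A_a$ give $2s-k+3/2>0$ (for $a<1/4$ from $s>k/2-3/4$, and for $a\ge1/4$ from $s\ge k/2$).

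Once these two regions are checked, the estimate holds with constant $(\delta^u)^{0^+}$ uniformly in $t$, which proves the lemma for both $\B^u$ and $\B_0^u$.
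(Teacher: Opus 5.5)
Your proof is correct and follows the paper's route: the paper proves this lemma by the same fixed-time duality and Cauchy--Schwarz reduction used for $\B_0^u$, which leads to exactly the weighted integral you estimate with the Case A computations over $U_1$ and $U_2$. Your explicit split at $s=1/2$ in $U_2$ only makes visible where the two constraints $\epsilon<s-k+2$ and $\epsilon<2s-k+3/2$ come from, which the paper leaves implicit.
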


        \begin{proof}
            The proof is analogous to that of Lemma \ref{lemma B_0^u for a<1/4}.
        \end{proof}

        The following lemmas concern the boundary terms $\B^v$ and $\B^v_0$. Given that their proofs follow from the same arguments as those for $\B^u$ and $\B_0^u$, the details are omitted.

        \begin{lemma}\label{lemma B^v for a<1/4}
            Let $(k,s) \in \A^0_a$. Then,
            \[
            \| \B^v[u,v]\|_{Y^{s,b}} \lesssim (\delta^v)^{0^+} \|u\|_{X^{k,b}}\|v\|_{Y^{s,b}}.
            \]
        \end{lemma}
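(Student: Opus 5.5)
We follow the proof of Lemma \ref{lemma B^u for a<1/4} with $\rho=0$. Write $\sigma=\tau-\xi^3$, $\sigma_1=\tau_1-a\xi_1^3$ and $\sigma_2=\tau_2-\xi_2^3$, so that $\sigma=\sigma_1+\sigma_2+\Phi^v$. By duality, the estimate reduces to bounding
\[
\left|\int_{V,\tau=\tau_1+\tau_2}\frac{|\xi_2|\langle\xi\rangle^{s}\langle\sigma\rangle^{b}\,h\,h_1h_2}{|\Phi^v|\langle\xi_1\rangle^{k}\langle\xi_2\rangle^{s}\langle\sigma_1\rangle^{b}\langle\sigma_2\rangle^{b}}\right|\lesssim(\delta^v)^{0^+}\|h\|_{L^2}\|h_1\|_{L^2}\|h_2\|_{L^2}.
\]
On $V$ (Table \ref{table problematic regions}) we have $\xi\simeq\xi_1$, $|\xi|>1/\delta^v$ and $|\xi_2|\ll|\xi|$. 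Since $a\neq1$, this gives $|\Phi^v|\sim|\xi|^3$. The multiplier is then bounded by $|\xi|^{s-k-3}\langle\xi_2\rangle^{1-s}$.

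\textbf{Case A:} $|\sigma|\lesssim|\sigma_1|$ or $|\sigma|\lesssim|\sigma_2|$. We absorb $\langle\sigma\rangle^{b}$ into whichever of $\langle\sigma_1\rangle^b,\langle\sigma_2\rangle^b$ dominates it. Cauchy--Schwarz, integrating the remaining $\langle\sigma_j\rangle^{-2b}$ in $\tau_j$ (which is finite since $b>1/2$), reduces matters to
\[
\sup_{\xi}\int_{V}|\xi|^{2s-2k-6}\langle\xi_2\rangle^{2-2s}\,d\xi_2\lesssim(\delta^v)^{0^+}.
\]
Some care is needed here: when $|\sigma|\lesssim|\sigma_1|$ we fix $(\xi,\tau)$ and integrate over $(\xi_2,\tau_2)$, and when $|\sigma|\lesssim|\sigma_2|$ we fix the other variable instead. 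The integral runs over $|\xi_2|\ll|\xi|$. It is therefore bounded by $|\xi|^{2s-2k-6}\max\{1,|\xi|^{3-2s}\}$, up to a logarithm at $s=3/2$. The exponent is negative whenever $s<k+3$ and $-2k-3<0$, i.e. $k>-3/2$. Both conditions hold on $\A^0_a$, since there $s<k+5/2$ and $k>-3/4$. The margin in the exponent, together with $|\xi|>1/\delta^v$, produces the factor $(\delta^v)^{0^+}$.

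\textbf{Case B:} $|\sigma|\gg|\sigma_1|,|\sigma_2|$. Then $\sigma\simeq\Phi^v$, so $\langle\sigma\rangle^{b}/|\Phi^v|\sim\langle\sigma\rangle^{b-1}$. We write $b-1=b'-0^+$ and use $|\Phi^v|^{-0^+}\lesssim|\xi|^{-0^+}\lesssim(\delta^v)^{0^+}$. The inequality then reduces to the bilinear estimate \eqref{inequality bilinear estimate uv_x} restricted to the region $\xi\simeq\xi_1$. This is exactly Case C in the proof of Lemma \ref{lemma multilinear estimate uv_x}, which needs only $s<k+5/2$ and $a\neq1$, both available for $(k,s)\in\A^0_a$. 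Running that FRE argument via Lemma \ref{lemma CHS phase comparable to alpha} with the harmless extra factor $|\xi|^{0^-}$ closes the estimate.

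The main obstacle is Case B. There one must check that the loss $\langle\sigma\rangle^{b}$ versus $\langle\sigma\rangle^{b'}$ is fully compensated by $1/|\Phi^v|$, and that the Case C estimate of Lemma \ref{lemma multilinear estimate uv_x} tolerates the slight modification of the exponents. The endpoint condition $s<k+5/2$ is sharp there, so we would track the $0^+$ losses carefully and choose $b$ close enough to $1/2$ relative to $b'$, as in the sharpness computation of Proposition \ref{proposition estimate uvx is sharp}.
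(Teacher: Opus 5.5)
Your proposal is correct and follows the route the paper intends. The paper omits this proof, saying it follows from the same arguments as Lemma \ref{lemma B^u for a<1/4} with $\rho=0$: split by modulation, use Cauchy--Schwarz in Case A with $|\Phi^v|\sim|\xi|^3$ and $|\xi|>1/\delta^v$ on $V$, and in Case B use $\sigma\simeq\Phi^v$ to reduce to the frequency-restricted estimate of Case C in Lemma \ref{lemma multilinear estimate uv_x}. The worry in your last paragraph is unnecessary, since in Case B the factor $\langle\sigma\rangle^{b-1-b'}\lesssim(\delta^v)^{0^+}$ is a pure gain, and what remains is exactly the bilinear kernel restricted to $V$, with no change of exponents.
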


        \begin{lemma}\label{lemma B_0^v} 
            Let $(k,s) \in \A_a$ and $\epsilon < \min\{k-s+3, k+3/2\}$. Then,
            \[
            \| \eta(t)\B_0^v[u,v]\|_{Y^{s+\epsilon,b}} \lesssim (\delta^v)^{0^+} \|u(0)\|_{H^k}\|v(0)\|_{H^s}. 
            \]
        \end{lemma}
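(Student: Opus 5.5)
The plan is to repeat the argument of Lemma \ref{lemma B_0^u for a<1/4}, now for the $v$-boundary term. Recall from \eqref{equation N_j^v} that
\[
B^v[u,v](0)(\xi) = \int_{V} \frac{\xi_2}{\Phi^v}\,\tilde u_1(0)\tilde v_2(0)\,d\xi_1 ,
\]
where $V=\{\xi\simeq\xi_1,\ |\xi|>1/\delta^v\}$ (Table \ref{table problematic regions}). Since $\B_0^v(t)=e^{-t\partial_x^3}\F_x^{-1}B^v(0)$ is a free Airy evolution, the standard linear estimate gives
\[
\|\eta(t)\B_0^v\|_{Y^{s+\epsilon,b}}\lesssim \|\F_x^{-1}B^v(0)\|_{H^{s+\epsilon}} .
\]
So it suffices to bound $B^v(0)$ in $H^{s+\epsilon}$ by $(\delta^v)^{0^+}\|u(0)\|_{H^k}\|v(0)\|_{H^s}$.

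By duality and Cauchy--Schwarz in $\xi_1$ (taking the supremum in $\xi$), this bound reduces to the weighted estimate
\[
\sup_{\xi}\int_V \frac{|\xi_2|^2\langle\xi\rangle^{2s+2\epsilon}}{|\Phi^v|^2\langle\xi_1\rangle^{2k}\langle\xi_2\rangle^{2s}}\,d\xi_1\lesssim(\delta^v)^{0^+}.
\]
On $V$ we have $\xi\simeq\xi_1$. Since $a\neq1$, \eqref{equation Phi_2^u} gives $|\Phi^v|\sim|\xi|^3$, and also $|\xi_1|\sim|\xi|\gtrsim1/\delta^v$. The integrand is therefore comparable to
\[
|\xi|^{2s+2\epsilon-2k-6}\,|\xi_2|^2\langle\xi_2\rangle^{-2s},
\]
with $\xi_2$ ranging over $|\xi_2|\ll|\xi|$. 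Changing variables from $\xi_1$ to $\xi_2$ (for fixed $\xi$ the Jacobian is $1$), the integral becomes
\[
|\xi|^{2s+2\epsilon-2k-6}\int_{|\xi_2|\ll|\xi|}\langle\xi_2\rangle^{2-2s}\,d\xi_2 .
\]

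We split according to the sign of $3-2s$.
\begin{itemize}
\item If $2-2s<-1$, the $\xi_2$-integral is bounded. We then need $2s+2\epsilon-2k-6<0$, i.e.\ $\epsilon<k-s+3$. This holds strictly, so a factor $|\xi|^{0^-}\lesssim(\delta^v)^{0^+}$ can be extracted.
\item If $2-2s>-1$, the $\xi_2$-integral is $\lesssim|\xi|^{3-2s}$ (up to a logarithm at equality). The total power becomes $2\epsilon-2k-3$, so we need $\epsilon<k+3/2$.
\end{itemize}
Both cases are covered by the hypothesis $\epsilon<\min\{k-s+3,k+3/2\}$. The strict inequality absorbs any logarithmic loss and produces the smallness factor $(\delta^v)^{0^+}$, because $|\xi|>1/\delta^v$ on $V$.

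The only point needing care is the low-frequency piece $|\xi_2|\lesssim1$ when $s$ is large. There the factor $|\xi_2|^2$ from the derivative should be used in place of $\langle\xi_2\rangle^{2}$, but this is harmless since $\xi_2$ is integrated over a bounded set. I do not expect a genuine obstacle: unlike Lemma \ref{lemma B^v for a<1/4}, no modulation variables appear, so no Case A/Case B splitting is needed. The whole proof is the size count above, which is also exactly what produces the two thresholds $k-s+3$ and $k+3/2$ in the statement.
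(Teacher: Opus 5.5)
Your proposal is correct and follows the route the paper intends; the paper omits the details and refers to the argument for $\B_0^u$. You reduce via the free Airy evolution to an $H^{s+\epsilon}$ bound on $B^v(0)$, apply duality and Cauchy--Schwarz to get a $\sup_\xi$ weighted integral over $V$, and use $|\Phi^v|\sim|\xi|^3$ there. Your size count in the regimes $s>3/2$ and $s<3/2$ reproduces exactly the thresholds $k-s+3$ and $k+3/2$, and the strict inequality together with $|\xi|>1/\delta^v$ yields the factor $(\delta^v)^{0^+}$.
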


        \begin{lemma}\label{lemma B^v in H^s}
            Let $(k,s) \in \A_a$ and $\epsilon < \min\{k-s+3, k+3/2\}$. Then,
            \[
            \| \B^v[u,v]\|_{H^{s+\epsilon}}+\| \B_0^v[u,v]\|_{H^{s+\epsilon}} \lesssim (\delta^v)^{0^+} \|u\|_{H^k}\|v\|_{H^s}. 
            \]
        \end{lemma}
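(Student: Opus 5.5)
The estimate to prove is Lemma \ref{lemma B^v in H^s}. The plan is to handle $\B^v$ and $\B^v_0$ together. Both are fixed-time objects: $\B^v(t)$ is the Fourier multiplier expression evaluated at $t'=t$, and $\B^v_0$ is its value at $t'=0$ propagated by the unitary group $e^{-t\partial_x^3}$. Since the $H^{s+\epsilon}$ norm is invariant under this group and under the modulation factors $e^{it'\Phi^v}$, both quantities reduce to bounding
\[
\Bigl\|\langle \xi\rangle^{s+\epsilon}\int_{V}\frac{|\xi_2|}{|\Phi^v|}\,|f_1(\xi_1)|\,|f_2(\xi_2)|\,d\xi_1\Bigr\|_{L^2_\xi},
\]
where $f_1=\F_x u$ and $f_2=\F_x v$ are taken at a fixed time. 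This is a purely spatial bilinear estimate. As in Lemma \ref{lemma B_0^u for a<1/4}, we apply duality and the Cauchy--Schwarz inequality in $\xi_1$, which reduces the claim to the weighted bound
\[
\sup_{\xi}\int_{V}\frac{|\xi_2|^2\langle\xi\rangle^{2s+2\epsilon}}{|\Phi^v|^2\langle\xi_1\rangle^{2k}\langle\xi_2\rangle^{2s}}\,d\xi_1\lesssim(\delta^v)^{0^+}.
\]
We could instead put the supremum on $\xi_1$ or $\xi_2$ if that is more convenient; Schur's test allows either choice.

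Next we use the structure of $V$ from Table \ref{table problematic regions}. In $V$ we have $\xi\simeq\xi_1$ and $|\xi|>1/\delta^v$, hence $|\xi_2|\ll|\xi|$. Since $a\neq 1$, identity \eqref{equation Phi_2^u} gives $|\Phi^v|\sim|\xi|^3$. The integrand is therefore comparable to
\[
|\xi|^{2s+2\epsilon-2k-6}\,|\xi_2|^2\langle\xi_2\rangle^{-2s}.
\]
We integrate over $\xi_2$ with $|\xi_2|\ll|\xi|$, which is equivalent to integrating over $\xi_1$ with $\xi$ fixed. There are two cases.

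\emph{Case $2-2s\ge -1$.} The $\xi_2$-integral is at most $|\xi|^{3-2s}$, up to a logarithm in the endpoint case. The total bound is then $|\xi|^{2\epsilon-2k-3}$ (times that logarithm), which is negative-power, i.e.\ $|\xi|^{-c}$ for some $c>0$, exactly when $\epsilon<k+3/2$.

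\emph{Case $2-2s<-1$.} The $\xi_2$-integral is bounded, and the total bound is $|\xi|^{2s+2\epsilon-2k-6}$, which is negative-power exactly when $\epsilon<k-s+3$.

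In both cases the exponent condition coincides with the hypothesis $\epsilon<\min\{k-s+3,k+3/2\}$, with a strictly negative power of $|\xi|$ left over. Since $|\xi|>1/\delta^v$ on $V$, a small negative power of $|\xi|$ is bounded by $(\delta^v)^{0^+}$, which produces the required smallness factor.

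The only delicate point is the bookkeeping of the small-frequency region $|\xi_2|\lesssim1$, where $\langle\xi_2\rangle^{-2s}$ may grow when $s<0$. There the factor $|\xi_2|^2$ and the bounded domain make the contribution harmless, and the remaining $\xi$-power $|\xi|^{2s+2\epsilon-2k-6}$ must still be negative-power. This holds because $(k,s)\in\A_a$ gives $s<k+3$, so $2s-2k-6<0$, and $\epsilon$ is taken small relative to that margin. I expect no real obstacle beyond checking that these exponent conditions match the stated range of $\epsilon$.
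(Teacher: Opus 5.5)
Your proposal is correct and follows the route the paper indicates; the paper omits the details and defers to the argument for $\B^u$ and $\B^u_0$. In both, duality and Cauchy--Schwarz reduce the claim to a $\sup_\xi$ weighted integral over $V$, where $|\Phi^v|\sim|\xi|^3$ and $|\xi_2|\ll|\xi|$; your exponent count gives exactly $\epsilon<k+3/2$ (for $s<3/2$) and $\epsilon<k-s+3$ (for $s>3/2$), with the factor $(\delta^v)^{0^+}$ coming from $|\xi|>1/\delta^v$.
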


    \subsection{Additional multilinear estimates}

        We also require smoothing estimates for the term $\partial_x(u^2)$ in the equation for $u$. As noted in \cite{correia2025sharplocalwellposednessschrodingerkortewegde}, this would ideally correspond to a global smoothing estimate (i.e., a gain in $H^s$ regularity), which is known to be unattainable \cite{ISAZA}. Nevertheless, one can derive a gain in temporal regularity. Recall that $\Nl_3^u[u,u] (t)= \partial_x(v^2)$ and $N^u_3[u,u](t) = e^{-ita\xi^3}\F_x( \partial_x(v^2))$. We decompose $N_3^u$ into two components: 
        \[
        N_{3,\ll}^u[u_1,u_2] (t)\coloneqq \int_{\xi = \xi_1+\xi_2, |\xi_2 |\ll |\xi|} \xi e^{it\Phi_2^u }\tilde{u}_1\tilde{u}_2 d\xi_1,
        \]
        \[
        N_{3,\gtrsim}^u[u_1,u_2](t) \coloneqq \int_{\xi = \xi_1+\xi_2, |\xi_1 |\gtrsim |\xi_2|\gtrsim |\xi|} \xi e^{it\Phi_2^u }\tilde{u}_1\tilde{u}_2 d\xi_1,
        \]
        where $\Phi_2^u$ is defined in \eqref{equation Phi_2^u}. We define $\Nl^u_{3,\ll}$ and $\Nl^u_{3,\gtrsim}$ accordingly. Note that $\R$ can be partitioned into the subsets 
        $\{\xi_1 \in \R : |\xi_1|\ll |\xi|\}$, $\{\xi_1 \in \R : |\xi_2|\ll |\xi|\}$, and $\{\xi_1 \in \R : |\xi_1|\gtrsim |\xi| \text{ and } |\xi_2|\gtrsim |\xi|\}$. Furthermore, the set $\{\xi_1 \in \R : |\xi_1|\gtrsim |\xi| \text{ and } |\xi_2|\gtrsim |\xi|\}$ is contained in the union of $\{\xi_1 \in \R : |\xi_1|\gtrsim |\xi_2| \gtrsim |\xi|\}$ and $\{\xi_1 \in \R : |\xi_2|\gtrsim |\xi_1| \gtrsim |\xi|\}$. Consequently, the symmetry of the arguments of $\Nl^u_3$ implies
        \begin{equation}\label{inequality bound N_3 by N_3,<< and N_3,>}
            \| \Nl^u_3[u,u]\|_{X^{k,b}} \lesssim  \| \Nl^u_{3,\ll}[u,u]\|_{X^{k,b}} + \| \Nl^u_{3,\gtrsim}[u,u]\|_{X^{k,b}}.
        \end{equation}
        
        By adapting the arguments in \cite[Lemma 4.17]{correia2025sharplocalwellposednessschrodingerkortewegde}, we have 
        \begin{lemma}[Time smoothing for the KdV nonlinearity]\label{lemma time smoothing kdv} 
            Let $k > -3/4$ and $\rho > 0$ be such that
            \[ \rho <
            \begin{cases}
                \displaystyle k+\frac{3}{4}, &\text{ if } -\frac{3}{4} < k \leq 0,\\
               \displaystyle \frac{3}{4}, &\text{ if }  k > 0. \\
            \end{cases} \]
            Then, the following estimates hold:
            \begin{equation}\label{inequality N_3,>}
                \|\Nl^u_{3,\gtrsim} \|_{X^{k+\rho,b'}} \lesssim \|u_1\|_{X^{k,b}}\|u_2\|_{X^{k,b}}.
            \end{equation}
            \begin{equation}\label{inequality N_3,<< with c}
                \|\Nl^u_{3,\ll} \|_{X^{k+\rho,c}} \lesssim \|u_1\|_{X^{k+\rho,b-\rho/3}}\|u_2\|_{X^{k,b}}, 
            \end{equation}
            where $-b<c < b-1$.
        \end{lemma}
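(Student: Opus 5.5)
Both estimates are KdV-type bilinear estimates with phase $\Phi_2^u = a(-\xi^3+\xi_1^3+\xi_2^3) = -3a\,\xi\xi_1\xi_2$. I will prove them via frequency-restricted estimates, following the scheme of \cite[Lemma 4.17]{correia2025sharplocalwellposednessschrodingerkortewegde}. Write $\sigma=\tau-a\xi^3$ and $\sigma_j=\tau_j-a\xi_j^3$. Since $\sigma=\sigma_1+\sigma_2+\Phi_2^u$, we always have
\[
\max\{|\sigma|,|\sigma_1|,|\sigma_2|\}\gtrsim |\xi\xi_1\xi_2|.
\]
The whole gain of $\rho$ derivatives will come from this resonance relation, traded against powers of the modulations.

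For \eqref{inequality N_3,>}, on the region $|\xi_1|\gtrsim|\xi_2|\gtrsim|\xi|$ the multiplier is
\[
\mathscr{M}=\frac{|\xi|\langle\xi\rangle^{k+\rho}}{\langle\xi_1\rangle^k\langle\xi_2\rangle^k}.
\]
If all frequencies are $\lesssim 1$, Lemma \ref{lemma CHS} applies directly. Otherwise $|\xi_1|\sim|\xi_2|$ and $|\Phi_2^u|\sim|\xi||\xi_1|^2$, so I will use Lemma \ref{lemma CHS phase comparable to alpha} under the assumption $M\lesssim|\alpha|$, which gives $|\xi||\xi_1|^2\lesssim|\alpha|$.
\begin{itemize}
\item With $\xi$ fixed, $|\partial_{\xi_1}\Phi_2^u|\sim|\xi||\xi_1-\xi_2|$. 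Away from $\xi_1\simeq\xi_2$ this is $\gtrsim |\xi\xi_1|$; changing variables to $\Phi_2^u$ bounds the integral by
\[
\int |\xi|^{1+2k+2\rho}|\xi_1|^{-4k-1}\,d\Phi_2^u .
\]
\item This must be controlled by $(|\xi||\xi_1|^2)^{1^-}M$. For $k\ge 0$, using $|\xi|\lesssim|\xi_1|$, this holds iff $\rho<3/4$. For $-3/4<k\le 0$, the worst case is $|\xi|\sim|\xi_1|$, giving $2k+2\rho-4k\cdot 0\ldots$, i.e.\ the condition $\rho<k+3/4$. This is the same arithmetic as in Kenig--Ponce--Vega, shifted by $\rho$.
\item Near $\xi_1\simeq\xi_2$, which forces $|\xi|\sim|\xi_1|$, the phase is stationary in $\xi_1$ but has nonvanishing second derivative $\sim|\xi|$. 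Lemma \ref{lemma integral quadratic} then gives an extra factor $M^{1/2}|\xi|^{-1/2}$, and the same thresholds appear.
\end{itemize}
The estimates with $\xi_1$ or $\xi_2$ fixed are treated identically, using $|\partial_\xi\Phi_2^u|\sim|\xi_2^2-\xi_1^2|$.

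For \eqref{inequality N_3,<< with c}, the region $|\xi_2|\ll|\xi|$ gives $\xi\simeq\xi_1$ and $|\Phi_2^u|\sim|\xi|^2|\xi_2|$. The factor $\langle\xi\rangle^{\rho}$ is moved onto $u_1$ through the weight $\langle\xi_1\rangle^{k+\rho}$, at the price of the loss $\langle\sigma_1\rangle^{-\rho/3}$ in the $b$-index. By duality I will bound the trilinear form with weight
\[
\frac{|\xi|\langle\sigma\rangle^{c}}{\langle\xi_2\rangle^{k}\langle\sigma_1\rangle^{b-\rho/3}\langle\sigma_2\rangle^{b}} .
\]
I will split according to which modulation dominates.
\begin{itemize}
\item If $|\sigma|$ dominates, then $\langle\sigma\rangle^{c}\lesssim (|\xi|^2|\xi_2|)^{c}$ since $c<0$. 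This case reduces to the standard $\partial_x(u^2)$ high–low estimate, which needs only $c<b-1$.
\item If $|\sigma_1|\gtrsim|\xi|^2|\xi_2|$, the loss $\langle\sigma_1\rangle^{\rho/3}$ costs $(|\xi|^2|\xi_2|)^{\rho/3}$. It is compensated by $\langle\sigma_1\rangle^{-b}$ and by integrating in $\xi_2$ with $\xi$ fixed, where $|\partial_{\xi_2}\Phi_2^u|\sim|\xi|^2$ gives a $|\xi|^{-2}$ Jacobian gain. A Cauchy–Schwarz argument in the style of Case A of Lemma \ref{lemma B^u for a<1/4}, with $\sup_{\xi_1,\tau_1}$ and integration over $(\xi_2,\tau_2)$, then yields the needed bound whenever $\rho<3/4$, respectively $\rho<k+3/4$ when $k\le 0$.
\item If $|\sigma_2|$ dominates, the argument is symmetric and uses the full $\langle\sigma_2\rangle^{b}$.
\end{itemize}
The lower bound $c>-b$ ensures the $\tau$-integrals converge in the Cauchy–Schwarz step.

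I expect the main obstacle to be the $\sigma_1$-dominant case of \eqref{inequality N_3,<< with c} when $|\xi_2|\lesssim 1$. There the resonance $|\xi|^2|\xi_2|$ may be small, so no gain is available from it. In that case I will use only $\langle\sigma_1\rangle^{\rho/3-b}\lesssim 1$ and $\langle\sigma\rangle^{c}$ with $c<b-1<0$. The single factor $|\xi|$ then has to be absorbed by the Jacobian $|\xi|^{-2}$ in a $\sup_{\xi}$ change of variables, which gives at least $|\xi|^{-1}$ to spare, so this case should close.
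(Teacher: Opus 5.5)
\emph{There is a genuine gap in the second estimate.} Your overall scheme matches the paper's: FREs for the $\Nl^u_{3,\gtrsim}$ estimate, and duality plus Cauchy--Schwarz according to the dominant modulation for the $\Nl^u_{3,\ll}$ estimate. However, the case $|\sigma_2|\gtrsim\max\{|\sigma|,|\sigma_1|\}$ of the second estimate is not symmetric to the $\sigma_1$-dominant case, and the argument you describe does not close there.

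The supremum has to be placed on $(\tau_2,\xi_2)$. Putting it on $(\tau_1,\xi_1)$ or on $(\tau,\xi)$ fails for negative $k$: on the part of this case where $\sigma,\sigma_1=O(1)$ and $\sigma_2\approx-\Phi_2^u$, the remaining integral is about $|\xi|^{2}\int_1^{\epsilon|\xi|}|\xi_2|^{-2k}(|\xi|^2|\xi_2|)^{-2b}\,d\xi_2\sim|\xi|^{3-6b-2k}$.

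With $\xi_2$ fixed, however, $|\partial_\xi\Phi_2^u|=3|a||\xi_1^2-\xi^2|\sim|\xi||\xi_2|$, not $|\xi|^2$. After changing variables to $\Phi_2^u$ you are left with $|\xi||\xi_2|^{-2k-1}$, which is unbounded in $|\xi|$. So there is no $|\xi|^{-2}$ Jacobian gain to mirror.

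The missing idea is to pay for this leftover derivative with half a power of the dominant modulation. Since $k>-3/4$, you have $|\xi||\xi_2|^{-2k-1}\le|\xi||\xi_2|^{1/2}\sim|\Phi_2^u|^{1/2}\lesssim|\sigma_2|^{1/2}$. Once half of $\langle\sigma_2\rangle^{2b}$ is spent this way, two further steps are needed:
\begin{itemize}
\item You must use the $\sigma_1$-weight, weakening $\langle\sigma_1\rangle^{2b-2\rho/3}$ to $\langle\sigma_1\rangle^{2b-1/2}$. This is exactly where $\rho<3/4$ is needed.
\item You must shift a small power $\mu$ from $\langle\sigma_2\rangle$ to $\langle\sigma\rangle$ so that the $\sigma$-integral converges. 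This requires $c+\tfrac12<\mu<2b-1$, i.e.\ $c<2b-\tfrac32$.
\end{itemize}

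\emph{Consequences for your accounting of the hypotheses.}
\begin{itemize}
\item In the $\sigma_1$-dominant case, $\langle\sigma_1\rangle^{\rho/3}\gtrsim|\Phi_2^u|^{\rho/3}$, so the loss cannot be charged to the resonance. Instead, keep $\langle\sigma_1\rangle^{-(2b-2\rho/3)}$ as the weight of the supremum variable. For $k\le 0$ this gives $\langle\sigma_1\rangle^{1+2c-2b+\frac{2(\rho-k)}{3}}$, which is bounded once $c\le b-\tfrac12+\tfrac{k-\rho}{3}$. Under $c<b-1$ that only needs $\rho\le k+\tfrac32$.
\item The subcase $|\xi_2|\lesssim 1$, which you single out as the main obstacle, is routine. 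After squaring, the Jacobian cancels $|\xi|^2$ exactly, so there is no $|\xi|^{-1}$ to spare. It still closes because $1+2c<2b-\tfrac{2\rho}{3}$.
\item In the $\Nl^u_{3,\gtrsim}$ estimate, the nonstationary region only needs $\rho<k+\tfrac32$; your exponent count there is garbled.
\item The threshold $\rho<k+\tfrac34$ comes from the stationary points. These are $\xi_1\simeq\xi_2$ for fixed $\xi$, and $\xi\simeq\xi_1/2$ for fixed $\xi_1$. Note that there $\partial_\xi\Phi_2^u=3a(\xi_2^2-\xi^2)$, not a multiple of $\xi_2^2-\xi_1^2$. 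At these points Lemma~\ref{lemma integral quadratic} gives $|\xi|^{\frac32-2k+2\rho}M^{1/2}$, which is admissible exactly when $\rho<k+\tfrac34$.
\item The restriction $\rho<\tfrac34$ for $k>0$ comes solely from the $\sigma_2$-dominant case above.
\end{itemize}
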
   

        For the proof, we need the following elementary calculus result:
        \begin{proposition}\label{proposition calculus}
            Let $\gamma, \theta \in \R$. If $l>1$ and $l' \geq 0$, then
            \[
            \int_\R \frac{dx}{\langle x - \gamma \rangle^{l'} \langle x -\theta \rangle^{l}} \underset{l,l'}{\lesssim} \frac{1}{ \langle \gamma-\theta \rangle^{\min\{l,l'\}}}. 
            \]
        \end{proposition}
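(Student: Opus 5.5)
The plan is a direct comparison argument after locating where the second weight is essentially constant. Set $\delta=\gamma-\theta$; by translation $x\mapsto x+\theta$ it suffices to bound
\[
\int_\R \frac{dx}{\langle x-\delta\rangle^{l'}\langle x\rangle^{l}} \lesssim \langle \delta\rangle^{-\min\{l,l'\}}.
\]
If $|\delta|\lesssim 1$ the claim is immediate: the integrand is bounded by $\langle x\rangle^{-l}$, which is integrable since $l>1$, and $\langle\delta\rangle\sim 1$. So assume $|\delta|\gg 1$.

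Split $\R$ into three regions. On $R_1=\{|x|\le |\delta|/2\}$ we have $|x-\delta|\ge |\delta|/2$, so $\langle x-\delta\rangle^{-l'}\lesssim \langle\delta\rangle^{-l'}$ (using $l'\ge0$). The remaining factor integrates to $\int\langle x\rangle^{-l}\,dx\lesssim_l 1$, giving a bound $\langle\delta\rangle^{-l'}$.

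On $R_2=\{|x-\delta|\le |\delta|/2\}$ we have $|x|\ge|\delta|/2$, so $\langle x\rangle^{-l}\lesssim\langle\delta\rangle^{-l}$. We must now control $\int_{R_2}\langle x-\delta\rangle^{-l'}dx$, which is the main obstacle since $l'$ may be $\le 1$. Two bounds are available:
\begin{itemize}
\item Crudely, $\langle x-\delta\rangle^{-l'}\le 1$ and $|R_2|\sim|\delta|$, which gives $\langle\delta\rangle^{1-l}$. This is too weak on its own.
\item Better, split $R_2$ as follows. On the unit neighbourhood of $\delta$ the integral is $O(1)$. On the rest, compare the size of $\langle\delta\rangle^{-l}\int_{R_2}\langle x-\delta\rangle^{-l'}$ with the expression from $R_1$.
\end{itemize}
Concretely, for $l'<1$ we get $\int_{R_2}\langle x-\delta\rangle^{-l'}\lesssim\langle\delta\rangle^{1-l'}$, so the $R_2$ contribution is $\langle\delta\rangle^{1-l'-l}\le\langle\delta\rangle^{-l'}$ because $l>1$. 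For $l'=1$ a logarithm appears, but $\langle\delta\rangle^{-l}\log\langle\delta\rangle\lesssim\langle\delta\rangle^{-1}$ since $l>1$. For $l'>1$ the integral is $O(1)$, giving $\langle\delta\rangle^{-l}$. In every case the $R_2$ contribution is at most $\langle\delta\rangle^{-\min\{l,l'\}}$.

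On $R_3=\R\setminus(R_1\cup R_2)$ both $|x|\ge|\delta|/2$ and $|x-\delta|\ge|\delta|/2$ hold. Moreover $\langle x-\delta\rangle\gtrsim\langle x\rangle$ there, since $|x-\delta|\ge|x|-|\delta|\ge|x|/3$ once $|x|\ge 3|\delta|/2$, and $|x|\sim|\delta|$ otherwise. Hence the integrand is at most $\langle\delta\rangle^{-l'}\langle x\rangle^{-l}$, and since $l>1$ the integral over $R_3$ is $\lesssim\langle\delta\rangle^{-l'}$.

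Summing the three regions gives $\langle\delta\rangle^{-l'}+\langle\delta\rangle^{-\min\{l,l'\}}\lesssim\langle\delta\rangle^{-\min\{l,l'\}}$, with constants depending only on $l$ and $l'$. The only delicate point is the $R_2$ estimate when $l'\le1$, handled by the case split above; the hypothesis $l>1$ is used there to absorb the growth $\langle\delta\rangle^{1-l'}$ or the logarithm.
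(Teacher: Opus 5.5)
Your proof is correct and complete. The reduction to $\delta=\gamma-\theta$ is valid, and so is the trivial case $|\delta|\lesssim 1$. The three-region estimate also holds, including the split $l'<1$, $l'=1$, $l'>1$ on $R_2$, where $l>1$ absorbs the growth $\langle\delta\rangle^{1-l'}$ or the logarithm.

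The paper gives no proof of this proposition; it calls it an elementary calculus result. So there is no argument to compare with, and your decomposition is the standard one.

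Two cosmetic simplifications are possible:
\begin{itemize}
\item On $R_3$ you only need $|x-\delta|>|\delta|/2$, which directly gives $\langle x-\delta\rangle^{-l'}\lesssim\langle\delta\rangle^{-l'}$. The comparison $\langle x-\delta\rangle\gtrsim\langle x\rangle$ is therefore superfluous, and $R_1$ and $R_3$ can be merged into the single region $\{|x-\delta|\ge|\delta|/2\}$.
\item The two-bullet discussion in $R_2$ can be dropped. The ``concretely'' computation that follows it is the actual argument.
\end{itemize}
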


        \begin{proof}[Proof of Lemma \ref{lemma time smoothing kdv}]
            We first establish the inequality \eqref{inequality N_3,<< with c}. Let  $\sigma_j = \tau_j-a\xi_j^3$ for $j \in \{\emptyset,1,2\}$. By duality, the estimate is equivalent to
            \begin{equation}\label{inequality dual of N_3,<<}
                \left| \int_{\substack{\xi =\xi_1+\xi_2  \\ \sigma = \sigma_1 + \sigma_2 + \Phi } }  \frac{|\xi|\langle\xi\rangle^{k+\rho}\indicatrix_{\{|\xi_2|\ll |\xi|\}}h(\tau,\xi)h_1(\tau_1,\xi_1)h_2(\tau_2,\xi_2)}{\langle \sigma \rangle^{-c}\langle \sigma_1 \rangle^{b-\rho/3}\langle \sigma_2 \rangle^{b}\langle\xi_1\rangle^{k+\rho}\langle\xi_2\rangle^{k} } d\tau d\tau_1 d\xi d\xi_1 \right| \lesssim \|h\|_{L^2}\|h_1\|_{L^2}\|h_2\|_{L^2}.
            \end{equation}

            Without loss of generality, assume $a=1$. In cases A and B, we suppose $k \leq 0$ and $|\xi_2| \gg 1$; the regimes where $k > 0$ or $|\xi_2| \lesssim 1$ are treated analogously.

            \vspace{2mm}
            \noindent \textbf{Case A: }$|\sigma_1| \gtrsim \max\{|\sigma|, |\sigma_2|\}$. In this case, the relation $\sigma = \sigma_1+\sigma_2+\Phi$ implies $|\sigma_1| \gtrsim |\Phi|$. By the Cauchy-Schwarz inequality, \eqref{inequality dual of N_3,<<} reduces to showing that
            \[ I \coloneqq \sup_{\tau_1,\xi_1} \int \frac{|\xi|^2}{\langle \sigma \rangle^{-2c}\langle \sigma_1 \rangle^{2b-2\rho/3}\langle \sigma_2 \rangle^{2b}|\xi_2|^{2k} } d\tau d\xi < \infty. \]

            By a change of variables (noting that $|\partial_\xi \Phi| \sim |\xi|^2$) and Proposition \ref{proposition calculus}, we obtain
            \[ I \lesssim \sup_{\tau_1,\xi_1} \int \frac{|\xi_2|^{-2k}}{\langle \sigma \rangle^{-2c}\langle \sigma_1 \rangle^{2b-2\rho/3}\langle \sigma_2 \rangle^{2b} } d\sigma d\Phi \lesssim   \sup_{\tau_1,\xi_1}\int \frac{|\Phi|^{-\frac{2k}{3}}}{\langle \sigma_1+\Phi \rangle^{-2c}\langle \Phi \rangle^{2b-2\rho/3}}d\Phi, \]
            since $|\Phi| = 3 |\xi \xi_1 \xi_2| \sim |\xi^2\xi_2| \gtrsim |\xi_2|^3$. This implies that $I$ is finite, provided that
            \[ -2c+\frac{2k}{3}+2b-\frac{2\rho}{3} > 1 \iff c < b-\frac{1}{2}+\frac{(k-\rho)}{3}, \]
            which is satisfied due to the hypothesis $c < b-1$.

            \vspace{2mm}
            \noindent \textbf{Case B: }$|\sigma| \gtrsim \max\{|\sigma_1|, |\sigma_2|\}$. In this setting, 
            \[ \sup_{\tau,\xi} \int \frac{|\xi|^2}{\langle \sigma \rangle^{-2c}\langle \sigma_1 \rangle^{2b-2\rho/3}\langle \sigma_2 \rangle^{2b}|\xi_2|^{2k} } d\tau_1 d\xi_1 \lesssim \sup_{\tau,\xi} \int \frac{d\sigma_1 d\Phi}{\langle \sigma \rangle^{-2c}\langle \sigma_1 \rangle^{2b-2\rho/3}\langle \sigma_2 \rangle^{2b}|\Phi|^{2k/3} } \]
            \[ \lesssim \sup_{\tau,\xi} \int \frac{d\sigma_1 d\Phi}{\langle \Phi \rangle^{-2c}\langle \sigma_1 \rangle^{2b-2\rho/3}\langle \sigma_2 \rangle^{2b}|\Phi|^{2k/3} } \lesssim \sup_{\tau,\xi} \int \frac{ d\Phi}{\langle \Phi \rangle^{-2c}\langle \sigma - \Phi \rangle^{2b-2\rho/3}|\Phi|^{2k/3} } < \infty, \]
            since $-2c+2b-2\rho/3 + 2k/3 > 1$. Indeed, this is equivalent to $c < b-\frac{1}{2}+\frac{(k-\rho)}{3}$, which follows from $c < b-1$. For the cases $k \geq 0$ and $|\xi_2| \lesssim 1$, the reasoning is analogous. This concludes the proof of \eqref{inequality N_3,<< with c}. 
            
            \vspace{2mm}
            \noindent \textbf{Case C: }$|\sigma_2| \gtrsim \max\{|\sigma|, |\sigma_1|\}$. Considering first $|\xi_2| \gg 1$, we estimate
            \[ I \coloneqq \sup_{\tau_2,\xi_2} \int \frac{|\xi|^2}{\langle \sigma \rangle^{-2c}\langle \sigma_1 \rangle^{2b-2\rho/3}\langle \sigma_2 \rangle^{2b}|\xi_2|^{2k} } d\tau d\xi \lesssim \sup_{\tau_2,\xi_2} \int \frac{|\xi||\xi_2|^{-2k-1}}{\langle \sigma \rangle^{-2c}\langle \sigma_1 \rangle^{2b-2\rho/3}\langle \sigma_2 \rangle^{2b}} d\sigma d\Phi, \]
            since $|\partial_\xi \Phi| = |\xi^2-\xi_1^2| \sim |\xi_2\xi|$. Moreover, we observe that $|\xi||\xi_2|^{-2k-1} \leq |\xi||\xi_2|^{1/2} \sim |\Phi|^{1/2}$ and $2b-2\rho/3 > 2b-1/2$. Consequently, 
            \[ I \lesssim \sup_{\tau_2,\xi_2} \int \frac{|\Phi|^{\frac{1}{2}}}{\langle \sigma \rangle^{-2c}\langle \sigma_1 \rangle^{2b-1/2}\langle \sigma_2 \rangle^{2b}}d\sigma d\Phi  \lesssim \sup_{\tau_2,\xi_2} \int \frac{|\Phi|^{\frac{1}{2}}}{\langle \sigma \rangle^{2\mu -2c}\langle \sigma_1 \rangle^{2b-1/2}\langle \sigma_2 \rangle^{2b-2\mu}} d\sigma d\Phi, \]
            where $\mu > 0$ is chosen such that $2\mu-2c > 1$. Proposition \ref{proposition calculus} then yields
            \[ I \lesssim \sup_{\tau_2,\xi_2} \int \frac{|\Phi|^{\frac{1}{2}}}{\langle \sigma_2+\Phi \rangle^{2b-1/2}\langle \sigma_2 \rangle^{2b-2\mu}} d\Phi \lesssim \sup_{\tau_2,\xi_2} \int \frac{d\Phi}{\langle \sigma_2+\Phi \rangle^{2b-1/2}\langle \Phi \rangle^{2b-2\mu-1/2}} < \infty, \]
            provided $2b-\mu > 1$. Thus, we require the existence of $\mu > 0$ such that $c+1/2 < \mu < 2b-1$, which holds if and only if $c < 2b-3/2$. 

            In the regime $|\xi_2| \lesssim 1$, it suffices to observe that $|\sigma_2| \gtrsim |\sigma|, |\Phi|$  ensure
            \[  \sup_{\tau_1,\xi_1} \int \frac{|\xi|^2}{\langle \sigma \rangle^{-2c}\langle \sigma_2 \rangle^{2b}} d\tau d\xi \lesssim \sup_{\tau_1,\xi_1} \int \frac{1}{\langle \sigma \rangle^{-2c+2b^--1} \langle \Phi \rangle ^{1^+}} d\sigma d\Phi < \infty, \]
            provided $c<b-1$. The proof of \eqref{inequality N_3,>} reduces to employing frequency-restricted estimates, and the argument follows the same strategy as in the proofs of Lemmas \ref{lemma multilinear estimate delx(v1v2)} and \ref{lemma multilinear estimate uv_x}.

        \end{proof} 

        The following result aims to circumvent the absence of a smoothing effect in \eqref{estimate N_0^v}. Indeed, Lemma \ref{lemma N_0^v com termo de bordo} will be employed to establish that $v-\eta(t)\B^v$ possesses sufficient gain in regularity to yield local well-posedness.

        \begin{lemma}\label{lemma N_0^v com termo de bordo}
            Let $(k,s) \in \A_a$ and $\epsilon < \min\{k+4-s, 2k+3, k+3, 2k+4-s\}$. In the case $a=4$, assume further that $\epsilon < \min \{2k-s+13/4, 2k+9/4\}$. Then,
            \[ 
            \| \Nl_0^v[u_1, \B^v[u_{21},v_{22}]] \|_{Y^{s+\epsilon,b'}} \lesssim \|u_1\|_{X^{k,b}}\|u_{21}\|_{X^{k,b}}\|v_{22}\|_{Y^{s,b}}. 
            \]
        \end{lemma}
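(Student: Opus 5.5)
The plan is to reduce the statement to a trilinear frequency-restricted estimate and then follow the case analysis of Lemmas \ref{lemma N_2^v} and \ref{lemma N_3^v}.

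\textbf{Step 1: write down the trilinear operator.} By \eqref{equation B^v}-type definitions, the boundary term $\B^v[u_{21},v_{22}]$ has spatial Fourier transform
\[
\int_{V,\ \xi_2=\xi_{21}+\xi_{22}} \frac{\xi_{22}}{\Phi^v(\xi_2,\xi_{21},\xi_{22})}\,\F u_{21}\,\F v_{22}\,d\xi_{21},
\]
with $|\Phi^v(\xi_2,\xi_{21},\xi_{22})|\sim|\xi_2|^3$ and $\xi_2\simeq\xi_{21}$ on $V$. Inserting this into $\Nl_0^v[u_1,\cdot]$, which is restricted to $V^c=\{|\xi|\lesssim 1 \text{ or } |\xi_2|\gtrsim|\xi|\}$, gives a trilinear operator. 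Its total phase is
\[
\Psi=-\xi^3+a\xi_1^3+a\xi_{21}^3+\xi_{22}^3,
\]
which is exactly $\Psi_3^v$. The multiplier is
\[
\M=\frac{|\xi_2||\xi_{22}|\langle\xi\rangle^{s+\epsilon}}{|\xi_2|^3\langle\xi_1\rangle^k\langle\xi_{21}\rangle^k\langle\xi_{22}\rangle^s}\lesssim \frac{|\xi_2|^{-2}\langle\xi\rangle^{s+\epsilon}}{\langle\xi_1\rangle^k\langle\xi_2\rangle^k\langle\xi_{22}\rangle^{s-1}}.
\]
Here $\B^v$ carries its own time modulation. To handle this I would first check the Fourier-restriction structure: for $\B^v(t)$ the profile is evaluated at time $t$, so the $\tau$-variable of $\B^v$ equals $\tau_{21}+\tau_{22}$. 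Hence $\Nl_0^v[u_1,\B^v]$ is a genuine trilinear form in $(u_1,u_{21},v_{22})$ with phase $\Psi_3^v$, and Lemmas \ref{lemma CHS phase comparable to alpha}, \ref{lemma Schur's test with weight} and \ref{leminha} apply directly.

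\textbf{Step 2: case analysis.} The constraint $|\xi_2|\gtrsim|\xi|$ means $\M$ is at least as good as in Lemma \ref{lemma N_3^v}: there $\xi\simeq\xi_2$ gave a factor $|\xi|^{s+\epsilon-k-3}|\xi_2|$, and now the $|\xi|^{s+\epsilon}$ weight is controlled by $|\xi_2|$. I would split into the following cases.
\begin{enumerate}
\item[(a)] $|\xi_2|\sim|\xi|$. The bound on $\M$ reduces to the one in Lemma \ref{lemma N_3^v}, whose cases A, B1 and B2 I would reuse. This produces the conditions $\epsilon<\min\{k+3,\,2k+3,\,k+4-s,\,2k+4-s\}$ via Lemma \ref{leminha} with the same choices of $A$.
\item[(b)] $|\xi_2|\gg|\xi|$, so $\xi_1\simeq-\xi_2\simeq-\xi_{21}$. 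In this regime $\M\lesssim \langle\xi\rangle^{s+\epsilon}|\xi_2|^{-2-2k}\langle\xi_{22}\rangle^{1-s}$, and the high-high-to-low interaction gives extra room.
\end{enumerate}
In each subcase I would pick the set $A$ so that $\partial\Psi$ is $\gtrsim\langle\eta_1\rangle^2$ on both sides, and then verify the exponent sums $a_1+a_2+a_3<2$ required by Lemma \ref{leminha}.

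\textbf{Main obstacle.} I expect the hardest part to be the resonant configuration in case (b). When $\xi_1\simeq-\xi_{21}$ with $\xi_{22}$ small, $\Psi\approx\xi_{22}^3-\xi^3$ can have vanishing derivatives in some directions. The same happens when $a=4$, where $\partial_\xi\Psi$ degenerates near $\xi_{21}\simeq\xi/2$ (compare case B of Lemma \ref{lemma N_2^v}). For $a=4$ I would mirror that argument: apply Morse's Lemma with parameters (Lemma \ref{lemma morse parameters}), then Lemma \ref{lemma integral quadratic}, inside Lemma \ref{lemma Schur's test with weight}. This explains the extra restrictions $\epsilon<\min\{2k-s+13/4,\,2k+9/4\}$. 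If some subcase requires $s\le -k-1$, I would instead use the large-phase bound $|\Psi|\sim|\xi_{21}|^3$ and apply Lemma \ref{lemma CHS phase comparable to alpha}, exactly as in \eqref{inequality application of CHS with M< alpha}.
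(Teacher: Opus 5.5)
Your Step 1 is correct and matches the paper. Inserting $\B^v$ gives a genuine trilinear form with phase $\Psi=-\xi^3+a\xi_1^3+a\xi_{21}^3+\xi_{22}^3$, denominator $|\Phi^{v_2}|\sim|\xi_2|^3$, and the multiplier bound you wrote.

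\textbf{The gap is in Step 2(a).} You claim that for $|\xi_2|\sim|\xi|$ the estimate reduces to Cases A, B1, B2 of Lemma \ref{lemma N_3^v} ``with the same choices of $A$''. This does not hold. In Lemma \ref{lemma N_3^v} the outer frequencies lie in $V$, so $\xi\simeq\xi_1$ and $|\xi_2|\ll|\xi|$. Every subcase there is built on this relation:
\begin{itemize}
\item $|\xi|\simeq|\xi_{22}|$, which forces $\xi_{21}\simeq-\xi_{22}$;
\item $|\xi_{21}|\sim|\xi_{22}|\gtrsim|\xi|$;
\item $|\xi_{21}|,|\xi_{22}|\ll|\xi|$.
\end{itemize}
Here, instead, $\xi_{21}\simeq\xi_2$, $|\xi_{22}|\ll|\xi_2|\sim|\xi|$, and $\xi_1$ is an arbitrary frequency with $|\xi_1|\lesssim|\xi|$. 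None of those configurations occurs, and the derivative lower bounds that justified those sets $A$ are not available. The $\epsilon$-thresholds happen to coincide with those of Lemma \ref{lemma N_3^v}, but that does not make the case analysis transfer.

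\textbf{What is missing is the resonance that actually occurs in this regime.} For fixed $\xi_{21},\xi_{22}$ one has $\partial_\xi\Psi=3(a\xi_1^2-\xi^2)$. This vanishes on $\sqrt{a}\,\xi_1\simeq\pm\xi$ for every $a>0$, not only for $a=4$.

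Away from this set, i.e.\ when $a\xi_1^2\not\simeq\xi^2$, the choice $A=\{\emptyset,1\}$ works. This region contains all of your case (b) and the region $|\xi|\lesssim 1$. There $|\partial_\xi\Psi|\gtrsim\max\{|\xi|,|\xi_1|\}^2\gtrsim|\xi_2|^2$, and for fixed $\xi,\xi_1$ one has $|\partial_{\xi_{21}}\Psi|=3|a\xi_{21}^2-\xi_{22}^2|\sim|\xi_2|^2$, so Lemma \ref{leminha} closes. Thus case (b) is the easy one. Your approximation $\Psi\approx\xi_{22}^3-\xi^3$ there is wrong: the terms $a(\xi_1^3+\xi_{21}^3)\approx 3a(\xi-\xi_{22})\xi_{21}^2$ do not cancel. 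No large-phase argument via Lemma \ref{lemma CHS phase comparable to alpha} is needed.

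On the resonant set, $\xi_{21}\simeq\xi_2\simeq(\pm\sqrt{a}-1)\xi_1$. Since $a\neq1$, this gives $|\xi|\sim|\xi_1|\sim|\xi_{21}|$ and $\M\lesssim\langle\xi_{22}\rangle^{1-s}|\xi|^{s+\epsilon-2k-2}$. You must switch to $A=\{1,21\}$:
\begin{itemize}
\item for fixed $\xi_1,\xi_{21}$, $\partial_\xi\Psi=3(\xi_{22}^2-\xi^2)$ is large;
\item for fixed $\xi,\xi_{22}$, $\partial_{\xi_1}\Psi=3a(\xi_1^2-\xi_{21}^2)$ is $\gtrsim|\xi|^2$ unless $(\pm\sqrt{a}-1)^2=1$, i.e.\ $a=4$ with $\xi_1\simeq\xi_{21}\simeq\xi/2$.
\end{itemize}
Only in that doubly degenerate configuration is the Morse-with-parameters argument of Lemma \ref{lemma N_2^v} required; your analogy there is apt. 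As it stands, your plan leaves the region $a\xi_1^2\simeq\xi^2$ with $a>0$, $a\neq4$, uncovered.
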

        
        \begin{proof}
            Observe that 
            
            $$e^{-it\xi^3}\F_x \Nl_0^v[u_1,\B^v](\xi,t) = N_0^v[u_1,\B^v] = \int_{\xi_1 \in V^c} i\xi_2 e^{it\Phi^v}\tilde{u}_1 \underbrace{e^{-it\xi^3 }\F_x\B^v}_{B^v} d\xi_1 
              $$
            $$=i\int_{\xi_1 \in V^c} \xi_2 e^{it\Phi^v}\tilde{u}_1 \int_{\xi_{21}\in V}\frac{\xi_{22}e^{it\Phi^{v_2}}}{\Phi^{v_2}}\tilde{u}_{21}\tilde{v}_{22} d\xi_{21}d\xi_1 =i\int_{\xi_1 \in V^c} \int_{\xi_{21}\in V}\frac{\xi_2\xi_{22}e^{it\Psi_4^{v}}}{\Phi^{v_2}}\tilde{u}_1 \tilde{u}_{21}\tilde{v}_{22} d\xi_{21} d\xi_1,$$
            where
            \[
            \Phi^{v_2} =  - \xi^3_2+ a\xi_{21}^3+\xi^3_{22} \quad \text{and} \quad \Psi_4^v \coloneqq \Phi^v+ \Phi^{v_2} = -\xi^3 + a\xi_1^3+ a\xi_{21}^3+\xi^3_{22}.
            \]
        
            We proceed by applying FREs. 
            Note that $\xi_{21} \in V$ implies $ \xi_2 \simeq \xi_{21} \text{ and } |\xi_2|> \frac{1}{\delta^v} $. Consequently, $|\Phi^{v_2}|\sim |\xi_2|^3$ since $a \neq 1$. Furthermore,
            $\xi_1 \in V^c$ implies either $|\xi|\lesssim 1$ or $|\xi_2| \gtrsim|\xi|.$
        
            \vspace{2mm}
            \noindent \textbf{Case A: }$|\xi| \lesssim 1 $. This implies $ \xi_1 \simeq -\xi_2$ and
            $$\M \sim \frac{|\xi_{22}|\langle\xi \rangle^{s+\epsilon}}{| \xi_1 |^{2k+2}  \langle\xi_{22} \rangle^s} \lesssim\frac{\langle \xi_{22}\rangle^{1-s}}{\langle \xi_1\rangle^{2k+2}}. $$
            Moreover, as $|\xi_2| \simeq |\xi_1| \gg |\xi_{22}|$, we apply Lemma \ref{leminha} \ref{item 2} with $A=\{ \emptyset,1\}$.
            
            \vspace{2mm}
            \noindent \textbf{Case B: } $|\xi_2| \gtrsim|\xi|$. In this regime, $ |\xi_1| \lesssim |\xi_2|$ and
            $$ \M \lesssim \frac{\langle\xi \rangle^{s+\epsilon} \langle \xi_{22}\rangle^{1-s}}{| \xi_2 |^{k+2}  \langle\xi_{1} \rangle^k} .  $$
        
            \vspace{2mm}
            \noindent \textbf{Subcase B1: } $a\xi_1^2 \simeq \xi^2.$ This condition requires $ a>0$ and $\sqrt{a}\xi_1 \simeq \pm\xi$. Thus, $\xi_1+\xi_2 \simeq \pm \sqrt{a}\xi_1$, which yields 
            \begin{equation}\label{equation relation frequencies}
                \xi_{21} \simeq\xi_2  \simeq (\pm \sqrt{a}-1)\xi_1.
            \end{equation}
            Thus, $|\xi_{21}|\sim |\xi_2| \sim |\xi_1| \sim |\xi|$, and we obtain
            \[ \M \lesssim \frac{ \langle \xi_{22}\rangle^{1-s}}{| \xi |^{2k-s+2-\epsilon}  } . \]
        
            If $a\neq 4$, we note that for fixed $\xi$ and $\xi_{22}$, we have $|\partial_{\xi}\Psi_4^v| \sim |\xi_1^2-\xi_{21}^2| \gtrsim |\xi_1|^2$; otherwise,  $a = 4$ according to \eqref{equation relation frequencies}. We then apply Lemma \ref{leminha} \ref{item 2} with $A=\{1,21\}$. If $a=4$, we employ Lemma \ref{lemma Schur's test with weight} with $A=\{1,21\}$, where
            \[ \M_1 \lesssim \langle \xi_{22} \rangle^{1-s} |\xi |^{s-2k-11/4+\epsilon} \quad \text{and} \quad \M_2 \lesssim \langle \xi_{22} \rangle^{1-s} |\xi |^{s-2k-5/4+\epsilon}.\]
            Invoking Morse's Lemma with parameters (cf. Lemma \ref{lemma morse parameters}) and Lemma \ref{lemma integral quadratic}, following the approach in \eqref{equation application morse parameters}, we obtain
            $$ \sup_{\xi,\xi_{22}} \int_{|\Psi_4^v-\alpha|<M}  \frac{ \langle \xi_{22}\rangle^{1-s}}{| \xi |^{2k-s+11/4-\epsilon + 0^-}  }d\xi_1 \lesssim \sup_{\xi,\xi_{22}}\frac{ \langle \xi_{22}\rangle^{1-s}}{| \xi |^{2k-s+13/4-\epsilon + 0^-}  }M^{1/2} \lesssim \sup_{\xi_{22}} \langle \xi_{22}\rangle^{\epsilon -2k-9/4+0^+} M^{1/2}  \lesssim M^{1/2}.$$
            For fixed $\xi_1$ and $\xi_{21}$, the result follows by changing the variable to the phase $\Psi_4^v$.
        
            \vspace{2mm}
            \noindent \textbf{Subcase B2:} $a\xi_1^2 \not \simeq \xi^2$. For fixed $\xi_{21}$ and $\xi_{22}$, we have
            $$|\partial_\xi \Psi_4^v| = |-\xi^2+ a \xi_1^2| \gtrsim \max \{ |\xi_1|^2,|\xi|^2\} \gtrsim |\xi_2|^2.  $$
            The bound is obtained from Lemma \ref{leminha} \ref{item 1} with $A=\{0,1\}$ by considering all possible orderings of $|\xi|, |\xi_1|, |\xi_{22}|$, since $|\xi_2| \gtrsim \max\{|\xi|, |\xi_1|, |\xi_{22}|\}$.
        \end{proof}

    \subsection{Local well-posedness in \texorpdfstring{$\A_a$}{A\_a}}

        In this section, we provide a formal statement and proof for Theorem \ref{theorem general LWP}. Let $a \in \R \setminus \{0,1\}$. The regimes in which local well-posedness remains to be proved are: 
        
        \[\A^+_{a} \coloneqq 
        \begin{cases}
             \displaystyle\left \{(k,s)\in \R^2; \text{ }  k>-\frac{3}{4}, \text{ } k+\frac{5}{2} \leq s< k+3  \right \},  &\text{if } a < 1/4, \\
            \displaystyle \left \{(k,s)\in \R^2; \text{ } k \geq \frac{3}{4}, \text{ } k+\frac{5}{2} \leq s< k+3  \right \},  &\text{if } a= 1/4, \\
            \displaystyle \left \{(k,s)\in \R^2; \text{ }  k\geq0, \text{ } k+\frac{5}{2} \leq s< k+3 \right \},   &\text{if } a >1/4.
        \end{cases} \]
        
        \[\A^-_{a} \coloneqq 
        \begin{cases}
             \displaystyle\left \{(k,s)\in \R^2; \text{ } \max \left\{-\frac{3}{4},\frac{k}{2}-\frac{3}{4}, k-2\right\}< s\leq \max  \left\{\frac{k}{2}-\frac{3}{8}, k-\frac{3}{2}\right\}  \right \},  &\text{if } a < 1/4, \\
            \displaystyle \left \{(k,s)\in \R^2; \text{ } s \geq \frac{k}{2}+\frac{3}{8}, \text{ }  k-2< s\leq k-\frac{3}{2}  \right \},  &\text{if } a= 1/4, \\
            \displaystyle \left \{(k,s)\in \R^2; \text{ } s \geq \frac{k}{2}, \text{ }  k-2< s \leq k-\frac{3}{2} \right \},   &\text{if } a >1/4.
        \end{cases} \]
        
        \begin{theorem}[LWP]\label{theorem improved LWP}
            Let $a \in \R \setminus \{0,1\}$, $(k,s) \in \A_{a}$, and $(u_0,v_0) \in H^k(\R)\times H^s(\R)$. Then, there exist $\delta^u, \delta^v, T>0$, depending only on $\|u_0\|_{H^k}+\|v_0\|_{H^s}$ and on the parameter $a$, and
            $$(u,v) \in C([0,T],H^k(\R)) \times C([0,T],H^s(\R)),$$
            such that $(u,v)$ is an integrated-by-parts strong solution to \eqref{equation hirota system} on $[0,T]$. Moreover, the flow map depends analytically on the initial data and is independent of the choices of $\delta^u$ and $\delta^v$. 
        \end{theorem}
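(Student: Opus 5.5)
The plan is to split into the three regimes $\A_a^0$, $\A_a^+$ and $\A_a^-$. On $\A_a^0$ the result is Proposition \ref{proposition direct LWP}: a classical strong solution in $X^{k,b}\times Y^{s,b}$ is in particular an IBPS, since its profiles satisfy \eqref{equation profile u}--\eqref{equation profile v}. For $(k,s)\in\A_a^\pm$ I would follow the strategy of \cite{correia2025sharplocalwellposednessschrodingerkortewegde}.

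\textbf{Step 1 (low-regularity solution).} Choose an auxiliary pair $(k_0,s_0)\in\A_a^0$ with $H^k\times H^s\hookrightarrow H^{k_0}\times H^{s_0}$ and with $(k,s)$ within distance less than the admissible $\epsilon$ of $(k_0,s_0)$.
\begin{itemize}
\item On $\A_a^+$, where $s\ge k+5/2$, keep $k_0=k$ and lower $s_0=s-\epsilon$ slightly below $k+5/2$.
\item On $\A_a^-$, keep $s_0=s$ and lower $k_0$ so that $s>\max\{k_0/2-3/8,\,k_0-3/2\}$ (or the corresponding $a\ge 1/4$ constraints hold).
\end{itemize}
Proposition \ref{proposition direct LWP} then gives a solution $(u,v)\in X^{k_0,b}\times Y^{s_0,b}$ on $[0,T]$, analytic in the data, with $T$ depending only on $\|u_0\|_{H^{k}}+\|v_0\|_{H^{s}}$. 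If the gap $k-k_0$ or $s-s_0$ exceeds the allowed smoothing $\epsilon$, I would iterate the following upgrade finitely many times along a chain of intermediate regularities.

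\textbf{Step 2 (upgrade via integration by parts).} The case $\A_a^+$ needs the higher regularity in $v$. Because the classical solution is smooth enough (approximate by Schwartz data and pass to the limit using the estimates), the $v$-profile satisfies \eqref{equation IBP profile v}, while $u$ keeps \eqref{equation profile u}. Writing
\[
w=v-\eta(t)\bigl(\B^v-\B^v_0\bigr),
\]
the quantity $w$ is $e^{-t\partial_x^3}v_0$ plus Duhamel integrals of $\Nl_0^v$ and $\Nl_1^v,\Nl_2^v,\Nl_3^v$. Lemmas \ref{lemma N_1^v}, \ref{lemma N_2^v} and \ref{lemma N_3^v} control the trilinear terms in $Y^{s_0+\epsilon,b'}$.

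The term $\Nl_0^v[u,v]$ gains no regularity. For it, split $v=w+\eta(\B^v-\B^v_0)$. The part involving $w$ is handled by Lemma \ref{lemma N_0^v} at the higher level, run as a linear fixed-point estimate in $w$. The part involving the boundary term is handled by Lemma \ref{lemma N_0^v com termo de bordo}. The boundary terms themselves lie in $C_tH^{s_0+\epsilon}$ by Lemmas \ref{lemma B_0^v} and \ref{lemma B^v in H^s}. This yields $v\in C_tH^{s}$.

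The case $\A_a^-$ needs the higher regularity in $u$, and proceeds symmetrically.
\begin{itemize}
\item The trilinear terms $\Nl^u_1,\Nl^u_2$ are controlled by Lemma \ref{Lemma N_1^u and N_2^u}, and $\Nl_0^u$ by Lemma \ref{lemma N_0^u}.
\item The boundary terms are controlled by Lemmas \ref{lemma B^u for a<1/4}--\ref{lemma B^u in H^k}.
\item The KdV term $\Nl_3^u$ is split as in \eqref{inequality bound N_3 by N_3,<< and N_3,>}. The piece $\Nl_{3,\gtrsim}^u$ is smoothing by Lemma \ref{lemma time smoothing kdv}. The piece $\Nl_{3,\ll}^u$ is linear in the high-frequency factor $u_1$, so \eqref{inequality N_3,<< with c} closes a linear estimate for $u$ in $X^{k_0+\rho,b-\rho/3}$ on a short time interval.
\end{itemize}
Each step gains $\min\{\epsilon,\rho\}>0$, so finitely many steps reach $(k,s)$, with continuity in time coming from the $C_t$ embeddings.

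\textbf{Step 3 (analyticity and independence of $\delta$).} Every map used above is multilinear and bounded, so the solution map $H^k\times H^s\to C_tH^k\times C_tH^s$ is a composition of analytic maps. The factors $(\delta^{u})^{0^+},(\delta^{v})^{0^+}$ make the boundary terms small, which lets $\delta$ be fixed depending only on the data size. Independence of $\delta$ follows because the low-regularity solution from Proposition \ref{proposition direct LWP} does not involve $\delta$ at all, and the IBP identities hold for every $\delta$. Uniqueness of the IBPS would be argued by running the same fixed point in the IBP formulation.

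\textbf{Main obstacle.} I expect the hardest step to be the $\Nl_0^v$ and $\Nl^u_{3,\ll}$ terms, which do not smooth. Closing them requires working with $w$ rather than $v$ (and with the $X^{k+\rho,b-\rho/3}$ norm for $u$), and checking that all the $\epsilon$-constraints in the lemmas are simultaneously positive throughout $\A_a^\pm$, including the special case $a=4$.
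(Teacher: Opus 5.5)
Your strategy is sound in outline, but it is organized differently from the paper's. For $(k,s)\notin\A_a^0$ the paper does not start from the classical solution. In Case 1 it runs the contraction, with the $u$-equation in integrated-by-parts form, directly in $(X^{k,b-\rho/3}\cap X^{k',b})\times Y^{s,b}$. In Case 2 it does the same with the $v$-equation in integrated-by-parts form, in $X^{k,b}\times Y^{s',b}$. In both cases $\delta^u,\delta^v$ are chosen small in terms of $R$ so that the boundary terms are absorbed. Only afterwards does it upgrade: time continuity in $H^k$ in Case 1, and inversion of $L_u$ in Case 2, which is exactly your linear step for $\Nl_0^v$.

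You instead take the solution of Proposition \ref{proposition direct LWP} at $(k_0,s_0)$, derive the IBP identities for it by density, and upgrade a posteriori by Neumann-series arguments. That density step, rather than the classical profile equations, is also what makes a solution in $\A_a^0$ an IBPS. Your route needs no smallness of $\delta$, and uniqueness and $\delta$-independence are immediate because the object is the classical solution. The paper's route yields uniqueness within the IBP formulation and, in Case 1, keeps the target norm $X^{k,b-\rho/3}$ available inside the contraction when estimating nonlinear terms.

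To make your Neumann-series steps conclusive, state that the linear operator is a contraction on a space containing both the known and the target regularity. For $\Nl^u_{3,\ll}[\cdot,u]$ this space is $X^{k_0,b-\rho/3}$; this works because $\langle\xi\rangle\sim\langle\xi_1\rangle$ on that region, so the bound of Lemma \ref{lemma time smoothing kdv} holds at every level. For $\Nl_0^v[u,\cdot]$ you need both $Y^{s_0,b}$ and $Y^{s,b}$, via Lemma \ref{lemma N_0^v}. Also, the $\B^v_0$ part of your splitting is handled by Lemma \ref{lemma B_0^v} together with Lemma \ref{lemma N_0^v}, not by Lemma \ref{lemma N_0^v com termo de bordo}.

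Two points fail as stated.

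First, the fallback iteration. After one upgrade you only know $u\in X^{k_1,b-\rho/3}$, since Lemma \ref{lemma B^u for a<1/4} places $\B^u$ only there. Likewise $v=r+\eta\B^v$ with no $Y^{s_1,b}$ bound on $\B^v$. But every smoothing lemma takes inputs in $X^{\cdot,b}$ or $Y^{\cdot,b}$, and Lemma \ref{lemma B^u for a<1/4} needs $(k_1,s)\in\A_a^0$. So the distance to $\A_a^0$ must be bridged in a single step. What has to be checked is therefore not positivity of the $\epsilon$-constraints but that they exceed $k-k_0$, resp.\ $s-s_0$. For the boundary terms this is exactly how $\A_a$ is cut out. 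The constraint $\epsilon<k-s_0+3$ (Lemmas \ref{lemma B_0^v}, \ref{lemma B^v in H^s}) reaches $s$ iff $s<k+3$. The constraint $\epsilon<\min\{s-k_0+2,\,2s-k_0+3/2\}$ reaches $k$ iff $s>k-2$ and $s>k/2-3/4$.

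Second, that one-step check fails for $\Nl^u_{3,\gtrsim}$ when $a<1/4$. There $k_0=(2s+3/4)^-$, and for $k_0\le 0$ Lemma \ref{lemma time smoothing kdv} requires $k-k_0<k_0+3/4$, i.e.\ $k<4s+9/4$. This excludes, for instance, $(k,s)=(0.3,-1/2)\in\A_a^-$. The bound $\|\Nl^u_{3,\gtrsim}\|_{X^{k,b'}}\lesssim\|u\|^2_{X^{k_0,b}}$ is genuinely false there. Take $\xi_1,\xi_2\in[N,N+N^{-1/2}]$ with $|\sigma_1|,|\sigma_2|\le 1$; then $|\sigma|\sim N^3$, and testing the dual form forces $k-k_0\le k_0+3/4$.

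The paper's displayed chain in Case 1 has the same defect. However, its contraction space controls $u$ in $X^{k,b-\rho/3}$, so one factor can be measured there instead. In your route the only a priori information is $u\in X^{k_0,b}$, so you need an additional estimate not supplied by the paper. One option is to treat $\Nl^u_{3,\gtrsim}[\cdot,u]$ linearly in the $X^{k,b-\rho/3}$ slot, as you do for $\Nl^u_{3,\ll}$.
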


        \begin{proof}
            
            To establish existence for $(k,s) \in \A_a$, we first construct functions $u$ and $v$ in an auxiliary space with lower regularity. We then show that they gain regularity due to the smoothing effects established in the multilinear estimates. This allows us to conclude that the solution possesses the same spatial regularity as the initial data. 

            \vspace{2mm}
            \noindent \textbf{Case 1: }$(k,s)\in \A_{a}^- 
            \cup \A_a^0$. 

            \vspace{2mm}
            \noindent \textit{Step 1: Fixed point at low regularity.} Define 
            $$k'= \sup \{k_0 \in (-\infty,k] ; (k_0,s) \in \A^0_{a}\}^-.$$

            Hence, $(k',s)\in \A^0_{a}$. Let $\rho = k-k'$, $R>0$, and define
            $$B_R = \{(u,v)\in (X^{k,b-\rho/3} \cap X^{k',b}) \times Y^{s,b} ; \| u\|_{X^{k,b-\rho/3}} + \| u\|_{X^{k',b}} + \| v\|_{Y^{s,b}} \leq R\}. $$

            Let $\eta \in C_c^\infty(\R)$ with
            $\eta \equiv 1$ in $[-1,1]$ and $\eta_T(t) =\eta(t/T)$. Define the map $\Theta(u,v) \coloneqq (w,z)$, where 
            $$w(t) = \eta(t)\left(e^{-at\partial_x^3}u_0+ \B^u- \B^u_0 + \sum_{j=0}^3 \int_0^t \eta_T(t') e^{-a(t-t')\partial_x^3} \Nl_j^u(t') dt'\right), $$
            $$z(t) = \eta(t)\left(e^{-t\partial_x^3}v_0+  \int_0^t \eta_T(t') e^{-(t-t')\partial_x^3} (u v_x) dt'\right).$$

            By Lemma  \ref{lemma multilinear estimate uv_x}, 
            $$\| z\|_ {Y^{s,b}} \lesssim \|v_0\|_{H^s}+ T^{0^+}\|u\|_{X^{k',b}} \|v\|_{Y^{s,b}} \lesssim  \|v_0\|_{H^s}+ T^{0^+}R^2. $$

            Regarding $w$, first observe that
            \begin{equation}\label{inequality argument in X^k,b^-}
                \left \|\int_0^t \eta_T(t') e^{-a(t-t')\partial_x^3} \Nl_{3,\ll}^u(t') dt' \right \|_{X^{k,b-\rho/3}} \lesssim\left \|\int_0^t \eta_T(t') e^{-a(t-t')\partial_x^3} \Nl_{3,\ll}^u(t') dt' \right \|_{X^{k,b^-}} 
            \end{equation}
            $$  \lesssim \|\eta_T\ \Nl_{3,\ll}^u  \|_{X^{k,b^--1}} \lesssim T^{0^+}\|\Nl_{3,\ll}^u  \|_{X^{k,c}} ,$$
            where $b^--1< c <b-1$. Hence, by Lemma \ref{lemma time smoothing kdv}, we obtain
            $$ T^{0^+}\|\Nl_{3,\ll}^u  \|_{X^{k,c}}  \lesssim T^{0^+} \|u\|_{X^{k,b-\rho/3}}\|u \|_{X^{k-\rho,b}} = T^{0^+} \|u \|_{X^{k,b-\rho/3}}\|u \|_{X^{k',b}} \lesssim T^{0^+} R^2,$$
            
            Similarly, we have 
            $$\left \|\int_0^t \eta_T(t') e^{-a(t-t')\partial_x^3} \Nl_{3,\gtrsim}^u(t') dt' \right \|_{X^{k,b-\rho/3}} \lesssim T^{0^+}\|\Nl_{3,\gtrsim}^u  \|_{X^{k,b'}} \lesssim T^{0^+} \|u \|_{X^{k-\rho,b}}^2 =  T^{0^+} \|u \|_{X^{k',b}}^2 \lesssim T^{0^+} R^2.$$ 

            By Lemma \ref{lemma kdv estimate}, it follows that
            $$\left \|\int_0^t \eta_T(t') e^{-a(t-t')\partial_x^3} \Nl_{3}^u(t') dt' \right \|_{X^{k',b}} \lesssim T^{0^+} R^2.  $$

            By Lemmas \ref{lemma N_0^u} and \ref{Lemma N_1^u and N_2^u}, there exists $C_{\delta^u}>0$ such that
            \begin{equation}\label{equation N_j^u in X^k,b}
                \left \|\sum_{j=0}^2\int_0^t \eta_T(t') e^{-a(t-t')\partial_x^3} \Nl_{j}^u(t') dt' \right \|_{X^{k,b}} \leq C_{\delta^u} T^{0^+}R^2(1+R).
            \end{equation}

            Consequently, using \eqref{inequality bound N_3 by N_3,<< and N_3,>} and Lemmas \ref{lemma B^u for a<1/4} and \ref{lemma B_0^u for a<1/4}, there exists $C>0$ such that
            \begin{equation}\label{inequality contraction}
                \|w \|_{X^{k,b-\rho/3}} + \|w \|_{X^{k',b}} \leq C(\|u_0\|_{H^k} + (\delta^u)^{0^+}\|v_0\|^2_{H^s} + (\delta^u)^{0^+}R^2) +C_{\delta^u} T^{0^+}R^2(1+R).
            \end{equation}

            Assume $C>1$. We take $R>2C(\|u_0\|_{H^k}+ \|v_0\|_{H^s})$ and choose $(\delta^u)^{0^+} < (8CR)^{-1}. $
            Finally, taking $T$ sufficiently small, we have $\Theta: B_R\rightarrow B_R$. A similar argument shows that $\Theta$ is a strict contraction. Thus, $\Theta$ has a unique fixed point
            $$(u,v)\in (X^{k,b-\rho/3} \cap X^{k',b}) \times Y^{s,b}.$$
            
            \noindent\textit{Step 2: Gain of regularity.} We now show that $(u,v) \in C([0,T], H^k_x) \times C([0,T], H^s_x)$. Indeed, the fact that $v \in Y^{s,b}$ with $b>1/2$ implies $v \in C([0,T], H^s_x)$. By Lemma \ref{lemma B^u in H^k}, we have $\B^u, \B^u_0 \in C([0,T], H^k_x)$. Since $u \in X^{k',b}$ and $v \in Y^{s,b}$, the spatial smoothing established in Lemmas \ref{lemma N_0^u} and \ref{Lemma N_1^u and N_2^u} ensures that
            $$\sum_{j=0}^2\int_0^t \eta_T(t') e^{-a(t-t')\partial_x^3} \Nl_{j}^u(t') dt' \in X^{k,b}, $$
            given our choice of $\rho = k-k'$. Similarly, following the argument in \eqref{inequality argument in X^k,b^-}, Lemma \ref{lemma time smoothing kdv} ensures that
            $$\int_0^t \eta_T(t') e^{-a(t-t')\partial_x^3} \Nl_{3}^u(t') dt' \in X^{k,b^-}, $$
            where we choose $b^->1/2$. Hence, we obtain $u \in C([0,T], H^k_x)$. The analytic dependence of the flow map on the initial data follows from the multilinear estimates by a reasoning analogous to the fixed-point argument.

            \vspace{2mm}
            \noindent \textbf{Case 2: }$(k,s)\in \A_{a}^+ \cup \A_a^0$.
            
            \vspace{2mm}
            \noindent
            \textit{Step 1: Fixed point at low regularity.} Define 
            $$s'= \sup \{s_0 \in (-\infty,s] ; (k,s_0) \in \A_a^0\}^-.$$
            
            Thus, $(k,s')\in \A^0_{a}$. According to the lemmas in Section \ref{section multilinear estimates}, we have sufficient smoothing to achieve the necessary gain in regularity to pass from $s'$ to $s$ for all nonlinear terms in the equation for $v$ in \eqref{equation z formula}, with the exception of the term $\Nl_0^v$. This justifies the requirement for Lemma \ref{lemma N_0^v com termo de bordo}, as will become clear throughout the argument. Let $R>0$ and define the ball
            $$B_R = \{(u,v)\in X^{k,b} \times Y^{s',b} ;  \| u\|_{X^{k,b}} + \| v\|_{Y^{s',b}} \leq R\}. $$
            
            Define the map $\Theta(u,v) \coloneqq (w,z)$, where 
            $$w(t) = \eta(t)\left(e^{-at\partial_x^3}u_0 +  \int_0^t \eta_T(t') e^{-a(t-t')\partial_x^3} \partial_x(u^2+ v^2) dt'\right), $$
            \begin{equation}\label{equation z formula}
                z(t) = \eta(t)\left(e^{-t\partial_x^3}v_0+ \B^v- \B^v_0+ \sum_{j=0}^3 \int_0^t \eta_T(t') e^{-(t-t')\partial_x^3} \Nl_j^v(t') dt'\right).
            \end{equation}
            
            By Lemmas \ref{lemma kdv estimate} and \ref{lemma multilinear estimate delx(v1v2)}, we have
            $$\| w\|_ {X^{k,b}} \lesssim \|u_0\|_{H^k}+ T^{0^+}(\|u\|_{X^{k,b}}^2+ \|v\|_{Y^{s',b}}^2) \lesssim  \|u_0\|_{H^k}+ T^{0^+}R^2. $$
            
            For $z$, by Lemmas \ref{lemma N_1^v}, \ref{lemma N_2^v}, and \ref{lemma N_3^v}, there exists $C_{\delta^v}>0$ such that
            \begin{equation}
                \left \|\sum_{j=1}^3\int_0^t \eta_T(t') e^{-(t-t')\partial_x^3} \Nl_{j}^v(t') dt' \right \|_{Y^{s,b}} \leq C_{\delta^v} T^{0^+}R^3.
            \end{equation}
            
            Furthermore, Lemma \ref{lemma N_0^v} implies
            $$\left \|\int_0^t \eta_T(t') e^{-(t-t')\partial_x^3} \Nl_{0}^v(t') dt' \right \|_{Y^{s',b}} \leq C_{\delta^v} T^{0^+}R^2. $$
            Consequently, by Lemmas \ref{lemma B^v for a<1/4} and \ref{lemma B_0^v}, there exists $C>0$ such that       
            $$ \|z \|_{X^{s',b}} \leq C (\|v_0\|_{H^{s'}} + (\delta^v)^{0^+}\|u_0\|_{H^{k}} \|v_0\|_{H^{s'}} + (\delta^v)^{0^+}R^2) +C_{\delta^v} T^{0^+}R^2(1+R)   . $$
            
            Arguing as in \eqref{inequality contraction}, we conclude that $\Theta$ is a strict contraction. Thus, $\Theta$ has a unique fixed point
            $$(u,v)\in X^{k,b} \times Y^{s',b}.$$
            
            \noindent\textit{Step 2: Regularity gain.} We now show that $(u,v) \in C([0,T], H_x^k) \times C([0,T], H_x^s).$ Indeed, the fact that $u \in X^{k,b}$ with $b>1/2$ implies $u \in C([0,T], H_x^k)$. By Lemma \ref{lemma B_0^v}, we obtain $\B^v_0 \in Y^{s,b}$. From Lemmas \ref{lemma N_1^v}, \ref{lemma N_2^v}, and \ref{lemma N_3^v}, it follows that
            \begin{equation}\label{equation v - terms in Y^s,b}
                \sum_{j=1}^3\int_0^t \eta_T(t') e^{-(t-t')\partial_x^3} \Nl_{j}^v(t') dt'  \in Y^{s,b}
                \implies v- \eta(t)\left(\B^v+  \int_0^t \eta_T(t') e^{-(t-t')\partial_x^3} \Nl_0^v(t') dt'\right)\in Y^{s,b}.
            \end{equation}
            
            Define the operator $L_u: Y^{s,b} \rightarrow Y^{s,b}$ by
            \begin{equation}\label{equation L_u}
                L_u(f) = f - \eta(t)\int_0^t \eta_T(t') e^{(t-t')\partial_x^3} \Nl_0^v[u,f](t') dt',
            \end{equation}
            which is well-defined, since Lemma \ref{lemma N_0^v} implies
            \begin{equation}\label{equation L_u in Y^s,b}
                \|L_u(f)- f \|_{Y^{s,b}} \leq CT^{0^+}\|u\|_{X^{k,b}}\|f\|_{Y^{s,b}} < \frac{1}{2}\|f\|_{Y^{s,b}},
            \end{equation}
            for sufficiently small $T$. \footnote{Note that changing $T$ also changes $u$, which in turn modifies $L_u$. Thus, $T$ must be chosen small enough before defining $u$. This is achieved by ensuring $T^{0^+}< \frac{R}{2C}$ prior to applying the fixed point theorem.} 
            
            This inequality also implies that $L_u$ is invertible. Let $r \coloneqq v-\eta(t)\B^v$. Then, Lemma \ref{lemma N_0^v com termo de bordo} and \eqref{equation v - terms in Y^s,b} imply
            $$L_u(r)= r- \eta(t)  \int_0^t \eta_T(t') e^{-(t-t')\partial_x^3} \Nl_0^v[u,r](t') dt'\in Y^{s,b}.$$
            
            Since $L_u$ is invertible, we conclude that $r \in Y^{s,b}$. Thus, $v \in C([0,T], H_x^s)$ by Lemma \ref{lemma B^v in H^s}. The analytic dependence on the initial data follows from the multilinear estimates by a reasoning analogous to the fixed point argument.

            Moreover, for smooth solutions, the IBPS coincides with the classical strong solution, as integration by parts is justified in this setting. Consequently, the solution established in Proposition \ref{proposition direct LWP} agrees with the IBPS by density and the multilinear estimates provided in Subsection \ref{section bilinear estimates} and Section \ref{section well posedness in A_a}. Furthermore, a density argument combined with these estimates ensures that any continuous extension of the flow to initial data with regularity in $\A_a$ is unique; thus, it remains independent of the particular choice of $\delta^u$ and $\delta^v$.

        \end{proof}

    \subsection{Global well-posedness}
        We now proceed to the proof of Theorem \ref{theorem GWP}.

        \begin{proposition}[Persistence of regularity]\label{proposition persistence} 
            Suppose $a \in \R \setminus\{0,1\}$, and let $(k,s), (k',s') \in \A_a$ satisfy $k'\geq k$ and $s'\geq s$. For initial data $(u_0,v_0) \in H^{k'}(\R) \times H^{s'}(\R)$, let $(u,v)$ and $(u',v')$ denote the maximal solutions provided by Theorem \ref{theorem improved LWP} at the regularity levels $(k,s)$ and $(k',s')$, respectively. Then, their maximal times of existence coincide and $(u,v) = (u',v')$.
        \end{proposition}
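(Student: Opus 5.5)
The argument has two parts: the two solutions coincide where both exist, and the higher-regularity solution survives as long as the lower one. Let $T^*$ and $T'^*$ be the maximal times at regularities $(k,s)$ and $(k',s')$.

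\textbf{Coincidence.} Since $H^{k'}\times H^{s'}\hookrightarrow H^k\times H^s$ and $(k,s),(k',s')\in\A_a$, the solution $(u',v')$ is, on every compact subinterval of $[0,T'^*)$, a continuous solution in $H^k\times H^s$ with the same data. The flow of Theorem \ref{theorem improved LWP} does not depend on $\delta^u,\delta^v$ and is the unique continuous extension of the smooth flow. Uniqueness in that theorem, applied on successive short intervals, then gives $(u,v)=(u',v')$ on $[0,T'^*)$ and $T'^*\le T^*$. To avoid any ambiguity of uniqueness class for IBPS, I would argue by density instead. Approximate $(u_0,v_0)$ in $H^{k'}\times H^{s'}$ by Schwartz data. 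For these data both constructions coincide with the classical smooth solution, since the IBPS equals the strong solution there. Passing to the limit in both topologies, using the continuity of the flow at each level, gives the same limit.

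\textbf{Equality of maximal times.} It remains to show $T'^*\ge T^*$. Suppose $T'^*<T^*$. Then $\sup_{[0,T'^*]}(\|u\|_{H^k}+\|v\|_{H^s})=:K<\infty$, and by the blow-up alternative for $(k',s')$ the quantity $\|u'(t)\|_{H^{k'}}+\|v'(t)\|_{H^{s'}}$ is unbounded as $t\to T'^*$. The key claim is a linear-in-the-high-norm estimate: on any interval $[t_0,t_0+T]$ with $T=T(K,a)$, the length depending only on the low norm,
\[
\|u'(t)\|_{H^{k'}}+\|v'(t)\|_{H^{s'}}\le C(K)\big(\|u'(t_0)\|_{H^{k'}}+\|v'(t_0)\|_{H^{s'}}\big).
\]
Iterating this over the $O(T'^*/T)$ steps bounds the high norm up to $T'^*$, a contradiction. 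To prove the claim, rerun the fixed-point scheme of Theorem \ref{theorem improved LWP}. There are two cases, according to whether $(k',s')$ lies in $\A^0_a\cup\A^-_a$ or in $\A^0_a\cup\A^+_a$.

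\begin{itemize}
\item First split every multilinear term so that at most one factor carries the high regularity and the others carry the low one. The bilinear estimates (Lemmas \ref{lemma kdv estimate}, \ref{lemma multilinear estimate delx(v1v2)}, \ref{lemma multilinear estimate uv_x}) are available at both $(k,s)$ and $(k',s')$. The trilinear estimates (Lemmas \ref{Lemma N_1^u and N_2^u}--\ref{lemma N_3^v}, \ref{lemma N_0^v com termo de bordo}) and the boundary estimates are available with smoothing $\epsilon$ at both levels.
\item Interpolating the multipliers, on the region where the output frequency is dominated by one input frequency, gives tame estimates of the form $\|N[f,g]\|_{\text{high}}\lesssim\|f\|_{\text{high}}\|g\|_{\text{low}}+\|f\|_{\text{low}}\|g\|_{\text{high}}$.
\item The boundary terms are then absorbed into the left-hand side using the smallness $(\delta^u)^{0^+}$, $(\delta^v)^{0^+}$ multiplied by the low norm, with $\delta$ chosen depending only on $K$. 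The integral terms are absorbed using $T^{0^+}C(K)$.
\end{itemize}
This yields the linear bound above.

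The main obstacle is that tame splitting. The multilinear lemmas are stated with all inputs at a single regularity, and the mixed versions need checking in every frequency case. The most delicate pieces are the regions handled via Lemma \ref{lemma CHS phase comparable to alpha} and the time-smoothing step (Lemma \ref{lemma time smoothing kdv}), where $k$ and $k'$ enter asymmetrically. I would handle this by noting that in each FRE case the weight $\langle\xi\rangle^{k'-k}$ (or $\langle\xi\rangle^{s'-s}$) is bounded by the same power of the largest input frequency, which is attached to a single factor. If this failed somewhere, the fallback is a Gronwall-type iteration along the chain of intermediate regularities from $(k,s)$ to $(k',s')$ in small steps inside $\A_a$, which is convex in the relevant directions, each step using the gain $\epsilon$ from the smoothing lemmas.
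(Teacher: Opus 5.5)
Your architecture matches the paper's Step 1. Uniqueness gives coincidence, and a contradiction with the blow-up alternative follows from a bound on the high norm that is linear in it, with constants depending only on the low norm. The gap is in how you obtain that linear bound.

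Your tame splitting assumes the extra weight $\langle\xi\rangle^{k'-k}$ (or $\langle\xi\rangle^{s'-s}$) can be moved onto the input carrying the largest frequency. In this coupled system that input may belong to the other component, which only carries $s'-s$ (resp.\ $k'-k$) extra derivatives.

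Concretely, let $a>1/4$, $(k,s)=(0,0)$ and $(k',s')=(2,1)$, both in $\A^0_a$. In the resonant region $\xi_1\simeq\mu_a\xi$ of $\partial_x(v^2)$ all frequencies are comparable. There your tame bound amounts to $\|\partial_x(v_1v_2)\|_{X^{2,b'}}\lesssim\|v_1\|_{Y^{1,b}}\|v_2\|_{Y^{0,b}}$ plus its symmetric counterpart. This region lies in $U^c$, so the normal form does not touch it either. Running Case 4 of Proposition \ref{proposition estimate del_x(v_1v_2) is sharp} with weights $\langle\xi_1\rangle^{s'}\langle\xi_2\rangle^{s}$ shows the estimate requires $k'\le s+s'$, i.e.\ $2\le 1$. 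There one needs $\|v\|_{Y^{1,b}}^2$, which is quadratic in the high norm, so your iteration breaks down.

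Your fallback does not repair this as stated. The segment from $(0,0)$ to $(2,1)$ lies on $s=k/2$. Each small step $(k,k/2)\to(k+2h,k/2+h)$ fails for the same reason, since the condition becomes $k+2h\le k+h$ for every $h>0$. The smoothing gain $\epsilon$ plays no role here.

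The missing idea is the paper's: at each step, require the mixed pairs $(k,s')$ and $(k',s)$ to lie in $\A^0_a$. Then:
\begin{itemize}
\item Lemma \ref{lemma multilinear estimate delx(v1v2)} at $(k',s)$ gives $\|\partial_x(v^2)\|_{X^{k',b'}}\lesssim\|v\|_{Y^{s,b}}^2$.
\item Lemma \ref{lemma multilinear estimate uv_x} at $(k,s')$ gives $\|uv_x\|_{Y^{s',b'}}\lesssim\|u\|_{X^{k,b}}\|v\|_{Y^{s',b}}$.
\item The KdV term is tame.
\end{itemize}
Together these give exactly the linear bound. Arbitrary pairs in $\A^0_a$ are then joined by a chain of such steps, for instance purely horizontal and vertical moves, whose mixed pairs are the endpoints themselves.

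For points of $\A_a\setminus\A^0_a$, the paper does not rerun the normal-form scheme with tame trilinear, boundary and time-smoothing estimates, which you leave unverified. Instead it uses that the proof of Theorem \ref{theorem improved LWP} already builds the solution at some $(k_0,s_0)\in\A^0_a$ with $k_0\le k$, $s_0\le s$, and upgrades it to $(k,s)$. That gives persistence between $(k_0,s_0)$ and $(k,s)$, and the chain in $\A^0_a$ does the rest.
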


        \begin{proof}
            \textit{Step 1. Local persistence in $\A^0_a$.} For $(k,s), (k',s') \in \A^0_a$, the solutions are given by Proposition \ref{proposition direct LWP} (by the proof of Theorem \ref{theorem general LWP}). Without loss of generality, assume $(k,s')$ and $(k',s) \in \A_a^0$. Suppose, for the sake of contradiction, that there exists initial data $(u_0,v_0) \in H^{k'}(\R) \times H^{s'}(\R)$ such that $T'_{max} < T_{max}$, where $T'_{max}$ and $T_{max}$ are the maximal existence times of $(u',v')$ and $(u,v)$, respectively. Consequently, there exists $R>0$ such that
            $$ \sup_{t \in [0,T'_{max}]} \| (u(t),v(t)) \|_{H^k \times H^s} \leq R. $$

            Let $T>0$ denote the local existence time for initial data whose $H^k \times H^s$ norm is bounded by $R$, and assume $T'_{max} < T$ (after a possible time translation of $(u,v)$ and $(u',v')$). By uniqueness, $(u',v')$ coincides on $[0,T'_{max}]$ with the $H^k \times H^s$ local solution, which satisfies
            $$\|u\|_{X^{k,b}} + \|v\|_{Y^{s,b}} \lesssim R. $$

            Lemmas \ref{lemma kdv estimate}, \ref{lemma multilinear estimate delx(v1v2)} and  \ref{lemma multilinear estimate uv_x} then yield
            $$ \left\| \int_0^t \eta_T(t') e^{-a(t-t')\partial_x^3} \partial_x(u^2+ v^2) \,dt' \right\|_{X^{k',b}} \lesssim T^{0^+}\left(  \| u  \|_{X^{k,b}}\| u  \|_{X^{k',b}}+ \| v  \|^2_{Y^{s,b}}\right) \lesssim T^{0^+}R\|u\|_{X^{k',b}} + T^{0^+}R^2$$
            and
            $$ \left\| \int_0^t \eta_T(t') e^{-(t-t')\partial_x^3} uv_x \,dt' \right\|_{Y^{s',b}} \lesssim T^{0^+}  \| u  \|_{X^{k,b}}\| v  \|_{Y^{s',b}} \lesssim T^{0^+}R\|v\|_{Y^{s',b}}. $$

            It follows that
            $$\| u\|_{X^{k',b}} + \| v\|_{Y^{s',b}} \lesssim \|u_0\|_{H^{k'}} + \|v_0\|_{H^{s'}} + T^{0^+}R^2 + T^{0^+}R(\| u\|_{X^{k',b}} + \| v\|_{Y^{s',b}}).$$

            Choosing $T$ sufficiently small depending on $R$, we obtain
            $$\| u\|_{H^{k'}} + \| v\|_{H^{s'}} \lesssim \|u_0\|_{H^{k'}} + \|v_0\|_{H^{s'}} + T^{0^+}R^2 < \infty.$$

            This contradicts the blowup alternative, thereby establishing the persistence property.

            \textit{Step 2. Global persistence in $\A_a^0$.} Suppose $(k,s), (k',s') \in \A^0_a$. By the argument of Step 1, there exists a finite sequence $\{(k_j,s_j)\}_{j=1}^n \subset \A^0_a$ with $(k_1,s_1) = (k,s)$ and $(k_n,s_n) = (k',s')$ such that local persistence holds between $(k_j, s_j)$ and $(k_{j+1}, s_{j+1})$ for each $j$. This implies persistence between $(k,s)$ and $ (k',s')$.

            \textit{Step 3. Global persistence in $\A_a$.} Suppose $(k,s), (k',s') \in \A_a$. If $(k,s) \notin \A_a^0$, the proof of Theorem \ref{theorem improved LWP} ensures the existence of $(k_0,s_0) \in \A_a^0$ with $k_0 \leq k$ and $s_0 \leq s$ such that persistence is true between $(k_0,s_0)$ and $(k,s)$. If $(k,s) \in \A_a^0$, we simply set $(k_0,s_0) = (k,s)$.

            Similarly, we define $(k_0', s_0')$ such that persistence is valid between $(k_0',s_0')$ and $(k',s')$ with $k'_0 \geq k_0$ and $s'_0 \geq s_0$. By Step 2, persistence holds between the regularity levels $(k_0, s_0)$ and $(k_0',s_0')$. Therefore, the maximal times of existence for all four regularity levels must coincide.
        \end{proof}

        \begin{proof}[Proof of Theorem \ref{theorem GWP}]
            For $a \neq 1/4$, the conservation law \eqref{equation mass} ensures global well-posedness in $L^2 \times L^2$. The conclusion of the theorem then follows from Proposition \ref{proposition persistence}. For $a=1/4$, the argument is analogous, leveraging the conservation laws \eqref{equation mass} and \eqref{equation energy}.
        \end{proof}

\section{Ill-posedness}\label{section ill-posedness}

    In this section, we establish the sharpness of our results by proving Theorem \ref{theorem ill-posedness}. The optimality is deduced in all cases except for $a=-1/8$ (as noted in Remark \ref{remark ill-posedness for a=-1/8}). Following \cite{Bourgain1997PeriodicKD} (Section 6), it is known that \eqref{equation hirota system} is $C^3$-ill-posed for $k<-3/4$ by considering initial data of the form $(u_0,0)$. The sharpness of the remaining obstructions is addressed in the subsequent lemmas.

    The underlying heuristic is as follows. The equations for the integrated-by-parts strong solution (see \eqref{equation IBP profile u} and \eqref{equation IBP profile v}) contain boundary and integral terms in addition to the initial condition. If an obstruction arises within a boundary term or an integral term with a small phase, it likely constitutes a genuine obstruction to the flow. The counterexamples we construct below are built to exploit precisely these obstructions. Furthermore, to prove the failure of $C^2$ (or $C^3$) regularity, we employ the standard arguments of \cite{ BEJENARU2006228, Bourgain1997PeriodicKD,  TZVETKOV19991043}.

    In the subsequent proofs, we adopt the notation:
    \[\hat {f} \coloneqq \F_xf.\]

    \begin{lemma}\label{lemma s-3 <= k}
        Let $a \in \R \setminus \{0,1\}$. Suppose that the flow of the system \eqref{equation hirota system} is $C^2$ at the origin. Then, $s \leq k+3$.
    \end{lemma}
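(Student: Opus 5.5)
We follow the standard scheme of Bourgain and Tzvetkov. Take initial data $(u_0,v_0)=(\varepsilon\phi,\varepsilon\psi)$ and write $(u,v)=(u^\varepsilon,v^\varepsilon)$ for the solution. If the flow is $C^2$ at the origin, the second derivative in $\varepsilon$ at $\varepsilon=0$ is a bounded bilinear map from $H^k\times H^s$ to $C([0,T];H^k\times H^s)$. The first iterates are the linear solutions $u_1=e^{-at\partial_x^3}\phi$ and $v_1=e^{-t\partial_x^3}\psi$. The second-order term of the $v$-equation is
\[
v_2(t)=\int_0^t e^{-(t-t')\partial_x^3}\big(u_1 \partial_x v_1\big)(t')\,dt',
\]
so $C^2$ regularity forces
\[
\sup_{t\in[0,T]}\|v_2(t)\|_{H^s}\lesssim \|\phi\|_{H^k}\|\psi\|_{H^s}.
\]
The obstruction $s<k+3$ comes from the boundary term $B^v$ in the region $V$, where $\xi\simeq\xi_1$ and $\xi_2$ is low frequency (Table \ref{table problematic regions}). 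We therefore test this inequality with a high-frequency $u$-datum and a low-frequency $v$-datum.

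On the Fourier side,
\[
\widehat{v_2}(t,\xi)=e^{it\xi^3}\int_{\xi=\xi_1+\xi_2} i\xi_2\,\frac{e^{it\Phi^v}-1}{i\Phi^v}\,\hat\phi(\xi_1)\hat\psi(\xi_2)\,d\xi_1,
\qquad \Phi^v=-\xi^3+a\xi_1^3+\xi_2^3 .
\]
Let $N\gg1$ and choose
\[
\hat\phi=N^{-k}\indicatrix_{[N,N+1]},\qquad \hat\psi=\indicatrix_{[1,2]},
\]
so that $\|\phi\|_{H^k}\sim\|\psi\|_{H^s}\sim1$. Then $\xi\in[N+1,N+3]$, and since $a\ne1$ we have $\Phi^v\simeq(a-1)\xi^3$, hence $|\Phi^v|\sim N^3$. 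Moreover $\xi_2\in[1,2]$ has a fixed sign and size. The factor $(e^{it\Phi^v}-1)/\Phi^v$ must be kept from cancelling. Writing $\Phi^v=(a-1)\xi_1^3+O(N^2)$, we fix a time $t\sim N^{-3}$ chosen so that $t\Phi^v$ stays in a small interval such as $[\pi/4,\pi/2]$ for all relevant frequencies. This is possible because the variation of $\Phi^v$ over the support is $O(N^2)$, so $t\cdot O(N^2)=O(N^{-1})$ is negligible. With this choice, $\operatorname{Re}$ or $\operatorname{Im}$ of $(e^{it\Phi^v}-1)/i$ has a fixed sign and modulus $\sim1$. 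The integrand then has no cancellation, and for $\xi$ in a set of measure $\sim1$,
\[
|\widehat{v_2}(t,\xi)|\gtrsim N^{-k}N^{-3}.
\]
This gives $\|v_2(t)\|_{H^s}\gtrsim N^{s-k-3}$.

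Comparing with the $C^2$ bound gives $N^{s-k-3}\lesssim1$ for all $N\gg1$, hence $s\le k+3$. The time used is $t\sim N^{-3}\in[0,T]$, so restricting to a short time causes no problem. Alternatively, one can take any fixed $t$ and note that the boundary part $e^{it\Phi^v}/\Phi^v$ and the part $1/\Phi^v$ have the same size. The choice $t\sim N^{-3}$ is simply the cleanest way to avoid any oscillation issue.

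The only delicate point is the sign and non-cancellation control of the oscillating factor $(e^{it\Phi^v}-1)/\Phi^v$ uniformly over the support. This is handled by choosing $t$ as above and checking the $O(N^2)$ variation bound on $\Phi^v$. It relies on $a\ne1$ so that $|\Phi^v|\sim N^3$; the hypothesis $a\neq0$ is not needed here.
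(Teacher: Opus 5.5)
Your proposal is correct and is essentially the paper's proof. Both take the second $\varepsilon$-derivative of $v$, test it on $\hat u_0$ supported in $[N,N+1]$ and $\hat v_0$ supported near frequency $1$, use $|\Phi^v|\sim N^3$ (from $a\neq 1$) together with a time $t\sim N^{-3}$ to rule out cancellation, and conclude $N^{s-k-3}\lesssim 1$.

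The only difference is cosmetic: the paper takes $t=cN^{-3}$ with $c$ small and uses $|\sin(t\Phi^v)|\gtrsim |t\Phi^v|$, whereas you place $t\Phi^v$ in an order-one window of fixed sign. Your aside about an arbitrary fixed $t$ is not justified by the two parts having \emph{the same size}, since comparable terms could still cancel. What actually makes it work is that the oscillating part is smaller after integrating in $\xi_1$. The main argument does not need this aside.
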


    \begin{proof}
        For $N \gg 1$, define the initial data in the frequency domain by 
        $$\hat{u}_0 = \indicatrix_{[N,N+1]}, \quad \hat{v}_0 = \indicatrix_{[1,3]}.$$

        Consider the initial data $(\epsilon u_0, \epsilon v_0)$. Since the flow is assumed to be $C^2$ at the origin, then for any $t \in \R$, we may define a continuous bilinear operator $I: H^k \times H^s \rightarrow H^s$ by
        $$I[u_0, v_0] = \frac{\partial^2 v}{\partial \epsilon^2} \bigg|_{\epsilon=0} = \int_0^t e^{-(t-t')\partial^3_x}\big( (e^{-at'\partial^3_x} u_0) \partial_x(e^{-t'\partial^3_x} v_0) \big) \, dt',$$
        which, in frequency variables, takes the form
        $$ie^{it\xi^3}\int_{\xi = \xi_1+ \xi_2} \int_0^t \xi_2 e^{it'\Phi^v} \hat{u}_0(\xi_1)\hat{v}_0(\xi_2) \, dt' \, d\xi_1 = ie^{it\xi^3}\int_{\xi =\xi_1+ \xi_2} \xi_2 \frac{e^{it\Phi^v}-1}{i\Phi^v} \hat{u}_0(\xi_1)\hat{v}_0(\xi_2) \, d\xi_1.$$

        Moreover, $|\xi_2| \sim 1$ and the assumption $a \neq 1$ implies that $|\Phi^v| = |-\xi^3 + a\xi_1^3 + \xi_2^3| \sim N^3$. Let $c > 0$ be a sufficiently small constant and set $t = cN^{-3}$. By the Mean Value Theorem, it follows that
        \begin{equation}\label{equation lower bound phase}
            \left| \real \left( \frac{e^{it\Phi^v}-1}{i} \right) \right| = |\sin (t\Phi^v)| \gtrsim  |t\Phi^v | \implies \real \left( \frac{e^{it\Phi^v}-1}{i\Phi^v} \right) \gtrsim N^{-3}.
        \end{equation}

        Moreover, as $\xi_2$, $\hat{u}_0$, and $\hat{v}_0$ do not change sign, we have
        $$\|I\|_{H^s} \gtrsim N^{s-3} \|\hat{u}_0 \ast \hat{v}_0 \|_{L^2} \gtrsim N^{s-3}.$$

        To see this, observe that
        $$ \|\hat{u}_0 \ast \hat{v}_0 \|_{L^2}^2 =\int \left( \int \indicatrix_{[N, N+1]}(\xi_1) \indicatrix_{[1, 3]}(\xi_2) d\xi_1 \right)^2 d\xi \gtrsim \int_{N+2}^{N+3} \left( \int_{N}^{N+1} \indicatrix_{[1, 3]}(\xi_2) d\xi_1 \right)^2 d\xi \gtrsim 1,$$  
        since $\xi \in [N+2, N+3]$ and $\xi_1 \in [N,N+1]$ imply $\xi_2 = \xi-\xi_1 \in [1,3]$. On the other hand, the continuity of $I$ requires
        $$ \|I\|_{H^s} \lesssim \|u_0\|_{H^k}\|v_0\|_{H^s} \lesssim N^k. $$
        Therefore, we must have $s \leq k+3$, which concludes the proof.
    \end{proof}

    \begin{remark}
        The preceding argument can be generalized to systems of the form
        $$\left\{ 
        \begin{aligned} 
            u_t - iL_1(D)u &= \Nl_1(u,v),\\
            v_t - iL_2(D)v &= \Nl_2(u,v) + \Nl_3(u,v),
        \end{aligned} \right.$$
        defined for $(t,x) \in \R \times \R$, where $L_1$ and $L_2$ are linear differential operators with Fourier symbols $L_1(\xi)$ and $L_2(\xi)$, respectively. Assume that:
        \begin{enumerate}
            \item $\F_x(\Nl_2(u,v))(\xi) = \int_{\xi=\xi_1+\xi_2} m(\xi_1,\xi_2)\F_x(u)\F_x(v) \, d\xi_1$ such that there exist $a < b$ with $|m(\xi_1,\xi_2)| \gtrsim |\xi_1|^p$ for all $\xi_2 \in [a,b]$.
            \item $\Nl_1$ is an $n$-linear form satisfying \eqref{equation form of the nonlinearity}.
            \item $\Nl_3$ is an $n'$-linear form with $n' > 2$ satisfying \eqref{equation form of the nonlinearity}.
            \item $|\Phi^v| = |-L_2(\xi) + L_1(\xi_1) + L_2(\xi_2)| \sim |\xi_1|^l$ for $|\xi_1| \gg |\xi_2|$.
        \end{enumerate}

        Under these assumptions, the $C^2$-regularity of the solution map at the origin entails $s - l + p \leq k$. For instance, this criterion is satisfied by the nonlinearity $\Nl_2(u,v) = \partial^p_x (u)  v$ subjetcted to the dispersion relations $L_1(\xi) = \xi^3$ and $L_2(\xi) = -\xi^2$. Consequently, the same constraint $s \leq k+3$ arises in the NLS-KdV system investigated in \cite{correia2025sharplocalwellposednessschrodingerkortewegde}:    \begin{equation}\label{equation NLS-KdV}
            \left\{ 
            \begin{aligned} 
                u_t + u_{xxx} &= \alpha\partial_x(u^2) + \beta \partial_x(|v|^2), \\
                iv_t + v_{xx} &= \gamma uv + \theta|v|^2v.
            \end{aligned} \right.
        \end{equation}
        The reader is referred to \cite{correia2025sharplocalwellposednessschrodingerkortewegde} for further justification, which is essentially identical to the argument used in Lemma \ref{lemma s-3 <= k}.
    \end{remark}

    \begin{lemma}
        Let $a  \in \R \setminus \{0,1\}$. Suppose the flow of the system \eqref{equation hirota system} is $C^2$ at the origin. Then, $ s \geq k-2$.
    \end{lemma}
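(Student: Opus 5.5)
We argue exactly as in Lemma \ref{lemma s-3 <= k}, now reading off the second derivative of the $u$-component. The relevant interaction is the one behind the problematic region $U_2$ in Table \ref{table problematic regions}: a high-frequency piece of $v$ times a low-frequency piece of $v$ produces a high-frequency output in $u$ through $\partial_x(v^2)$. The $C^2$ assumption gives, for fixed small $t$, a continuous symmetric bilinear operator $I:H^s\times H^s\to H^k$, namely the second derivative of $u(t)$ at the origin in the direction $(0,v_0)$, polarized. Since $u_0=0$, only the $\partial_x(v^2)$ term contributes to this second derivative:
$$I[v_1,v_2]=\int_0^t e^{-a(t-t')\partial_x^3}\partial_x\big((e^{-t'\partial_x^3}v_1)(e^{-t'\partial_x^3}v_2)\big)\,dt',$$
up to a harmless constant. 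In frequency this is
$$i e^{iat\xi^3}\int_{\xi=\xi_1+\xi_2}\xi\,\frac{e^{it\Phi_1^u}-1}{i\Phi_1^u}\,\hat v_1(\xi_1)\hat v_2(\xi_2)\,d\xi_1 ,$$
symmetrized in $(\xi_1,\xi_2)$. Continuity then requires $\|I[v_1,v_2]\|_{H^k}\lesssim\|v_1\|_{H^s}\|v_2\|_{H^s}$. If one prefers to avoid polarization, the same conclusion follows from the data $\epsilon(v_1+v_2)$ by isolating the cross term, which is supported at frequencies disjoint from the diagonal terms.

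For $N\gg1$ we choose $\hat v_1=\indicatrix_{[N,N+1]}$ and $\hat v_2=\indicatrix_{[1,3]}$. Then $\|v_1\|_{H^s}\sim N^s$ and $\|v_2\|_{H^s}\sim 1$. On the support we have $\xi\simeq\xi_1\sim N$ and $|\xi_2|\sim1$. Since $a\neq1$, the factorization \eqref{equation factorization phi^u} gives $|\Phi_1^u|\sim N^3$, with $\Phi_1^u$ of constant sign. We take $t=cN^{-3}$ with $c$ small. Then
$$\real\Big(\frac{e^{it\Phi_1^u}-1}{i\Phi_1^u}\Big)=\frac{\sin(t\Phi_1^u)}{\Phi_1^u}\gtrsim N^{-3},$$
because $t\Phi_1^u$ is small and $\sin(x)/x>0$ there. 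Moreover $\xi>0$ and both $\hat v_j\geq0$, so no cancellation occurs in the integral. Hence the modulus of the multiplier is bounded below by $|\xi|\,N^{-3}\sim N^{-2}$.

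Restricting the $H^k$ norm to $\xi\in[N+2,N+3]$, we use, as in Lemma \ref{lemma s-3 <= k}, that $\|\hat v_1*\hat v_2\|_{L^2([N+2,N+3])}\gtrsim1$. This yields $\|I[v_1,v_2]\|_{H^k}\gtrsim N^{k-2}$. Comparing with the continuity bound $\lesssim N^{s}$ and letting $N\to\infty$ forces $k-2\le s$.

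The only delicate point is that the time $t=cN^{-3}$ depends on $N$. This should be handled as in Lemma \ref{lemma s-3 <= k} and the standard arguments cited there: the $C^2$ assumption is understood uniformly for $t$ in a fixed interval $[0,T]$, so the bilinear bound holds with a constant independent of $t\in[0,T]$, and $cN^{-3}\in[0,T]$ for $N$ large.
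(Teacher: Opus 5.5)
Your argument is correct, but it is not the route the paper takes.

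\textbf{The paper's route.} The paper uses a single datum $(0,\epsilon v_0)$ with $\hat v_0=\langle\xi\rangle^{-\rho}\bigl(\indicatrix_{[-(1-b)N,(1-b)N]}+\indicatrix_{[bN,N]}\bigr)$, where $b$ is close to $1$. It argues by contradiction with an auxiliary exponent $s+\frac12<\rho<k-\frac32$. This choice does three things:
\begin{itemize}
\item it gives $\|v_0\|_{H^s}\lesssim1$ uniformly in $N$;
\item it sends the high--high and low--low contributions outside $[bN,N]$;
\item on $[bN,N]$, the high--low cross term gives $\|I\|_{H^k}\gtrsim N^{k-\frac32-\rho}$, which contradicts $\rho<k-\frac32$.
\end{itemize}
Here $t=cN^{-3}$, and $|\Phi_1^u|\sim N^3$ requires $b$ near $1$ together with a triangle-inequality estimate over the wide block.

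\textbf{Your route.} You use the polarized bilinear form with two unit bumps, exactly as in the paper's proof of $s\le k+3$. This gives $N^{k-2}\lesssim N^{s}$ directly, and the phase control is trivial.

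\textbf{Comparison.} Both proofs exploit the same non-resonant interaction $\xi\simeq\xi_1$, $|\xi_2|\ll|\xi|$ underlying $U_2$, and both see the same ratio $N^{k-2-s}$. The paper's weighted construction only uses the quadratic form $\partial_\epsilon^2|_{\epsilon=0}$ along one ray; the weight $\langle\xi\rangle^{-\rho}$ is what balances the $H^s$ norms of the high and low pieces. Your version is shorter. It relies on the bilinear form, which is legitimate: the second derivative at the origin is a bounded symmetric bilinear map. Alternatively, it follows from the quadratic form by polarization plus homogeneity. Your remark on uniformity of the bound for $t\in[0,T]$ is appropriate; the paper uses $t=cN^{-3}$ in the same way without comment.

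\textbf{A flaw in your aside.} The aside about avoiding polarization is not right as stated. With the unscaled datum $\epsilon(v_1+v_2)$, continuity only gives $\|I[v_1,v_2]\|_{H^k}\lesssim\|v_1+v_2\|_{H^s}^2\sim N^{2s}+1$. That yields $k-2\le\max\{2s,0\}$, which is weaker than $k-2\le s$. To make it work, take $\epsilon(\lambda v_1+\lambda^{-1}v_2)$ with $\lambda=N^{-s/2}$:
\begin{itemize}
\item the cross term is invariant under this rescaling;
\item the diagonal terms stay supported away from $[N+2,N+3]$;
\item $\|\lambda v_1+\lambda^{-1}v_2\|_{H^s}^2\sim N^{s}$.
\end{itemize}
This balancing is precisely what the paper's weight $\langle\xi\rangle^{-\rho}$ accomplishes.
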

    \begin{proof}
        Assume, for the sake of contradiction, that there exists $\rho \in \R$ such that
        \begin{equation}\label{equation contradiction hyotheses}
            s+\frac{1}{2} < \rho < k-2+ \frac{1}{2}.
        \end{equation}
        
        For $N \gg 1$, define 
        $$\hat{u}_0 = 0, \hspace{2mm} \hat{v}_0 = \frac{1}{\langle \xi \rangle^{\rho}}(\indicatrix_{[-(1-b)N,(1-b)N]}+ \indicatrix_{[bN,N]}), $$
        where $b < 1$ is chosen sufficiently close to $1$ (its precise value will be specified later). Then, $\|v_0\|_{H^s} \lesssim 1$.
        Let the initial data be $(0, \epsilon v_0)$, and define
        $$I[v_0] = \frac{\partial^2 u}{\partial \epsilon^2} \bigg|_{\epsilon=0} = \int_0^t e^{-a(t-t')\partial^3_x}\partial_x((e^{-t'\partial^3_x} v_0) (e^{-t'\partial^3_x} v_0))dt',$$
        which, in frequency variables, takes the form
        $$ie^{iat\xi^3}\int_{\xi = \xi_1+ \xi_2} \int_0^t \xi e^{it'\Phi^u_1} \hat{v}_0(\xi_1)\hat{v}_0(\xi_2) dt' d\xi_1 = ie^{iat\xi^3}\int_{\xi =\xi_1+ \xi_2}  \xi \frac{e^{it\Phi_1^u}-1}{i\Phi_1^u} \hat{v}_0(\xi_1)\hat{v
        }_0(\xi_2)\, d\xi_1 . $$

        Fix $\xi \in [bN, N]$. If 
        $\xi_1 \in [bN, N]$, it follows that
        $$\xi_2 = \xi-\xi_1 \in [-(1-b)N,(1-b)N]. $$

        Moreover, for $b$ sufficiently close to 1, we have $|\Phi^u_1| \sim N^3$. Indeed, by the triangle inequality,
        $$|\Phi^u_1| = |-a\xi^3 +\xi_1^3 + \xi_2^3| \geq |1-a||\xi|^3 - |\xi^3_1-\xi^3|- |\xi_2|^3 $$
        $$\implies |\Phi^u_1| \geq (|1-a|b^3- (1-b^3)-(1-b)^3)N^3.$$
        
        Let $c > 0$ be a sufficiently small constant and set $t = cN^{-3}$. By the Mean Value Theorem,
        $$ \left| \real  \frac{e^{it\Phi_1^u}-1}{i} \right| = |\sin (t\Phi_1^u)| \gtrsim  |t\Phi_1^u | \implies \real  \frac{e^{it\Phi_1^u}-1}{i\Phi^u_1} \gtrsim N^{-3}. $$
        
        Hence,
        $$\left|\int_{\xi =\xi_1+ \xi_2}  \xi \frac{e^{it\Phi_1^u}-1}{i\Phi_1^u} \hat{v_0}(\xi_1)\hat{v
        }_0(\xi_2) d\xi_1 \right| \gtrsim  N^{-2}\int_{|\xi_2|<(1-b)N} \frac{1}{\langle \xi_1 \rangle^{\rho}\langle \xi_2 \rangle^{\rho}}\, d\xi_1, $$
        since the real part of the integrand does not change sign. It follows that
        $$\left|\int_{\xi =\xi_1+ \xi_2}  \xi \frac{e^{it\Phi_1^u}-1}{i\Phi_1^u} \hat{v_0}(\xi_1)\hat{v
        }_0(\xi_2) d\xi_1 \right|  \gtrsim  N^{-2-\rho}\int_{|\xi_2|<(1-b)N} \frac{1}{\langle \xi_2 \rangle^{\rho}} d\xi_2  \gtrsim N^{-2-\rho}\int_{|\xi_2|<1} \frac{1}{\langle \xi_2 \rangle^{\rho}} d\xi_2 \gtrsim N^{-2-\rho}.$$  
       
        Consequently,
        $$\|I\|_{H^k([b N, N])} \gtrsim N^{k-2-\rho} \left( \int_{bN}^N d\xi \right)^{1/2} \gtrsim N^{k-2-\rho+1/2}.$$

        On the other hand, the $C^2$ regularity of the solution map implies $\|I\|_{H^k([b N,  N])} \lesssim  \|v_0\|_{H^s}^2 \lesssim 1$. Thus, 
        $$0\geq k-2-\rho+\frac{1}{2} \implies \rho \geq k-2+\frac{1}{2}.$$

        This contradicts \eqref{equation contradiction hyotheses}, and we conclude that $s \geq k-2$.
    \end{proof}
    \begin{remark}
        By a similar argument (noting that the estimate for the phase $|\Phi_1^u| \sim N^3$ is slightly more involved in the present case), the obstruction $s \geq k-2$ was also obtained for the NLS-KdV system \eqref{equation NLS-KdV} in \cite{correia2025sharplocalwellposednessschrodingerkortewegde}.
    \end{remark}

    \begin{lemma}\label{lemma ill-posedness s<k/2-3/4}
        Let $a \in \R \setminus\{0\}$. Suppose the flow of the system \eqref{equation hirota system} is $C^2$ at the origin. Then, $s \geq k/2-3/4$.
    \end{lemma}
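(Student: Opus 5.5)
The plan is to mimic the two previous lemmas: a second-order (in $\epsilon$) Picard iterate for the $u$-component with data $(0,\epsilon v_0)$, evaluated at a very short time. The obstruction $s\ge k/2-3/4$ should come from the \emph{high$\times$high$\to$high} interaction $\xi_1\sim\xi_2\sim\xi\sim N$ with inputs spread over a frequency window of width $\sim N$. Spreading the data, rather than localizing it to a unit interval, is what gains the extra $N^{1/2}$ relative to the naive counterexample, which only gives $s\ge k/2-1$.

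Fix $N\gg1$ and set
$$\hat u_0=0,\qquad \hat v_0=N^{-s-1/2}\,\indicatrix_{[N,2N]},$$
so that $\|v_0\|_{H^s}\sim 1$. If the flow is $C^2$ at the origin, then for each fixed $t$ the map
$$I[v_0]=\frac{\partial^2 u}{\partial\epsilon^2}\Big|_{\epsilon=0}=\int_0^t e^{-a(t-t')\partial_x^3}\partial_x\big((e^{-t'\partial_x^3}v_0)^2\big)\,dt'$$
is a bounded bilinear operator $H^s\times H^s\to H^k$. As in the proof of the previous lemma, in frequency it equals
$$ie^{iat\xi^3}\int_{\xi=\xi_1+\xi_2}\xi\,\frac{e^{it\Phi_1^u}-1}{i\Phi_1^u}\,\hat v_0(\xi_1)\hat v_0(\xi_2)\,d\xi_1 .$$

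On the support, $\xi\in[2N,4N]$ and $|\xi_1|,|\xi_2|\le 2N$, so $|\Phi_1^u|\le C_aN^3$ for every $a$. I take $t=cN^{-3}$ with $c$ small, so that $|t\Phi_1^u|\le cC_a\ll1$. Taylor expansion then gives
$$\frac{e^{it\Phi_1^u}-1}{i\Phi_1^u}=t+O\big(t^2|\Phi_1^u|\big),$$
whose real part is $\gtrsim t$ regardless of the sign of $\Phi_1^u$. This is the one point needing care: for $a\ge1/4$ the phase may vanish or change sign inside the region, but the small-time expansion makes the sign irrelevant. The factors $\xi$ and $\hat v_0$ are nonnegative, so there is no cancellation in the $\xi_1$-integral. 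For $\xi\in[5N/2,7N/2]$ the set of admissible $\xi_1$, namely those with both $\xi_1,\xi-\xi_1\in[N,2N]$, has length $\gtrsim N$, and hence
$$|\hat I(\xi)|\gtrsim N\cdot N^{-3}\cdot N\cdot N^{-2s-1}=N^{-2-2s}.$$

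Integrating over that interval of length $\sim N$ gives
$$\|I\|_{H^k}\gtrsim N^{k}\,N^{-2-2s}\,N^{1/2}=N^{k-2s-3/2}.$$
Boundedness of $I$ requires $\|I\|_{H^k}\lesssim\|v_0\|_{H^s}^2\sim1$. Letting $N\to\infty$ forces $k-2s-3/2\le0$, that is, $s\ge k/2-3/4$.

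The only steps needing checking are the uniform phase bound $|\Phi_1^u|\lesssim N^3$, which is trivial, and the lower bound on the length of the admissible $\xi_1$-set. Neither uses $a\neq1/4$, consistent with the statement holding for all $a\neq0$; the restriction $a\neq0$ only matters in that the system degenerates there.
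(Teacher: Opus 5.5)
Your argument is correct and is essentially the paper's proof. Both use the second-order iterate for $u$ with data $(0,\epsilon v_0)$, take $t=cN^{-3}$ so that $\real\frac{e^{it\Phi_1^u}-1}{i\Phi_1^u}\gtrsim N^{-3}$ with a sign-definite integrand, and use frequency windows of width $\sim N$ to gain the extra $N^{1/2}$. The only difference is the frequency configuration: the paper takes opposite-sign inputs $\xi_1\approx -N$, $\xi_2\approx N$ with output $\xi\in[\delta N,2\delta N]$, whereas you take same-sign inputs in $[N,2N]$. The power count is identical, and as you note, either version needs only the upper bound $|\Phi_1^u|\lesssim N^3$.
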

    \begin{proof}
        Let $\delta > 0$ be sufficiently small. For $N \gg 1$, define 
        $$\hat{u}_0 = 0, \hspace{2mm} \hat{v}_0 = \indicatrix_{[-N-\delta N,-N+\delta N]} + \indicatrix_{[N+ \delta N,N + 2\delta N]}. $$

        Considering the initial data $(0, \epsilon v_0)$, we define the operator
        \begin{equation}\label{equation operator I of u}
            I[v_1, v_2] = \frac{\partial^2 u}{\partial \epsilon^2} \bigg|_{\epsilon=0} = \int_0^t e^{-a(t-t')\partial^3_x}\partial_x((e^{-t'\partial^3_x} v_0) (e^{-t'\partial^3_x} v_0))\,dt',
        \end{equation}
        which, in frequency variables, is given by
        $$ie^{iat\xi^3}\int_{\xi = \xi_1+ \xi_2} \int_0^t \xi e^{it'\Phi_1^u} \hat{v}_0(\xi_1)\hat{v}_0(\xi_2)\, dt'\, d\xi_1 = ie^{iat\xi^3}\int_{\xi =\xi_1+ \xi_2}  \xi \frac{e^{it\Phi_1^u}-1}{i\Phi_1^u} \hat{v}_0(\xi_1)\hat{v}_0(\xi_2)\, d\xi_1 . $$

        For $\xi \in [\delta N, 2\delta N]$, the factorization in \eqref{equation factorization phi^u} implies $|\Phi_1^u| \sim |N|^3$, provided $\delta$ is sufficiently small. Let $c > 0$ be a small constant and set $t = cN^{-3}$. It then follows from the Mean Value Theorem that
        $$ \real \frac{e^{it\Phi_1^u}-1}{i\Phi_1^u} \gtrsim N^{-3}.$$

        Furthermore, since $\xi$ and $\hat{v}_0$ do not change sign, we obtain
        $$\|I\|_{H^k} \gtrsim N^{k-2} \|\hat{v}_0 \ast \hat{v}_0 \|_{L^2([\delta N,2\delta N])}.$$

        Additionally, 
        $$\|\hat{v}_0 \ast \hat{v}_0 \|_{L^2([\delta N,2\delta N])}^2 \gtrsim \int_{\delta N}^{2\delta N}\left(\int_{N+\delta N}^{N+2\delta N} \hat{v}_0(\xi_1)\hat{v}_0(\xi_2)d\xi_1\right)^2d\xi\gtrsim N^{3},$$ which implies $\|I\|_{H^k} \gtrsim N^{k-1/2}$. On the other hand, the assumption that the solution map is $C^2$ at the origin yields $ \|I\|_{H^k} \lesssim \|v_0\|_{H^s}^2 \lesssim N^{2s+1}. $ Therefore, we must have $s \geq k/2 - 3/4$, which concludes the proof.
        
    \end{proof}

    In the case $a=1/4$, we have the following lemmas:

    \begin{lemma}\label{lemmma ill-posedness s>=k/2+3/8 a=1/4}
        Let $a = 1/4$. Suppose the flow of the system \eqref{equation hirota system} is $C^2$ at the origin. Then, $s \geq k/2+3/8$.
    \end{lemma}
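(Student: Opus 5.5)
The plan is to run the same second-order-derivative argument as in Lemma \ref{lemma ill-posedness s<k/2-3/4}, with initial data $(0,\epsilon v_0)$. The data is chosen to sit in the resonant configuration $\xi_1\simeq\xi_2\simeq \xi/2$, where for $a=1/4$ the phase $\Phi_1^u = \tfrac34\xi(\xi_1-\xi_2)^2$ has a double zero. This is exactly the regime of Case 3 of Proposition \ref{proposition estimate del_x(v_1v_2) is sharp}, which produced the threshold $s\ge k/2+3/8$.

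\textbf{Choice of data.} For $N\gg1$ set
\[
\hat u_0=0,\qquad \hat v_0=\indicatrix_{[N,N+N^{-1/2}]}.
\]
Then $\|v_0\|_{H^s}\sim N^{s-1/4}$. As in \eqref{equation operator I of u}, consider the bilinear operator
\[
I[v_0,v_0]=\partial_\epsilon^2 u\big|_{\epsilon=0},
\]
whose Fourier transform is
\[
ie^{iat\xi^3}\int \xi\,\frac{e^{it\Phi_1^u}-1}{i\Phi_1^u}\,\hat v_0(\xi_1)\hat v_0(\xi_2)\,d\xi_1 .
\]
On the support we have $\xi\in[2N,2N+2N^{-1/2}]$ and $|\xi_1-\xi_2|\le N^{-1/2}$, so $|\Phi_1^u|\lesssim N\cdot N^{-1}\lesssim 1$.

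\textbf{Lower bound.} Take a fixed small time $t=c\sim1$, instead of $t=cN^{-3}$. Then $t|\Phi_1^u|\ll1$, so
\[
\real\frac{e^{it\Phi_1^u}-1}{i\Phi_1^u}=\frac{\sin(t\Phi_1^u)}{\Phi_1^u}\gtrsim t
\]
uniformly on the support (when $\Phi_1^u=0$ the quotient equals $t$ by continuity). Since $\xi>0$ and $\hat v_0\ge0$, the real part of the integrand does not change sign. Hence
\[
|\widehat{I}(\xi)|\gtrsim N\,(\hat v_0*\hat v_0)(\xi).
\]
On a subinterval of length $\sim N^{-1/2}$ near $\xi = 2N+N^{-1/2}$ we have $\hat v_0*\hat v_0\gtrsim N^{-1/2}$. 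This gives
\[
\|I\|_{H^k}\gtrsim N^{k+1}\cdot N^{-1/2}\cdot N^{-1/4}=N^{k+1/4}.
\]

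\textbf{Conclusion and main obstacle.} $C^2$ regularity at the origin gives
\[
\|I\|_{H^k}\lesssim\|v_0\|_{H^s}^2\sim N^{2s-1/2}.
\]
Comparing with the lower bound yields $k+1/4\le 2s-1/2$, that is, $s\ge k/2+3/8$.

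The delicate step is the choice of time scale. It must be independent of $N$ so that no factor $N^{-3}$ is lost, which is possible only because the phase is $O(1)$ thanks to the double root at $a=1/4$. The frequency width $N^{-1/2}$ is tuned precisely so that $|\xi||\xi_1-\xi_2|^2\lesssim1$.

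One must also check that the bound holds for a fixed $t\in(0,c]$ independent of $N$, and that the bilinear map $I$ at that fixed $t$ is bounded $H^s\times H^s\to H^k$. This follows from the $C^2$ hypothesis of the flow at the origin, as in the preceding lemmas.
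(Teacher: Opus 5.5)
Your proposal is correct and follows the paper's proof essentially step by step. It uses the same data $\hat u_0=0$, $\hat v_0=\indicatrix_{[N,N+N^{-1/2}]}$, and the same double root $\Phi_1^u=\tfrac34\xi(\xi_1-\xi_2)^2$ to get $|\Phi_1^u|\lesssim 1$, so a small time $t$ independent of $N$ can be used; your tent-shaped convolution bound on an interval of length $\sim N^{-1/2}$ near $2N+N^{-1/2}$ is the same computation the paper does by restricting $\xi_1$ and $\xi$ to explicit subintervals, leading to the same comparison $N^{k+1/4}\lesssim\|I\|_{H^k}\lesssim\|v_0\|_{H^s}^2\sim N^{2s-1/2}$.
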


    \begin{proof}
        For $a=1/4$ and $\xi=\xi_1+\xi_2$, we observe that
        \begin{equation}\label{equation phase}
            \Phi^u_1 = -\frac{\xi^3}{4} + \xi_1^3 + \xi_2^3 = \frac{3}{4} \xi (\xi_1-\xi_2)^2,  
        \end{equation}
        
        For $N \gg 1$, let 
        $$\hat{u}_0 = 0, \hspace{2mm} \hat{v}_0 = \indicatrix_{\left[N,N+N^{-\frac{1}{2}}\right]}. $$

        Considering the initial data as $(0,\epsilon v_0)$, we define
        
        $$I[v_0] = \frac{\partial^2 u}{\partial \epsilon^2} \bigg|_{\epsilon=0} = \int_0^t e^{-a(t-t')\partial^3_x}\partial_x((e^{-t'\partial^3_x} v_0) (e^{-t'\partial^3_x} v_0))\,dt',$$
        which, in frequency variables, is given by
        $$ie^{iat\xi^3}\int_{\xi = \xi_1+ \xi_2} \int_0^t \xi e^{it'\Phi_1^u} \hat{v}_0(\xi_1)\hat{v}_0(\xi_2) \,dt' \,d\xi_1.$$

        Note that $|\xi| \sim N$ and $|\xi_1 - \xi_2| \lesssim N^{-1/2}$; thus, $|\Phi_1^u| \lesssim 1$ by \eqref{equation phase}. For $t > 0$ sufficiently small such that $|t\Phi_1^u| \ll 1$, we have
        $$\|I\|_{H^k}^2 \gtrsim \int \langle \xi \rangle^{2k} \left(t\xi\int\hat{v_0}(\xi_1)\hat{v}_0(\xi_2)d\xi_1\right)^2d\xi \gtrsim N^{2k+2}\int   \left(\int\hat{v_0}(\xi_1)\hat{v}_0(\xi_2)d\xi_1\right)^2d\xi. $$

        Furthermore, observe that 
        $$N + \frac{1}{2\sqrt{N}} \leq \xi_1\leq N + \frac{1}{\sqrt{N}} \hspace{2mm }\text{ and } \hspace{2mm } 2N + \frac{1}{\sqrt{N}} \leq \xi\leq 2N + \frac{3}{2\sqrt{N}}$$
        $$ \implies \xi_2 = \xi-\xi_1 \in \left[N,N+\frac{1}{\sqrt{N}}\right].$$

        Therefore,
        $$ \int   \left(\int\hat{v_0}(\xi_1)\hat{v}_0(\xi_2)d\xi_1\right)^2d\xi \gtrsim  \int_{2N+\frac{1}{\sqrt{N}}}^{2N + \frac{3}{2\sqrt{N}}}   \left(\int_{N+\frac{1}{2\sqrt{N}}}^{N + \frac{1}{\sqrt{N}}}  d\xi_1\right)^2d\xi \gtrsim N^{-\frac{1}{2}-1}.$$

        This yields
        $\|I\|_{H^k} \gtrsim N^{k+1/4}$. On the other hand, the $C^2-$regularity of the flow at the origin entails $ \|I\|_{H^k} \lesssim  \|v_0\|_{H^s}^2 \lesssim N^{2s-1/2}. $ Consequently, we must have $s \geq k/2+3/8$.
        
    \end{proof}

    \begin{lemma}\label{lemma a=1/4 flow C^2 implies k >=3/4}
        Let $a = 1/4$. Suppose the flow of the system \eqref{equation hirota system} is $C^2$ at the origin. Then, $k \geq 3/4$.
    \end{lemma}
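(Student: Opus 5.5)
Following the heuristic of this section, the obstruction $k\geq 3/4$ for $a=1/4$ comes from Subcase A1 of Lemma \ref{lemma multilinear estimate uv_x}, in the regime $\xi_2\simeq-\xi$, $\xi_1\simeq 2\xi$. There the phase $\Phi^v$ has a double zero. By \eqref{equation relation phi^v and phi^u} and the factorization for $a=1/4$,
\[
\Phi^v(\xi,\xi_1,\xi_2)=\Phi_1^u(-\xi_1,-\xi,\xi_2)=-\tfrac34\,\xi_1(\xi_1+2\xi_2)^2 .
\]
We therefore use the second iterate in $\epsilon$ of $v$, generated by the coupling $uv_x$. We take initial data $(\epsilon u_0,\epsilon v_0)$ and consider the bilinear operator
\[
I[u_0,v_0]=\frac{\partial^2 v}{\partial\epsilon^2}\Big|_{\epsilon=0}=\int_0^t e^{-(t-t')\partial_x^3}\big((e^{-at'\partial_x^3}u_0)\,\partial_x(e^{-t'\partial_x^3}v_0)\big)\,dt'.
\]
In frequency this reads $ie^{it\xi^3}\int \xi_2\int_0^t e^{it'\Phi^v}\,dt'\,\hat u_0(\xi_1)\hat v_0(\xi_2)\,d\xi_1$, up to a harmless constant coming from the second derivative. $C^2$-regularity at the origin forces $\|I\|_{H^s}\lesssim\|u_0\|_{H^k}\|v_0\|_{H^s}$ for fixed small $t$.

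For $N\gg1$ we choose, mirroring Case 4 of Proposition \ref{proposition estimate uvx is sharp} and Lemma \ref{lemmma ill-posedness s>=k/2+3/8 a=1/4},
\[
\hat u_0=\indicatrix_{[2N,\,2N+N^{-1/2}]},\qquad \hat v_0=\indicatrix_{[-N,\,-N+N^{-1/2}]}.
\]
Then $|\xi_1+2\xi_2|\lesssim N^{-1/2}$ and $|\xi_1|\sim N$, so $|\Phi^v|\lesssim1$. The output frequency satisfies $\xi=\xi_1+\xi_2\in[N,N+2N^{-1/2}]$, and $|\xi_2|\sim N$ with constant sign.

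We fix a small $t>0$ with $|t\Phi^v|\ll1$. Then $\real\int_0^t e^{it'\Phi^v}dt'\gtrsim t$, and since $\xi_2$ and both profiles have fixed sign, there is no cancellation. Hence
\[
|\hat I(\xi)|\gtrsim t N\,(\hat u_0*\hat v_0)(\xi).
\]
As in Lemma \ref{lemmma ill-posedness s>=k/2+3/8 a=1/4}, on a subinterval of $\xi$ of length $\sim N^{-1/2}$ the convolution is $\gtrsim N^{-1/2}$. This gives
\[
\|I\|_{H^s}\gtrsim N^{s+1}\big(N^{-1}\cdot N^{-1/2}\big)^{1/2}=N^{s+1/4}.
\]

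On the other side, $\|u_0\|_{H^k}\sim N^{k-1/4}$ and $\|v_0\|_{H^s}\sim N^{s-1/4}$. The bound $\|I\|_{H^s}\lesssim\|u_0\|_{H^k}\|v_0\|_{H^s}$ then gives $N^{s+1/4}\lesssim N^{k+s-1/2}$, that is, $k\geq 3/4$. The $N^s$ weights cancel because the output frequency and the frequency of $v$ are both of size $N$, consistent with the restriction being independent of $s$.

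The only step needing care is the lower bound on the convolution over a set of $\xi$ of the right measure, together with the non-cancellation in the time integral. Both follow exactly as in the previous lemma, by restricting $\xi_1$ to half its support so that $\xi_2=\xi-\xi_1$ stays in the support of $\hat v_0$. A minor check is that the constant from $\partial_\epsilon^2$ and the coupling coefficient $\theta\neq0$ do not vanish.
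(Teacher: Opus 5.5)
Your proposal is correct and follows essentially the same route as the paper's proof. Both take the second $\epsilon$-derivative of $v$ with $\hat u_0$ and $\hat v_0$ concentrated on intervals of width $\sim N^{-1/2}$ near $2N$ and $-N$, where $\Phi^v=-\tfrac34\xi_1(\xi_1+2\xi_2)^2=O(1)$, and both arrive at $N^{s+1/4}\lesssim N^{k+s-1/2}$; the slightly shifted intervals and your tent-function bound on the convolution are cosmetic differences.
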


    \begin{proof}
        Since $a=1/4$ and $\xi=\xi_1+\xi_2$, we observe that
        \begin{equation}\label{equation phase v}
            \Phi^v = -\xi^3+ \frac{\xi_1^3}{4} + \xi_2^3 = -\frac{3}{4}\xi_1(\xi_1+2\xi_2)^2.  
        \end{equation}

        Note that if $|\xi_1|(\xi_1 + 2\xi_2)^2 \lesssim 1$, the phase $\Phi^v$ remains bounded. Specifically, for $N \gg 1$, define 
        
        $$\hat{u}_0 =\indicatrix_{\left[2N,2N+2N^{-\frac{1}{2}}\right]}, \hspace{2mm} \hat{v}_0 = \indicatrix_{\left[-N-N^{-\frac{1}{2}},-N\right]}.  $$

        Considering the initial data $(\epsilon u_0, \epsilon v_0)$, and since the solution map is assumed to be $C^2$ at the origin, for any $t \in \R$ we may define a continuous operator $I: H^k \times H^s \rightarrow H^s$ by
        $$I[u_0, v_0] = \frac{\partial^2 v}{\partial \epsilon^2} \bigg|_{\epsilon=0} = \int_0^t e^{-(t-t')\partial^3_x}((e^{-at'\partial^3_x} u_0) \partial_x(e^{-t'\partial^3_x} v_0))\,dt',$$
        which, in frequency variables, is given by
        $$ie^{it\xi^3}\int_{\xi = \xi_1+ \xi_2} \int_0^t \xi_2 e^{it'\Phi^v} \hat{u}_0(\xi_1)\hat{v}_0(\xi_2) \,dt' \,d\xi_1.$$

        Observe that $|\xi_1+ 2\xi_2| \lesssim  N^{-1/2}.$ Consequently, $|\Phi^v|\lesssim 1$ by \eqref{equation phase v}. For $t > 0$ sufficiently small such that $|t\Phi^v| \ll 1$, we have
        $$\|I\|_{H^s}^2 \gtrsim \int \langle \xi \rangle^{2s} \left(t\xi_2\int\hat{u_0}(\xi_1)\hat{v}_0(\xi_2)d\xi_1\right)^2d\xi \gtrsim N^{2s+2}\int   \left(\int\hat{u_0}(\xi_1)\hat{v}_0(\xi_2)d\xi_1\right)^2d\xi. $$

        Furthermore, note that 
        $$2N + \frac{1}{2\sqrt{N}} \leq \xi_1\leq 2N + \frac{1}{\sqrt{N}} \hspace{2mm }\text{ and } \hspace{2mm } N  \leq \xi\leq N + \frac{1}{2\sqrt{N}}$$
        $$ \implies \xi_2 = \xi-\xi_1 \in \left[-N-\frac{1}{\sqrt{N}},-N\right].$$

        Therefore,
        $$ \int   \left(\int\hat{u_0}(\xi_1)\hat{v}_0(\xi_2)d\xi_1\right)^2d\xi \gtrsim  \int_{N}^{N + \frac{1}{2\sqrt{N}}}   \left(\int_{2N+\frac{1}{2\sqrt{N}}}^{2N + \frac{1}{\sqrt{N}}}  d\xi_1\right)^2d\xi \gtrsim N^{-3/2}.$$

       This implies $\|I\|_{H^s} \gtrsim N^{s+\frac{1}{4}}$. On the other hand, the continuity of $I$ entails $\|I\|_{H^s} \lesssim \|u_0\|_{H^k} \|v_0\|_{H^s} \lesssim N^{k+s-\frac{1}{2}}$. Consequently, we must have $k \geq 3/4$, which concludes the proof.

    \end{proof}

    In the case $a \in (1/4,\infty) \setminus \{1\}$, we establish the following results:

    \begin{lemma}\label{lemma illposedness s>=k/2 a>1/4}
        Let $a \in (1/4,\infty) \setminus \{1\}$. Suppose the flow of the system \eqref{equation hirota system} is $C^2$ at the origin. Then $s \geq k/2$.
    \end{lemma}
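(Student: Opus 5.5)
We follow the scheme of Lemma \ref{lemmma ill-posedness s>=k/2+3/8 a=1/4}, but replace the double-root resonance by the simple resonance of the factorization \eqref{equation factorization phi^u a >=1/4}. For $a>1/4$ we have $\Phi_1^u = 3\xi(\xi_1-\mu_a\xi)(\xi_1-(1-\mu_a)\xi)$ with two distinct roots, so the natural test data are concentrated on two frequency boxes of width $N^{-2}$. This is the same configuration as the counterexample in Case 4 of Proposition \ref{proposition estimate del_x(v_1v_2) is sharp}.

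Concretely, for $N\gg1$ we take $\hat u_0=0$ and
$$\hat v_0=\indicatrix_{[N,N+N^{-2}]}+\indicatrix_{[-N+N/\mu_a,\,-N+N/\mu_a+N^{-2}]}.$$
Since $\mu_a\neq 1/2$ and $a\neq1$, the two intervals are disjoint and both have size $\sim N$. With initial data $(0,\epsilon v_0)$ we consider the second derivative $I[v_0]=\partial^2_\epsilon u|_{\epsilon=0}$, given in Fourier variables by
$$ie^{iat\xi^3}\int_{\xi=\xi_1+\xi_2}\int_0^t \xi e^{it'\Phi_1^u}\hat v_0(\xi_1)\hat v_0(\xi_2)\,dt'\,d\xi_1.$$
We restrict to the cross interaction, with $\xi_1$ in the first box and $\xi_2$ in the second. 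Then $\xi\in[N/\mu_a,N/\mu_a+2N^{-2}]$ and $|\xi_1-\mu_a\xi|\lesssim N^{-2}$. The first factor is therefore $\lesssim N^{-2}$ and the other two are $\lesssim N$, which gives $|\Phi_1^u|\lesssim 1$. Choosing a fixed small $t$ with $|t\Phi_1^u|\ll1$ makes the time integral have real part $\gtrsim t$. The integrand $\xi\hat v_0\hat v_0$ is nonnegative on this region, so there is no cancellation. The same-box and symmetric contributions land at $\xi\approx 2N$, $\xi\approx 2(-N+N/\mu_a)$, or the same $\xi$-window. Off the window they are harmless after restricting the $H^k$ norm to $\xi\in[N/\mu_a,N/\mu_a+2N^{-2}]$. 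Inside the window they are just the symmetric copy $\xi_1\leftrightarrow\xi_2$, which has the same sign.

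On the window, over a subinterval of length $\sim N^{-2}$ in $\xi$, the convolution $\hat v_0*\hat v_0(\xi)\gtrsim N^{-2}$. Hence
$$\|I\|_{H^k}\gtrsim N^{k+1}\cdot N^{-2}\cdot N^{-1}=N^{k-2}.$$
On the other side, $\|v_0\|_{H^s}^2\sim N^{2s}N^{-2}=N^{2s-2}$. The $C^2$ bound $\|I\|_{H^k}\lesssim\|v_0\|_{H^s}^2$ then forces $k-2\le 2s-2$, that is, $s\ge k/2$.

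The main point to verify is that the phase stays uniformly bounded on the whole cross region, and not only at its center. This needs the width $N^{-2}$ exactly: a width $w$ gives $|\Phi_1^u|\lesssim N^2w$. A secondary check is that the sign of the real part is preserved, which is why $t$ is chosen as a fixed small constant independent of $N$ rather than $cN^{-3}$.
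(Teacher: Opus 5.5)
Your proposal is correct and is essentially the paper's proof. It uses the same two boxes of width $N^{-2}$ at $N$ and $(1/\mu_a-1)N$ (the configuration of Case~4 in Proposition~\ref{proposition estimate del_x(v_1v_2) is sharp}), the bounded-phase argument with a fixed small $t$ from Lemma~\ref{lemmma ill-posedness s>=k/2+3/8 a=1/4}, and the same count $N^{k-2}\lesssim N^{2s-2}$. Your explicit checks spell out what the paper leaves to ``proceeding as in'' that lemma: the uniform bound $|\Phi_1^u|\lesssim N^2\cdot N^{-2}$ on the whole cross region, the separation of the window $\xi\approx N/\mu_a$ from $2N$ and $2(1/\mu_a-1)N$ since $\mu_a\neq 1/2$, and the equal sign of the symmetric copy.
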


    \begin{proof}
        In view of the factorization \eqref{equation factorization phi^u a >=1/4}, we have
        \begin{equation}
            \Phi^u_1 =  3\xi(\xi_1-\mu_a\xi)(\xi_1-(1-\mu_a)\xi),  
        \end{equation}
        where $\mu_a = \frac{1}{2} + \frac{\sqrt{3(4a-1)}}{6}$. 
        
        The obstruction $s \geq k/2$ stems from the regime $\xi_1 \simeq \mu_a\xi$, where $|\Phi^u_1|$ is small (cf. Subcase A2 in the proof of Lemma \ref{lemma multilinear estimate delx(v1v2)}). Since $\mu_a \notin \{0,1\}$, it follows that $|\xi| \sim |\xi_1| \sim |\xi_2|$ and $|\Phi^u_1| \sim |\xi^2_1(\xi_1 - \mu_a\xi)|$. Specifically, by choosing frequencies such that $|\xi - \mu_a\xi_1| \lesssim 1/|\xi_1|^2$, we ensure that $|\Phi^u_1| \lesssim 1$. For $N \gg 1$, we may set
        $$\xi_1 \in \left[N,N +\frac{1}{N^2}\right], \hspace{2mm} \xi \in \left[ \frac{N}{\mu_a},\frac{N}{\mu_a} +\frac{1}{N^2}\right]$$
        $$\implies \xi_2 = \xi-\xi_1 \in \left[\left(\frac{1}{\mu_a}-1\right)N-\frac{1}{N^2},\left(\frac{1}{\mu_a}-1\right)N +\frac{1}{N^2}\right].$$

        By defining
        $$\hat{u}_0 =0, \quad \hat{v}_0 = \indicatrix_{\left[N,N +\frac{1}{N^2}\right]} + \indicatrix_{\left[\left(\frac{1}{\mu_a}-1\right)N-\frac{1}{N^2},\left(\frac{1}{\mu_a}-1\right)N +\frac{1}{N^2}\right]} $$ 
        and proceeding as in the proof of Lemma \ref{lemmma ill-posedness s>=k/2+3/8 a=1/4}, we conclude that $s \geq k/2$. 
    \end{proof}

    \begin{lemma}\label{lemma a>1/4 k>=0}
        Let $a \in (1/4,\infty) \setminus \{1\}$. If the flow of the system \eqref{equation hirota system} is $C^2$ at the origin, then $k \geq 0$.
    \end{lemma}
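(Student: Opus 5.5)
The plan is to reproduce, at the level of the flow, the counterexample of Case 5 in Proposition \ref{proposition estimate uvx is sharp}. The obstruction $k\geq 0$ arises in Subcase A3 of Lemma \ref{lemma multilinear estimate uv_x}, where $|\xi|\sim|\xi_1|\sim|\xi_2|$ and $\Phi^v$ nearly vanishes. We take initial data $(\epsilon u_0,\epsilon v_0)$ and consider the second derivative of the $v$-component,
$$I[u_0,v_0]=\frac{\partial^2 v}{\partial\epsilon^2}\Big|_{\epsilon=0}=\int_0^t e^{-(t-t')\partial_x^3}\big((e^{-at'\partial_x^3}u_0)\,\partial_x(e^{-t'\partial_x^3}v_0)\big)\,dt'.$$
In frequency this is $ie^{it\xi^3}\int\!\int_0^t \xi_2 e^{it'\Phi^v}\hat u_0(\xi_1)\hat v_0(\xi_2)\,dt'\,d\xi_1$, exactly as in Lemma \ref{lemma a=1/4 flow C^2 implies k >=3/4}. $C^2$ regularity at the origin forces $\|I\|_{H^s}\lesssim\|u_0\|_{H^k}\|v_0\|_{H^s}$.

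To make the phase small we use the factorization \eqref{equation factorization Phi^v for a>1/4},
$$\Phi^v=-3\xi_1(\xi-\mu_a\xi_1)(\xi-(1-\mu_a)\xi_1).$$
We choose $\xi\simeq\mu_a\xi_1$ with $|\xi-\mu_a\xi_1|\lesssim N^{-2}$. Since $\mu_a\notin\{0,1/2,1\}$ for $a>1/4$, $a\ne 1$, the factors $\xi_1$ and $\xi-(1-\mu_a)\xi_1$ are of size $N$, so $|\Phi^v|\lesssim 1$. Concretely, set
$$\hat u_0=\indicatrix_{[N,N+N^{-2}]},\qquad \hat v_0=\indicatrix_{[(\mu_a-1)N-N^{-2},\,(\mu_a-1)N+N^{-2}]}.$$
Then $\xi_2=\xi-\xi_1\simeq(\mu_a-1)N$, and since $\mu_a\ne1$ we have $|\xi_2|\sim N$. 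We restrict attention to $\xi$ in an interval of length $\sim N^{-2}$ around $\mu_a N$, on which $\xi_1$ ranges over a subinterval of length $\sim N^{-2}$ with $\xi-\xi_1$ staying in the support of $\hat v_0$. This is the same bookkeeping as in Lemma \ref{lemmma ill-posedness s>=k/2+3/8 a=1/4}, with widths $N^{-2}$ in place of $N^{-1/2}$.

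For fixed small $t$ with $|t\Phi^v|\ll1$, the time integral is $\approx t$ with nonnegative real part, and $\xi_2$ has constant sign. This gives the lower bound
$$\|I\|_{H^s}^2\gtrsim N^{2s+2}\int\Big(\int\hat u_0(\xi_1)\hat v_0(\xi-\xi_1)\,d\xi_1\Big)^2 d\xi\gtrsim N^{2s+2}\,N^{-4}\,N^{-2},$$
so $\|I\|_{H^s}\gtrsim N^{s-2}$. On the other side, $\|u_0\|_{H^k}\|v_0\|_{H^s}\sim N^{k-1}N^{s-1}=N^{k+s-2}$. Comparing the two bounds as $N\to\infty$ yields $k\geq0$. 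This matches the exponent count $N^{1-k-4}\lesssim N^{-3}$ in Proposition \ref{proposition estimate uvx is sharp}.

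The main point requiring care is verifying $|\Phi^v|\lesssim1$ uniformly on the chosen set. This needs the constraint $|\xi-\mu_a\xi_1|\lesssim N^{-2}$ to hold for all admissible $(\xi,\xi_1)$. We will enforce it by taking the $\xi$-interval slightly narrower (of length $cN^{-2}$) so that the errors from $\xi_1\in[N,N+N^{-2}]$ only contribute $O(N^{-2})$. We also need $\mu_a\ne 1/2$, so that the second factor is of size $N$; this holds since $a>1/4$.
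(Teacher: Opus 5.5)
Your proposal is correct and is essentially the paper's own proof. It uses the same factorization of $\Phi^v$, the same data $\hat u_0=\indicatrix_{[N,N+N^{-2}]}$ and $\hat v_0$ of width $N^{-2}$ concentrated at $(\mu_a-1)N$, and the same comparison $N^{s-2}\lesssim N^{k+s-2}$, carried out exactly as in the $a=1/4$, $k\geq 3/4$ lemma.

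Narrowing the $\xi$-interval is not actually needed. For $\xi\in[\mu_aN,\mu_aN+N^{-2}]$ and $\xi_1\in[N,N+N^{-2}]$ one automatically has $|\xi-\mu_a\xi_1|\le(1+\mu_a)N^{-2}$, and $\xi-\xi_1$ lies in the support of $\hat v_0$.
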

    
    \begin{proof}
        In view of \eqref{equation factorization Phi^v for a>1/4}, we have
        \begin{equation}
            \Phi^v = -3\xi_1 (\xi-\mu_a\xi_1)(\xi-(1-\mu_a)\xi_1),  
        \end{equation}
        where $\mu_a = \frac{1}{2} + \frac{\sqrt{3(4a-1)}}{6}$. 
        
        The obstruction $k \geq 0$ arises in the region $\xi \simeq \mu_a\xi_1$, where $|
        \Phi^v|$ is small (cf. Subcase A3 in the proof of Lemma \ref{lemma multilinear estimate uv_x}). Since $\mu_a \notin \{0,1\}$, it follows that $|\xi| \sim |\xi_1| \sim |\xi_2|$ and $|\Phi^v| \sim |\xi^2_1(\xi_1 - \mu_a\xi)|$. Consequently, provided $|\xi - \mu_a\xi_1| \lesssim 1/|\xi_1|^2$, we obtain $|\Phi^v| \lesssim 1$. In fact, for $N \gg 1$, we may set
        $$\xi_1 \in \left[N,N +\frac{1}{N^2}\right], \hspace{2mm} \xi \in \left[ \mu_aN, \mu_aN +\frac{1}{N^2}\right]$$
        $$\implies \xi_2 = \xi-\xi_1 \in \left[\left(\mu_a-1\right)N-\frac{1}{N^2},\left(\mu_a-1\right)N +\frac{1}{N^2}\right].$$
    
        By defining
        $$\hat{u}_0 =\indicatrix_{\left[N,N +\frac{1}{N^2}\right]}, \quad \hat{v}_0 =   \indicatrix_{\left[\left(\mu_a-1\right)N-\frac{1}{N^2},\left(\mu_a-1\right)N +\frac{1}{N^2}\right]} $$ 
        and proceeding as in the proof of Lemma \ref{lemma a=1/4 flow C^2 implies k >=3/4}, we conclude that $k \geq 0$.
    \end{proof}

    The frequency region used to construct the counterexample in the following lemma does not appear in the proof of the multilinear estimates in Sections \ref{section bilinear estimates} and \ref{section multilinear estimates}. Instead, this regime emerges in the term $\partial_x(v^2)v_x$, which arises when substituting equation \eqref{equation profile u} into \eqref{equation profile v}. Its total phase is 
    \begin{equation}\label{phase mkdv}
        \Theta \coloneqq -\xi^3+\xi_2^3+\xi_{11}^3+\xi_{12}^3 = -3(\xi_2+\xi_{11})(\xi_2+\xi_{12})(\xi_{11}+\xi_{12}),
    \end{equation}
    which is small when one of the factors is small. Using this alongside Bourgain's argument in Section 6 of \cite{Bourgain1997PeriodicKD}, we prove the following:

    \begin{lemma}\label{lemma ill-posedness s<-3/4}
        Let $a \in(-\infty, 1/4)\setminus\left\{-\frac{1}{8},0\right\}$ and suppose the flow of the system \eqref{equation hirota system} is $C^3$ at the origin. Then, $s \geq -3/4$.
    \end{lemma}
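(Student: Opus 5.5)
We argue by contradiction using the third derivative of the flow map at the origin, in the spirit of Bourgain's argument in Section 6 of \cite{Bourgain1997PeriodicKD}. We take initial data $(0,\epsilon v_0)$. Since $u$ vanishes to first order, $\partial_\epsilon u|_{\epsilon=0}=0$, while $\partial_\epsilon^2 u|_{\epsilon=0}=2I_2[v_0,v_0]$, where $I_2$ is the operator in \eqref{equation operator I of u}. Hence the third derivative of $v$ is, up to a constant,
\[
I_3[v_0]=\int_0^t e^{-(t-t')\partial_x^3}\Big(I_2[v_0,v_0](t')\,\partial_x e^{-t'\partial_x^3}v_0\Big)\,dt'.
\]
In frequency this is the iterated integral
\[
e^{it\xi^3}\int \xi_1\xi_2\int_0^t e^{it'\Phi^v}\int_0^{t'} e^{it''\Phi_1^u(\xi_1,\xi_{11},\xi_{12})}\,dt''\,dt'\,\hat v_0(\xi_{11})\hat v_0(\xi_{12})\hat v_0(\xi_2),
\]
with $\xi_1=\xi_{11}+\xi_{12}$. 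The phases combine to $\Phi^v+\Phi^u_1=\Theta$ from \eqref{phase mkdv}.

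\textbf{Choice of data.} We pick the Bourgain-type configuration that makes $\Theta$ small while keeping each intermediate phase large. Let $\hat v_0=N^{-s}\gamma^{-1/2}(\indicatrix_{[N,N+\gamma]}+\indicatrix_{[-N,-N+\gamma]})$ with $\gamma\ll1$, so that $\|v_0\|_{H^s}\sim1$. We then restrict to the output $\xi\approx N$, arising from $\xi_{11}\approx N$, $\xi_{12}\approx -N$, $\xi_2\approx N$. Here $\xi_2+\xi_{12}=O(\gamma)$, so $|\Theta|\lesssim N^2\gamma$, and $\Theta\lesssim1$ once $\gamma\le N^{-2}$. However $\xi_1=\xi_{11}+\xi_{12}=O(\gamma)$ is small, which kills the factor $\xi_1$. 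So we instead choose $\gamma$ of order $1$ and let $|\xi_1|\sim1$, which forces $|\Theta|\sim N^2$.

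The inner phase is $\Phi_1^u(\xi_1,\xi_{11},\xi_{12})=-a\xi_1^3+\xi_{11}^3+\xi_{12}^3=3\xi_1\xi_{11}\xi_{12}+(1-a)\xi_1^3$. It has size $\sim N^2|\xi_1|$ and is non-resonant. Integrating the inner time integral exactly gives
\[
\frac{\xi_1}{i\Phi_1^u}\left(\frac{e^{it\Theta}-1}{i\Theta}-\frac{e^{it\Phi^v}-1}{i\Phi^v}\right).
\]
We take $t\sim N^{-2}$ (below the $\Theta$ scale), so the first term is $\approx t$. Since $|\Phi^v|\sim N^3$, the second term is $O(N^{-3})$ and negligible. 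The dominant contribution is therefore
\[
\frac{\xi_1\xi_2\,t}{\Phi_1^u}\sim \frac{N t}{N^2}\cdot\frac{\xi_1}{\xi_1}\sim \frac{t}{N},
\]
with a fixed sign: we restrict $\xi_1\in[1/2,1]$, so that $\xi_1/\Phi_1^u$ has the sign of $-3\xi_{11}\xi_{12}/(\cdots)$, which is positive after the restriction. This is the step where $a\neq-1/8$ should enter, through the algebraic combination of the two phase contributions. The resonant/remainder term may cancel the main one exactly when $a=-1/8$, and we would check this explicitly.

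\textbf{Size comparison.} The triple convolution $\hat v_0*\hat v_0*\hat v_0$ on $\xi\in[N,N+1]$ has size $N^{-3s}$ over a region of measure $\sim1$. Hence
\[
\|I_3\|_{H^s}\gtrsim N^{s}\cdot tN^{-1}\cdot N^{-3s}=N^{-2s-3}.
\]
$C^3$ regularity requires $\|I_3\|_{H^s}\lesssim\|v_0\|_{H^s}^3\sim1$, which gives $s\ge -3/2$. This bound is too weak, so the scaling above needs refinement.

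\textbf{Refinement.} The natural fix is to use Bourgain's choice with a larger window: $\hat v_0$ supported on intervals of length $N^{-1/2}$ with amplitude $N^{-s+1/4}$. We take $t\sim1$ with $|\Theta|\lesssim1$, i.e.\ $|\xi_2+\xi_{12}|\,|\xi_{11}+\xi_{12}|\lesssim N^{-1}$. This lets $\xi_1$ have size $N^{-1/2}$ while making $\Phi_1^u\sim N^{3/2}$. The gain is then
\[
\frac{\xi_1\xi_2}{\Phi_1^u}\sim\frac{N^{1/2}}{N^{3/2}}=N^{-1}\cdot N^{1/2}\cdot\ldots
\]
We then optimize these scalings to produce exactly the threshold $-3/4$, as in the mKdV-type counterexample.

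\textbf{Main obstacle.} The delicate step is handling the cancellation between the $\Theta$-term and the $\Phi^v$-term, together with the coefficient depending on $a$. This is where the exclusion $a=-1/8$ arises, and I would compute that coefficient first.
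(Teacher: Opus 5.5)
There is a genuine gap, and it sits at the step you flag as the main obstacle. Let $I_\pm$ denote the data windows near $\pm N$. Every output $\xi\approx N$ is a sum in $I_++I_++I_-$, and each such sum arises from three configurations:
\begin{itemize}
\item the two with $\xi_1=\xi_{11}+\xi_{12}$ small, namely $\xi_{11}\approx N$, $\xi_{12}\approx -N$, $\xi_2\approx N$, and the same with $\xi_{11},\xi_{12}$ exchanged;
\item the one you ignore, $\xi_{11},\xi_{12}\approx N$, $\xi_2\approx -N$, $\xi_1\approx 2N$.
\end{itemize}
At the $N^{-1/2}$ scale all three have $|\Theta|\lesssim 1$. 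Since $\Theta$ and $\hat v_0(\xi_{11})\hat v_0(\xi_{12})\hat v_0(\xi_2)$ are permutation-symmetric, the three configurations differ only through
\[
\frac{\xi_1\xi_2}{\Phi_1^{u_1}}=\frac{\xi_2}{\xi_{11}^2-\xi_{11}\xi_{12}+\xi_{12}^2-a\xi_1^2}.
\]
This factor is $\frac{1}{3N}(1+o(1))$ in each small-$\xi_1$ configuration and $-\frac{1}{(1-4a)N}(1+o(1))$ in the large-$\xi_1$ one. The leading coefficient is therefore $\frac{1}{N}\bigl(\frac{2}{3}-\frac{1}{1-4a}\bigr)$, which vanishes exactly when $a=-1/8$. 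That is where the exclusion comes from.

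Your fixed-sign argument (``restrict $\xi_1\in[1/2,1]$'') is not available. You may only localize the output $\xi$ and the data, and the large-$\xi_1$ configuration hits the same outputs with the opposite sign when $a<1/4$.

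Conversely, the cancellation you anticipate between the $\Theta$-term and the $\Phi^v$-term is not where $a$ enters. The paper shifts the negative window to $[-N+\frac{5}{4\sqrt N},-N+\frac{3}{2\sqrt N}]$, so that $|\xi_1|\gtrsim N^{-1/2}$ in the small-$\xi_1$ configurations. There $|\Phi^v|,|\Phi_1^{u_1}|\gtrsim N^{3/2}$, so $\xi_1\xi_2\,\frac{e^{it\Phi^v}-1}{\Phi_1^{u_1}\Phi^v}=O(N^{-5/2})$. In the large-$\xi_1$ configuration it is $O(N^{-4})$. Both are negligible against the main term of size $t/N$.

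Your first scaling attempt went wrong at the bound $|\Theta|\lesssim N^2\gamma$. In the configuration you consider, both $\xi_2+\xi_{12}$ and $\xi_{11}+\xi_{12}$ are $O(\gamma)$ while $\xi_2+\xi_{11}\approx 2N$, so in fact $|\Theta|\lesssim N\gamma^2$. This singles out $\gamma=N^{-1/2}$ with a small fixed $t$ directly. Also, the small factor $\xi_1$ is harmless, because $\Phi_1^{u_1}\approx 3\xi_1\xi_{11}\xi_{12}$ cancels it.

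Your ``Refinement'' reaches the right scale, but the computation is never carried out, and the displayed arithmetic is garbled. With unit-amplitude windows of length $N^{-1/2}$, one gets $|\hat I(\xi)|\gtrsim \frac{t}{N}\cdot N^{-1}$ on an output interval of length $\sim N^{-1/2}$. Hence $\|I\|_{H^s}\gtrsim N^{s-9/4}$, while $\|v_0\|_{H^s}^3\sim N^{3s-3/4}$, which gives $s\ge -3/4$. This conclusion holds only once the coefficient above is known to be nonzero.
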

    \begin{remark}\label{remark ill-posedness for s<-1}
         Let $a \in \R \setminus\{0\}$. By reasoning similar to that for Lemma \ref{lemma ill-posedness s<k/2-3/4}, one can show that if $s < -1$, there exists no flow that is $C^2$ at the origin. 
    \end{remark}

    \begin{proof} For $N \gg 1$, define
        \begin{equation}\label{equation v0}
            \hat{v}_0 = \indicatrix_{\left[N,N+\frac{1}{\sqrt{N}}\right]} + \indicatrix_{\left[-N+\frac{5}{4\sqrt{N}},-N+\frac{3}{2\sqrt{N}}\right]}.
        \end{equation}

        Set the initial data to $(0, \epsilon v_0)$. Since the flow is $C^3$ at the origin, for any $t \in \R$, we can define an operator $I: H^s \rightarrow H^s$ by
        $$I[v_0] \coloneqq \frac{\partial^3 v}{\partial \epsilon^3} = \int_0^t e^{-(t-t')\partial^3_x}\int_0^{t'} e^{-a(t'-q)\partial^3_x}\partial_x\left[(e^{-q\partial^3_x} v_0)^2\right] dq \hspace{0.5mm}\partial_x(e^{-t'\partial^3_x} v_0)\,dt',$$
        satisfying
        \begin{equation}\label{inequality continous operator I}
            \| I\|_{H^s} \lesssim \|v_0\|^3_{H^s}.
        \end{equation}
        
        In frequency variables, we have
        $$\hat{I}=-e^{it\xi^3}\int_{\xi = \xi_{11}+\xi_{12}+ \xi_2}\xi_1\xi_2\hat{v}_0(\xi_{11})\hat{v}_0(\xi_{12}) \hat{v}_0(\xi_2) \int_0^t  e^{it'\Phi^v} \int_0^{t'}  e^{iq\Phi^{u_1}_1} \,dq\,dt'\, d\xi_1 \, d\xi_{11},  $$
        where 
        $ \Phi^v = - \xi^3 + a \xi_1^3 + \xi_2^3 $ and $\Phi_1^{u_1} = -a\xi_1^3 + \xi_{11}^3+\xi_{12}^3$.
        Moreover,
        $$ \int_0^t  e^{it'\Phi^v} \int_0^{t'}  e^{iq\Phi^{u_1}_1} dqdt'  = \int_0^t  e^{it'\Phi^v} \left(\frac{e^{it'\Phi_1^{u_1}}-1}{i\Phi_1^{u_1}} \right) dt'  = \int_0^t  \frac{e^{it'\Theta}-e^{it'\Phi^v}}{i\Phi_1^{u_1}}  dt' =  \frac{1}{i\Phi_1^{u_1}}\int_0^t  e^{it'\Theta}  dt'+ \frac{e^{it\Phi^v}-1}{\Phi_1^{u_1}\Phi^v}, $$
        where $\Theta$ is defined in equation \eqref{phase mkdv}. Suppose $\xi = N + o(N)$. By \eqref{equation v0}, three cases arise:
        \begin{equation}\label{equation situtation 1}
            \xi_{11},\xi_{12} \in \left[N,N+\frac{1}{\sqrt{N}}\right] \text{ and } \xi_2 \in \left[-N+\frac{5}{4\sqrt{N}},-N+\frac{3}{2\sqrt{N}}\right],
        \end{equation}
        \begin{equation}\label{equation situtation 2}
            \xi_{11},\xi_{2} \in \left[N,N+\frac{1}{\sqrt{N}}\right] \text{ and } \xi_{12} \in \left[-N+\frac{5}{4\sqrt{N}},-N+\frac{3}{2\sqrt{N}}\right],
        \end{equation}

        \begin{equation}\label{equation situtation 3}
            \xi_{12},\xi_{2} \in \left[N,N+\frac{1}{\sqrt{N}}\right] \text{ and } \xi_{11} \in \left[-N+\frac{5}{4\sqrt{N}},-N+\frac{3}{2\sqrt{N}}\right].
        \end{equation}
        
        In this setting, we have $|\Phi_1^{u_1}|\sim|\Phi^v|\sim |N|^3$, which follows from  \eqref{equation factorization phi^u} and the fact that under the transformation $(\xi,\xi_1) \mapsto -(\xi_1, \xi)$, the total phase $\Phi^v$ coincides with $\Phi_1^u$. Moreover, $|\Theta|\lesssim 1$ (uniformly in $N$). Thus, for sufficiently small $t$, we have
       \begin{equation}\label{equation estimates integral of exponetials}
            \left|\frac{1}{i\Phi_1^{u_1}}\int_0^t  e^{it'\Theta}  dt' \right| \sim \frac{|t|}{N^3} \quad \text{and} \quad \left| \frac{e^{it\Phi^v}-1}{\Phi_1^{u_1}\Phi^v}\right| \lesssim \frac{1}{N^6}.
       \end{equation}
       
       Consequently, the dominant contribution to $\hat{I}$ comes from $\frac{1}{i\Phi_1^{u_1}}\int_0^t  e^{it'\Theta}  dt' $, which corresponds to the term
       $$\frac{-e^{it\xi^3}}{i}\int_{\xi = \xi_{11}+\xi_{12}+ \xi_2}\frac{\xi_1\xi_2}{\Phi_{1}^{u_1}}\hat{v}_0(\xi_{11})\hat{v}_0(\xi_{12}) \hat{v}_0(\xi_2) \int_0^t e^{it'\Theta}  \,dt'\, d\xi_1\, d\xi_{11}. $$

        Note that this expression remains unchanged across the scenarios in \eqref{equation situtation 1}, \eqref{equation situtation 2}, and \eqref{equation situtation 3}, with the exception of:
        $$\frac{\xi_1\xi_2}{\Phi_1^{u_1}} = \frac{\xi_2}{\xi_{11}^2+\xi_{12}^2-\xi_{11}\xi_{12} -a\xi_1^2 }.$$

        In the case of \eqref{equation situtation 1}, we find
        \begin{equation}\label{equation contribution 1}
            \frac{\xi_1\xi_2}{\Phi_1^{u_1}} = \frac{-N(1+o(1))}{(N^2-4aN^2)(1+o(1))} = -\frac{1}{(1-4a)N}+o\left(\frac{1}{N}\right).
        \end{equation}

        For \eqref{equation situtation 2} and \eqref{equation situtation 3}, the expression becomes
        \begin{equation}\label{equation contribution 2}
            \frac{N(1+o(1))}{3N^2(1+o(1))} = \frac{1}{3N}+ o\left(\frac{1}{N}\right).
        \end{equation}

        To obtain $s\geq -3/4$ in the conclusion, the leading term must satisfy
        $$-\frac{1}{(1-4a)N} + \frac{2}{3N} \neq 0 \Longleftrightarrow a \neq -\frac{1}{8}.$$

        In view of \eqref{equation estimates integral of exponetials}, this implies 
        $$|\hat{I}| \sim \left | \int\frac{\xi_1\xi_2}{\Phi_{1}^{u_1}}\hat{v}_0(\xi_{11})\hat{v}_0(\xi_{12}) \hat{v}_0(\xi_2) \int_0^t e^{it'\Theta}  \,dt' \,d\xi_1\, d\xi_{11} \right | \sim \frac{|t|}{N}  \left | \int\hat{v}_0(\xi_{11})\hat{v}_0(\xi_{12}) \hat{v}_0(\xi_2) \, d\xi_2\, d\xi_{11} \right |, $$
        where the integration is restricted, without loss of generality, to the region defined in \eqref{equation situtation 1}. Since the integrand is of constant sign on this domain, we obtain
        $$|\hat{I}| \gtrsim \frac{1}{N}  \left | \int_{\xi_{11}\in \left[N,N+\frac{1}{2\sqrt{N}}\right]}\hat{v}_0(\xi_{11})\hat{v}_0(\xi_{12}) \hat{v}_0(\xi_2)\,  d\xi_2 \,d\xi_{11} \right |. $$

        The identity $\xi_{12}= \xi-\xi_{11}-\xi_{2}$ imply $ N\leq  \xi_{12} \leq N+\frac{1}{\sqrt{N}}$ assuming $\xi_2 \in \left[-N+\frac{5}{4\sqrt{N}},-N+\frac{3}{2\sqrt{N}}\right]$, $\xi_{11}\in \left[N,N+\frac{1}{2\sqrt{N}}\right]$, and $\xi \in \left [N+ \frac{2}{\sqrt{N}},  N+\frac{9}{4\sqrt{N}}\right]$ . Given that the domain of integration is restricted to that in \eqref{equation situtation 1}, we obtain
        $$\|I\|_{H^s} \gtrsim \left( \int_{N+ \frac{2}{\sqrt{N}}}^{N+ \frac{9}{4\sqrt{N}}}\langle \xi \rangle^{2s} |\hat{I}(\xi)|^2 d\xi \right)^{1/2} \sim N^{s-9/4}.$$

        Furthermore, $\|v_0\| \sim N^{s-1/4}$ by \eqref{equation v0}. Thus, \eqref{inequality continous operator I} yields
        $$N^{s-9/4} \leq N^{3s-3/4} \implies s\geq-\frac{3}{4}. $$
    \end{proof}

    \begin{remark}
        In the case $a=-1/8$, the sum of the contributions from \eqref{equation contribution 1} and \eqref{equation contribution 2} results in a leading term of order $N^{-3/2}$, which only implies $s \geq -1$. However, this conclusion is already established by the $C^2$ regularity of the flow at the origin (see Remark \ref{remark ill-posedness for s<-1}).

    \end{remark}

    By collecting the results of the lemmas in this section, we obtain Theorem \ref{theorem ill-posedness}.

\bibliographystyle{IEEEtranS}

\bibliography{sources.bib}
    
    \end{document}